\documentclass{article}
\usepackage[journal=RMI,lang=american]{ems-journal-rmi} 

\numberwithin{equation}{section}

\usepackage{color}
\usepackage{subcaption}
\usepackage{comment}

\newcommand{\cor}{{({\rm c})}}
\newcommand{\side}{{({\rm s})}}

\newcommand{\dx}{\partial_x}
\newcommand{\dt}{\partial_t}
\newcommand{\dz}{\partial_z}

\newcommand{\tpsi}{\widetilde{\psi}}

\newcommand{\cE}{{\mathcal E}}
\newcommand{\mfh}{\mathfrak h}

\newcommand{\gr}{{\mathtt g}}

\newcommand{\RR}{{\mathbb R}}

\newcommand{\vertiii}[1]{{\left\vert\kern-0.25ex\left\vert\kern-0.25ex\left\vert #1 
    \right\vert\kern-0.25ex\right\vert\kern-0.25ex\right\vert}}

\newtheorem{proposition}{Proposition}
\newtheorem{theorem}{Theorem}
\newtheorem*{theorem*}{Theorem}
\newtheorem{lemma}{Lemma}
\newtheorem{corollary}{Corollary}
\newtheorem{assumption}{Assumption}
\newtheorem{definition}{Definition}

\theoremstyle{remark}
\newtheorem{remark}{Remark}

\begin{document}

\title{Well posedness of F. John's floating body problem for a fixed object}
\titlemark{Well-posedness of F. John's floating body problem}

\emsauthor{1}{
	\givenname{David}
	\surname{Lannes}
	\zblid{author.first}
	\mrid{1234567}
	\orcid{0000-0001-0002-0003}}{D.~Lannes}
\emsauthor{2}{
	\givenname{Mei}
	\surname{Ming}
	\mrid{885432}
	\zblid{ming.mei}
	\orcid{0000-0002-6496-9289}}{M.~Ming}

\Emsaffil{1}{
	\pretext{}
	\department{Institut de Mathématiques de Bordeaux}
	\organisation{CNRS UMR 5251 et Université de Bordeaux}
	\address{351 Cours de la Libération}
	\zip{33405}
	\city{Talence Cedex}
	\country{France}
	\posttext{}
	\affemail{david.lannes@math.u-bordeaux.fr}
	}

\Emsaffil{2}{
	\pretext{}
	\department{School of Mathematics and Statistics}
	\organisation{Yunnan University}
	\address{1}{Rm3422 Gewu Building, Yunnan University, East Outer Ring Road, Chenggong District}
	\zip{650500}
	\city{Kunming}
	\country{People's Republic of China}
	\posttext{}
	\affemail{mingmei@ynu.edu.cn}
	}

\classification[35S10,35J57]{35Q35}
\keywords{Floating body problem, mixed elliptic problem, corner domains, homogeneous spaces, Dirichlet-Neumann operator, weighted estimates  }

\begin{abstract}
The goal of this paper is to prove the well-posedness of F. John's floating body problem in the case of a fixed object and for unsteady waves, in horizontal dimension $d=1$ and with a possibly emerging bottom. This problem describes the interactions of waves with a partially immersed object using the linearized Bernoulli equations. The fluid domain $\Omega$ therefore has corners where the object meets the free surface, which consists of various connected components. The energy space associated with this problem involves the space of traces on these different connected components  of all functions in the Beppo-Levi space $\dot{H}^1(\Omega)$; we characterize this space, exhibiting non local effects linking the different connected components. We prove the well-posedness of the Laplace equation in corner domains, with mixed boundary conditions and Dirichlet data in this trace space, and study several properties of the associated Dirichlet-Neumann operator (self-adjointness, ellipticity properties, etc.). This trace space being only semi-normed, we cannot use standard semi-group theory to solve F. John's problem: one has to choose a realization of the homogeneous space (i.e. choose an adequate representative in the equivalence class) we are working with. When the fluid domain is bounded, this realization is obtained by imposing a zero-mass condition; for unbounded fluid domains, we have to choose a space-time realization which can be interpreted as a particular choice of the Bernoulli constant. Well-posedness in the energy space is then proved. Conditions for higher order regularity in times are then derived, which yield some limited space regularity that can be improved through smallness assumption on the contact angles. We finally show that higher order regularity away from the contact points can be achieved through weighted estimates.
\end{abstract}

\maketitle

\section{Introduction}

\subsection{General setting}

The interaction of waves with floating structures is obviously important for navigation issues but also for renewable marine energies (floating wind turbines or wave energy converters) or the modeling of sea-ice. From a mathematical point of view, this is a very complex problem because in addition to all the difficulties met to describe ocean waves, one must understand their interaction with the floating objects. This interaction raises new problems, among which the (free boundary) dynamics of the contact line between the object, the air, and the surface of the water, and the fact that the fluid domain can no longer be assumed to be smooth since it has wedges (or corners in the two-dimensional case). If the object is allowed to move freely, one has in addition to study its motion under the force exerted by the fluid. 

A wide range of mathematical and numerical tools are used to describe wave structure interactions. Computational Fluid Dynamics (CFD) is used to get precise information such as vortex-induced motion of floaters or global performance in extreme waves; however, it has to be said that CFD is far less adapted for this kind of situation than it is, say, in aeronautics or car industry. This is due to several specific difficulties identified in \cite{Kim} and that include the fact that one has to work in an open ocean environment that requires computations of large volumes of fluid, and that one has to face a non-Gaussian stochastic environment that requires many computations to get reliable statistics; for these reasons, the total cost of a CFD project is comparable to the cost of physical model tests.

Faster, but of course less precise computations, can be achieved if one uses a simpler model for the description of the fluid. Fully Nonlinear Potential Flow (FNPF) methods \cite{Penalba} for instance assume that the fluid  is non viscous, incompressible and that the flow is irrotational. The fluid velocity therefore derives from a scalar velocity potential whose evolution is governed by the Bernoulli equations. In the absence of any floating object, this amounts to assuming that the flow is governed by the water waves equations (see \cite{ABZ2,HIT} for low regularity results that allow $C^{3/2}$ interfaces). Though faster than CFD computations, the FNPF approach remains extremely costly, for the same reasons (using them to describe the propagation of waves without floating object is already an achievement, even for one dimensional surfaces \cite{Benoit}).

Another approach, proposed in \cite{Lannes1} consists in replacing the equations for the fluid by simpler asymptotic wave models such as the Nonlinear Shallow Water (NSW) or the Boussinesq equations. Such models, that couple the evolution of the free surface with the vertically averaged horizontal velocity have proven very efficient to describe waves in coastal areas \cite{Lannes_SW}. The presence of a floating object can be accounted for by imposing a constraint on the surface elevation in the region where the object is located; the pressure exerted by the fluid on the object can then be understood as the Lagrange multiplier associated with this constraint. The problem can then be reduced to a non standard initial boundary value problem for the wave model cast in the exterior region (where the surface is not in contact with the object); see \cite{IguchiLannes}, \cite{Bocchi,Bocchi2} and \cite{IguchiLannes2023} for the $1d$ nonlinear, $2d$ radial and $2d$ NSW equations respectively, and \cite{BLM,BL} for the $1d$ Boussinesq equations. We refer also to \cite{Maity} for the viscous $1d$ NSW equations and also mention the so-called "soft congestion" approach (see the review by Perrin \cite{Perrin})  that relaxes the constraint on the surface elevation, hereby allowing the use of efficient low-Mach techniques \cite{Godlewski1,Godlewski2}. Such approaches based on asymptotic wave models have been numerically implemented for the $1d$ nonlinear shallow water \cite{BocchiHeVergara,Haidar} and Boussinesq equations \cite{BLW} and are numerically very efficient. The hope is that they can be integrated to the operational codes based on these wave models, which would allow to study the impact of farms of floating offshore wind turbines or wave energy converters and the wave fields, their incidence on submersion risks, etc. Their drawback is of course their limited range of validity (e.g. shallow water models cannot be used in deep water) that must be assessed numerically and experimentally.

Today, the most widely used approach to describe wave-structure interactions corresponds to the {\it linear} potential flow approach. In this approach, the variations of the fluid domain are neglected for the computation of the velocity potential. This removes in particular the difficult issue of the evolution of the contact line. This approach is used in the commercial codes used by engineers. From a mathematical point of view, this problem attracted a lot of interest at the middle of the XXth century, starting with a series of two papers by F. John \cite{John1,John2} who studied the interaction of waves with floating structures in free or forced motion, assuming the the fluid is governed by the linear Bernoulli equation. Many articles have been devoted to this since when, almost all of them focusing on time harmonic motions and on the possible existence and properties of trapped modes \cite{McIver,KMV}; let us mention however the early work of Ursell \cite{Ursell} who considered
the decay of the free motion of a floating symmetric body in a fluid initially at rest. Despite its physical importance, the analysis of unsteady waves in the presence of obstacles is only understood for completely submerged objects \cite{KMV}; one of the reasons why such configurations are easier to handle mathematically is that the fluid domain remains regular. As pointed in \cite{KMV} the analysis of the Cauchy problem for unsteady waves in the presence of a partially immersed object is open. This is the problem that we address in this paper, in the particular case where the floating object is fixed.

\subsection{Statement of the problem}
We consider the behavior of small perturbations of a fluid occupying at rest a two-dimensional domain $\Omega$ with finite depth, and in the presence of one or several partially immersed fixed solid objects. On the right and left extremities, the fluid can be delimited or not by an emerging bottom, as shown in Figure \ref{fig:images}. More precisely, we make the following assumption on $\Omega$ and its boundary $\Gamma$.

\begin{figure}[htbp]
        \begin{subfigure}[b]{0.45\textwidth}
                \centering
                \includegraphics[width=\textwidth]{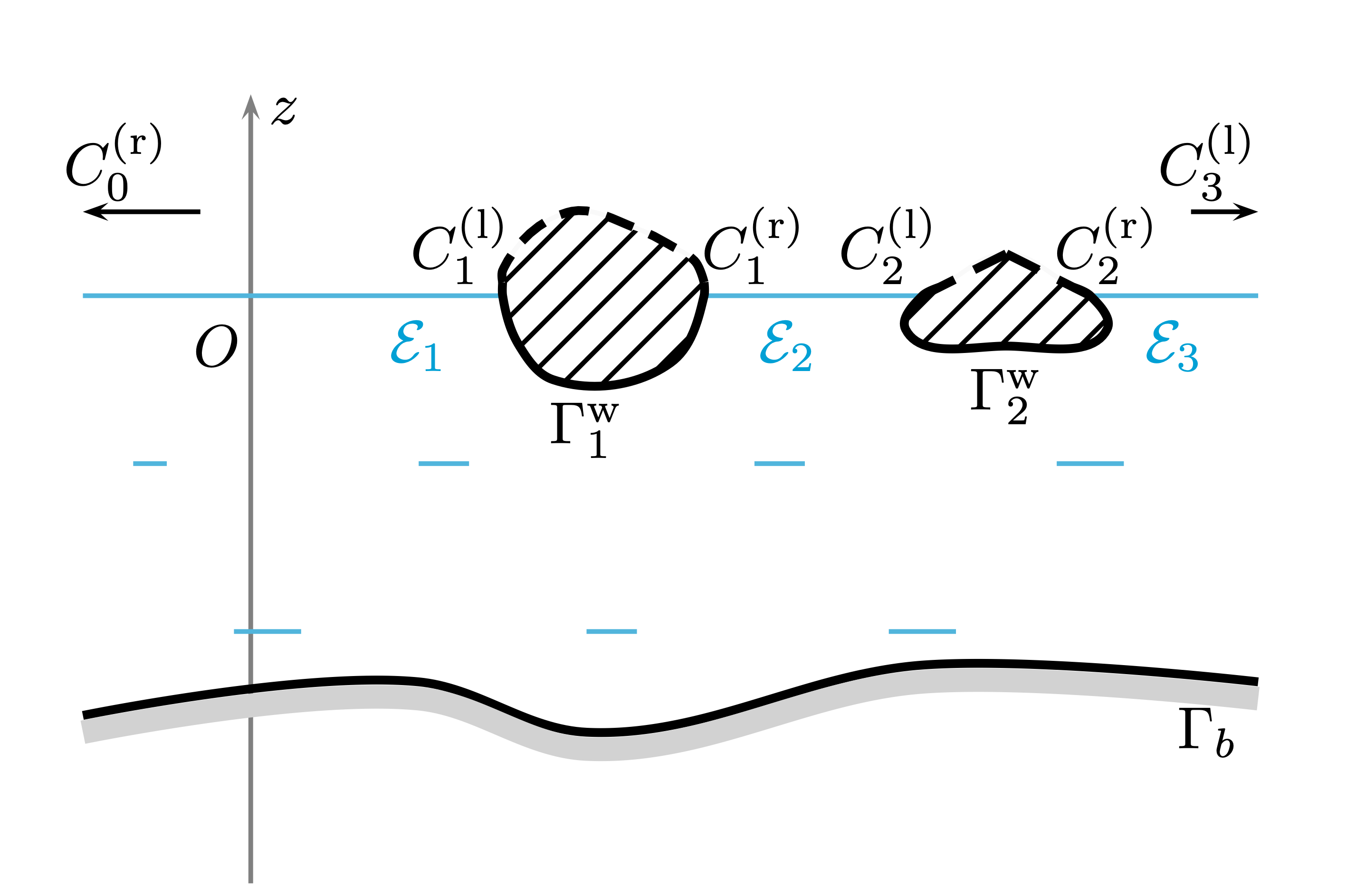}
                \caption{$N=2$, non-emerging bottom, $\Gamma^{\rm D}$ unbounded}
                \label{fig:image1}
        \end{subfigure}%
        \hfill
        \begin{subfigure}[b]{0.45\textwidth}
                \centering
                \includegraphics[width=\textwidth]{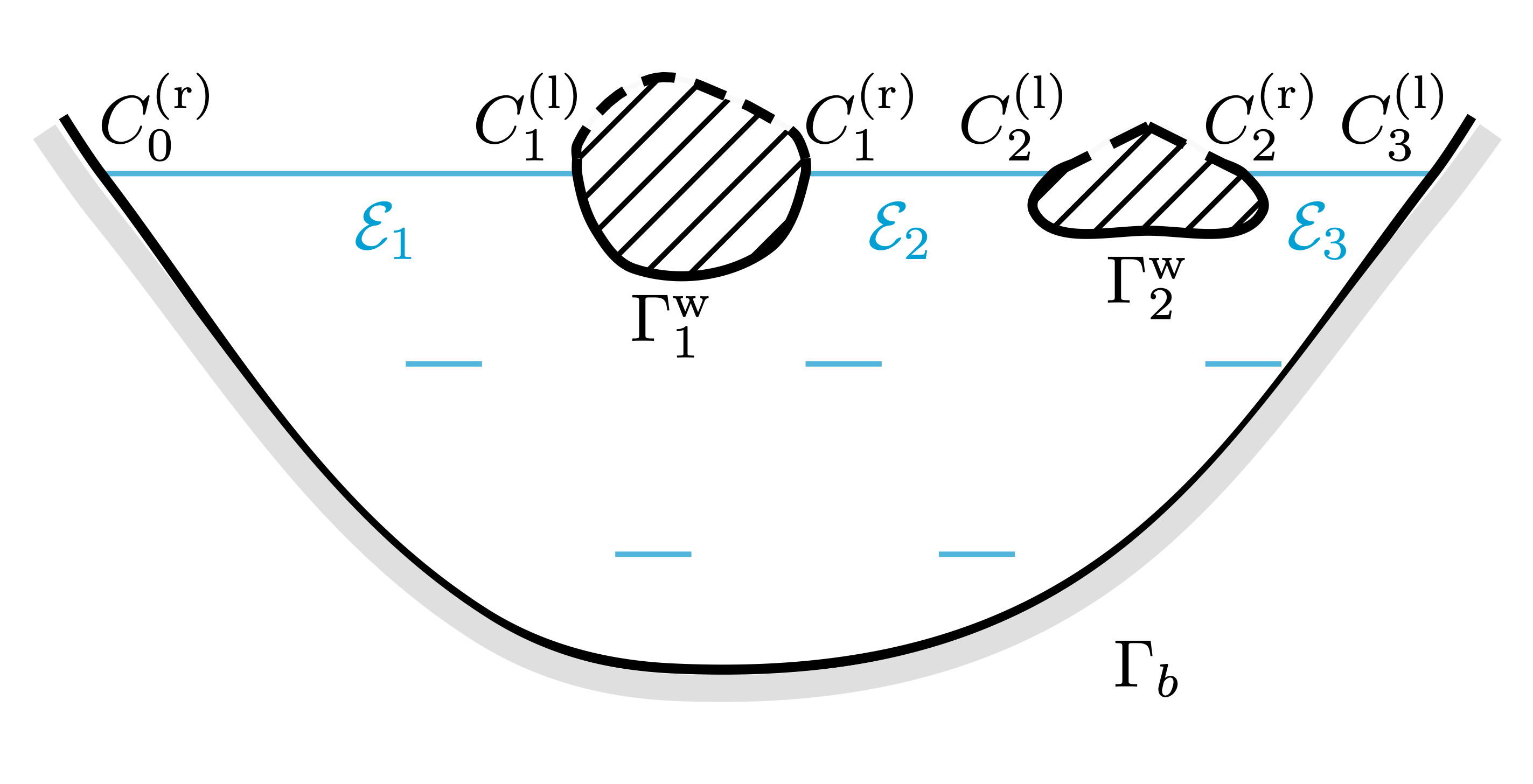}
                \caption{$N=2$, emerging bottom, $\Gamma^{\rm D}$ bounded}
                \label{fig:image2}
        \end{subfigure}

         \begin{subfigure}[b]{0.45\textwidth}
                \centering
                \includegraphics[width=\textwidth]{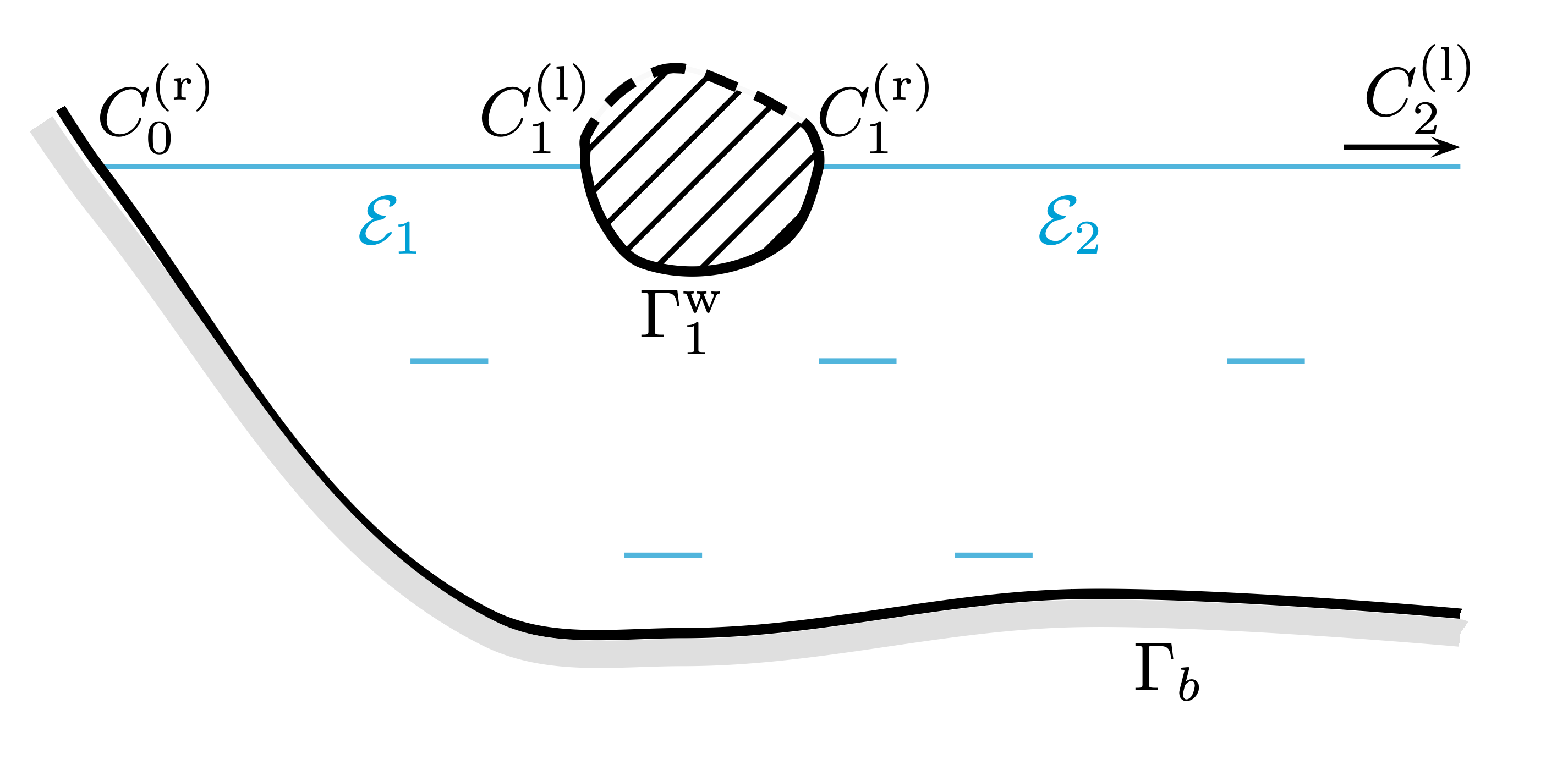}
                \caption{$N=1$, emerging bottom, $\Gamma^{\rm D}$ unbounded}
                \label{fig:image3}
        \end{subfigure}
        \caption{Admissible configurations}
        \label{fig:images}
\end{figure}

\begin{assumption}\label{assconfig}
Let $N\geq 1$ be the number of partially immersed solids. The fluid domain $\Omega$ is delimited from above and from below by two open curves $\Gamma^{(\rm top)}$ and $\Gamma^{(\rm b)}$ respectively, that do not intersect, and such that:
\begin{enumerate}
\item There exist $-\infty\leq x_0^{{\rm r}} <x^{{\rm l}}_1 < x^{\rm r}_1<\dots <x^{{\rm l}}_N<x^{\rm r}_N<x^{{\rm l}}_{N+1}\leq \infty$ such that $\Gamma^{(\rm top)}$ is a curved broken line with (possibly infinite) endpoints $C_0=(x^{\rm r}_0,0)$ and $C_{N+1}=(x^{\rm l}_{N+1},0)$ and vertices $C_j^{(\rm l)}=(x_j^{\rm l},0)$ and $C_j^{(\rm r)}=(x_j^{\rm r},0)$ for $1\leq j\leq N$. Moreover the segments $(C^{(\rm l)}_j,C_j^{(\rm r)})$, which correspond to the wetted part of the boundary of the objects, are smooth open curves denoted by $\Gamma^{({\rm w})}_j$ and contained in the lower half-plane $\{z<0\}$; all the other segments of   $\Gamma^{(\rm top)}$ are flat and contained in the axis $\{z=0\}$. 
\item The bottom  $\Gamma^{({\rm b})}$ is the graph on $(x^{\rm r}_0,x^{\rm l}_{N+1})$ of a smooth function $b$ such that $b(x^{{\rm r}}_0)=0$  if $-\infty<  x^{{\rm r}}_0$ and $\limsup_{x\to - \infty} b(x)<0$ if  $x^{{\rm r}}_0=- \infty$, and similar condition at the other endpoint $x_{N+1}^{\rm l}$. 
\item At each corner, the fluid domain forms an angle strictly larger than $0$ and strictly smaller than $\pi$ (no tangential contact).
\end{enumerate}
We denote by $\Gamma^{\rm D}=\underline{\Gamma}^{\rm D}\times\{0\}$ with $\underline{\Gamma}^{\rm D}=\bigcup_{j=1}^{N+1} {\mathcal E}_j$, where the ${\mathcal E}_j$ are the intervals defined as
$\cE_j=(x_{j-1}^{\rm r},x_j^{\rm l})$.
We also denote by $\Gamma^{\rm N}$ the union of all the other open curved segments of $\Gamma=\partial\Omega$ and by $\Gamma^*=\Gamma^{\rm D}\cup \Gamma^{\rm N}$.
\end{assumption} 
\begin{remark}
The notation $\Gamma^{\rm D}$ for the portion of the surface of the fluid in contact with the air is not natural at this point; it is due to the fact that for the elliptic problem solved by the velocity potential in $\Omega$, Dirichlet type boundary conditions are imposed on $\Gamma^{\rm D}$. Similarly, we denote $\Gamma^{\rm N}$ the portion of $\Gamma$ where the fluid is in contact with a solid boundary because Neumann boundary conditions are imposed on these segments.
\end{remark}

In order to state the linear floating problem formulated by F. John \cite{John1} in the case of a fixed object, let us first introduce some notations. Since the flow is potential, and because the floating objects and the bottom are assumed to be impermeable and fixed, the velocity $U(t,\cdot)$ in the fluid domain at time $t$ can be written $U(t,\cdot)=\nabla \phi(t,\cdot)$, where $\phi(t,\cdot)$ is a scalar function, harmonic in $\Omega$, and such that its normal derivative satisfies $\partial_{\rm n}\phi=0$ on $\Gamma^{\rm N}$. It is therefore fully determined by its trace on $\Gamma^{\rm D}$ (see Proposition \ref{propharmonic} for a precise statement), which we denote by $\psi(t,\cdot)$. This latter quantity therefore determines also the normal derivative $\partial_n\phi$ on $\Gamma^{\rm D}$, which we denote by $G_0\psi$; since $\Gamma^{\rm D}$ is horizontal here, we have equivalently $G_0\psi:=(\partial_z\phi )_{\vert_{\Gamma^{\rm D}}}$; the operator $G_0$ is called the Dirichlet-Neumann operator. Denoting also by $\zeta(t,\cdot)$ the surface elevation above $\Gamma^{\rm D}$ at time $t$, F. John's floating body problem can be stated as
\begin{equation}\label{F1}
\begin{cases}
\dt \zeta -G_0 \psi &= f ,\\
\dt \psi + {\mathtt g}\zeta &=g, 
\end{cases}
\quad \mbox{ on }\quad 
\RR^+_t\times {\Gamma}^{\rm D},
\end{equation}
where ${\mathtt g}>0$ is the acceleration of gravity, and the function $f$ and $g$ are given source terms. The goal of this paper is to prove the well-posedness of the Cauchy problem formed by \eqref{F1} and initial conditions of the form
\begin{equation}\label{F2}
(\zeta,\psi)_{\vert_{t=0}}=(\zeta^{\rm in},\psi^{\rm in}),
\end{equation}
for some given functions $\zeta^{\rm in}$ and $\psi^{\rm in}$.

\subsection{Presentation of the results}

In the presentation of our results, we identify four questions that can be of independent interest. Firstly, in order to identify the energy space for \eqref{F1}, we are led to characterize the range of the boundary trace mapping for functions in the homogeneous Beppo-Levi space $\dot{H}^1(\Omega)$. Secondly, since the energy space is not homogeneous and of critical regularity, the question of its realization is not trivial, especially when the fluid domain is unbounded, in which case we can relate it to the choice of the Bernoulli constant; once a good realization is found, we can prove the well-posedness of F. John's problem in the energy space. Since the regularity issues raised by the corners of the fluid domain play a central role; we review the literature on this subject and try to explain the difficulties at stake by discussing an apparent paradox related to corner singularities for water waves; we can finally present our approach to get higher order regularity estimates for solutions to \eqref{F1}-\eqref{F2} away from the corners.

\subsubsection{The energy space and the boundary trace of $\dot{H}^1(\Omega)$ functions}

We consider in this paper waves of finite mechanical energy, that is, we want the surface elevation $\zeta$ and the velocity $U=\nabla\phi$ to be such that
$$
E:= \frac{1}{2}\rho {\mathtt g}\int_{\Gamma^{\rm D}} \zeta^2+\frac{1}{2}\rho \int_\Omega \vert \nabla \phi\vert^2 <\infty,
$$
where $\rho$ is the constant density of the fluid.
Quite obviously, $L^2(\Gamma^{\rm D})$ is the natural functional space for $\zeta$; for $\psi$ however, the choice is much less clear. We should consider the space of all the functions $\psi$ that can be obtained as the trace on $\Gamma^{\rm D}$ of an harmonic function $\phi$ of finite Dirichlet energy, that is, $\int_\Omega \vert \nabla \phi\vert^2 <\infty$. Since every function $\phi$ in the Beppo-Levi space $\dot{H}^1(\Omega):=\{ \phi \in L^1_{\rm loc}(\Omega), \nabla \phi \in L^2(\Omega)\}$ can be uniquely decomposed as $\phi=\phi_1+\phi_2$, with $\phi_1\in H^1(\Omega)$ is such that $(\phi_1)_{\vert_{\Gamma^{\rm D}}}\equiv 0$ and $\phi_2$ is harmonic with homogeneous Neumann condition on $\Gamma^{\rm N}$, the problem of finding the correct functional space for $\psi$ can be reformulated as the following question, of independent interest.\\

\medbreak
\noindent
{\bf Question 1.} Can we characterize the range of the mapping ${\mbox Tr}^{\rm D}: \phi \in \dot{H}^1(\Omega)\mapsto \phi_{\vert_{\Gamma^{\rm D}}}$ ? And if so, does it admit a continuous right inverse ?
\medbreak

Recently, a very similar question was raised in \cite{Gaudin1,Gaudin2}, but for the trace of homogeneous space of the form $\dot{H}^{s+1/2}(\Omega)$ with $s<1/2$; the case we have to deal with here is therefore the critical case $s=1/2$. The critical case was addressed recently by Strichartz \cite{Strichartz} in the case where $\Omega$ is a flat strip and $\Gamma^{\rm D}$ consisting of the union of the two boundaries. He showed that the structure of the range of the trace mapping has some interesting structures that led Leoni and Tice \cite{LeoniTice} to develop a general theory for {\it screened} Sobolev spaces. The challenge here is to manage the fact that we are in a corner domain and that $\Gamma^{\rm D}$ now has boundaries.

We answer by the affirmative to both parts of Question 1 in Section \ref{sectionTraces}, and denote by $\dot{H}^{1/2}(\Gamma^{\rm D})$ the range of the trace mapping. This space is only a semi-normed space, and its norm contains a nonlocal effect relating the different connected components of $\Gamma^{\rm D}$. A rough statement of the result presented in Theorems \ref{theortrace} and \ref{theoremsurj} is the following.
\begin{theorem*}
We characterize a homogeneous space $\dot{H}^{1/2}(\Gamma^{\rm D})$ such that the trace mapping ${\mbox Tr}^{\rm D}: \dot{H}^1(\Omega)\to  \dot{H}^{1/2}(\Gamma^{\rm D})$ is well defined, continuous, onto, and admits a continuous right-inverse.
\end{theorem*}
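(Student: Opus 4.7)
The approach I would take is to first propose an intrinsic definition of $\dot{H}^{1/2}(\Gamma^{\rm D})$ and then verify separately that the trace map is continuous into this space, that it is surjective, and that it admits a continuous right inverse. Motivated by the screened Sobolev spaces of Strichartz and Leoni--Tice for flat strips, I would define $\dot{H}^{1/2}(\Gamma^{\rm D})$ as the space of tuples $\psi=(\psi_j)_{j=1}^{N+1}$ with $\psi_j:\mathcal{E}_j\to\mathbb{R}$, taken modulo a single common additive constant, and equipped with a seminorm of the form
\[
\vertiii{\psi}^2 = \sum_{j}\iint_{\mathcal{E}_j\times\mathcal{E}_j}\frac{(\psi_j(x)-\psi_j(y))^2}{|x-y|^2}\,dx\,dy + \sum_{j\neq k}\iint_{\mathcal{E}_j\times\mathcal{E}_k}\frac{(\psi_j(x)-\psi_k(y))^2}{d_\Omega(x,y)^2}\,dx\,dy,
\]
where $d_\Omega$ is an intrinsic distance on $\underline{\Gamma}^{\rm D}$ that accounts for the need to circumvent the wetted parts $\Gamma^{({\rm w})}_j$ (and, where present, the emerging bottom). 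The first sum provides local $H^{1/2}$ control on each component and the second encodes the non-local coupling anticipated in the introduction; that the quotient is by a single constant (rather than $N+1$ independent constants) reflects the fact that $\dot{H}^1(\Omega)$ itself is only defined up to a single global additive constant.

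For continuity of $\text{Tr}^{\rm D}$, I would reduce to the harmonic case via the decomposition $\phi=\phi_1+\phi_2$ already invoked in the introduction, so only $\phi_2$ contributes to the trace, with Dirichlet energy controlled by $\|\nabla\phi\|_{L^2}^2$. The diagonal $H^{1/2}$ bounds on each $\mathcal{E}_j$ follow from standard local trace theorems on a tubular neighborhood, supplemented by weighted estimates near each corner (the assumption that each opening angle lies strictly in $(0,\pi)$ precludes pathological singularities and allows, locally, a conformal change of variables to a half-plane). The off-diagonal contributions are controlled via a ``tent'' estimate: for $x\in\mathcal{E}_j$ and $y\in\mathcal{E}_k$, one bounds $|\psi_j(x)-\psi_k(y)|$ by $\int_\gamma|\nabla\phi_2|$ along an admissible path $\gamma\subset\Omega$ of length comparable to $d_\Omega(x,y)$; Cauchy--Schwarz and a Whitney-type integration over the family of such paths then produce the bound of the off-diagonal seminorm by the Dirichlet energy.

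For surjectivity and the continuous right inverse, given $\psi$ in the proposed space I would construct an extension in two steps. First, a trial extension $\widetilde\phi$ is assembled from local harmonic lifts (Poisson kernels in half-strips near each $\mathcal{E}_j$, Schwarz-reflected or conformally transplanted near the corners), patched with cutoffs and corrected to carry the non-local coupling between components; the scheme is calibrated so that $\|\nabla\widetilde\phi\|_{L^2(\Omega)}\lesssim \vertiii{\psi}$. Second, writing $\phi=\widetilde\phi+\phi_0$, the correction $\phi_0$ solves a mixed BVP with vanishing Dirichlet data on $\Gamma^{\rm D}$ and prescribed Neumann data on $\Gamma^{\rm N}$, which I would solve by Lax--Milgram in the subspace of $\dot{H}^1(\Omega)$ vanishing on $\Gamma^{\rm D}$ (where the Dirichlet seminorm is a genuine norm). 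The hardest step will be the two-sided matching between $\vertiii{\psi}$ and the quotient norm $\inf\{\|\nabla\phi\|_{L^2(\Omega)}:\text{Tr}^{\rm D}\phi=\psi\}$: the inequality $\vertiii{\psi}\lesssim \|\nabla\phi\|_{L^2}$ is comparatively soft, but its converse demands a precise choice of $d_\Omega$ and of the trial extension $\widetilde\phi$ so that the dyadic/Whitney machinery captures exactly the off-diagonal cost. The critical regularity $s=1/2$ combined with the corner geometry and the possibility of unbounded components make this calibration the essential new difficulty compared to the flat-strip setting of Strichartz.
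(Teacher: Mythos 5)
Your proposed per-component seminorm $\iint_{\cE_j\times\cE_j}\frac{(\psi_j(x)-\psi_j(y))^2}{|x-y|^2}\,dx\,dy$ is the \emph{unscreened} Gagliardo seminorm $H^{1/2}_{\rm hom}(\cE_j)$, whereas the correct per-component space is the \emph{screened} one, with the inner integration restricted to $|x-y|\leq\rho_0$. These coincide on a bounded $\cE_j$, but on a half-line they are strictly different: a trace that approaches a nonzero limit at infinity (behaving, say, like $\arctan x$) lies in $\dot{H}^{1/2}(\cE_j)$ and is attained as the boundary value of a function in $\dot{H}^1(\Omega)$, yet its unscreened seminorm diverges because $\int(\psi(x+h)-\psi(x))^2\,dx$ grows like $|h|$ and $\int_1^\infty h^{-1}\,dh=\infty$. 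With your definition the trace mapping therefore fails to be bounded whenever $\Gamma^{\rm D}$ has an unbounded component, which is exactly the situation the paper is designed to treat. The screening is not a cosmetic variation: it is what separates the Beppo-Levi trace space from $H^{1/2}_{\rm hom}$, and it is the content of the Strichartz and Leoni--Tice results you cite in motivation but then drop from the definition.

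Beyond that, your off-diagonal coupling $\iint_{\cE_j\times\cE_k}\frac{(\psi_j(x)-\psi_k(y))^2}{d_\Omega(x,y)^2}$ is heavier than the phenomenon requires. Because distinct components of $\Gamma^{\rm D}$ are separated by a strictly positive in-domain distance, the information the trace carries across the gap amounts to a single scalar per adjacent pair of components; the paper records it simply as $\sum_{j=1}^N|\overline{\psi}_{j+1}-\overline{\psi}_j|$, where $\overline{\psi}_j$ is the average of $\psi_j$ over $\cE_j$ (or a fixed finite subinterval if $\cE_j$ is unbounded). Whether your double-integral term is equivalent, or even finite for unbounded components, is not established and would require Poincar\'e-type work of its own. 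Your surjectivity scheme (local Poisson lifts, conformal transplants at the corners, Lax--Milgram correction) is workable in principle, but you flag the calibration as the hard step; the paper avoids it entirely by first proving that every element of $\dot{H}^{1/2}(\Gamma^{\rm D})$ extends, with control of seminorms, to an element of $\dot{H}^{1/2}(\RR)$, and then invoking the known right inverse for the trace on a strip containing $\Omega$. Finally, for continuity of the trace you propose reducing to the harmonic part; the paper does not need this reduction: it estimates $\phi(x+h,0)-\phi(x,0)$ directly for arbitrary $\phi\in\dot H^1(\Omega)$ by integrating $\nabla\phi$ along a two-segment right-angled path inside $\Omega$ and concluding with Hardy's and Minkowski's inequalities, and handles the cross-component term by integrating along an explicit broken line between components.
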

Note that we also prove in Proposition \ref{propharmonic} that the Laplace equation in $\Omega$, with homogeneous Neumann conditions on $\Gamma^{\rm N}$ and Dirichlet data on $\Gamma^{\rm D}$ in $\dot{H}^{1/2}(\Gamma^{\rm D})$  is well posed, which extends the classical result where the Dirichlet data are in ${H}^{1/2}(\Gamma^{\rm D})$; this result is optimal since we have the equivalence of semi-norms $\vert \psi \vert_{\dot{H}^{1/2}(\Gamma^{\rm D})}\sim \big(\int_\Omega \vert\nabla \phi\vert^2\big)^{1/2}$ (which is of course false if we replace $\vert \psi \vert_{\dot{H}^{1/2}(\Gamma^{\rm D})}$ by $\vert \psi \vert_{{H}^{1/2}(\Gamma^{\rm D})}$).

It also follows from these considerations and the definition of the energy above, that the natural energy space for \eqref{F1} is the {\it semi-normed} space ${\mathbb X}$ defined as
$$
{\mathbb X}:=L^2(\Gamma^{\rm D})\times \dot{H}^{1/2}(\Gamma^{\rm D}).
$$

\subsubsection{Realization of the energy space and well-posedness in the energy space}

In order to prove the well-posedness of \eqref{F1}-\eqref{F2} in the energy space, one has to prove that the evolution operator in \eqref{F1} is skew-adjoint. This is achieved via a careful analysis of the Dirichlet-Neumann operator on corner domains, but this is not enough to conclude by standard tools because the energy space ${\mathbb X}$, being only a semi-normed space, is not a Banach space. In such situations, it is convenient to find a {\it realization} of the semi-normed space ${\mathbb X}$, that is, a convenient choice of a particular representative of each element of the quotient space ${\mathbb X}/K$, $K$ being the adherence of zero for the semi-norm. A well-known example is Chemin's space ${\mathcal S}'_h$ of homogeneous distributions \cite{BCD} (see also \cite{Cobb}), but this approach based on Fourier analysis is not directly applicable in our case where $\Gamma^{\rm D}$ has boundaries and more importantly, it works for homogeneous spaces of the form $\dot{H}^s$ with $s<1/2$; here again, the case $s=1/2$ of interest here is critical.

\medbreak
\noindent
{\bf Question 2.} Can we find a good realization of the energy space ${\mathbb X}$ ?
\medbreak

When $\Gamma^{\rm D}$ is bounded, it is quite easy to answer this question by the affirmative. Indeed, the space ${\mathcal L}^2(\Gamma^{\rm D})\times \dot{\mathcal H}^{1/2}(\Gamma^{\rm D})$ consisting of all $(\zeta,\psi)\in {\mathbb X}$ such that $\int_{\Gamma^{\rm D}}\zeta = \int_{\Gamma^{\rm D}}\psi=0$ is an appropriate realization of ${\mathbb X}$ since the equations propagate these zero mass conditions. This realization is also a standard Hilbert space in which we can apply standard tools to prove the well-posedness of \eqref{F1}-\eqref{F2}.\\
When $\Gamma^{\rm D}$ is unbounded, this realization does not apply because the integrals $\int_{\Gamma^{\rm D}}\zeta$ and $\int_{\Gamma^{\rm D}}\psi$  are not necessarily well defined. Our approach is then to construct a solution by a duality method in the semi-normed space $L^2([0,T];{\mathbb X})$; we then show that it is possible to choose a realization of this homogeneous space which depends on space {\it and time}. This realization turns to have a physical meaning: it consists in choosing the Bernoulli constant to be zero in the second equation of \eqref{F1}. The well-posedness in the energy space can then be established. A simplified statement of Theorems \ref{theoWPbounded} and \ref{theoWPunbounded} is the following.
\begin{theorem*}
The initial value problem \eqref{F1}-\eqref{F2} is well-posed in the energy space.
\end{theorem*}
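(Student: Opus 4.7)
I would cast \eqref{F1}--\eqref{F2} as an abstract evolution problem $\partial_t U = AU + F$ on the semi-normed space $\mathbb{X}$, with $U=(\zeta,\psi)$, $F=(f,g)$ and
\[
A = \begin{pmatrix} 0 & G_0 \\ -\gr\,\mathrm{I} & 0 \end{pmatrix}.
\]
The polarization of the energy $E$ defines a symmetric bilinear form on $\mathbb{X}$, and using the self-adjointness and non-negativity of $G_0$ on $\dot{H}^{1/2}(\Gamma^{\rm D})$ together with the symmetry $\int_{\Gamma^{\rm D}} \psi\, G_0\varphi = \int_{\Gamma^{\rm D}} \varphi\, G_0\psi$ (read at the level of the Dirichlet integral) one gets the formal skew-symmetry $\langle AU, V\rangle_{\mathbb{X}} = -\langle U, AV\rangle_{\mathbb{X}}$ and the a priori identity $\tfrac{d}{dt} E(U) = \langle F, U\rangle_{\mathbb{X}}$.

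\textbf{Bounded case.} When $\Gamma^{\rm D}$ is bounded, the null space $K$ of the semi-norm is finite-dimensional (made of constants on the connected components of $\Gamma^{\rm D}$ constrained by the non-local compatibility arising in the trace theorem). The zero-mean realization $\mathbb{X}_0 := {\mathcal L}^2(\Gamma^{\rm D})\times \dot{\mathcal H}^{1/2}(\Gamma^{\rm D})$ is then a genuine Hilbert space and is invariant under the flow: integrating the first equation of \eqref{F1} and using $\int_{\Gamma^{\rm D}} G_0\psi = 0$ (divergence theorem combined with the homogeneous Neumann condition on $\Gamma^{\rm N}$) preserves $\int \zeta$, and similarly for $\int \psi$. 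After identifying $D(A)\subset \mathbb{X}_0$ via the elliptic corner theory of the previous sections and checking $A^*=-A$, Stone's theorem produces a strongly continuous unitary group $e^{tA}$ and Duhamel's formula handles the source term.

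\textbf{Unbounded case.} The integrals defining $\mathbb{X}_0$ may diverge, so one must work directly with representatives. I would first extract the semi-norm a priori estimate from the skew-symmetry, then construct a weak solution in $L^2([0,T];\mathbb{X})$ by duality, testing against backward solutions of the adjoint problem supplied by the bounded case applied to an exhaustion of $\Omega$. The resulting weak solution is determined only modulo an element of the kernel $K$, and this indeterminacy is exactly the Bernoulli constant in the pressure equation. I would fix the representative by imposing, at the level of chosen representatives, the identity
\[
\psi(t) = \psi^{\rm in} + \int_0^t \bigl(-\gr\,\zeta(s) + g(s)\bigr)\,ds,
\]
which is the space-time realization with ``zero Bernoulli constant''. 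Checking that the right-hand side lies in $\dot{H}^{1/2}(\Gamma^{\rm D})$ for every $t$ and depends continuously on $t$ promotes the weak solution to a strong one in $C([0,T];\mathbb{X})$; uniqueness follows by applying the energy identity to the difference of two solutions.

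\textbf{Main obstacle.} The delicate step is the weak construction plus pinning of the representative in the unbounded case. Two issues must be handled together: the duality argument needs solvability of the adjoint problem with only semi-norm control, and fixing the time-dependent representative so that continuity holds in the critical homogeneous space $\dot{H}^{1/2}(\Gamma^{\rm D})$ is subtle because its semi-norm encodes a non-local coupling between the connected components of $\Gamma^{\rm D}$. Both points rely crucially on the surjectivity (with continuous right-inverse) of the trace mapping ${\rm Tr}^{\rm D}: \dot{H}^1(\Omega) \to \dot{H}^{1/2}(\Gamma^{\rm D})$ and on the self-adjoint calculus of $G_0$ from the earlier sections; I expect the technical heart of the argument to be the verification that the pinned representative above actually sits in $\dot{H}^{1/2}(\Gamma^{\rm D})$ uniformly in $t$, since only then does the ``zero Bernoulli constant'' prescription define an honest trajectory in $\mathbb{X}$ rather than a curve in the quotient $\mathbb{X}/K$.
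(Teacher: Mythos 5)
Your overall plan matches the paper's: skew-symmetry of the evolution operator, Stone's theorem in the bounded case via the zero-mean realization, and a duality construction plus Bernoulli-constant pinning in the unbounded case. The bounded-case argument is essentially the paper's, and the identification of the indeterminacy with the Bernoulli constant and the space-time realization $\psi(t)=\psi^{\rm in}+\int_0^t(-\gr\zeta+g)$ is exactly right.

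There are, however, two genuine issues in the unbounded case. First, your proposal to supply the adjoint/backward solutions by running the bounded theory on an exhaustion of $\Omega$ is not what the paper does, and is not obviously harmless: the constants in the trace and Dirichlet--Neumann estimates, and especially the non-local cross-component term $\sum_j|\overline\psi_{j+1}-\overline\psi_j|$ in the $\dot H^{1/2}(\Gamma^{\rm D})$ semi-norm, would have to be shown uniform over the exhaustion, which is an extra and nontrivial step. The paper avoids this entirely by applying Hahn--Banach for semi-normed spaces to the linear form $\ell$ directly on $L^2([0,T];\mathbb X)$ over the \emph{fixed} unbounded domain, then invoking Riesz on the quotient $L^2([0,T];\mathbb X)/K$ with $K=\{0\}\times L^2([0,T];\RR)$; no approximation of the domain is needed.

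Second, and more seriously, ``uniqueness follows by applying the energy identity to the difference of two solutions'' has a gap: the weak solution you construct only lies in $L^2([0,T];\mathbb X)$ (after pinning, in $C([0,T];$ a weaker space$)$), and the a priori energy identity $\frac{d}{dt}\tfrac12|V|_{\mathbb X}^2=\langle G,V\rangle_{\mathbb X}$ requires $V\in H^1([0,T];\mathbb X)\cap L^2([0,T];\mathbb X^1)$ to make sense of $\langle \mathbf A V,V\rangle_{\mathbb X}=0$. The paper closes this gap by introducing the one-sided time mollifier $J_\epsilon u(t)=\epsilon^{-1}\int_{-\infty}^t e^{(s-t)/\epsilon}u(s)\,ds$ applied to the zero-extended weak solution, showing $J_\epsilon U$ has the required $H^1\cap L^2(\mathbb X^1)$ regularity (because it satisfies the mollified equation with $L^2$-in-time right-hand side), and then passing to the limit to transfer the energy estimate to $U$. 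Without this mollification step you cannot legitimately subtract two weak solutions and run the energy identity, so as written the uniqueness claim does not follow.
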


\subsubsection{Regularity issues and the corner angle singularity paradox for water waves}\label{ssectparadox}

One of the difficulties one has  to face with F. John's floating body problem is that the fluid domain has corners. Due to these singularities, the velocity potential is not necessarily in $H^{s+1}(\Omega)$, with $s\geq 0$, when the Dirichlet data $\psi$ belongs to $H^{s+1/2}(\Gamma^{\rm D})$. While such a property is true for all $s\geq 0$ in smooth domains, there is a threshold value $s_0$ for $s$ above which this elliptic regularity property does not hold when $\Omega$ has corners \cite{Grisvard,Dauge,KMR}. Consequently, above this threshold, the Dirichlet-Neumann operator does not map $H^{s+1/2}(\Gamma^{\rm D})$ into $H^{s-1/2}(\Gamma^{\rm D})$ and the techniques developed to deal with the standard water waves equations become useless because they strongly use the fact that $G_0$ is an operator of order $1$. Since the regularity threshold $s_0$ is larger when the angles of the corners of the fluid domain are small, a way to overcome this difficulty is to assume that the angles are small. This is the approach proposed by T. de Poyferré \cite{Poyferre} (see also Ming and Wang \cite{MW2,MW3,MW4} in the presence of surface tension) to derive a priori estimates for the nonlinear water waves equations in the presence of an emerging bottom, assuming that the so called Rayleigh-Taylor coefficient is positive, and using the geometrical framework developed by Shatah and Zeng \cite{SZ}.

The role of this coefficient can be seen by writing the equations satisfied by the trace of the horizontal velocity at the free surface, which we denote  $\underline{v}$ ; in horizontal dimension $d=1$ (for the sake if simplicity), this equation is given by
$$
\dt \underline{v}+\underline{v}\dx \underline{v}+{\mathfrak a}\dx \zeta =0,
$$
where ${\mathfrak a}:= -(\partial_y P)_{\vert_{\Gamma^{\rm D}}}$ is the Rayleigh-Taylor coefficient. Its positiveness ensures the hyperbolicity of the equations; Wu showed that it is always positive in infinite depth \cite{Wu1997} and Lannes in finite depth with a flat bottom \cite{Lannes} when the surface is sufficiently regular. But in the presence of corner singularities this coefficient could a priori vanish. Assuming that it is positive yields some compatibility conditions that are quite obvious when one considers water waves on a half-line delimited by a vertical wall at $x=0$. As pointed out by Alazard, Burq and Zuily \cite{ABZ3}, since $\underline{v}$ vanishes at $x=0$, and if ${\mathfrak a}>0$ at this point, one must necessarily have  $\dx\zeta =0$ as a consequence of the equation satisfied by $\underline{v}$; in other words, the contact angle must necessarily be $90$ degrees. 

In order to have more freedom on the angle at the singularity of the fluid domain, it is possible to relax the assumption that ${\mathfrak a}>0$. This approach has been considered by several authors interested by the existence of possible surface waves with a corner singularity. The difficulty is then to handle the fact that the hyperbolicity of the equations degenerates if ${\mathfrak a}$ vanishes.  Kinsey and Wu \cite{KinseyWu,Wu2015} allowed the Rayleigh-Taylor coefficient to degenerate at the corner points and derived weighted energy estimates for angles at the crest that are less than $\pi/2$; Wu \cite{Wu2019} then established a local existence result for a class of initial data which allows such singularities, and Agrawal \cite{Agrawal} showed that for this class of solutions the corner singularities are preserved by the evolution in time, and that the angle of the crest does not change with time. Cordoba, Enciso and Grubic \cite{CEG1} were then able to establish local-well-posedness in a class of weighted Sobolev spaces allowing corner singularities, but this time with an angle that may vary in time; however, they had to impose several symmetries that imposed them to work in the zero gravity case. Recently, in \cite{CEG2}, the same authors were able to remove the symmetry assumptions and therefore to keep the gravity term. Similarly, for singularities caused by an emerging bottom, it is also possible to consider configurations where the Rayleigh-Taylor coefficient vanishes at the singularity, as done recently by Ming \cite{M2}.

Unfortunately, these approaches are not suited for our present purpose since, in the linear case, one has ${\mathfrak a}={\mathtt g}$ which is strictly positive. We therefore potentially have to deal with the constraints on the angle illustrated by the example of Alazard, Burq and Zuily mentioned above, and that looks to be valid in our situation too. Indeed, if we consider a floating object with a boundary that intersects vertically the free surface, one has $\dx\psi=0$ at the contact points and it follows from \eqref{F1} (without source term), that ${\mathtt g}\dx \zeta=0$ too. The fact that this argument should be handled with care is well illustrated by the fact that it raises a paradox when applied to the famous extreme Stokes wave, which is a progressive periodic solution of the water waves equation of maximum amplitude and is known to form an angle of $120$ degrees at its crest \cite{Stokes1,Stokes2,Toland}. In the frame moving at the velocity $c$ of the wave, the equation for the surface velocity becomes
$$
(\underline{v}-c)\dx \underline{v}+{\mathfrak a}\dx \zeta =0.
$$
It is also known since Stokes that the crest is a stagnation point, meaning that $\underline{v}=c$ at the crest; moreover, the Rayleigh-Taylor coefficient is ${\mathfrak a}=\frac{3}{4}{\mathtt g}$ at the crest and therefore positive \cite{Lyons}. For the same reason as above, this seems to imply that $\dx\zeta=0$, and therefore that the angle at the crest should be $2\times 90$ degrees rather than $120$ degrees, hence the following question.

\medbreak
\noindent
{\bf Question 3.} How can one explain the angle corner singularity paradox, and what happens at the contact points?
\medbreak

For the Stokes wave, this paradox can be explained by a singular behavior near the corner. For a Stokes wave of amplitude $a$, with crest located at $x=0$, one has near the crest $\zeta \sim a-\frac{1}{\sqrt{3}}\vert{x}\vert$ and $\underline{v}\sim c+(\frac{\sqrt{3}}{2}{\mathtt g})^{1/2} \sqrt{\vert x \vert}$; one has therefore $(\underline{v}-c)\dx \underline{v}\sim \frac{\sqrt{3}}{4} {\mathtt g}$ which is non zero, so that $\dx\zeta$ does not vanish, hereby explaining the paradox\footnote{Note that in the configuration considered in \cite{ABZ3} there is no paradox since by symmetry reasons, the condition $\dx\zeta=0$ at the contact point is  propagated from the initial data so that the claim of the authors is of course correct.}.\\
For F. John's floating body problem with an object intersecting vertically the free surface, which is a linear problem, this nonlinear explanation does not hold, but there still can be obstructions to apply the argument of Alazard, Burq and Zuily. Indeed, in order to infer that $\dx\psi=0$ at the contact points from the homogeneous Neumann condition on the object, some minimal regularity on $\phi$ is needed. For instance, in  \cite{Su}, Su, Tucsnak and Weiss considered the linear water waves equation in a rectangular basin (so that Fourier decomposition can be applied) with a regularity $H^{1/2}$ on $\zeta$ and $H^1$ on $\psi$; at such regularity, the compatibility condition $\dx\psi=0$ and $\dx \zeta=0$ at the corners do not appear. It is however not clear whether the compatibility conditions appear automatically if $\psi$ is smoother since it is known that the regularity of the velocity potential $\phi$ is limited by the presence of the corner.  The critical regularity for $\phi$ above which compatibility conditions arise is $H^2(\Omega)$. Indeed, if  $\phi\in H^{2+\epsilon}(\Omega)$ with $\epsilon>0$, one has $\dx\phi \in H^{1+\epsilon}(\Omega)$ and by the trace theorem $(\dx\phi)_{\vert_{\Gamma}}\in H^{1/2+\epsilon}(\Gamma)$; since $(\dx \phi)_{\vert_{\Gamma^{\rm N}}}$ vanishes near the contact point, this implies that $\dx\psi =(\dx \phi)_{\vert_{\Gamma^{\rm D}}}$ vanishes at the contact points. It follow that in the standard Sobolev scale $H^s(\Gamma^{\rm D})$, the critical regularity for $\dx\psi$ should be $H^{1/2}(\Gamma^{\rm D})$.\\
We explain in the next section that using weighted estimates one can however reach higher order regularity. 

\subsubsection{Time regularity, space regularity and weighted estimates}

As explained above, we manage to prove the well-posedness of F. John's problem \eqref{F1}-\eqref{F2} in $C([0,T];{\mathbb X})$. Under appropriate assumptions on the initial datas, one can prove that the solution is more regular in time, say, $C^n([0,T];{\mathbb X})$, with $n \in {\mathbb N}$. For the standard linear water waves equations, such an information is sufficient to deduce space regularity for the solution, using ellipticity properties of the Dirichlet-Neumann operator. Here, this is only partially true due to the singularities of the fluid domain. We are able to establish ellipticity properties for the Dirichlet-Neumann operator, but only up to a given regularity threshold. As a consequence, even if we had infinite time regularity we could only deduce finite space regularity. Without smallness assumption on the angles, one cannot get better than $\zeta\in H^{3/2}(\Gamma^{\rm D})$ and $\dx \psi\in \dot{H}^{1/2}(\Gamma^{\rm D})$, which is consistent with the comments made above to explain the corner singularity paradox. 

Let us mention here that Guo and Tice faced the same difficulty in their study of the stability of the contact line at smaller scales, where both viscosity and capillarity must be taken into account \cite{GuoTice1,GuoTice2}; to handle this issue they developed a theory based on the functional calculus for the capillary gravity operator which, unfortunately, cannot be used here in the absence of surface tension, and also because we work in a possibly unbounded domain.
In  order to get higher order regularity in the configuration under consideration here,   one can try to use weighted derivatives of the form $\rho\dx$, with $\rho$ a bounded function behaving in the neighborhood of each corner as the distance to this corner. This approach has been used for instance in \cite{KinseyWu,Wu2015,Wu2019,CEG1,CEG2,M2} where the authors work in weighted Sobolev spaces, which is a natural framework for elliptic problems in corner domains, see for instance Kozlov, Maz'ya and Rossmann \cite{KMR,KMR2001}.
The issue is that the Dirichlet-Neumann operator operates on such weighted Sobolev spaces only under smallness assumptions on the angle; typically, with the space $V_\beta^{l-1/2}(\Gamma^{\rm D})$ defined as in Section 6.2.1 of \cite{KMR} (where $l$ is the order of derivatives and $\beta$ is some power of the weight) the Dirichlet-Neumann operator is a continuous mapping from $V_\beta^{l-1/2}(\Gamma^{\rm D})$ to $V_\beta^{l-3/2}(\Gamma^{\rm D})$ provided that $l\geq 2$ is an integer, $\beta\in \RR$ and that $\vert l-\beta-1\vert < \frac{\pi}{2\omega_0}$, where $\omega_0$ is the largest corner angle of the domain (this is a consequence of Theorems 1.4.3 and Corollary 6.3.1 of \cite{KMR2001}).

\medbreak
\noindent
{\bf Question 4.} Can we use weighted derivatives without making any smallness assumption on the angles ?
\medbreak

Here again, we answer this question by the affirmative. The strategy is to carefully look at the structure of the commutator $[G_0,\rho\dx]$. If $\Omega$ were an angular sector and $\rho=r$, we could compute explicitly this commutator using polar coordinates; more precisely, we would have $[G_0,\rho\dx]=G_0$. Formally applying $\rho\dx$ to \eqref{F1} (without source term), we would find that $(\rho\dx \zeta,\rho\dx\psi)$ would solve 
$$
\begin{cases}
\dt (\rho\dx\zeta) -G_0 \rho\dx\psi &= G_0\psi ,\\
\dt (\rho\dx\psi) + {\mathtt g}\rho\dx\zeta &=0 ,
\end{cases}
\quad \mbox{ on }\quad 
\RR^+_t\times {\Gamma}^{\rm D}.
$$
Using the well-posedness in the energy space mentioned above, we would get a control of $(\rho\dx \zeta,\rho\dx\psi)$ in ${\mathbb X}$ provided that the source term $G_0\psi$ is in $L^2$, which is equivalent to say, by \eqref{F1}, that $\dt\zeta \in L^2$; therefore the weighted regularity could be deduced from the time regularity and the commutator identity $[G_0,\rho\dx]=G_0$. If the domain $\Omega$ is as in Assumption \ref{assconfig} then this identity is no longer true, but this commutator can still be controlled in terms of time derivatives (and lower order terms).

For all $n\in {\mathbb N}$, we can then identify a space ${\mathbb Y}^n \subset{\mathbb X}$ whose norms controls $( (\rho\dx)^j\zeta,(\rho\dx)^j \psi )$ in ${\mathbb X}$ for all $0\leq j\leq n$ as well as $(\zeta,\psi)$ in $H^{j/2}\times \dot{H}^{(j+1)/2}$ for $0\leq j\leq \min\{2,n\}$, and prove the following result (we refer to Theorem \ref{maintheo} for a precise statement).
\begin{theorem*}
For all $n\in {\mathbb N}$, the initial value problem \eqref{F1}-\eqref{F2} is well-posed for data in ${\mathbb Y}^n$.
\end{theorem*}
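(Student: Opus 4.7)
The plan is to proceed by induction on $n$, using the energy-space well-posedness of Theorems \ref{theoWPbounded}--\ref{theoWPunbounded} as the base case $n=0$, and at each step combining time regularity of the solution with the key commutator identity between $G_0$ and the weighted derivative $\rho\dx$ that was previewed before the statement.

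First, I would handle time regularity. Differentiating \eqref{F1} in time, the pair $(\dt^k\zeta,\dt^k\psi)$ formally solves the same evolution equation with source term $(\dt^kf,\dt^kg)$ and with initial datum expressed through the original data by repeated use of \eqref{F1}. Compatibility conditions on $(\zeta^{\rm in},\psi^{\rm in},f,g)$ at $t=0$ will be absorbed in the definition of ${\mathbb Y}^n$ so that these time-differentiated initial data lie in the energy space ${\mathbb X}$. Applying the base case well-posedness to each time-differentiated system then delivers $(\zeta,\psi)\in \bigcap_{k\leq n} C^k([0,T];{\mathbb X})$. In particular, the first equation of \eqref{F1} rewrites as $G_0\psi = \dt\zeta - f \in L^2(\G^{\rm D})$ and, iteratively, $G_0\dt^k \psi \in L^2$ for all $k\leq n-1$.

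Next, I would promote this time regularity into weighted space regularity. Formally applying $(\rho\dx)^j$ with $1\le j\le n$ to \eqref{F1} and using the identity $[G_0,\rho\dx]=G_0+\cR$, valid exactly with $\cR=0$ on an angular sector and up to a controllable remainder $\cR$ in the general geometry of Assumption~\ref{assconfig}, one finds that the pair $((\rho\dx)^j\zeta,(\rho\dx)^j\psi)$ satisfies an evolution equation of the form \eqref{F1} with new source terms
\begin{equation*}
\widetilde f_j = (\rho\dx)^j f + \sum_{i<j} \al_{j,i}\, G_0 (\rho\dx)^i\psi + \cR_j\psi,
\qquad
\widetilde g_j = (\rho\dx)^j g,
\end{equation*}
where the first sum encodes the iterated use of $[G_0,\rho\dx]=G_0+\cR$ and can be rewritten, through \eqref{F1}, as linear combinations of $\dt^k\zeta$ and $f$ with $k\le j$, while $\cR_j\psi$ gathers lower-order terms stemming from $\cR$ that are controlled by the induction hypothesis in ${\mathbb Y}^{j-1}$. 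The base case well-posedness of \eqref{F1} in ${\mathbb X}$, applied to this system, then yields $((\rho\dx)^j\zeta,(\rho\dx)^j\psi)\in C([0,T];{\mathbb X})$, which closes the induction on the weighted part of ${\mathbb Y}^n$. The low-order Sobolev regularity $(\zeta,\psi)\in H^{j/2}\times \dot H^{(j+1)/2}$ for $0\le j\le \min\{2,n\}$ is then recovered by invoking the ellipticity properties of $G_0$ below the threshold mentioned in Section~\ref{ssectparadox}, together with the already-established control on $\dt\zeta=G_0\psi+f$ and on the weighted derivatives near the corners.

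The principal obstacle is the precise commutator analysis $[G_0,\rho\dx]$ in the non-angular geometry of Assumption~\ref{assconfig}: the exact identity $[G_0,\rho\dx]=G_0$ holds only in a pure angular sector with $\rho=r$, so I need to choose $\rho$ behaving like the distance to each corner, then quantify the remainder $\cR$ as a bounded (or lower-order) operator relative to $G_0$ on the appropriate trace spaces. This requires a careful localization near each corner, a separate estimate on the interior away from corners (where everything is smooth), and bookkeeping of the compatibility conditions on $(\zeta^{\rm in},\psi^{\rm in},f,g)$ needed to keep the time-differentiated data in ${\mathbb X}$; these compatibility conditions are exactly what should be built into the definition of ${\mathbb Y}^n$ to make the induction close cleanly.
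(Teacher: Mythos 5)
Your high-level strategy — use the base case in $\mathbb{X}$, gain time regularity, then convert it to weighted spatial regularity through the commutator structure of $[(\rho\dx)^j, G_0]$ — is precisely the one the paper follows (Corollary \ref{corohigherT}, Proposition \ref{keyprop}, Theorem \ref{maintheo}). However, your pivot step contains a genuine gap. You propose to rewrite the equation for $((\rho\dx)^j\zeta,(\rho\dx)^j\psi)$ as a system of the same form \eqref{F1} with modified source terms $\widetilde f_j,\widetilde g_j$, and then invoke the base-case well-posedness. For that to be legitimate you need $\widetilde f_j\in C([0,T];L^2(\Gamma^{\rm D}))$, which in particular requires the remainder $\cR_j\psi$ — and, more fundamentally, the full commutator $[(\rho\dx)^j,G_0]\psi$ — to lie in $L^2(\Gamma^{\rm D})$. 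The machinery developed in Section~\ref{sectcommutator} does \emph{not} deliver that: Proposition~\ref{propcomm1} and Lemma~\ref{compcomdual} bound only the \emph{weak} pairing $\int_{\Gamma^{\rm D}}[(\rho\dx)^j,G_0]\psi\,\rho\dx\tpsi$ against test functions $\tpsi\in\dot H^{1/2}(\Gamma^{\rm D})$, and the decomposition in Lemma~\ref{compcomdual2} involves traces $(\dz\phi_i)_{\vert_{\Gamma^{\rm D}}}$ of non-harmonic corrections $\phi_i$ for which no $L^2$ boundary estimate is available. Attempting to close the loop by writing $G_0(\rho\dx)^i\psi=(\rho\dx)^iG_0\psi-[(\rho\dx)^i,G_0]\psi$ and substituting $G_0\psi=\dt\zeta-f$ just pushes the same commutator back into your source term, so the circularity persists.

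The paper avoids this by never treating the commutator as an $L^2$ source for a fresh application of the base-case theorem. Instead, Proposition~\ref{keyprop} performs a \emph{direct} energy estimate for $\rho\dx U_{j,k}$: the commutator appears paired against $\rho\dx\zeta_{j,k}$, exactly in the form that the weak estimate of Proposition~\ref{propcomm1} controls, with $\zeta_{j,k}\in\dot H^{1/2}$ supplied by the hypothesis $U_{j,k+1}\in C([0,T];\mathbb{X})$. This is why the transfer runs from time regularity to weighted regularity ``at the cost of one $\mathbf{A}$'' rather than through a black-box application of the existence theorem. Additionally, you do not address the regularization needed to justify these formal manipulations at the available regularity level: the solution $U_{j,k}$ is not regular enough a priori to take the $\mathbb{X}$-scalar product of the equation with $\rho\dx U_{j,k}$, and the paper introduces the smoothing operators $K_\epsilon=(1-\epsilon\dx(\rho^2\dx\cdot))^{-1}$ (Lemma~\ref{lemmaregul}) and tracks three further commutators $C_1,C_2,C_3$ with $K_\epsilon$ before passing to the limit. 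Your proposal would need both the replacement of ``apply the base case theorem to the modified system'' by a genuine weighted energy estimate, and the regularization argument, to be complete.
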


\subsection{Organization of the paper}

In Section \ref{sectionTraces}, we identify the range of the mapping that takes the trace on $\Gamma^{\rm D}$ of functions in $\dot{H}^{1}(\Omega)$. We first consider in \S \ref{sectstrip} the case where $\Omega$ is a flat strip, in which case the range of the trace mapping, can be characterized in terms of the space $\dot{H}^{1/2}(\RR)$, which we define and characterize in terms of screened Sobolev spaces that are described in \S \ref{sectscreen}.  We then define in \S \ref{sectinthalf} the functional space $\dot{H}^{1/2}(I)$ when $I$ is a finite interval or a half-line; the case of $\dot{H}^{1/2}(\Gamma^{\rm D})$ is more complex because $\Gamma^{\rm D}$ has several connected components; we show in \S \ref{sectdotH12} that the corresponding semi-norm is non local in the sense that it includes terms relating the different connected components. We then show in \S \ref{secttracecont}  that the trace mapping $\mbox{Tr}^{\rm D}:\dot{H}^1(\Omega)\to \dot{H}^{1/2}(\Gamma^{\rm D})$ is well defined and continuous, and in \S \ref{secttracesurj} that it is onto and admits a continuous right inverse. Further properties of the space $\dot{H}^{1/2}(\Gamma^{\rm D})$ are then investigated in \S \ref{sectdensity} and \S \ref{sectdotH012}. 

Section \ref{sectLaplace} is then devoted to the analysis of the Laplace equation in $\Omega$, with homogeneous Neumann conditions on $\Gamma^{\rm D}$ and Dirichlet data in $\dot{H}^{1/2}(\Gamma^{\rm D})$ on $\Gamma^{\rm D}$. We prove  in \S \ref{sectharmext} that this problem is well posed, which allows us to rigorously define the Dirichlet-Neumann operator $G_0$ in \S \ref{sectDN}.  This operator is constructed as a mapping from $\dot{H}^{1/2}(\Gamma^{\rm D})$ with values in its dual but we also show that it is a continuous elliptic operator from $\dot{H}^1(\Gamma^{\rm D})$ to $L^2(\Gamma^{\rm D})$ and prove several other properties such as self-adjointness and elliptic regularity.

Section \ref{sectWP} is devoted to the well-posedness of F. John's problem in the energy space. We prove in \S \ref{subsectskew}  that the evolution operator in \eqref{F1} is skew-adjoint if $\Gamma^{\rm D}$ is bounded and if a zero mass assumption is made on $\zeta$ and $\psi$; well-posedness for \eqref{F1}-\eqref{F2} then follows from standard semi-group theory.  When $\Gamma^{\rm D}$ is unbounded, as explained above for Question 2, we construct in \S \ref{subsectWPunbnded} a solution by a duality method in a semi-normed functional space which we realize (in the sense of \cite{Bourdaud}) through a convenient choice of the Bernoulli constant; well-posedness is then established. Higher order time regularity is then studied in \S \ref{sectHOTR}  and we show in \S \ref{sectlimreg} how to deduce a limited amount of space regularity from time regularity.  

We then proceed in Section \ref{sectcommutator} to study commutator terms of the form $[(\rho\dx)^j,G_0]$ (see Question 4 above). We first construct in \S \ref{sectnotref} and \S \ref{sectweight} a convenient weight function $\rho$ on $\Omega$, and then, in \S \ref{sectTN}, we construct an extension $T$ of the tangent vector ${\bf t}$ defined on $\Gamma^*$; this extension is used to construct an extension $X=\rho T\cdot \nabla$ defined on $\Omega\cup \Gamma^*$ of the weighted derivative $\rho\dx$.  In \S \ref{sectHOHE}, we measure the regularity of the harmonic extension of $\psi$ using the vector field $X$; finally, the commutator estimates are proved in \S \ref{sectCEDN}.

The final section is devoted to the proof of the well-posedness of F. John's problem in spaces that can measure higher order regularity. These partially weighted  spaces are introduced in \S \ref{sectmixspaces}. We then prove in  \S \ref{sectkeyprop},  a key result of transfer of regularity, which roughly states that an additional $\rho\dx$ derivative of the solution can be controlled at the cost of a time derivative; this is a consequence of the commutator estimates of Section \ref{sectcommutator}, which are themselves the equivalent in $\Omega$ of the commutation property proved in a sector in the comments related to Question 4 above. A well-posedness result in such partially weighted spaces is then proved in \S \ref{sectMR}, without smallness assumption on the angles.

\medbreak

\noindent
{\bf Acknowledgment.} The authors thank T. de Poyferré for discussions about this work, and M.Ming wants to thank Chongchun Zeng and Chao Wang for some discussions on the water waves problem.

\subsection{Notations}
We provide here some  notations used throughout this article, with a particular emphasis on the function spaces that we have to use.
\subsubsection{General notations}
- We use $C$ as a generic notation for a strictly positive constant of no importance; the value of $C$ may change from one line to another.\\
- The notations $a\lesssim b$ stands for $a\leq C b$.\\
- We denote by $x$ and $z$  the horizontal and vertical coordinates and by $\partial_x$ or $\partial_1$ and $\partial_z$ or $\partial_2$ the corresponding partial derivatives; we also write $\nabla=(\partial_x,\partial_z)^{\rm T}$.
\subsubsection{Function spaces}
Here are some notations used throughout this article for function spaces. We first provide the notations for the function spaces on the fluid domain $\Omega$. Since, with the notations of Assumption \ref{assconfig}, $\Gamma^{\rm D}$ can be identified with a finite union of finite or semi-infinite intervals $\cE_j$, we then introduce function spaces over such intervals; finally, we introduce the function spaces on the non-connected  portion $\Gamma^{\rm D}$ of the boundary.  Note that when the context is clear, we often use shortened notations, for instance $\Vert \phi \Vert_{L^2(\Omega)}$ may be written $\Vert \phi \Vert_{L^2}$ or $\Vert \phi \Vert_2$.

\medskip
\noindent $\bullet$ \underline{Function spaces on $\Omega$.}\\
- We denote by $L^2(\Omega)$ the standard Lebesgue space, and by $\Vert \cdot \Vert_{L^2(\Omega)}$ its canonical norm.\\  
- We denote by $H^1(\Omega)$ the standard non-homogeneous Sobolev space with canonical norm $\Vert \phi \Vert_{H^1(\Omega)}=\Vert \phi\Vert_{L^2}+\Vert \nabla \phi\Vert_{L^2}$.\\
- We denote by $H^{1}_{\rm D}(\Omega)$ the set of functions in $H^1(\Omega)$ whose trace on $\Gamma^{\rm D}$ vanishes, that is, the completion of ${\mathcal D}(\Omega\cup \Gamma^{\rm N})$ for the $H^1(\Omega)$ norm, where ${\mathcal D}(\Omega\cup \Gamma^{\rm N})$ is the set of infinitely smooth functions on $\RR^2$ supported in a compact set included in $\Omega\cup \Gamma^{\rm N}$.\\
- We denote by $\dot{H}^1(\Omega)$ the homogeneous (or Beppo-Levi) Sobolev space of order $1$
$$
\dot{H}^1(\Omega)=\{\phi\in L^1_{\rm loc}(\Omega), \nabla\phi\in L^2(\Omega)^2\} ,
$$
endowed with the semi-norm $\Vert \phi\Vert_{\dot{H}^1(\Omega)}:=\Vert \nabla\phi \Vert_{L^2}$, see \eqref{defdotH1}.

\medskip
\noindent $\bullet$ \underline{ Function spaces on a non-empty open interval $I\subsetneq \RR$.}\\
- The set of smooth functions compactly supported in $I$ is denoted ${\mathcal D}(I)$.\\
- We denote by $\widetilde{L}^1_{\rm loc}(I):=\{ f\in L^1_{\rm loc}(I), \exists \widetilde{f}\in L^1_{\rm loc}(\RR), \widetilde{f}_{\vert_I}=f\}$ (the notation $L^1_{\rm loc}(\overline{I})$ is sometimes used in the literature for these spaces); in particular, if $I$ is bounded, $\widetilde{L}^1_{\rm loc}(I)=L^1(I)$.\\
- We denote by $H^{1/2}(I)$ the standard Sobolev space of order $1/2$, 
$$
H^{1/2}(I)=\{ f\in L^2(I), \vert f\vert_{H^{1/2}(I)}<\infty  \},
$$
where
$  \vert f\vert_{H^{1/2}(I)}=\vert f\vert_{L^2(I)}+\big(\iint_{x,y\in I}\frac{( f(y)-f(x) )^2}{(y-x)^2}{\rm d}y{\rm d}x\big)^{1/2}$.\\
- We denote by  $H_0^{1/2}(I)$ the completion  of ${\mathcal D}(I)$ in $H^{1/2}(I)$. It is classical that $H_0^{1/2}(I)=H^{1/2}(I)$, both algebraically (these two sets are the same) and topologically (their norms are equivalent).\\
- We denote by $H_{00}^{1/2}(I)$ the set of functions in $H^{1/2}(I)$ whose extension by zero in $\RR\backslash I$ belongs to $H^{1/2}(\RR)$. This set is strictly included in $H^{1/2}(I)=H^{1/2}_0(I)$.\\
- The homogeneous Sobolev space of order $1/2$, denoted ${H}_{\rm hom}^{1/2}(I)$  is defined as
$$
\dot{H}^{1/2}_{\rm hom}(I):=\{f\in \widetilde{L}^1_{\rm loc}(I), \vert f\vert_{\dot{H}^{1/2}_{\rm hom}(I)}<\infty \} ,
$$
with $  \vert f\vert_{\dot{H}^{1/2}_{{\rm hom}}(I)}:= 
\big(\iint_{x,y\in I}\frac{( f(y)-f(x) )^2}{(y-x)^2}{\rm d}y{\rm d}x\big)^{1/2}$, so that $\vert f \vert_{H^{1/2}(I)}=\vert f\vert_{L^2(I)}+ \vert f\vert_{\dot{H}^{1/2}_{{\rm hom}}(I)}$.\\
- The {\it screened}  homogeneous Sobolev space of order $1/2$, denoted $\dot{H}^{1/2}(I)$ is defined as
$$
\dot{H}^{1/2}(I):=\{f\in \widetilde{L}^1_{\rm loc}(I), \vert f\vert_{\dot{H}^{1/2}(I)}<\infty \}  ,
$$
with $  \vert f\vert_{\dot{H}^{1/2}}:= 
\big(\iint_{x,y\in I, \vert y-x\vert\leq 1}\frac{( f(y)-f(x) )^2}{(y-x)^2}{\rm d}y{\rm d}x\big)^{1/2}$. This space is defined and studied in \S \ref{sectinthalf}; in particular, one has $\dot{H}^{1/2}(I)={{H}^{1/2}_{\rm hom}(I)}$ when $I$ is finite, but ${{H}^{1/2}_{\rm hom}(I)\subsetneq \dot{H}^{1/2}(I)} $ when $I$ is unbounded.\\
- The homogeneous Sobolev space of order $s\geq 1$ is $\dot{H}^s(I):=\{f\in \widetilde{L}^1_{\rm loc}, \dx f \in H^{s-1}(I)\}$ and is endowed with the canonical semi-norm $\vert f \vert_{\dot{H}^s}=\vert \dx f \vert_{H^{s-1}}$.

\medskip
\noindent $\bullet$\underline{ Function spaces on $\Gamma^{\rm D}$.}\\
- We denote by $\widetilde{L}^1_{\rm loc}(\Gamma^{\rm D}):=\{ f\in L^1_{\rm loc}(\Gamma^{\rm D}), \exists \widetilde{f}\in L^1_{\rm loc}(\RR), \widetilde{f}_{\vert_I{\Gamma^{\rm D}}}=f\}$; in particular, if $\Gamma^{\rm D}$ is bounded, $\widetilde{L}^1_{\rm loc}(\Gamma^{\rm D})=L^1(\Gamma^{\rm D})$;\\
- We define $X(\Gamma^{\rm D})$, with $X={\mathcal D}$, $L^2$, $H^{1/2}$ or $H_{00}^{1/2}$ as $X(\Gamma^{\rm D})=\prod_{j=1}^{N+1}X(\cE_j)$, with $X(\cE_j)$ defined as above.\\
- For all $s\geq 0$, we denote classically by $H^{s}(\Gamma^{\rm D})$ the set of functions whose restriction to $\cE_j$ belongs to $H^{s}(\cE_j)$ for all $1\leq j\leq N+1$; we identify therefore $H^{s}(\Gamma^{\rm D})=\prod_{j=1}^{N+1}H^{s}(\cE_j)$.\\
- The spaces $\dot{H}^{s}(\Gamma^{\rm D})$ with $s=1/2$ or $s\geq 1$  coincide algebraically with $\prod_{j=1}^{N+1}\dot{H}^{s}(\cE_j)$, with $\dot{H}^s(\cE_j)$ as defined above, but are topologically different as they are endowed with the semi-norm
$$
\vert f \vert_{\dot{H}^s(\Gamma^{\rm D})}=\sum_{j=1}^{N+1} \vert f_{\vert_{\cE_j}} \vert_{\dot{H}^s(\cE_j)}+\sum_{j=1}^N \vert \overline{f}_{j+1}-\overline{f}_j\vert,
$$
where $\overline{f}_j=\frac{1}{\vert \cE_j\vert}\int_{\cE_j} f$ is $\cE_j$ is bounded, and $\overline{f}_j$ is the average of $f$ over a finite subinterval of $\cE_j$ if this latter is unbounded. See \S \ref{sectdotH12} for details.\\
- We denote by ${\mathcal L}^2(\Gamma^{\rm D})$ (resp. $\dot{\mathcal H}^{1/2}(\Gamma^{\rm D})$, $\dot{\mathcal H}^{1}(\Gamma^{\rm D})$) the subspace of $L^2(\Gamma^{\rm D})$ (resp. $\dot{H}^{1/2}(\Gamma^{\rm D})$, $\dot{H}^{1}(\Gamma^{\rm D})$) obtained by considering only the elements of this space that satisfy an additional zero mass condition, see \S  \ref{sectdotH012}.\\
- We define the space $\widetilde{H}^{1/2}(\Gamma^{\rm D})$ as
$$
\widetilde{H}^{1/2}(\Gamma^{\rm D})=\{ f \in \widetilde{L}^1_{\rm loc}(\Gamma^{\rm D}), \exists \widetilde{f}\in \dot{H}^{1/2}(\RR), \widetilde{f}_{\vert_{\Gamma^{\rm D}}}=f\},
$$
endowed with its canonical semi-norm; as shown in Proposition \ref{propH12char}, the space $\widetilde{H}^{1/2}(\Gamma^{\rm D})$  coincides algebraically and topologically with $\dot{H}^{1/2}(\Gamma^{\rm D})$.

\section{Traces for Beppo-Levi spaces}\label{sectionTraces}

Let $\Omega$ be a curved polygon and $\Gamma^{\rm D}$ a portion of its boundary be defined as in Assumption \ref{assconfig}.
The trace operator  on $\Gamma^{\rm D}$, denoted ${\rm Tr}^{\rm D}$ is well defined on ${H}^1(\Omega)$ even if $\Omega$ is as here a corner domain \cite{Grisvard}. More precisely, the mapping 
\begin{equation}\label{tracemap}
{\rm Tr}^{\rm D}: F\in H^1(\Omega)\mapsto F_{\vert_{\Gamma^{\rm D}}}\in H^{1/2}(\Gamma^{\rm D})
\end{equation}
is continuous and onto since the connected components of $\Gamma^{\rm D}$ are not adjacent (they are separated by a connected component of $\Gamma^{\rm N}$).  

As already mentioned in the introduction, it is relevant for the problem under consideration in this article to work with velocity potentials $\phi$ such that $\nabla\phi$ belongs to $L^2(\Omega)$ but that are not necessarily themselves in $L^2(\Omega)$. In such a context, it is convenient to work with the
  {\it Beppo-Levi space} $\dot{H}^1(\Omega)$ (or homogeneous Sobolev space) studied in a general framework by Deny and Lions \cite{DenyLions} and
 defined in this particular case as
\begin{equation}\label{defdotH1}
\dot{H}^1(\Omega)=\{\phi\in L^1_{\rm loc}(\Omega), \nabla\phi\in L^2(\Omega)^2\}.
\end{equation}
\begin{remark}\label{remL2L1}
We can  replace the condition $\phi\in L^1_{\rm loc}(\Omega)$ in the definition of $\dot{H}^1(\Omega)$ by $\phi\in L^2_{\rm loc}(\Omega)$ or $\phi\in {\mathcal D}'(\Omega)$ since distributions $\phi\in {\mathcal D}'(\Omega)$ whose gradient is in $L^2(\Omega)$ are automatically in all $L^p_{\rm loc}$ for $1\leq p<\infty$ (see Theorem 2.1 in \cite{DenyLions}). 
\end{remark}

From Remark \ref{remL2L1}, it follows that functions in $\dot{H}^1(\Omega)$ also belong to $H^1_{\rm loc}(\Omega)$, so that the mapping  ${\rm Tr}^{\rm D}$ is also well defined on $\dot{H}^1(\Omega)$. In this section, we want to study the range of this trace operator when defined on the Beppo-Levi space $\dot{H}^1(\Omega)$ and to study its continuity properties.

We first recall in \S \ref{sectstrip} some known properties in the case where $\Omega$ is a horizontal strip; we comment there on some striking differences with the inhomogeneous case (trace of $H^1(\Omega)$ functions). We then introduce screened Sobolev spaces in \S \ref{sectscreen} and then define in \S \ref{sectinthalf} the space $\dot{H}^{1/2}(I)$ when $I$ is a finite interval or a half-line. We then define in \S \ref{sectdotH12} the spaces $\dot{H}^{s}(\Gamma^{\rm D})$, with $s=1/2$ and $s\geq 1$, which are algebraically the product spaces of the $\dot{H}^{s}(\cE_j)$ where $\cE_j$ are the connected components of $\Gamma^{\rm D}$ but that are {\it topologically different}. We then establish  in \S \ref{secttracecont} that the trace of functions of $\dot{H}^{1}(\Omega)$ on $\Gamma^{\rm D}$ belong to $\dot{H}^{1/2}(\Gamma^{\rm D})$ and that the trace mapping is continuous; we then show in \S \ref{secttracesurj} that it is onto by constructing a continuous extension mapping. We then prove in \S \ref{sectdensity} that ${\mathcal D}(\Gamma^{\rm D})$ is dense in $\dot{H}^{1/2}(\Gamma^{\rm D})$, and finally we introduce in \S \ref{sectdotH012} the realizations $\dot{\mathcal H}^{s}(\Gamma^{\rm D})$ of  $\dot{H}^{s}(\Gamma^{\rm D})$, with $s=1/2$ and $s\geq 1$; the realization $\dot{\mathcal H}^{1/2}(\Gamma^{\rm D})$ is a Banach space of distributions and will play an important role in this article.

\subsection{The case of a strip}\label{sectstrip}

The extension of the trace mapping ${\rm Tr}^{\rm D}$ to $\dot{H}^1(\Omega)$,  was shown in \cite{AL,Lannes_book} when $\Omega$ is a strip with $\Gamma^{\rm D}$ being the upper boundary and $\Gamma^{\rm N}$ the lower one. Considering for the sake of clarity a flat strip of height $h_0>0$, that is, $\Omega=\RR\times (-h_0,0)$, we can canonically identify functions on $\Gamma^{\rm D}$ with functions on $\RR$; the trace space $\dot{H}^{1/2}(\Gamma^{\rm D})=\dot{H}^{1/2}(\RR)$, is then also a Beppo-Levi space in the sense of \cite{DenyLions}, and defined as
\begin{equation}\label{defdotH12}
\dot{H}^{1/2}(\RR):=\{f\in L^1_{\rm loc}(\RR), \partial_x f\in H^{-1/2}(\RR)\},
\end{equation}
with associated semi-norm $\vert{f}\vert_{\dot{H}^{1/2}(\RR)}=\vert{\partial_x f}\vert_{H^{-1/2}(\RR)}$.

The mapping ${\rm Tr}^{\rm D}$ was also considered by Strichartz, still in the case of a flat strip $\Omega=\RR\times (-h_0,0)$, but this time $\Gamma^{\rm N}=\emptyset$ and $\Gamma^{\rm D}$ having two connected components $\Gamma^{\rm D}=\{z=0\}\cup\{z=-h_0\}$. For all $F\in C(\overline{\Omega})$, the trace $F_{\vert_{{\Gamma}^{\rm D}}}$ can therefore be identified with an element $(F(\cdot,0),F(\cdot,-h_0))$ of $C(\RR)\times C(\RR)$. He showed that this mapping could be extended as an onto mapping $\dot{H}^1(\Omega)\to \dot{H}^{1/2}(\Gamma^{\rm D})$, where $\dot{H}^{1/2}(\Gamma^{\rm D})$ is a subspace of $\dot{H}^{1/2}(\RR)\times \dot{H}^{1/2}(\RR)$, namely,
\begin{equation}\label{H12strip}
\dot{H}^{1/2}(\Gamma^{\rm D})=\{ (f_+,f_-) \in \dot{H}^{1/2}(\RR)\times \dot{H}^{1/2}(\RR), f_+-f_- \in L^2(\RR)\}.
\end{equation}
\begin{remark}
In the standard inhomogeneous case the trace mapping maps $H^1(\Omega)$ {\it onto} $H^{1/2}(\RR)\times H^{1/2}(\RR)$. It is a striking difference that the mapping ${\rm Tr}^{\rm D}: \dot{H}^1(\Omega)\to \dot{H}^{1/2}(\RR)\times \dot{H}^{1/2}(\RR)$ is not onto and that an additional condition must be imposed to describe its range. This additional condition, namely, that $f_+-f_- \in L^2(\RR)$, is non-local inasmuch as it involves the traces of $f$ on the different connected components of $\Gamma^{\rm D}$.
\end{remark}

\subsection{Screened homogeneous Sobolev spaces}\label{sectscreen}

Let us first recall the standard notation $m(D)$ for Fourier multipliers defined for $f\in {\mathcal S}'(\RR)$ as
$$
 m(D)f={\mathcal F}^{-1}(\xi\mapsto m(\xi)\widehat{f}(\xi)),
$$
where $\widehat{f}$ is the Fourier transform of $f$, ${\mathcal F}^{-1}$ the inverse Fourier transform, and with appropriate assumptions on $m$ and $f$ ensuring that $\xi\mapsto m(\xi)\widehat{f}(\xi)$ is a tempered distribution.

Using this notation, for functions $f$ that belong to the space $\dot{H}^{1/2}(\RR)$ defined in \eqref{defdotH12} 
and that are in the Schwartz class ${\mathcal S}'(\RR)$, we have the following equivalence of semi-norms
\begin{equation}\label{equivtempered}
\vert f \vert_{\dot{H}^{1/2}(\RR)}\sim  \vert {\mathfrak P} f\vert_{L^{2}(\RR)},
\end{equation}
where the operator ${\mathfrak P}$ is the Fourier multiplier ${\mathfrak P}=\frac{\vert D\vert}{(1+\vert D\vert)^{1/2}}$; this equivalent formulation of the $\dot{H}^{1/2}(\RR)$ semi-norm was used in \cite{AL,Lannes_book}; note that the operator ${\mathfrak P}$ could equivalently be replaced by
 the Fourier multiplier $\min\{\vert D\vert, \vert D\vert^{1/2}\}$ as in \cite{Strichartz,LeoniTice}.  
 
 In particular, the Beppo-Levi space ${\dot{H}^{1/2}(\RR)}$ does not coincide with the {\it homogeneous} Sobolev space $H^{1/2}_{\rm hom}(\RR)$ defined as
$$
H^{1/2}_{\rm hom}(\RR)=\{ f\in {\mathcal S}'(\RR), |D|^{1/2} f \in L^2(\RR)\},
$$
with associated semi-norm $\vert f\vert_{H^{1/2}_{\rm hom}(\RR)}= \vert |D|^{1/2}f \vert_{L^2(\RR)}$.

We refer to \cite{LeoniTice} for counter examples showing that $\dot{H}^{1/2}(\RR)$ and $H^{1/2}_{\rm hom}(\RR)$ are topologically and algebraically distinct. This difference can also be observed using yet another equivalent formulation for the semi-norms. Indeed, one has
\begin{equation}\label{defH12hom}
\vert f \vert_{H^{1/2}_{\rm hom}(\RR)}\sim  \Big(\int_{\RR}\int_{\RR} \frac{\vert f(x)-f(y)\vert^2}{\vert x-y\vert^2}{\rm d}x{\rm d}y\Big)^{1/2},
\end{equation}
while it was shown by Strichartz \cite{Strichartz} that
\begin{equation}\label{defH12screen}
\vert f \vert_{\dot{H}^{1/2}(\RR)} \sim \Big(\int\int_{\vert x-y\vert\leq 1} \frac{\vert f(x)-f(y)\vert^2}{\vert x-y\vert^2}{\rm d}x{\rm d}y\Big)^{1/2};
\end{equation}
the observation that in this latter expression, the range of the difference quotient is screened led Leoni and Tice \cite{LeoniTice} to introduce {\it screened homogeneous Sobolev spaces.}
\begin{remark}\label{remH12screen}
Screened homogeneous Sobolev spaces are defined in a general framework in \cite{LeoniTice}, where their properties are studied with great care. It is in particular shown that different screening functions may define the same space (algebraically and topologically); for instance, one has (Theorem 3.12 of \cite{LeoniTice})
$$
\vert f \vert_{\dot{H}^{1/2}(\RR)} \sim \Big(\int\int_{\vert x-y\vert\leq \rho_0} \frac{\vert f(x)-f(y)\vert^2}{\vert x-y\vert^2}{\rm d}x{\rm d}y\Big)^{1/2}
$$
for any $\rho_0>0$.
\end{remark}

\subsection{The space $\dot{H}^{1/2}(I)$ when $I$ is a finite interval or a half-line}\label{sectinthalf}

Since $\underline{\Gamma}^{\rm D}$ is a union of finite intervals and half-lines, we pay specific attention to these two cases. 

If $I$ is a finite interval or a half line, we first define the set $\widetilde{L}^1_{\rm loc}(I)$ of $L^1_{\rm loc}(I)$ functions which are the restriction of $L^1(\RR)$ functions (in particular,  $\widetilde{L}^1_{\rm loc}(I)=L^1(I)$ if $I$ is finite),
$$
\widetilde{L}^1_{\rm loc}(I):=\{ f\in L^1_{\rm loc}(I), \exists \widetilde{f}\in L^1_{\rm loc}(\RR), \widetilde{f}_{\vert_I}=f\},
$$
and we introduce the following generalization of the homogeneous fractional Sobolev space of order $1/2$ introduced in \eqref{defH12hom} on the full line, 
$$
{H}^{1/2}_{\rm hom}(I)=\{ f\in \widetilde{L}^1_{\rm loc}(I),\quad \vert f \vert_{{{H}^{1/2}_{\rm hom}(I)}} <\infty\},
$$
with
\begin{equation}\label{defH12homI}
\vert f \vert_{H^{1/2}_{\rm hom}(I)} = \Big(\int_{I}\int_{I} \frac{\vert f(x)-f(y)\vert^2}{\vert x-y\vert^2}{\rm d}x{\rm d}y\Big)^{1/2}.
\end{equation}

If $I$ is bounded,  then we define $\dot{H}^{1/2}(I)$ as ${H}^{1/2}_{\rm hom}(I)$, but if $I$ is unbounded, we have to add a screening effect, consistently with \eqref{defH12screen}. Note that our definition differs slightly from the one used in \cite{LeoniTice}, where $L^1_{\rm loc}$ is used rather than $\widetilde{L}^1_{\rm loc}$; the reason for this change is that we later need to define the average $\overline{f}=\frac{1}{\vert I \vert}\int_I f$ when $I$ is a finite interval.
\begin{definition}\label{defH12I}
If $I$ is a finite interval or a half-line, then we define
$$
\dot{H}^{1/2}(I)=\{f\in \widetilde{L}_{\rm loc}^1(I),\vert f\vert_{\dot{H}^{1/2}(I)}<\infty\},
$$
where:\\
- if $I$ is a finite interval, one has $\vert f\vert_{\dot{H}^{1/2}(I)}=\vert f\vert_{{{H}_{\rm hom}^{1/2}(I)}}$ as defined in \eqref{defH12homI};\\
- If $I$ is half-line, one takes
$$
\vert f\vert_{\dot{H}^{1/2}(I)}=\Big( \int_I \int_{I\cap B(x,1)} \frac{(f(y)-f(x))^2}{(y-x)^2}{\rm d}y {\rm d}x\Big)^{1/2}.
$$
\end{definition}
As in Remark \ref{remH12screen}, it is possible to replace the constant $1$ used to impose the screening by any constant $\rho_0>0$. This is a consequence of the following proposition.
\begin{proposition}\label{propequivnorms}
Let $I$ be a non-empty finite interval or a half-line. Let $\rho_0>0$ and denote, for all $f\in \widetilde{L}^1_{\rm loc}(I)$, 
$$
\vert f\vert_{\dot{H}_{\rho_0}^{1/2}(I)}=\Big( \int_I \int_{I\cap B(x,\rho_0)} \frac{(f(y)-f(x))^2}{(y-x)^2}{\rm d}y {\rm d}x\Big)^{1/2}.
$$
There exists a constant $C$ (depending on $\rho_0$) such that for all $f\in \widetilde{L}^1_{\rm loc}(I)$, one has
$$
\frac{1}{C} \vert f\vert_{\dot{H}^{1/2}(I)} \leq \vert f\vert_{\dot{H}_{\rho_0}^{1/2}(I)} \leq C \vert f\vert_{\dot{H}^{1/2}(I)}.
$$
\end{proposition}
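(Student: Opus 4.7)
The plan is to reduce the equivalence to a single one-sided estimate and then establish that estimate via a chain decomposition combined with an affine change of variables. In each geometric configuration one of the two inequalities is immediate: when $I$ is a finite interval, the screened semi-norm $\vert\cdot\vert_{\dot{H}^{1/2}_{\rho_0}(I)}$ is trivially bounded by the full one $\vert\cdot\vert_{\dot{H}^{1/2}(I)}$ (and the two coincide when $\rho_0\geq\vert I\vert$), while for a half-line, monotonicity of the screening radius gives $\vert\cdot\vert_{\dot{H}^{1/2}_{\rho_0}(I)}\leq \vert\cdot\vert_{\dot{H}^{1/2}(I)}$ when $\rho_0\leq 1$ and the reverse when $\rho_0\geq 1$. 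In every case it therefore remains to prove, for suitably chosen $0<r<R$ and a constant $C$ depending only on the ratio $R/r$, the estimate
\begin{equation*}
\iint_{\{(x,y)\in I^2:\,\vert y-x\vert\leq R\}}\frac{(f(y)-f(x))^2}{(y-x)^2}\,dy\,dx\leq C\,\vert f\vert_{\dot{H}^{1/2}_r(I)}^2 ;
\end{equation*}
one takes $R=\vert I\vert$ and $r=\rho_0$ in the finite case, and $\{R,r\}=\{1,\rho_0\}$ (with $R>r$) in the half-line case.

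To prove this estimate I would use a chain argument. Fix an integer $k\geq R/r$. For $a<b$ in $I$, set $a_i:=a+i(b-a)/k$ for $0\leq i\leq k$; by convexity of $I$, each $a_i$ belongs to $I$, and the consecutive gaps satisfy $a_i-a_{i-1}=(b-a)/k\leq r$. A telescoping sum combined with Cauchy--Schwarz yields
\begin{equation*}
(f(b)-f(a))^2\leq k\sum_{i=1}^k (f(a_i)-f(a_{i-1}))^2 .
\end{equation*}
Dividing by $(b-a)^2$, integrating over the region $\{(a,b)\in I^2:0<b-a\leq R\}$, and applying for each $i$ the affine change of variables $(a,b)\mapsto(u,v):=(a_{i-1},a_i)$ reduces the corresponding summand to an integral over a subset of $\{(u,v)\in I^2:\vert v-u\vert\leq r\}$. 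The Jacobian determinant of this linear map equals $1/k$ and $(b-a)^2=k^2(v-u)^2$, so each of the $k$ resulting integrals is bounded by $k^{-1}\vert f\vert_{\dot{H}^{1/2}_r(I)}^2$; summing and accounting for the outer factor $k$ from Cauchy--Schwarz produces the desired inequality with $C=k$.

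The main obstacle I anticipate is the careful verification in the change-of-variables step that, for each $i\in\{1,\dots,k\}$, the image of $\{(a,b)\in I^2:0<b-a\leq R\}$ under the affine map $(a,b)\mapsto(a_{i-1},a_i)$ is contained in $\{(u,v)\in I^2:0<v-u\leq r\}$. This relies on the convexity of $I$ (which keeps the intermediate points $a_i$ inside $I$) and on the elementary scaling $v-u=(b-a)/k\leq R/k\leq r$, together with the explicit Jacobian computation. Once these points are established, the rest of the argument is routine bookkeeping, and the same proof handles finite intervals and half-lines uniformly.
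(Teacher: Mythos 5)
Your argument is correct, and it takes a genuinely different route from the paper's. The paper reduces the claim (as in Leoni--Tice) to a single doubling step, showing $\vert f\vert_{\dot{H}^{1/2}_{2\rho_0}(I)}\leq C\vert f\vert_{\dot{H}^{1/2}_{\rho_0}(I)}$ by peeling off the shell $\rho_0<\vert y-x\vert\leq 2\rho_0$, explicitly describing the (non translation-invariant) region of integration $K_{\rho_0}$ near the endpoints of $I$, inserting the midpoint $x+h$, and carrying out a one-variable substitution; the full result then follows by iterating the doubling until $2^m\rho_0$ reaches the target screening radius. You instead compare the two radii $r<R$ in one shot by inserting $k\geq R/r$ equally spaced points, telescoping with Cauchy--Schwarz, and performing the affine change of variables $(a,b)\mapsto(a_{i-1},a_i)$, whose Jacobian $1/k$ and the scaling $(b-a)=k(v-u)$ combine to give each summand weight exactly $1/k$. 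Your version is shorter and more uniform: it avoids the explicit description of the shell region and the iteration bookkeeping, since convexity of $I$ is all that is needed to keep the intermediate points in $I$, and the resulting constant $k$ is explicit. The one step you flagged as delicate --- that the affine map is injective with Jacobian $1/k$ and sends the domain into $\{(u,v)\in I^2: 0<v-u\leq r\}$ --- checks out exactly as you describe. Both proofs are valid; yours replaces the paper's iterated two-point (midpoint) decomposition by a single $k$-point chain.
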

\begin{proof}
Clearly, if $\rho_0'>\rho_0>0$, then $\vert f\vert_{\dot{H}_{\rho_0}^{1/2}(I)}\leq \vert f\vert_{\dot{H}_{\rho'_0}^{1/2}(I)}$. Therefore, proceeding as in \cite{LeoniTice}, it is sufficient to prove that for all $\rho_0>0$, there exists a constant $C>0$ such that for all $f\in L^1_{\rm loc}(I)$, one has
$\vert f\vert_{\dot{H}_{2\rho_0}^{1/2}(I)}\leq C \vert f\vert_{\dot{H}_{\rho_0}^{1/2}(I)}$. 

Remarking that
\begin{align*}
\vert f\vert_{\dot{H}_{2\rho_0}^{1/2}(I)}^2&=\vert f\vert_{\dot{H}_{\rho_0}^{1/2}(I)}^2+ \int_I \int_{I\cap \big( B(x,2\rho_0)\backslash B(x,\rho_0)\big)} \frac{(f(y)-f(x))^2}{(y-x)^2}{\rm d}y {\rm d}x\\
&=\vert f\vert_{\dot{H}_{\rho_0}^{1/2}(I)}^2+ J,
\end{align*}
we just need to bound $J$ from above by $\vert f\vert_{\dot{H}_{\rho_0}^{1/2}(I)}^2$, up to a multiplicative constant of no importance. 

The difference with \cite{LeoniTice} is that we work on a domain with boundaries (and in particular not translation invariant). Writing $I=(a,b)$, the set $K_{\rho_0}$ of all $x\in I$ and $h\in \RR$ such that $y=x+h\in I\cap\big(B(x,2\rho_0)\backslash B(x,\rho_0)\big)$ can be written as
$$
K_{\rho_0}=\{ x\in I, h\in \RR, \rho_0< h  < \min\{   2\rho_0,b-x\}\mbox{ or }   \rho_0< -h  < \min\{   2\rho_0,x-a\}    \}.
$$
In particular, one has $K_{\rho_0}=\emptyset$ if $b-a\leq \rho_0$, in which case there is nothing to prove. We thus consider the case $\rho_0<b-a$.  Let us introduce also the set $\widetilde{K}_{\frac{\rho_0}{2}}$ by $(x,h)\in \widetilde{K}_{\frac{\rho_0}{2}}$ if and only if $(x,2h)\in K_{\rho_0}$.

We can therefore write
\begin{align*}
J
&= \int\int_{{K}_{\rho_0}}  \frac{(f(x+h)-f(x))^2}{h^2}{\rm d}h {\rm d}x\\
&= \frac{1}{2}\int\int_{\widetilde{K}_{\frac{\rho_0}{2}}}  \frac{(f(x+2h)-f(x))^2}{h^2}{\rm d}h  {\rm d}x,
\end{align*}
from which we infer that
\begin{align*}
J\leq &\int\int_{\widetilde{K}_{\frac{\rho_0}{2}}}  \frac{(f(x+2h)-f(x+h))^2}{h^2}{\rm d}h {\rm d}x
&+
\int\int_{\widetilde{K}_{\frac{\rho_0}{2}}}  \frac{(f(x+h)-f(x))^2}{h^2}{\rm d}h {\rm d}x  \\
&=:J_1+J_2.
\end{align*}

Let us first control $J_1$. One has
\begin{align*}
J_1=&\int_{-\rho_0}^{-\frac{\rho_0}{2}}\int_{\min\{a-2h,b\}}^b    \frac{(f(x+2h)-f(x+h))^2}{h^2}{\rm d}x {\rm d}h\\
&+\int_{\frac{\rho_0}{2}}^{ \rho_0}\int_a^{\max\{b-2h,a\}}    \frac{(f(x+2h)-f(x+h))^2}{h^2}{\rm d}x {\rm d}h.
\end{align*}
Remarking further that for all $h\in (-\rho_0,-\frac{1}{2}\rho_0 )$ and $x\in (\min\{a-2h,b\},b)$, one has $a<x+h<b$, and that the same conclusion holds similarly for all $h\in (\frac{1}{2}\rho_0,\rho_0)$ and $x\in (a,\max\{b-2h,a\})$, one deduces after performing the change of variable $x'=x+h$ that
$$
J_1\leq \int_I \int_{\vert h\vert <\rho_0,  x'+h\in I}\frac{(f(x'+h)-f(x'))^2}{h^2}{\rm d}h {\rm d}x'=\vert f \vert_{\dot{H}^{1/2}_{\rho_0}(I)}^2,
$$ 
which is the desired control. 

For $J_2$, we just need to remark that if $(x,h)\in \widetilde{K}_{\frac{\rho_0}{2}}$ then $x\in I$ and $y=x+h$ is in $I\cap B(x,\rho_0)$, so that the same upper bound holds. This concludes the proof.
\end{proof}

We provide here a Poincar\'e inequality that will prove useful. We do not provide the proof since it can be deduced from  more general properties for screened spaces (Theorem 3.5 in \cite{LeoniTice}). 
\begin{proposition}\label{propPoincare}
Let $I\subset \RR$ be an open non-empty finite interval, and let $\psi \in \dot{H}^{1/2}(I)$. Then one has $\psi-\frac{1}{\vert I\vert}\int_I {\psi} \in L^2(I)$, where $\vert I\vert$ denotes the length of $I$,  and
$$
\big\vert \psi-\frac{1}{\vert I\vert}\int_I {\psi}\big\vert_{L^2(I)}\leq C \vert \psi \vert_{\dot{H}^{1/2}(I)},
$$
for some constant $C>0$ independent of $\psi$.
\end{proposition}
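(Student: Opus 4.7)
The plan is to give a direct, elementary proof using the explicit double-integral form of the semi-norm, exploiting that $I$ is bounded in two crucial ways.

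First I would observe that since $I$ is a finite interval, $\widetilde{L}^1_{\rm loc}(I) = L^1(I)$, so the average $\bar\psi := \frac{1}{|I|}\int_I \psi$ is well-defined for any $\psi \in \dot{H}^{1/2}(I)$. Moreover, the screening in Definition \ref{defH12I} is inactive on a bounded interval, so by Proposition \ref{propequivnorms} (or by direct inspection, taking $\rho_0 > |I|$) the semi-norm coincides with the $H^{1/2}_{\rm hom}(I)$ semi-norm:
$$
\vert \psi \vert_{\dot{H}^{1/2}(I)}^2 = \int_I \int_I \frac{(\psi(y)-\psi(x))^2}{(y-x)^2}\, dy\, dx.
$$

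Next I would write the mean-zero part as an average of pointwise differences,
$$
\psi(x) - \bar\psi = \frac{1}{|I|} \int_I \big(\psi(x) - \psi(y)\big)\, dy,
$$
apply the Cauchy--Schwarz inequality, and insert the factor $(y-x)^2/(y-x)^2$ to match the semi-norm:
$$
\big(\psi(x) - \bar\psi\big)^2 \leq \frac{1}{|I|} \int_I \frac{(\psi(y)-\psi(x))^2}{(y-x)^2} (y-x)^2\, dy.
$$
Using the crucial bound $|y-x| \leq |I|$ for $x,y \in I$, this yields
$$
\big(\psi(x) - \bar\psi\big)^2 \leq |I| \int_I \frac{(\psi(y)-\psi(x))^2}{(y-x)^2}\, dy.
$$
Integrating in $x$ gives $\big\vert \psi - \bar\psi \big\vert_{L^2(I)}^2 \leq |I|\, \vert \psi \vert_{\dot{H}^{1/2}(I)}^2$, which is the desired inequality with constant $C = |I|^{1/2}$.

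There is no real obstacle here: the argument works precisely because $I$ is bounded, which allows us to (i) identify $\dot{H}^{1/2}(I)$ with the unscreened space $H^{1/2}_{\rm hom}(I)$, and (ii) trade the factor $(y-x)^2$ for $|I|^2$. Both features fail for a half-line, consistent with the fact that the analogous statement cannot hold there (and indeed the proposition is stated only for finite $I$). The only minor point is that the author appeals to Theorem 3.5 of \cite{LeoniTice} for generality; the above direct argument is a self-contained alternative in the present setting.
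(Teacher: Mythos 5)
Your proof is correct, and the computation is clean: the two uses of boundedness (dropping the screening, and replacing $(y-x)^2$ by $|I|^2$) are exactly where the hypothesis on $I$ enters, and your final constant $C=|I|^{1/2}$ makes the scaling explicit. Note that for a finite interval you do not even need to invoke Proposition \ref{propequivnorms}: Definition \ref{defH12I} already sets $\vert\cdot\vert_{\dot{H}^{1/2}(I)}=\vert\cdot\vert_{\dot{H}^{1/2}_{\rm hom}(I)}$ in the bounded case, so the identification is by definition rather than by the equivalence-of-screenings lemma. The paper itself does not prove this proposition; it simply cites Theorem~3.5 of \cite{LeoniTice}, a general Poincar\'e-type inequality for screened Sobolev spaces. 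Your argument is therefore a genuine alternative: it avoids any appeal to the screened-space machinery and reduces the statement to one application of Cauchy--Schwarz plus the trivial bound $|y-x|\leq|I|$. What the citation buys is uniformity across a wider family of screened spaces; what your direct proof buys is self-containment and an explicit constant, at no cost in length.
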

\begin{remark}\label{remHdot12H12}
It follows easily from this proposition that if $I$ is an open finite non-empty interval then $\dot{H}^{1/2}(I)=H^{1/2}(I)$; this identity is purely algebraical in the sense that the canonical semi-norm of $\dot{H}^{1/2}(I)$ is obviously not equivalent to the canonical norm of $H^{1/2}(I)$.
\end{remark}

\subsection{The spaces $\dot{H}^{1/2}(\Gamma^{\rm D})$ and $\dot{H}^{s}(\Gamma^{\rm D})$ ($s\geq 1$)}\label{sectdotH12}

We are now ready to define the topological space $\dot{H}^{1/2}(\Gamma^{\rm D})$. 

We recall that according to Assumption \ref{assconfig}, one can write $\Gamma^{\rm D}=\underline{\Gamma}^{\rm D}\times \{0\}$, with $\underline{\Gamma}^{\rm D}=\bigcup_{j=1}^{N+1}{\mathcal E}_j$ where the ${\mathcal E}_j$ are either finite intervals or half-lines. It seems natural at first sight to define $\dot{H}^{1/2}(\Gamma^{\rm D})$ as the set $\prod_{j=1}^{N+1}\dot{H}^{1/2}(\cE_j)$ of functions whose restriction to the $j$-th component of $\Gamma^{\rm D}$ belongs to $\dot{H}^{1/2}({\mathcal E}_j)$ for all $1\leq j\leq N+1$ (with $\dot{H}^{1/2}({\mathcal E}_j)$ as defined in Definition \ref{defH12I}). 

However, the example a horizontal strip with Dirichlet data on both boundaries  suggests that there might be an additional condition relating the different connected components of $\Gamma^{\rm D}$, see \eqref{H12strip}. 

This turns out to be the case; this condition involves averages of $\psi$ on the various connected components of $\Gamma^{\rm D}$, as introduced in the definition below. In the statement, we denote by $\vert I\vert$ the length of a finite interval $I$.

Generalizing the notation used on a finite interval or a half-line, we also set
$$
\widetilde{L}^1_{\rm loc}(\Gamma^{\rm D}):=\{ f\in L^1_{\rm loc}(\Gamma^{\rm D}), \exists \widetilde{f}\in L^1_{\rm loc}(\RR), \widetilde{f}_{\vert_{\Gamma^{\rm D}}}=f\}.
$$

\begin{definition}\label{propaverage}
Let $\Gamma^{\rm D}=\underline{\Gamma}^{\rm D}\times \{0\}$, with $\underline{\Gamma}^{\rm D}=\bigcup_{j=1}^{N+1}{\mathcal E}_j$ be as in Assumption \ref{assconfig}. For all $\psi \in \widetilde{L}^1_{\rm loc}({\Gamma}^{\rm D})$, we write $\psi=(\psi_1,\dots,\psi_{N+1})$ where for all $1\leq j\leq N+1$, $\psi_j=\psi_{\vert_{{\mathcal E}_j\times\{0\}}}$ is identified with a function on $ {\mathcal E}_j$. \\
{\bf i.} If $\cE_j$ is finite, then we define $\overline{\psi}_j:=\frac{1}{\vert \cE_j\vert}\int_{{\mathcal E}_j} \psi_j$;\\
{\bf ii.} If  ${\mathcal E}_1=(-\infty,x_1^{\rm l})$,  then we define $\overline{\psi}_1:=\frac{1}{\vert I_1\vert}  \int_{I_1}\psi_1$ where $I_1=(a_1,b_1)$, with $ -\infty < a_1 < b_1  \leq  x_1^{\rm l}$ ;\\
{\bf iii.} If  ${\mathcal E}_{N+1}= (x_N^{\rm r},\infty)$, then we define $\overline{\psi}_{N+1}:= \frac{1}{\vert I_{N+1}\vert} \int_{I_{N+1}}\psi_{N+1}$, with $x_N^{\rm r}\leq a_{N+1}< b_{N+1}<\infty$. 
\end{definition} 
We can now give the definition of the space $\dot{H}^{1/2}(\Gamma^{\rm D})$.
\begin{definition}\label{defH12Gamma}
Let $\Gamma^{\rm D}=\underline{\Gamma}^{\rm D}\times \{0\}$, with $\underline{\Gamma}^{\rm D}=\bigcup_{j=1}^{N+1}{\mathcal E}_j$ be as in Assumption \ref{assconfig}.
For all $\psi \in \widetilde{L}^1_{\rm loc}({\Gamma}^{\rm D})$, we write $\psi=(\psi_1,\dots,\psi_{N+1})$ where $\psi_j=\psi_{\vert_{{\mathcal E}_j\times\{0\}}}$ is identified with a function on $ {\mathcal E}_j$, and we also denote $\overline{\psi}_j$ the constants introduced in Definition \ref{propaverage}.
We then define $\dot{H}^{1/2}(\Gamma^{\rm D})$ as
$$
\dot{H}^{1/2}(\Gamma^{\rm D})=\{ \psi \in \widetilde{L}^1_{\rm loc}(\Gamma^{\rm D}), \forall 1\leq j\leq N+1, \psi_j\in \dot{H}^{1/2}({\mathcal E}_j)\},
$$
endowed with the semi-norm
$$
\vert \psi\vert_{\dot{H}^{1/2}(\Gamma^{\rm D})}=\sum_{j=1}^{N+1} \vert \psi_j\vert_{\dot{H}^{1/2}({\mathcal E}_j)}+\sum_{j=1}^N \vert \overline{\psi}_{j+1}-\overline{\psi}_j\vert.
$$
\end{definition}
\begin{remark}\label{remH12screen3}
Recalling that $\cE_j=(x_{j-1}^{\rm r},x_j^{\rm l})$ and introducing the interval $H_j(x)=(\max\{-\rho_0,x_{j-1}^{\rm r}-x\}, \min\{\rho_0,x_j^{\rm l}-x\})$ for some $\rho_0>0$,
we deduce from Proposition \ref{propequivnorms} that we can use the equivalent semi-norm
$$
\vert \psi\vert_{\dot{H}^{1/2}(\Gamma^{\rm D})}=\sum_{j=1}^{N+1} \Big(\int_{{\mathcal E}_j} \int_{H_j(x)} \frac{(\psi_j(x+h)-\psi_j(x))^2}{h^2}{\rm d}h {\rm d}x\Big)^{1/2}+\sum_{j=1}^N \vert \overline{\psi}_{j+1}-\overline{\psi}_j\vert.
$$
\end{remark}
\begin{remark}
Note that the adherence of zero in  $\prod_{j=1}^{N+1}\dot{H}^{1/2}({\mathcal E}_j)$ endowed with its canonical semi-norm is given by those functions $\psi$ which are constant in each ${\mathcal E}_{j}$ and can therefore be identified with $\RR^{N+1}$. Now, the adherence of zero for the $\dot{H}^{1/2}(\Gamma^{\rm D})$ semi-norm consists of those functions $\psi$ which are equal to some constant which has to be the same on each ${\mathcal E}_{j}$; it can therefore be identified with $\RR$. This shows that these two spaces are  topologically not the same.
\end{remark}


It looks like the definition of the topological space $\dot{H}^{1/2}(\Gamma^{\rm D})$ depends on the choice of the interval $I$ on which one takes the average on unbounded connected components of $\Gamma^{\rm D}$ in Definition \ref{propaverage}; the following proposition shows that this is not the case.
\begin{proposition}\label{propconstequiv}
Let $\Gamma^{\rm D}=\underline{\Gamma}^{\rm D}\times \{0\}$, with $\underline{\Gamma}^{\rm D}=\bigcup_{j=1}^{N+1}{\mathcal E}_j$ be as in Assumption \ref{assconfig}. \\
If $\cE_1$ (resp. $\cE_{N+1}$) is unbounded, then the topological space $\dot{H}^{1/2}(\Gamma^{\rm D})$ introduced in Definition \ref{defH12Gamma} is independent of the choice of the finite interval $I$ chosen to define $\overline{\psi}_1$ (resp.  $\overline{\psi}_{N+1}$) in Definition \ref{propaverage}.\end{proposition}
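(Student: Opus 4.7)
Without loss of generality I would focus on the endpoint $\cE_1=(-\infty,x_1^{\rm l})$ (the case of $\cE_{N+1}$ is identical). Let $I$ and $I'$ be two finite subintervals of $\cE_1$ used in item {\bf ii} of Definition \ref{propaverage}, and write $\overline{\psi}_1^{I}$ and $\overline{\psi}_1^{I'}$ for the two corresponding averages of $\psi_1$. Denote by $\vertiii{\cdot}_{I}$ and $\vertiii{\cdot}_{I'}$ the two candidate semi-norms obtained on $\dot{H}^{1/2}(\Gamma^{\rm D})$. Since these differ only in the term $\vert \overline{\psi}_2-\overline{\psi}_1\vert$ (through the choice of $\overline{\psi}_1$), the equivalence of the two semi-norms reduces to establishing a bound of the form
\begin{equation}\label{keyineq}
\vert \overline{\psi}_1^{I'}-\overline{\psi}_1^{I}\vert \leq C(I,I')\,\vert \psi_1\vert_{\dot{H}^{1/2}(\cE_1)},
\end{equation}
for a constant depending on $I$ and $I'$ but not on $\psi$.

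\textbf{Key steps.} The proof of \eqref{keyineq} will proceed by a finite chain of Poincar\'e-type estimates. First, I choose a finite sequence of open intervals $J_0=I,J_1,\dots,J_n=I'$ contained in $\cE_1$ with the following properties: each $J_k$ has length bounded by some $\rho_0>0$ (say $\rho_0=1$, consistently with the screening parameter entering Definition \ref{defH12I}), and two consecutive intervals $J_k$ and $J_{k+1}$ have intersection of positive measure bounded below by a constant $\delta>0$. Since $I\cup I'$ is contained in a bounded subinterval of $\cE_1$, a finite number $n$ of such intervals suffices. Second, on each bounded interval $J_k$ of length at most $\rho_0$, the screening in Definition \ref{defH12I} is inactive, so $\vert \psi_1\vert_{\dot{H}^{1/2}(J_k)} = \vert\psi_1\vert_{\dot{H}^{1/2}_{\rm hom}(J_k)}$ and this quantity is in turn bounded by $\vert \psi_1\vert_{\dot{H}^{1/2}(\cE_1)}$ (using Proposition \ref{propequivnorms} and the fact that $J_k\subset\cE_1$ has diameter at most $\rho_0$). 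Applying the Poincar\'e inequality of Proposition \ref{propPoincare} to each $J_k$ then yields
$$
\bigl\vert \psi_1-\overline{\psi}_1^{J_k}\bigr\vert_{L^2(J_k)}\leq C\,\vert \psi_1\vert_{\dot{H}^{1/2}(\cE_1)}.
$$
Third, on the overlap $J_k\cap J_{k+1}$, I use the identity $\overline{\psi}_1^{J_{k+1}}-\overline{\psi}_1^{J_k}=\frac{1}{\vert J_k\cap J_{k+1}\vert}\int_{J_k\cap J_{k+1}} \bigl[(\psi_1-\overline{\psi}_1^{J_k})-(\psi_1-\overline{\psi}_1^{J_{k+1}})\bigr]$ together with Cauchy-Schwarz and the lower bound $\vert J_k\cap J_{k+1}\vert\geq \delta$ to obtain
$$
\bigl\vert\overline{\psi}_1^{J_{k+1}}-\overline{\psi}_1^{J_k}\bigr\vert\leq \frac{C}{\sqrt{\delta}}\,\vert \psi_1\vert_{\dot{H}^{1/2}(\cE_1)}.
$$
Summing telescopically in $k$ from $0$ to $n-1$ yields \eqref{keyineq} with $C(I,I')=Cn/\sqrt{\delta}$.

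\textbf{Expected main obstacle.} The construction of the finite chain $J_0,\dots,J_n$ is essentially elementary, and the Poincar\'e step is a direct invocation of Proposition \ref{propPoincare}. The genuine technical point is ensuring that the homogeneous semi-norm $\vert \psi_1\vert_{\dot{H}^{1/2}_{\rm hom}(J_k)}$ of a chain element, which involves only pairs of points inside $J_k$, is indeed dominated by the screened semi-norm $\vert \psi_1\vert_{\dot{H}^{1/2}(\cE_1)}$; this requires that the diameter of each $J_k$ does not exceed the screening scale $\rho_0$, which is precisely why the chain argument is needed in the first place (a single large interval $J$ covering both $I$ and $I'$ would not be controlled by the screened semi-norm). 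Once this is in place, the argument is symmetric in $I$ and $I'$, hence the two semi-norms $\vertiii{\cdot}_{I}$ and $\vertiii{\cdot}_{I'}$ are mutually bounded, which proves that the underlying topology on $\dot{H}^{1/2}(\Gamma^{\rm D})$ does not depend on the particular choice of $I$ (and, by an identical argument, of the analogous interval for $\cE_{N+1}$).
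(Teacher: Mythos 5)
Your proof is correct in outline, but it takes a more elaborate route than the paper's and, in doing so, misidentifies the ``genuine technical point.'' You correctly reduce the claim to bounding $|\overline{\psi}_1^{I'}-\overline{\psi}_1^{I}|$ by $C(I,I')\,|\psi_1|_{\dot{H}^{1/2}(\cE_1)}$ (this is exactly the paper's key reduction), but you then build a chain of overlapping short intervals, invoke Poincar\'e on each, and telescope. The paper instead writes $\overline{\psi}_1^{I}-\overline{\psi}_1^{I'}=\frac{1}{|I|\,|I'|}\int_{I}\int_{I'}(\psi_1(x)-\psi_1(y))\,{\rm d}x\,{\rm d}y$, applies Cauchy--Schwarz with the weights $(y-x)$ and $(y-x)^{-1}$, and observes that the resulting double integral over $I\times I'$ sits inside the screened integral over $\cE_1$ with screening radius $\rho_0=\sup_{x\in I,\,y\in I'}|y-x|<\infty$; Proposition~\ref{propequivnorms} (the equivalence of screened semi-norms for any two positive screening radii) then finishes in one line. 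Your ``expected main obstacle''---that a single interval covering both $I$ and $I'$ would not be controlled by the screened semi-norm---is precisely what Proposition~\ref{propequivnorms} rules out, and you even cite that proposition yourself; once it is available, the chaining is redundant. There is also a small imprecision at the endpoints of your chain: you require every $J_k$, including $J_0=I$ and $J_n=I'$, to have length at most the screening scale $\rho_0$, yet $I$ and $I'$ are arbitrary finite subintervals of $\cE_1$ and may be longer than $\rho_0$; this is repairable (enlarge the screening radius via Proposition~\ref{propequivnorms}, or insert extra links relating $\overline{\psi}^{J_0}$ to $\overline{\psi}^{I}$), but you should state the fix. In short: both routes yield the result; yours is a valid Poincar\'e-plus-chaining argument, but it is longer and rests on a concern that the already-established Proposition~\ref{propequivnorms} makes moot, whereas the paper's direct Cauchy--Schwarz exploitation of that proposition is sharper and more economical.
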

\begin{proof}
Let $j=1$ or $N+1$ and let $I_1=(a_1,b_1)$ and $I_2=(a_2,b_2)$ with $-\infty <a_k <b_k<x_1^{\rm l}$ if $j=1$ and $x_N^{\rm r}<a_k <b_k<\infty$ if $j=N+1$ ($k=1,2$).

The proposition is a direct consequence of the fact that
$$
\big\vert \frac{1}{\vert I_1\vert}\int_{I_1} \psi - \frac{1}{\vert I_2\vert}\int_{I_2}\psi \big\vert \leq C \vert \psi_{\vert_{\cE_{j}}}\vert_{\dot{H}^{1/2}(\cE_{j})},
$$
with $C>0$ independent of $\psi$. 
We therefore prove that this inequality holds. 

Since 
$$
\frac{1}{\vert I_1\vert} \int_{I_1} \psi -\frac{1}{\vert I_2\vert} \int_{I_2}\psi = \frac{1}{\vert I_1\vert \, \vert I_2\vert}\Big( \int_{I_2}(\int_{I_1} \psi(x) {\rm d}x){\rm d}y
 - \int_{I_1}(\int_{I_2} \psi(y) {\rm d}y){\rm d}x \Big),
$$
it follows that
\begin{align*}
\big\vert \frac{1}{\vert I_1\vert} \int_{I_1} \psi &-\frac{1}{\vert I_2\vert} \int_{I_2}\psi \big\vert \leq \frac{1}{| I_1| \, | I_2|} \int_{I_1}\int_{I_2} \vert \psi(y)-\psi(x)\vert {\rm d}x {\rm d}y \\
&\leq \frac{1}{| I_1| | I_2|} \big( \int_{I_1}\int_{I_2} (y- x )^2  {\rm d}x {\rm d}y\big)^{1/2} \Big( \int_{I_1}\int_{I_2}  \frac{(\psi_j(y)-\psi_j(x))^2}{(y-x)^2}{\rm d}x {\rm d}y \Big)^{1/2},
\end{align*}
and therefore
$$
\big\vert \frac{1}{\vert I_1\vert} \int_{I_1} \psi -\frac{1}{\vert I_2\vert} \int_{I_2}\psi \big\vert  \lesssim \Big( \int_{\cE_j}\int_{\vert y-x \vert \leq \rho_0}  \frac{(\psi_j(y)-\psi_j(x))^2}{(y-x)^2}{\rm d}x {\rm d}y \Big)^{1/2},
$$
with $\rho_0:=\sup_{x\in I_1, y\in I_2}{\vert y-x\vert }<\infty$; together with Remark \ref{remH12screen3}, this proves the desired result.  
\end{proof}
We now provide a proposition that shows that the standard Sobolev space $H^{1/2}(\Gamma^{\rm D})$ coincides algebraically and topologically with the space $L^2(\Gamma^{\rm D})\cap \dot{H}^{1/2}(\Gamma^{\rm D})$ endowed with its canonical norm.
\begin{proposition}\label{propequivter}
One has $L^2(\Gamma^{\rm D})\cap \dot{H}^{1/2}(\Gamma^{\rm D})=H^{1/2}(\Gamma^{\rm D})$, and moreover there exists a constant $C>0$ such that for all function $\psi$ in $H^{1/2}(\Gamma^{\rm D})$, one has
$$
\frac{1}{C}\vert \psi \vert_{H^{1/2}(\Gamma^{\rm D})}\leq \vert \psi\vert_{L^2(\Gamma^{\rm D})}+\vert \psi \vert_{\dot{H}^{1/2}(\Gamma^{\rm D})}
\leq C\vert \psi\vert_{H^{1/2}(\Gamma^{\rm D})}.
$$
\end{proposition}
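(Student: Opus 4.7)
The plan is to establish the equivalence by proving the two-sided bound; set inclusion will follow from the bounds for the semi-norms. The argument reduces to a componentwise analysis on each connected component $\cE_j$ of $\Gamma^{\rm D}$, combined with a simple estimate on the averages $\overline{\psi}_j$.

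First I would handle the easy direction $\vert\psi\vert_{L^2(\Gamma^{\rm D})}+\vert\psi\vert_{\dot{H}^{1/2}(\Gamma^{\rm D})}\le C\vert\psi\vert_{H^{1/2}(\Gamma^{\rm D})}$. The $L^2$ bound is trivial from the very definition of $H^{1/2}(\Gamma^{\rm D})$. For each $\cE_j$, since the screened semi-norm $\vert\cdot\vert_{\dot{H}^{1/2}(\cE_j)}$ is obtained by restricting the integration domain compared with $\vert\cdot\vert_{\dot{H}^{1/2}_{\rm hom}(\cE_j)}$, one gets $\vert\psi_j\vert_{\dot{H}^{1/2}(\cE_j)}\le\vert\psi_j\vert_{H^{1/2}(\cE_j)}$ for free. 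For the boundary average terms I would simply apply Cauchy--Schwarz: whether $\cE_j$ is bounded (where $\overline{\psi}_j=\frac{1}{\vert\cE_j\vert}\int_{\cE_j}\psi$) or unbounded (where the average is taken over a finite subinterval $I_j\subset\cE_j$), one has $\vert\overline{\psi}_j\vert\le C\vert\psi_j\vert_{L^2(\cE_j)}$, and hence $\sum_j\vert\overline{\psi}_{j+1}-\overline{\psi}_j\vert\lesssim\vert\psi\vert_{L^2(\Gamma^{\rm D})}\le\vert\psi\vert_{H^{1/2}(\Gamma^{\rm D})}$.

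The reverse inequality is the main point and amounts to showing that, on each $\cE_j$, the \emph{screened} semi-norm together with the $L^2$ norm controls the full homogeneous semi-norm $\vert\psi_j\vert_{\dot{H}^{1/2}_{\rm hom}(\cE_j)}$. For bounded $\cE_j$ this is vacuous since by definition the two semi-norms coincide. For an unbounded $\cE_j$, I would split
\begin{equation*}
\vert\psi_j\vert_{\dot{H}^{1/2}_{\rm hom}(\cE_j)}^2=\iint_{\cE_j\times\cE_j,\,\vert y-x\vert\le 1}\frac{(\psi_j(y)-\psi_j(x))^2}{(y-x)^2}{\rm d}y{\rm d}x+\iint_{\cE_j\times\cE_j,\,\vert y-x\vert>1}\frac{(\psi_j(y)-\psi_j(x))^2}{(y-x)^2}{\rm d}y{\rm d}x.
\end{equation*}
The first term is exactly $\vert\psi_j\vert_{\dot{H}^{1/2}(\cE_j)}^2$. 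For the second, I would bound $(\psi_j(y)-\psi_j(x))^2\le 2(\psi_j(x)^2+\psi_j(y)^2)$, use Fubini, and observe that $\int_{\vert y-x\vert>1}(y-x)^{-2}{\rm d}y\le 2$, yielding a bound by $C\vert\psi_j\vert_{L^2(\cE_j)}^2$.

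Combining these estimates component by component gives $\vert\psi_j\vert_{H^{1/2}(\cE_j)}\le C\bigl(\vert\psi_j\vert_{L^2(\cE_j)}+\vert\psi_j\vert_{\dot{H}^{1/2}(\cE_j)}\bigr)$, which after summation and bounding by $\vert\psi\vert_{L^2(\Gamma^{\rm D})}+\vert\psi\vert_{\dot{H}^{1/2}(\Gamma^{\rm D})}$ (dropping the non-negative average terms on the right) delivers the desired upper bound on $\vert\psi\vert_{H^{1/2}(\Gamma^{\rm D})}$. Nothing here is delicate; the only point requiring a little care is the far-field cutoff argument on unbounded components, which is the sole obstruction preventing the screened and full homogeneous semi-norms from coinciding outright.
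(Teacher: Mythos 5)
Your proof is essentially correct and follows the same overall strategy as the paper's: the far-field part of the Gagliardo seminorm on unbounded components is controlled by $L^2$ using $(a-b)^2\le 2a^2+2b^2$ together with $\int_{|h|>1}h^{-2}\,{\rm d}h<\infty$, the averages $\overline{\psi}_j$ are controlled by $L^2$ via Cauchy--Schwarz, and the screened part is precisely what $\dot{H}^{1/2}(\Gamma^{\rm D})$ carries.

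One small difference of scope: you work throughout with the product-space reading $H^{1/2}(\Gamma^{\rm D})=\prod_j H^{1/2}(\cE_j)$, whose norm is the sum of component norms with no cross integrals — this matches the paper's own notation section. The paper's proof, by contrast, expands $\vert\psi\vert^2_{H^{1/2}(\Gamma^{\rm D})}$ as the Gagliardo double integral over all of $\Gamma^{\rm D}\times\Gamma^{\rm D}$, and therefore additionally has to dispose of the cross terms $\iint_{\cE_j\times\cE_k}|\psi_j(x)-\psi_k(y)|^2|x-y|^{-2}$ with $j\neq k$. These cross terms are harmless here only because the $\cE_j$ are separated by positive gaps (the wetted segments $\Gamma^{\rm w}_j$), so $|x-y|$ is bounded away from zero and the same $(a-b)^2\lesssim a^2+b^2$ argument bounds them by $\vert\psi\vert_{L^2}^2$. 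It would be worth adding a sentence acknowledging that if one adopts the full Gagliardo norm on the open set $\Gamma^{\rm D}$, the cross terms are controlled by $L^2$ for exactly this separation reason — this makes the equivalence of the two readings explicit and closes the argument regardless of which convention a reader has in mind. Otherwise the argument is complete and correct.
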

\begin{proof}
For the left side of the inequality, one has by the definition of $H^{1/2}(\Gamma^{\rm D})$ that
\[
\begin{split}
\vert \psi \vert^2_{H^{1/2}(\Gamma^{\rm D})}=&\sum_{j=1}^{N+1}\int_{\mathcal E_j}\vert \psi_j\vert^2dx+\sum_{j,k=1}^{N+1}\int_{\mathcal E_j}\int_{\mathcal E_k}\frac{\vert\psi_j(x)-\psi_k(y)\vert^2}{\vert x-y\vert^2}dxdy\\
=&\sum_{j=1}^{N+1}\int_{\mathcal E_j}\vert \psi_j\vert^2dx+\sum_{ j\neq k, \,j,k=1}^{N+1}\int_{\mathcal E_j}\int_{\mathcal E_k}\frac{\vert\psi_j(x)-\psi_k(y)\vert^2}{\vert x-y\vert^2}dxdy\\
&+\sum_{j=1}^{N+1}\int_{\mathcal E_j}\int_{\mathcal E_j\cap B(x,1)^c}\frac{\vert\psi_j(x+h)-\psi_j(x)\vert^2}{\vert h\vert^2}dhdx\\
&+\sum_{j=1}^{N+1}\int_{\mathcal E_j}\int_{\mathcal E_j\cap B(x,1)}\frac{\vert\psi_j(x+h)-\psi_j(x)\vert^2}{\vert h\vert^2}dhdx.
\end{split}
\] 
As a result, the first three terms above can be controlled by $\vert \psi\vert^2_{L^2(\Gamma^{\rm D})}$, and the last one can be controlled directly by $\vert \psi \vert^2_{\dot{H}^{1/2}(\Gamma^{\rm D})}$. 

On the other hand, checking directly Definition \ref{defH12Gamma}, one knows immediately that $\vert \psi\vert_{\dot H^{1/2}(\Gamma^{\rm D})}$ can be controlled by $\vert \psi\vert_{H^{1/2}(\Gamma^{\rm D})}$, so the right side of the desired inequality holds. 
\end{proof}

We finally introduce, for $s\geq 1$, the homogeneous space $\dot{H}^s(\Gamma^{\rm D})$, which cannot be identified topologically with $\prod_{j=1}^{N+1} \dot{H}^s(\cE_j)$, with  $\dot{H}^{s}(\cE_j):=\{\psi \in \widetilde{L}^1_{\rm loc}(\cE_j), \partial_x \psi \in H^{s-1}(\cE_j)\} $ because of the additional terms involving the $\overline{\psi}_j$ in the definition of the semi-norm.

\begin{definition}\label{defdotH1Gamma}
Let $s\geq 1$. With the assumption and notations of Definition \ref{defH12Gamma} we define $\dot{H}^{s}(\Gamma^{\rm D})$ as
$$
\dot{H}^{s}(\Gamma^{\rm D})=\{ \psi \in \widetilde{L}^1_{\rm loc}(\Gamma^{\rm D}), \forall 1\leq j\leq N+1, \psi_j\in \dot{H}^{s}({\mathcal E}_j)\}, 
$$
endowed with the semi-norm
$$
\vert \psi\vert_{\dot{H}^{s}(\Gamma^{\rm D})}=\vert \dx \psi\vert_{H^{s-1}(\Gamma^{\rm D})}+\sum_{j=1}^N \vert \overline{\psi}_{j+1}-\overline{\psi}_j\vert.
$$
\end{definition} 

We will use the following proposition which shows that $\dot{H}^s(\Gamma^{\rm D})$ can be identified algebraically and topologically with $\dot{H}^{1/2}(\Gamma^{\rm D})\cap \big( \prod_{j=1}^{N+1} \dot{H}^s(\cE_j)\big)$. 
\begin{proposition}\label{characdotH1}
Let $s\geq 1$ and $\Gamma^{\rm D}=\underline{\Gamma}^{\rm D}\times \{0\}$, with $\underline{\Gamma}^{\rm D}=\bigcup_{j=1}^{N+1}{\mathcal E}_j$ be as in Assumption \ref{assconfig}. Then one has  $\dot{H}^s(\Gamma^{\rm D})=\dot{H}^{1/2}(\Gamma^{\rm D})\cap \big( \prod_{j=1}^{N+1} \dot{H}^s(\cE_j)\big)$ and moreover, there exists a constant $C>0$ such that for all $\psi \in \dot{H}^1(\Gamma^{\rm D})$, one has
$$
\frac{1}{C} \vert \psi\vert_{\dot{H}^s(\Gamma^{\rm D})}\leq \vert \psi\vert_{\dot{H}^{1/2}(\Gamma^{\rm D})}+\vert \dx \psi\vert_{H^{s-1}(\Gamma^{\rm D})}\leq C  \vert \psi\vert_{\dot{H}^s(\Gamma^{\rm D})}.
$$
\end{proposition}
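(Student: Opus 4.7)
The proposition amounts to comparing two semi-norms that share the same jump contribution $\sum_{j=1}^{N}\vert\overline{\psi}_{j+1}-\overline{\psi}_j\vert$, so the proof reduces to showing, for each connected component $\cE_j$, the pointwise-local equivalence between the screened $\dot{H}^{1/2}(\cE_j)$ semi-norm and the $L^2$ norm of $\dx\psi_j$ as controlled by $\vert \dx\psi_j\vert_{H^{s-1}(\cE_j)}$. Note that elements of either space are in $\widetilde{L}^1_{\rm loc}(\Gamma^{\rm D})$, so the averages $\overline{\psi}_j$ are well defined in both settings, ensuring that the set-theoretic identity $\dot{H}^s(\Gamma^{\rm D}) = \dot{H}^{1/2}(\Gamma^{\rm D})\cap \prod_j \dot{H}^s(\cE_j)$ will follow from the norm equivalences.

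For the right inequality ($\vert \psi\vert_{\dot H^{1/2}(\Gamma^{\rm D})}+\vert\dx\psi\vert_{H^{s-1}(\Gamma^{\rm D})}\le C\vert\psi\vert_{\dot H^s(\Gamma^{\rm D})}$), the $\vert\dx\psi\vert_{H^{s-1}}$ term is identical on both sides and the jump term is identical on both sides, so it suffices to bound $\vert \psi_j\vert_{\dot H^{1/2}(\cE_j)}$ by $\Vert\dx\psi_j\Vert_{L^2(\cE_j)}\le \vert\dx\psi_j\vert_{H^{s-1}(\cE_j)}$ (the latter inclusion valid because $s\ge 1$). Using the equivalent screened semi-norm given by Definition \ref{defH12I}, I would write
$$
\vert \psi_j\vert_{\dot H^{1/2}(\cE_j)}^2 \;=\; \int_{\cE_j}\int_{|h|\le 1,\, x+h\in\cE_j}\frac{(\psi_j(x+h)-\psi_j(x))^2}{h^2}\,{\rm d}h\,{\rm d}x,
$$
and then estimate $(\psi_j(x+h)-\psi_j(x))^2\le |h|\int_0^{|h|}(\dx\psi_j(x+t))^2\,{\rm d}t$ by Cauchy--Schwarz on the fundamental theorem of calculus. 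A Fubini swap (integrating first in $x$ and using that $\cE_j$ is an interval, so $x+t\in\cE_j$ as soon as $x, x+h\in\cE_j$ and $t$ lies between $0$ and $h$) yields
$$
\int_{\cE_j}\int_{|h|\le 1,\, x+h\in\cE_j}\frac{(\psi_j(x+h)-\psi_j(x))^2}{h^2}\,{\rm d}h\,{\rm d}x \;\le\; 2\,\Vert\dx\psi_j\Vert_{L^2(\cE_j)}^2,
$$
finite thanks to the screening $|h|\le 1$ even when $\cE_j$ is unbounded.

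The left inequality ($\vert\psi\vert_{\dot H^s(\Gamma^{\rm D})}\le C(\vert\psi\vert_{\dot H^{1/2}(\Gamma^{\rm D})}+\vert\dx\psi\vert_{H^{s-1}(\Gamma^{\rm D})})$) is immediate from Definitions \ref{defH12Gamma} and \ref{defdotH1Gamma}: the term $\vert\dx\psi\vert_{H^{s-1}(\Gamma^{\rm D})}$ appears verbatim on the right, and the jump term $\sum_{j=1}^N\vert\overline{\psi}_{j+1}-\overline{\psi}_j\vert$ is already one of the constituents of $\vert\psi\vert_{\dot H^{1/2}(\Gamma^{\rm D})}$. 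Combining both inequalities, the set equality $\dot H^s(\Gamma^{\rm D})=\dot H^{1/2}(\Gamma^{\rm D})\cap\prod_{j=1}^{N+1}\dot H^s(\cE_j)$ follows: an element of the intersection has each $\psi_j\in\dot H^s(\cE_j)$ and finite jumps, giving $\psi\in\dot H^s(\Gamma^{\rm D})$; conversely, if $\psi\in\dot H^s(\Gamma^{\rm D})$, then $\psi_j\in\dot H^s(\cE_j)$ by definition, and the first step of the argument places each $\psi_j$ in $\dot H^{1/2}(\cE_j)$ with finite jumps, hence in $\dot H^{1/2}(\Gamma^{\rm D})$.

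There is no real obstacle here: the only mildly delicate point is ensuring the Fubini interchange produces a finite bound when $\cE_j$ is unbounded, which is taken care of by the screening $|h|\le 1$ in Definition \ref{defH12I}; everything else is a direct unpacking of the definitions.
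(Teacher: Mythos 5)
Your proof is correct and follows essentially the same route as the paper: both observe that the jump term $\sum_j\vert\overline{\psi}_{j+1}-\overline{\psi}_j\vert$ cancels on the two sides, and both reduce the nontrivial inequality to bounding the screened seminorm $\vert\psi_j\vert_{\dot H^{1/2}(\cE_j)}$ by $\Vert\dx\psi_j\Vert_{L^2(\cE_j)}$. The only cosmetic difference is that the paper simply invokes Hardy's inequality for this estimate, whereas you write out the underlying fundamental-theorem-of-calculus, Cauchy--Schwarz, and Fubini argument in full.
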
 
\begin{proof}
The only thing we need to prove is that for $\rho_0$ small enough and all $1\leq j\leq N+1$, one has
$$
\Big(\iint_{x,y \in \cE_j, \vert y-x\vert \leq \rho_0} \frac{\big(\psi(y)-\psi(x)\big)^2}{(y-x)^2}{\rm d}y{\rm d}x \Big)^{1/2}\lesssim \vert \dx \psi \vert_{L^2(\cE_j)},
$$
which is a classical consequence of Hardy's inequality.
\end{proof}

\subsection{The trace theorem on $\dot{H}^1(\Omega)$}\label{secttracecont}

We prove here the following theorem which shows that the trace on $\Gamma^{\rm D}$ of functions in $\dot{H}^1(\Omega)$ belongs to $\dot{H}^{1/2}(\Gamma^{\rm D})$, and the mapping is continuous for the corresponding semi-norms.
\begin{theorem}\label{theortrace}
Let $\Omega$ and $\Gamma^{\rm D}$ be as in Assumption \ref{assconfig}. The trace mapping ${\rm Tr}^{\rm D}$ defined in \eqref{tracemap} takes its values in $\dot{H}^{1/2}(\Gamma^{\rm D})$ and moreover ${\rm Tr}^{\rm D}: \dot{H}^1(\Omega) \to \dot{H}^{1/2}(\Gamma^{\rm D})$ is continuous. 
\end{theorem}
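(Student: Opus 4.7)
The plan is to estimate the two types of contributions to $\vert \psi\vert_{\dot{H}^{1/2}(\Gamma^{\rm D})}$---the local screened seminorms $\vert \psi_j\vert_{\dot{H}^{1/2}(\cE_j)}$ and the nonlocal jumps $\vert \overline{\psi}_{j+1}-\overline{\psi}_j\vert$---separately by $\Vert\nabla\phi\Vert_{L^2(\Omega)}$. The unifying observation is that both quantities are invariant under adding a constant to $\phi$, so on any bounded subdomain $D\subset\Omega$ we may freely replace $\phi$ by $\phi-\overline{\phi}_D$ and then apply Poincar\'e to control $\Vert\phi-\overline{\phi}_D\Vert_{L^2(D)}$ by $\Vert\nabla\phi\Vert_{L^2(D)}$. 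By Proposition \ref{propequivnorms} we may additionally fix the screening radius $\rho_0>0$ as small as convenient.

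For the local seminorm on a component $\cE_j$, I would cover $\cE_j$ by a locally finite family of open intervals $I_k$ of length of order $\rho_0$, each equipped with a bounded Lipschitz subdomain $D_k\subset\Omega$ with $\partial D_k\cap \Gamma=\overline{I_k}$---concretely, a thin curvilinear slab below $I_k$, suitably shrunk near any adjacent corner of $\Omega$. The strict non-tangentiality of Assumption \ref{assconfig}(3), together with the regularity of $\Gamma^{({\rm b})}$ and each $\Gamma^{({\rm w})}_j$, permits such a construction with uniformly bounded Lipschitz constants. Since $\phi\in H^1_{\rm loc}(\Omega)$, the classical trace theorem for Lipschitz domains gives $\vert \phi_{\vert_{I_k}}\vert_{H^{1/2}(I_k)}\lesssim \Vert\phi\Vert_{H^1(D_k)}$; subtracting $\overline{\phi}_{D_k}$ (which leaves the $H^{1/2}$ seminorm unchanged) and invoking Poincar\'e then yields $\vert \phi_{\vert_{I_k}}\vert_{H^{1/2}(I_k)}\lesssim \Vert\nabla\phi\Vert_{L^2(D_k)}$. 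Choosing the cover so that every pair $(x,y)\in \cE_j\times\cE_j$ with $\vert y-x\vert\leq \rho_0$ lies in some $I_k\times I_k$, summation over $k$ with finite overlap produces $\vert \psi_j\vert_{\dot{H}^{1/2}(\cE_j)}\lesssim \Vert\nabla\phi\Vert_{L^2(\Omega)}$; when $\cE_j$ is unbounded, the condition $\limsup b<0$ at infinity from Assumption \ref{assconfig}(2) furnishes a uniform slab height far from the object.

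For the nonlocal jump $\vert \overline{\psi}_{j+1}-\overline{\psi}_j\vert$ with $1\leq j\leq N$, note that $\cE_j$ and $\cE_{j+1}$ are separated by the single wetted boundary $\Gamma^{({\rm w})}_j$. Choose a bounded Lipschitz subdomain $D\subset\Omega$ enclosing this object and whose boundary contains the averaging intervals $I_j\subset \cE_j$ and $I_{j+1}\subset \cE_{j+1}$ of Definition \ref{propaverage}. Poincar\'e in $D$ combined with the $L^2$ trace inequality delivers $\Vert \psi-\overline{\phi}_D\Vert_{L^2(I_j\cup I_{j+1})}\lesssim \Vert\nabla\phi\Vert_{L^2(D)}$; Cauchy--Schwarz applied to each average together with the triangle inequality then yields $\vert \overline{\psi}_{j+1}-\overline{\psi}_j\vert\lesssim \Vert\nabla\phi\Vert_{L^2(\Omega)}$. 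The main obstacle in the whole argument is the uniformity in $k$ of the geometric constants entering the local estimates---Lipschitz character of $D_k$, Poincar\'e constant, trace constant---so that the contributions sum into a global bound; this uniformity is exactly what Assumption \ref{assconfig}(3), combined with the prescribed regularity of $\Gamma^{({\rm b})}$ and of each $\Gamma^{({\rm w})}_j$, guarantees.
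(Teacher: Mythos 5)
Your proposal is correct, and it takes a genuinely different route from the paper. The paper establishes \eqref{reformnorm} by a direct Morrey-type computation: it writes $\phi(x+h,0)-\phi(x,0)$ as a line integral of $\nabla\phi$ along a right-angle path in $\Omega$ emanating from the surface, then controls the resulting double integral via Hardy's inequality and Minkowski's inequality after a change of variables; similarly, the nonlocal jumps are handled by integrating $\nabla\phi$ along broken lines passing below the objects. Your approach instead localizes by covering each $\cE_j$ with overlapping intervals of length $\sim\rho_0$ and black-boxing two classical facts on bounded Lipschitz subdomains $D_k$: the $H^1\to H^{1/2}$ trace inequality and the Poincar\'e--Wirtinger inequality applied to $\phi-\overline{\phi}_{D_k}$, with finite-overlap summation supplying the global bound. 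Your version is shorter and more structural; the paper's is more elementary and self-contained (closer in spirit to \cite{LeoniTice}), and in exchange makes the dependence on the geometry completely explicit.

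Two minor points of clarification, neither of which is a gap. First, you should apply the trace theorem to $\phi-\overline{\phi}_{D_k}$ rather than to $\phi$: the former is in $H^1(D_k)$ precisely by Poincar\'e, whereas $\phi\in H^1_{\rm loc}(\Omega)$ alone does not immediately give $\phi\in L^2(D_k)$ since $D_k$ touches $\Gamma$; the order is to subtract the mean first, then take the trace. Second, for the jump estimate note that Definition \ref{propaverage} uses $I_j=\cE_j$ when $\cE_j$ is finite, so if both $\cE_j$ and $\cE_{j+1}$ are finite the subdomain $D$ must contain their entire traces $\cE_j\cup\cE_{j+1}$ in its boundary --- which is fine since both intervals are bounded --- or else you should invoke the constant-comparison estimate of Proposition \ref{propconstequiv} (as the paper does) to pass to small averaging intervals near the corner. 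Finally, you do not explicitly verify the membership $\phi_{\vert_{\Gamma^{\rm D}}}\in\widetilde{L}^1_{\rm loc}(\Gamma^{\rm D})$ required by Definition \ref{defH12Gamma}, but your local step already yields $\phi_{\vert_{I_k}}\in L^2(I_k)$ for every $I_k$ including those touching corners, which suffices.
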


\begin{proof}
Let us consider $\phi\in \dot{H}^{1}(\Omega)$. We want to show that its trace on $\Gamma^{\rm D}$, which is well defined in $L^2_{\rm loc}(\Gamma^{\rm D})$, is in $\dot{H}^{1/2}(\Gamma^{\rm D})$ and that there is a constant $C>0$ independent of $\phi$ such that
$$
\vert \phi_{\vert_{\Gamma^{\rm D}}}\vert^2_{\dot{H}^{1/2}(\Gamma^{\rm D})}\leq C \Vert \nabla \phi\Vert_{L^2(\Omega)}^2.
$$

From Remark \ref{remH12screen3} and with $H_j(x)$ as defined there, it is equivalent to prove that for all $1\leq j\leq N+1$
\begin{equation}\label{reformnorm}
\int_{\cE_j}\int_{H_j(x)} \frac{(\phi(x+h,0)-\phi(x,0))^2}{\vert h\vert^2}{\rm d}h{\rm d}x \leq C \Vert \nabla \phi\Vert_{L^2(\Omega)}^2,
\end{equation} 
and that furthermore
\begin{equation}\label{reformnormadd}
\sum_{j=1}^N \vert \overline{\phi}_{j+1}-\overline{\phi}_{j}\vert \leq C \Vert \nabla \phi\Vert_{L^2(\Omega)};
\end{equation}
in addition, we need to prove that 
 $ \phi_{\vert_{\Gamma^{\rm D}}} \in \widetilde{L}^1_{\rm loc}(\Gamma^{\rm D})$ (and not only in $  {L}^1_{\rm loc}(\Gamma^{\rm D})$).

We start by proving \eqref{reformnorm}. It is sufficient to prove that
$$
\int_{\cE_j}\int_{H_j^\pm(x)} \frac{(\phi(x+h,0)-\phi(x,0))^2}{\vert h\vert^2}{\rm d}h{\rm d}x \leq C \Vert \nabla \phi\Vert_{L^2(\Omega)}^2,
$$
with $H_j(x)^\pm=H_j(x)\cap \RR^\pm$. We only consider the case of $H_j(x)^+$ here, since the case of $H_j^-(x)$ is treated similarly.

Let us choose $c_0>0$ small enough such that for all $x\in \cE_j$ and $h\in H_j(x)^+$, the square triangle with vertices $(x,0)$, $(x+h,0)$ and $(x+\frac{c_0^2}{1+c_0^2}h,-\frac{c_0}{1+c_0^2}h)$ is contained in ${\Omega}$. 

As in \cite{LeoniTice}, the key point is to be able to write the difference of the values of $\phi$ as an integral of its derivative. 
 It is possible \cite{Morrey,DenyLions} to find a function $\dot{\phi}$ which is almost everywhere equal to $\phi$ and that is absolutely continuous on almost every line parallel to the orthogonal axis $(O{\bf e})$ and $(O{\bf e}^\perp)$, with ${\bf e}=\frac{c_0}{\sqrt{1+c_0^2}}(1,-1/c_0)^{\rm T}$, and whose usual derivative coincides almost everywhere with its derivative in the sense of distributions. Identifying $\phi$ with $\dot{\phi}$ for the sake of clarity, it follows that for almost all $x\in \cE_j$ and $h\in H_j^+(x)$, one has
\begin{align*}
\phi(x+h,0)-\phi(x,0)=&\int_0^{ c_1 h} (\dx-\frac{1}{c_0}\dz)\phi(x+h',-\frac{1}{c_0}h'){\rm d}h'\\
&+\int_0^{(1-c_1)h} (\dx+c_0\dz)\phi(x+ c_1h+h',-\frac{c_1}{c_0}h+c_0 h'){\rm d}h',
\end{align*}
with $c_1=  \frac{c_0^2}{1+c_0^2}h $. Therefore one obtains
\begin{align}
\nonumber
\frac{\big(\phi(x+h,0)-\phi(x,0)\big)^2}{h^2}\lesssim  \Big(\frac{1}{h}\int_0^{ c_1 h} {\bf e}\cdot \nabla\phi(x+h',-\frac{1}{c_0}h'){\rm d}h'\Big)^2\\
\label{eqnorm1}
+\Big(\frac{1}{h}    \int_0^{(1-c_1)h} {\bf e}^\perp \cdot \nabla \phi(x+ c_1h+h',-\frac{c_1}{c_0}h+c_0 h'){\rm d}h'   \Big)^2.
\end{align}

We now need the following lemma.
\begin{lemma}
Let $u\in L^2(\Omega)$ and $c_0$, $c_1$, $\cE_j$ and $H_j^+(x)$ as above. Then the following upper bounds hold
$$
\Big[\int_{\cE_j}\int_{H_j^+(x)} \Big(\frac{1}{h}\int_0^{ c_1 h} u(x+h',-\frac{1}{c_0}h'){\rm d}h'\Big)^2{\rm d} h {\rm d}x\Big]^{1/2}\leq C \Vert u \Vert_{L^2(\Omega)},
$$
and
$$
\Big[\int_{\cE_j}\int_{H_j^+(x)}\Big(\frac{1}{h}\int_0^{(1-c_1)h} u(x+c_1 h+h',-\frac{c_1}{c_0}h+c_0 h'){\rm d}h'\Big)^2{\rm d} h {\rm d}x\Big]^{1/2}\leq C \Vert u \Vert_{L^2(\Omega)},
$$
for some constant $C>0$ independent of $u$.
\end{lemma} 
\begin{proof}
For the first point, let us first rewrite
$$
\int_0^{c_1 h} u(x+h',-\frac{1}{c_0}h'){\rm d}h'=c_1 \int_0^{h} u(x+c_1 h',-\frac{c_1 }{c_0}h'){\rm d}h';
$$
we then integrate over $h\in H_j^+(x)$ and use the fact that the mapping $v\mapsto \frac{1}{h}\int_0^h v $ is bounded on $L^2(H_j^+(x))$ (Hardy's inequality) to obtain that the left-hand side of the first inequality of the lemma is bounded from above by
$$
\Big[\int_{\cE_j}\int_{H_j^+(x)}  u(x+c_1 h,-\frac{c_1}{c_0}h)^2{\rm d} h {\rm d}x\Big]^{1/2}
$$
(up to a multiplicative constant of no importance),
which is itself bounded from above by $\Vert u \Vert_{L^2(\Omega)}$ since $(x+c_1 h,-\frac{c_1}{c_0}h)$ belongs to $\Omega$ for all $x\in \cE_j$ and $h\in H_j^+(x)$. This completes the proof of the first point of the lemma.

For the second point of the lemma, the same proof does not work because of the dependence on $h$ in the arguments of $u$. Let us remark that 
\[
\begin{split}
&\frac{1}{h}\int_0^{(1-c_1)h} u(x+c_1 h+h',-\frac{c_1}{c_0}h+c_0h'){\rm d}h'\\
&=(1-c_1)\int_0^1 u\big(x+ h (c_1+(1-c_1)s),h \frac{c_1}{c_0}(s-1) \big){\rm d}s.    
\end{split}
\]

Using Minkowski's inequality, we obtain therefore the following upper bound for the left-hand side of the second inequality of the lemma,
\begin{equation}\label{upperb}
\int_0^1 \Big[ \int_{\cE_j}\int_{H_j^+(x)}  u\big(x+ h (c_1+(1-c_1)s),h \frac{c_1}{c_0}(s-1) \big)^2 {\rm d}h{\rm d}x \Big]^{1/2}{\rm d}s
\end{equation} 
(up to a multiplicative constant of no importance). 

If we denote by ${\mathcal O}_j$ the open subset of $\RR^2$ defined as
$$
{\mathcal O}_j:=\{(x,h)\in \RR^2,\quad  x\in \cE_j,  h\in H_j^+(x)\},
$$
one can check that the mapping 
$$
\Phi_s:\begin{array}{lcl}
{\mathcal O}_j& \to & \Omega\\
(x,h)&\mapsto &\big(  x+ h (c_1+(1-c_1)s),h \frac{c_1}{c_0}(s-1)  \big)
\end{array}
$$
is well defined (in the sense that it takes its values in $\Omega$) for all $0<s<1$. Moreover, the Jacobian determinant of $\Phi_s$ is $J(\Phi_s)=\frac{c_1}{c_0}(s-1)$. 

By a change of variables, we can therefore rewrite \eqref{upperb} under the form
$$
\int_0^1 \frac{\sqrt{c_0/c_1}}{\sqrt{1-s}}\Big[  \int_{\Phi_s({\mathcal O}_j)} u(x',z')^2{\rm d}x'{\rm d}z'\Big]^{1/2}{\rm d}s.
$$
Since $\Phi_{s}({\mathcal O}_j)$ is a subset of $\Omega$, one can bound this term from above by
$$
\big(\int_0^1 \frac{\sqrt{c_0/c_1}}{\sqrt{1-s}}{\rm d}s\big) \Vert u\Vert_{L^2(\Omega)},
$$
which proves the lemma because the integral in $s$ converges.
\end{proof}

It is now a direct consequence of \eqref{eqnorm1} and of the lemma that
$$
\int_{\cE_j} \int_{H_j^+(x)} \frac{\big(\phi(x+h,0)-\phi(x,0)\big)^2}{h^2} {\rm d}h{\rm d}x \leq C \Vert \nabla \phi\Vert_{L^2(\Omega)},
$$
which completes the proof of \eqref{reformnorm}.

\medskip
We now turn to prove \eqref{reformnormadd}. For all $1\leq j\leq N$, let $\xi_j\in \cE_j$, $\xi_{j+1}\in \cE_{j+1}$ and $\ell >0$ be such that:
\begin{itemize}
\item There exists an open broken line $L_j\subset  \Omega $ with a finite number of vertices, and with endpoints $(\xi_j,0)$ and $(\xi_{j+1},0)$ (see fig \ref{fig:brokenline});
\item The horizontal translation by $(0,l)$ of $L_j$ is contained in $\Omega$ for all $0\leq l\leq \ell$.
\item The segments of $L_j$ are either parallel to $(1,c)$ or $(-c,1)$ for some $c\in \RR$ (two adjacent segments are therefore orthogonal).
\end{itemize}
\begin{figure}[htbp]
                \centering
                \includegraphics[width=0.55\textwidth]{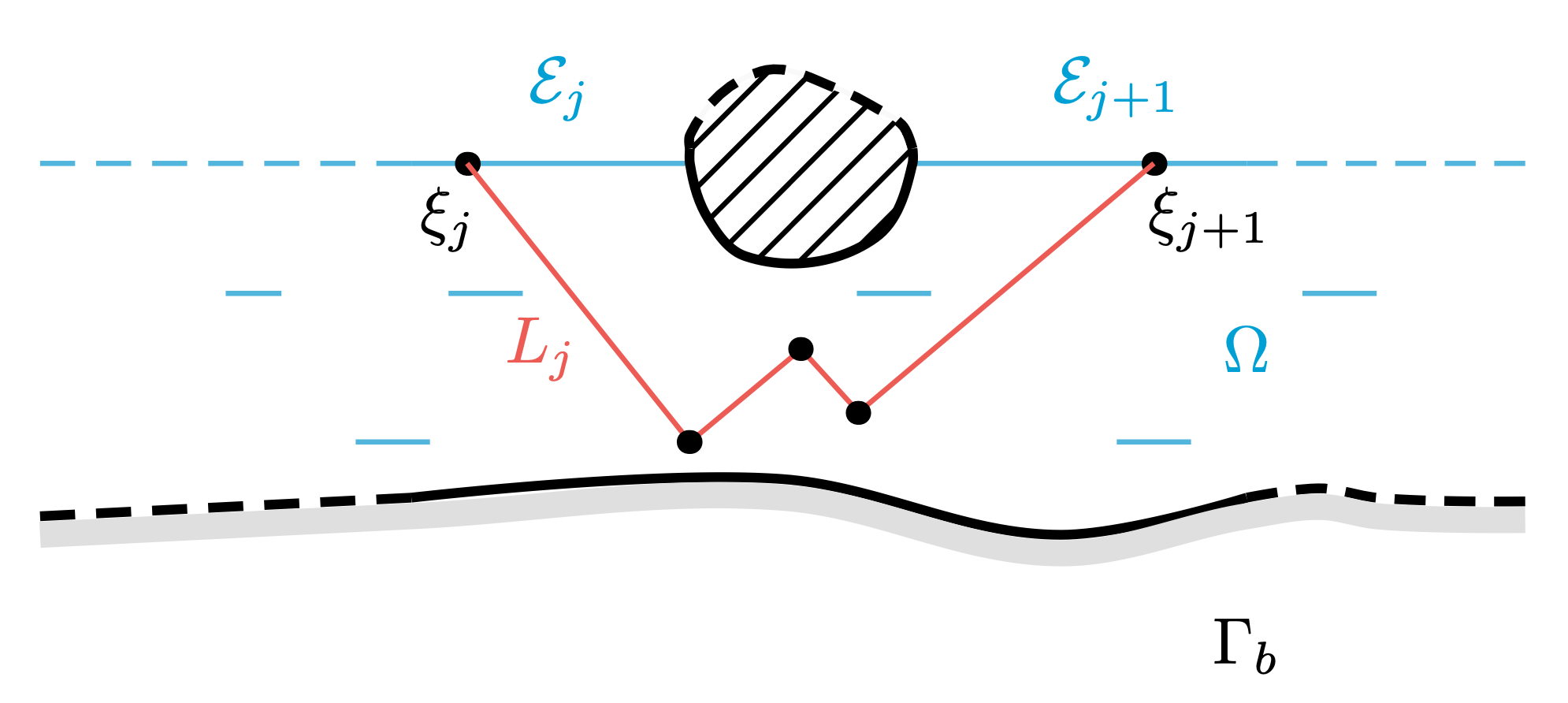}
                \caption{The broken line $L_j$}
        \label{fig:brokenline}
\end{figure}

Owing to  Proposition \ref{propconstequiv}, in order to prove \eqref{reformnormadd}, it is enough to prove that for all $1\leq j\leq N$, one has
\begin{equation}\label{reformnormaddbis}
\vert \bar{\bar{\phi}}_{j+1}-\bar{\bar{\phi}}_{j}\vert \leq C \Vert \nabla \phi\Vert_{L^2(\Omega)}.
\end{equation}
where we define
$$
\bar{\bar{\phi}}_{j}=\frac{1}{\ell} \int_{\xi_j}^{\xi_{j}+\ell}\phi(x,0){\rm d}x,\qquad
\bar{\bar{\phi}}_{j+1}=\frac{1}{\ell} \int_{\xi_{j+1}}^{\xi_{j+1}+\ell}\phi(x,0){\rm d}x.
$$

For the sake of simplicity, we assume that for each $1\leq j\leq N$, the broken line $L_j$ has only three vertices, namely, $(\xi_j,0)$, $(\xi_{j+1/2},-h_{j+1/2})$ and $(\xi_{j+1/2},0)$, where $\xi_{j+1/2}$ and $h_{j+1/2}$ are such that the triangle formed by these three points is rectangle at $(\xi_{j+1/2},-h_{j+1/2})$ (handling the case of more vertices does not raise any other difficulty).

We can therefore write
\begin{align*}
 \bar{\bar{\phi}}_{j+1}-\bar{\bar{\phi}}_j  =&
\frac{1}{\ell}  \int_0^\ell [\phi(\xi_j+x,0)- \phi(\xi_{j+1}+x,0) ] {\rm d} x\\
=&\frac{1}{\ell} \int_0^\ell [\phi(\xi_j+x,0)-\phi(\xi_{j+1/2}+x,-h_{j+1/2})]\\
&+\frac{1}{\ell} \int_0^\ell [\phi(\xi_{j+1/2}+x,-h_{j+1/2})- \phi(\xi_{j+1}+x,0)].
\end{align*}

Using as before the fact that $\phi$ is almost everywhere equal to a function that is absolutely continuous on almost all parallel lines to the two orthogonal segments of  $L_j$, we can rewrite this expression as for \eqref{eqnorm1} above. Since the difficulty raised by the division by $h^2$ in \eqref{eqnorm1} is not present here, it is straightforward to obtain that 
$$
\big\vert  \bar{\bar{\phi}}_{j+1}-\bar{\bar{\phi}}_j \big\vert \leq C \Vert \nabla \phi \Vert_{L^2(\Omega)}^2,
$$
which proves  \eqref{reformnormaddbis}. 

The last thing to prove is therefore that $ \phi_{\vert_{\Gamma^{\rm D}}} \in \widetilde{L}^1_{\rm loc}(\Gamma^{\rm D})$; it is enough to prove that if $I$ is a finite interval with at least one endpoint coinciding with a corner, then one has $ \phi_{\vert_{\Gamma^{\rm D}}} \in L^1(I)$. It is possible to find a triangle $T\subset \Omega$ which has one of its sides coinciding with $I$. In virtue of Theorem 6.2 in \cite{DenyLions}, $T$ is a Nikodym set, that is, every function in $\dot{H}^1(T)$ is in $H^1(T)$. It follows that the restriction of $\phi$ to $T$ is in $H^1(T)$ and that its trace on $I$ is in $H^{1/2}(I)$ by the standard trace theorem. It is therefore in $L^2(I)\subset L^1(I)$, so that the proof of the theorem is complete.
\end{proof}

\subsection{Surjectivity of the trace mapping ${\rm Tr}^{\rm D}$}\label{secttracesurj}

We have shown in the previous section that the mapping ${\rm Tr}^{\rm D}: \dot{H}^1(\Omega) \to \dot{H}^{1/2}(\Gamma^{\rm D})$ is well defined and continuous.
We show in this section that it is also  onto and admits a continuous right inverse. This relies on a characterization of $\dot{H}^{1/2}(\Gamma^{\rm D})$ of independent interest. We need to define first the space $\widetilde{H}^{1/2}(\Gamma^{\rm D})$.
\begin{definition}\label{deftildeH12}
Let $\Gamma^{\rm D}$ be as in Assumption \ref{assconfig}. We define the space $\widetilde{H}^{1/2}(\Gamma^{\rm D})$ as
$$
\widetilde{H}^{1/2}(\Gamma^{\rm D})= \{ \psi \in \widetilde{L}^1_{\rm loc}(\Gamma^{\rm D}), \exists \widetilde{\psi}\in \dot{H}^{1/2}(\RR), \widetilde{\psi}_{\vert_{\Gamma^{\rm d}}}=\psi\},
$$
endowed with the semi-norm 
$$
\vert \psi \vert_{\widetilde{H}^{1/2}(\Gamma^{\rm D})}=\inf_{\widetilde{\psi}\in \dot{H}^{1/2}(\RR), \widetilde{\psi}_{\vert_{\Gamma^{\rm d}}}=\psi} \vert \widetilde{\psi}\vert_{\dot{H}^{1/2}(\RR)}.
$$
\end{definition}
The following proposition shows that $\widetilde{H}^{1/2}(\Gamma^{\rm D})$  coincides algebraically and topologically with $\dot{H}^{1/2}(\Gamma^{\rm D})$.
\begin{proposition}\label{propH12char}
Let $\Gamma^{\rm D}$ be as in Assumption \ref{assconfig}, and let $\dot{H}^{1/2}(\Gamma^{\rm D})$ and $\widetilde{H}^{1/2}(\Gamma^{\rm D})$ be as in Definitions \ref{defH12Gamma} and \ref{deftildeH12} respectively. Then $\dot{H}^{1/2}(\Gamma^{\rm D})=\widetilde{H}^{1/2}(\Gamma^{\rm D})$, and there is $C>0$ such that for all $\psi \in \dot{H}^{1/2}(\Gamma^{\rm D})$, one has
$$
\frac{1}{C}\vert \psi \vert_{\widetilde{H}^{1/2}(\Gamma^{\rm D})} \leq \vert \psi\vert_{\dot{H}^{1/2}(\Gamma^{\rm D})} \leq C \vert \psi \vert_{\widetilde{H}^{1/2}(\Gamma^{\rm D})}.
$$
\end{proposition}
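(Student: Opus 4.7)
My plan is to prove the two inclusions separately, where the substantive content is producing a continuous right inverse to the restriction $\dot{H}^{1/2}(\RR)\to \dot{H}^{1/2}(\Gamma^{\rm D})$.

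For the right inequality, given $\widetilde{\psi}\in\dot{H}^{1/2}(\RR)$ extending $\psi$, the screened seminorm on each $\cE_j$ is immediately bounded by that on $\RR$ via Strichartz's characterization \eqref{defH12screen}, and hence by $\vert\widetilde{\psi}\vert_{\dot{H}^{1/2}(\RR)}$. For the average differences $\vert\overline{\psi}_{j+1}-\overline{\psi}_j\vert$, I would choose reference intervals $I_j\subset\cE_j$ and $I_{j+1}\subset\cE_{j+1}$ on either side of the corresponding gap and within uniform distance of one another, and apply the exact Cauchy--Schwarz argument from the proof of Proposition \ref{propconstequiv} to $\widetilde{\psi}$ across $I_j\times I_{j+1}$: since $\sup\vert y-x\vert$ remains bounded, this produces a bound by the screened $\dot{H}^{1/2}(\RR)$ seminorm. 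Taking the infimum over admissible extensions then yields $\vert\psi\vert_{\dot{H}^{1/2}(\Gamma^{\rm D})}\leq C\,\vert\psi\vert_{\widetilde{H}^{1/2}(\Gamma^{\rm D})}$.

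For the reverse (extension) inequality, the idea is to split the construction into a constant part (the averages) and a zero-average remainder, extend each piece separately, and sum. Given $\psi$, write $\psi_j=\overline{\psi}_j+\psi_j^\flat$ on each $\cE_j$; for bounded $\cE_j$, Proposition \ref{propPoincare} gives $\vert\psi_j^\flat\vert_{L^2(\cE_j)}\lesssim\vert\psi_j\vert_{\dot{H}^{1/2}(\cE_j)}$, so $\psi_j^\flat\in H^{1/2}(\cE_j)$ with controlled norm; for unbounded $\cE_j$ the same Poincaré estimate holds on a finite reference interval containing the averaging set, which is enough for what follows. A standard interval $H^{1/2}$-extension followed by a smooth cutoff $\chi_j$ equal to $1$ on $\cE_j$ and supported in $\cE_j$ plus a bounded neighborhood of its endpoints produces $\widetilde{\psi}_j^\flat\in H^{1/2}(\RR)\subset\dot{H}^{1/2}(\RR)$ with $\vert\widetilde{\psi}_j^\flat\vert_{\dot{H}^{1/2}(\RR)}\lesssim\vert\psi_j\vert_{\dot{H}^{1/2}(\cE_j)}$. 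In parallel, I would construct a smooth globally defined function $C$ on $\RR$ taking the value $\overline{\psi}_j$ on $\cE_j$ (at least on the support of $\chi_j$) and interpolating smoothly between $\overline{\psi}_j$ and $\overline{\psi}_{j+1}$ over each gap of bounded length, with $C$ constant at infinity on any unbounded end. Setting $\widetilde{\psi}:=C+\sum_{j=1}^{N+1}\chi_j\widetilde{\psi}_j^\flat$ then gives an element of $\dot{H}^{1/2}(\RR)$ whose restriction to $\Gamma^{\rm D}$ is exactly $\psi$, and summing the individual bounds closes the estimate.

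The main obstacle is the seminorm estimate for the step-like function $C$: I expect $\vert C\vert_{\dot{H}^{1/2}(\RR)}\lesssim\sum_{j=1}^N\vert\overline{\psi}_{j+1}-\overline{\psi}_j\vert$, but this requires care because truly piecewise-constant functions fail to belong to $H^{1/2}_{\rm hom}(\RR)$ (a logarithmic divergence appears in the unscreened Gagliardo integral). The screening in \eqref{defH12screen} is exactly what saves us here: since the integration is restricted to $\vert x-y\vert\leq 1$, only the transition regions of fixed bounded length contribute, and on each such region $C$ is Lipschitz with derivative of size comparable to the jump, so direct computation yields the claimed bound. A secondary difficulty is the rigorous treatment of the unbounded components $\cE_1$ and $\cE_{N+1}$: the Poincaré step there only provides control on a finite reference interval rather than on all of $\cE_j$, but this is sufficient because $\chi_j$ is compactly supported near the finite endpoint of the half-line, so only local information about $\psi_j^\flat$ near that endpoint enters the construction.
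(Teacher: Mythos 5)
Your restriction direction, your treatment of the bounded components, and your key observation about the step profile $C$ are all correct and essentially coincide with the paper's argument: the paper also controls the average jumps by adapting Proposition \ref{propconstequiv}, and its gluing function $\theta$ plays exactly the role of your $C$, with the screening being what makes $\vert\theta\vert_{\dot{H}^{1/2}(\RR)}$ finite. The gap is in the unbounded components, and it is a real one. Your formula $\widetilde{\psi}=C+\sum_j\chi_j\widetilde{\psi}_j^\flat$ forces, on an unbounded $\cE_j$, that $\chi_j\equiv 1$ on the whole half-line and that $\widetilde{\psi}_j^\flat=\psi_j-\overline{\psi}_j$ there; but then the claim that $\widetilde{\psi}_j^\flat\in H^{1/2}(\RR)$ with norm $\lesssim\vert\psi_j\vert_{\dot{H}^{1/2}(\cE_j)}$ is false: $\psi_j-\overline{\psi}_j$ need not belong to $L^2(\cE_j)$ (take $\psi_j(x)=\log(1+x-x_N^{\rm r})$ on $\cE_{N+1}$, which lies in the screened $\dot{H}^{1/2}$ of the half-line), and the Poincar\'e inequality of Proposition \ref{propPoincare} only controls the zero-average part on the finite reference interval, so a ``standard interval $H^{1/2}$-extension'' does not produce the required global object. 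Your proposed resolution --- taking $\chi_j$ compactly supported near the finite endpoint so that only local information enters --- is not available either: with such a cutoff, $\widetilde{\psi}$ equals the constant $\overline{\psi}_j$ on the far part of $\cE_j$, so $\widetilde{\psi}_{\vert_{\Gamma^{\rm D}}}\neq\psi$ and the extension property is lost. The two statements you make about $\chi_j$ on unbounded components are mutually inconsistent, and under either reading a step fails.

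What is missing is precisely Step 2 of the paper's Lemma \ref{lemmaext}: on an unbounded component one keeps $\psi_j$ itself on the tail and only modifies it near the finite endpoint, e.g. $\psi_j^{\rm ext}=\chi\big[((\psi_j)_{\vert_I}-\overline{(\psi_j)_{\vert_I}})^{\rm ext}+\overline{(\psi_j)_{\vert_I}}\big]+(1-\chi)\psi_j$, with $I$ a finite reference interval at the endpoint and $\chi$ a cutoff transitioning inside $I$. The screened seminorm of this object is then estimated by splitting the $x$-integration at the endpoint: on the far region the function coincides with $\psi_j$ and is bounded by the half-line screened seminorm; on the near region it is controlled through the $H^{1/2}(\RR)$ norm of the locally extended, average-subtracted piece (Poincar\'e on $I$); and the interactions between the two regions are confined by the screening to a set of bounded length. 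With this replacement for the unbounded components (your construction is fine for the bounded ones and for the profile $C$), the proof closes and is then essentially the paper's.
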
 
\begin{proof}
Let us first prove that $\widetilde{H}^{1/2}(\Gamma^{\rm D})$ is continuously embedded in $\dot{H}^{1/2}(\Gamma^{\rm D})$. Let therefore $\psi \in \widetilde{H}^{1/2}(\Gamma^{\rm D})$. We need to prove that there is a constant $C_0$ independent of $\psi$ such that $\vert \psi \vert_{\dot{H}^{1/2}(\Gamma^{\rm D})}\leq C \vert \widetilde{\psi}\vert_{\widetilde{H}^{1/2}(\RR)}$, for all  $\widetilde{\psi}\in \dot{H}^{1/2}(\RR)$ such that $ \widetilde{\psi}_{\vert_{\Gamma^{\rm d}}}=\psi$. 

From the definition of the semi-norm of $\dot{H}^{1/2}(\Gamma^{\rm D})$, the only non trivial thing to prove is that for all $1\leq j\leq N$, one has $\vert \overline{\psi}_{j+1}-\overline{\psi}_j\vert \leq C  \vert \widetilde{\psi}\vert_{\widetilde{H}^{1/2}(\RR)}$. Since $\overline{\psi}_k=\frac{1}{\vert I \vert } \int_I \widetilde{\psi}_k$  with $I=\cE_k$ if $\cE_k$ is bounded, and $I$ is a finite subinterval of $\cE_k$ if $\cE_k$ is unbounded, the result follows from a straightforward adaptation of Proposition \ref{propconstequiv}. 

The proof that $\dot{H}^{1/2}(\Gamma^{\rm D})$ is continuously embedded in $\widetilde{H}^{1/2}(\Gamma^{\rm D})$ follows from the following lemma. 
\begin{lemma}\label{lemmaext}
Let $\Gamma^{\rm D}=\underline{\Gamma}^{\rm D}\times \{0\}$, with $\underline{\Gamma}^{\rm D}=\bigcup_{j=1}^{N+1}{\mathcal E}_j$ be as in Assumption \ref{assconfig}. With the notations of Definition \ref{defH12Gamma}, there exists a constant $C>0$ such that for all $\psi=(\psi_1,\dots,\psi_{N+1}) \in \dot{H}^{1/2}(\Gamma^{\rm D})$, there exists $\widetilde{\psi}\in \dot{H}^{1/2}(\RR)$ such that for all $1\leq j\leq N+1$ one has  $\widetilde{\psi}_{\vert_{{\mathcal E}_j}}=\psi_j$ and moreover
$$
\vert \widetilde{\psi}\vert_{\dot{H}^{1/2}(\RR)}\leq C \vert \psi\vert_{\dot{H}^{1/2}(\Gamma^{\rm D})}.
$$
\end{lemma}
\begin{proof}[Proof of the lemma]
For the sake of clarity, we consider here the configuration of Figure \ref{fig:image3} for which $N=1$ and ${\mathcal E}_1=(x^{\rm r}_0,x_1^{\rm l})$ and ${\mathcal E}_2=(x_1^{\rm r},+\infty)$ with $x_0^{\rm r}>-\infty$. This configuration contains all the difficulties (emerging bottom and unbounded domain). The general case does not raise additional difficulty. The main idea of the proof here is to use extension of $H^{1/2}$ norm on finite intervals to extend the function to be defined on $\mathbb R$, while the part on the infinite interval of $\mathcal E_2$ is already controlled by $\vert \psi\vert_{\dot{H}^{1/2}(\Gamma^{\rm D})}$. 

\noindent{\bf Step 1.} We show here that there exists a function ${\psi}^{\rm ext}_1\in \dot{H}^{1/2}(\RR)$ such that ${\psi}^{\rm ext}_1=\psi_1$ on ${\mathcal E}_1$ and such that there exists a constant $C>0$ such that
$ \vert \psi_1^{\rm ext}\vert_{\dot{H}^{1/2}(\RR)}\leq C \vert \psi_1\vert_{\dot{H}^{1/2}(\cE_1)}$. 

From Proposition \ref{propPoincare}, we know that the function $\psi_1-\overline{\psi}_1$ belongs to $H^{1/2}({\mathcal E}_1)$; there exists therefore an extension of this function, denoted $(\psi_1-\overline{\psi}_1)^{\rm ext}$, that belongs to $H^{1/2}(\RR)$ and such that $\vert (\psi_1-\overline{\psi}_1)^{\rm ext}\vert_{H^{1/2}(\RR)}\leq C \vert \psi_1-\overline{\psi}_1\vert_{H^{1/2}(\mathcal E_1)}$, for some constant $C>0$. 

The function $\psi_1^{\rm ext}:=(\psi_1-\overline{\psi}_1)^{\rm ext}+\overline{\psi}_1$ is therefore an extension of $\psi_1$ that belongs to $\dot{H}^{1/2}(\RR)$. Moreover, one has
\begin{align*}
\vert \psi_1^{\rm ext}\vert_{\dot{H}^{1/2}(\RR)}=&\vert (\psi_1-\overline{\psi}_1)^{\rm ext}\vert_{\dot{H}^{1/2}(\RR)}\\
\leq & \vert (\psi_1-\overline{\psi}_1)^{\rm ext}\vert_{{H}^{1/2}(\RR)}\\
\leq & C  \vert \psi_1-\overline{\psi}_1\vert_{H^{1/2}(\mathcal E_1)}.
\end{align*}
The result therefore follows again from Propositions \ref{propPoincare} and \ref{propequivter}.

\noindent{\bf Step 2.} We show here that there exists a function ${\psi}^{\rm ext}_2\in \dot{H}^{1/2}(\RR)$ such that  ${\psi}^{\rm ext}_2=\psi_2$ on ${\mathcal E}_2$ and such that there exists a constant $C>0$ such that
$ \vert \psi_2^{\rm ext}\vert_{\dot{H}^{1/2}(\RR)}\leq C \vert \psi_2\vert_{\dot{H}^{1/2}(\cE_2)}$. The proof slightly differs from the previous step because ${\mathcal E}_2=(x_1^{\rm r},\infty)$ is unbounded. 

Let us denote by  $I=(x_1^{\rm r},x_1^{\rm r}+2)$. The restriction $(\psi_2)_{\vert_{I}}$ belongs to $\dot{H}^{1/2}(I)$. As in Step 1, and with the same notations, we can construct  an extension  $((\psi_2)_{\vert_I}-\overline{(\psi_2)_{\vert_I}})^{\rm ext}$ in ${H}^{1/2}(\RR)$ such that 
\begin{align*}
\vert ((\psi_2)_{\vert_I}-\overline{(\psi_2)_{\vert_I}})^{\rm ext}\vert_{{H}^{1/2}(\RR)} 
&\lesssim \vert (\psi_2)_{\vert_I}\vert_{\dot{H}^{1/2}(I)}\\
&\lesssim  \vert \psi_2\vert_{\dot{H}^{1/2}({\mathcal E}_2)}.
\end{align*}

Let us then construct $\psi_2^{\rm ext}$ as follows,
$$
\psi_2^{\rm ext}:= \chi\big[ ((\psi_2)_{\vert_I}-\overline{(\psi_2)_{\vert_I}})^{\rm ext}+\overline{(\psi_2)_{\vert_I}} \big]
+(1-\chi)\psi_2,
$$
for some smooth decreasing positive function $\chi$ such that $\chi(x)=1$ for $x<x_1^{\rm r}$ and $\chi(x)=0$ for $x>x_1^{\rm r}+1$.
In particular, one can check that $\psi_2^{\rm ext}=\psi_2$ on $\cE_2$.

One then has
\begin{align*}
\vert \psi_2^{\rm ext}\vert_{\dot{H}^{1/2}(\mathbb R)}^2=&\int_{-\infty}^{x^{\rm r}_1+1} \int_{\vert y-x\vert \leq 1} \frac{\vert\psi_2^{\rm ext}(y)-\psi_2^{\rm ext}(x)\vert^2}{(y-x)^2}{\rm d}y{\rm d}x\\
&+
\int_{x^{\rm r}_1+1}^\infty \int_{\vert y-x\vert \leq 1} \frac{\vert\psi_2^{\rm ext}(y)-\psi_2^{\rm ext}(x)\vert^2}{(y-x)^2}{\rm d}y {\rm d}x.
\end{align*}

Remarking that in the first integral, one can replace $\psi_2^{\rm ext}$ by $((\psi_2)_{\vert_I}-\overline{(\psi_2)_{\vert_I}})^{\rm ext}$, and by $\psi_2$ in the second one, we deduce that
\begin{align*}
\vert \psi_2^{\rm ext}\vert_{\dot{H}^{1/2}(\mathbb R)} &\lesssim \vert ((\psi_2)_{\vert_I}-\overline{(\psi_2)_{\vert_I}})^{\rm ext}\vert_{{H}^{1/2}(\RR)}  + \vert \psi_2\vert_{\dot{H}^{1/2}(\cE_2)}\\
&\lesssim \vert \psi_2\vert_{\dot{H}^{1/2}(\cE_2)},
\end{align*}
which is the desired result.

\noindent{\bf Step 3.} Conclusion. Let $\theta$ be a smooth decreasing compactly supported function on $\RR$ such that $\theta\equiv 1$ on ${\mathcal E}_1$ and $\theta\equiv 0$ on ${\mathcal E}_2$, and define
\[
\widetilde{\psi}:=\theta \psi_1^{\rm ext}+(1-\theta)\psi_2^{\rm ext};
\]
in particular, one has $\widetilde{\psi}=\psi_j$ on ${\mathcal E}_j$, $j=1,2$.

Moreover, we know directly that 
\[
\widetilde{\psi}=
\psi_2^{\rm ext}+\theta \big(  (\psi_1-\overline{\psi}_1)^{\rm ext}-((\psi_2)_{\vert_I}-\overline{(\psi_2)_{\vert_I}})^{\rm ext}\big) +\theta (\overline{\psi}_1-\overline{(\psi_2)_{\vert_I}}), \qquad\hbox{when}\quad x\le x^{\rm r}_1,  
\]
and 
\[
\widetilde{\psi}=\psi_2,\qquad\hbox{when}\quad x\ge x^{\rm r}_1.
\]
 From the previous steps and the fact that  $\vert \theta \vert_{\dot{H}^{1/2}(\RR)}<\infty$, one obtains that
\begin{align*}
 \vert \widetilde{\psi}\vert_{\dot{H}^{1/2}(\RR)}\lesssim & \Big(\int_{-\infty}^{x^{\rm r}_1+1} \int_{\vert y-x\vert \leq 1} \frac{\vert\widetilde{\psi}(y)-\widetilde{\psi}(x)\vert^2}{(y-x)^2}{\rm d}y{\rm d}x\Big)^{1/2}\\
&+
\Big(\int_{x^{\rm r}_1+1}^\infty \int_{\vert y-x\vert \leq 1} \frac{\vert\widetilde{\psi}(y)-\widetilde{\psi}(x)\vert^2}{(y-x)^2}{\rm d}y{\rm d}x\Big)^{1/2}\\
\lesssim & \vert \psi_2^{\rm ext}\vert_{\dot H^{1/2}(\mathbb R)}+\vert\theta  (\psi_1-\overline{\psi}_1)^{\rm ext}\vert_{\dot H^{1/2}(\mathbb R)}+\vert\theta((\psi_2)_{\vert_I}-\overline{(\psi_2)_{\vert_I}})^{\rm ext}\vert_{\dot H^{1/2}(\mathbb R)}\\
&+\vert\theta (\overline{\psi}_1-\overline{(\psi_2)_{\vert_I}})\vert_{\dot H^{1/2}(\mathbb R)}+\vert \psi_2\vert_{\dot H^{1/2}(\mathcal E_2)}\\
\lesssim & \vert \psi_1\vert_{\dot{H}^{1/2}({\mathcal E}_1)}+\vert \psi_2\vert_{\dot{H}^{1/2}({\mathcal E}_2)} +\vert \overline{\psi}_1-\overline{(\psi_2)_{\vert_I}}\vert  
\end{align*}
by the definition of $\vert \cdot \vert_{\dot{H}^{1/2}(\Gamma^{\rm D})}$, this concludes the proof of the lemma thanks to Proposition \ref{propconstequiv}.
\end{proof}
We have therefore proved the proposition.
\end{proof}

It is now easy to prove the surjectivity of the trace mapping and that it admits a continuous right-inverse.
\begin{theorem}\label{theoremsurj}
Let $\Omega$ and $\Gamma^{\rm D}$ be as in Assumption \ref{assconfig}. 
Then there exists a continuous mapping ${\bf E}: \dot{H}^{1/2}(\Gamma^{\rm D})\to \dot{H}^1(\Omega)$ such that ${\rm Tr}^{\rm D}\circ {\bf E}={\rm Id}_{\dot{H}^{1/2}(\Gamma^{\rm D})\to \dot{H}^{1/2}(\Gamma^{\rm D})}$.
\end{theorem}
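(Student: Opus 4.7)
The plan is to construct ${\bf E}$ as the composition of three continuous linear maps: (i) extend $\psi$ from $\Gamma^{\rm D}$ to an element $\widetilde\psi\in\dot{H}^{1/2}(\RR)$; (ii) lift $\widetilde\psi$ to a function $u\in\dot{H}^1(\mathcal S)$ on a horizontal strip $\mathcal S=\RR\times(-D,0)$ of depth $D$ large enough to contain $\Omega$, which exists because $\Omega$ has finite depth by Assumption \ref{assconfig}; (iii) restrict to $\Omega$. Step (i) is directly provided by Lemma \ref{lemmaext}, and step (iii) is continuous since the gradient $L^2$-norm only decreases under restriction and the Dirichlet trace on $\Gamma^{\rm D}\subset\{z=0\}$ agrees with the trace inherited from $\mathcal S$.

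The heart of the argument is step (ii). Because the Beppo-Levi space $\dot{H}^{1/2}(\RR)$ is strictly larger than $H^{1/2}_{\rm hom}(\RR)$, the naive Poisson extension $(x,z)\mapsto (e^{z|D|}\widetilde\psi)(x)$ fails in general: its gradient $L^2$-norm is equivalent to the $|D|^{1/2}$-seminorm, which may be infinite due to low-frequency contributions. I would therefore split $\widetilde\psi=\widetilde\psi_{\rm h}+\widetilde\psi_\ell$ via a smooth Littlewood--Paley cutoff $\chi(D)$ with $\chi=1$ on $\{|\xi|\leq 1/2\}$ and $\operatorname{supp}\chi\subset\{|\xi|\leq 1\}$. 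On the support of the high-frequency part, $|\xi|\sim\langle\xi\rangle$, and one checks directly that $\widetilde\psi_{\rm h}\in H^{1/2}(\RR)$ with norm controlled by $|\widetilde\psi|_{\dot H^{1/2}(\RR)}$; the harmonic extension $u_{\rm h}(x,z):=(e^{z|D|}\widetilde\psi_{\rm h})(x)$ then lies in $H^1(\mathcal S)$ with the expected estimate (using $\int_{-D}^0 e^{2z|\xi|}\,dz\leq 1/(2|\xi|)$). For the low-frequency part, since $\langle\xi\rangle\sim 1$ on $\operatorname{supp}\chi$, the hypothesis $\partial_x\widetilde\psi\in H^{-1/2}(\RR)$ yields $\partial_x\widetilde\psi_\ell\in L^2(\RR)$ with $\|\partial_x\widetilde\psi_\ell\|_{L^2}\lesssim |\widetilde\psi|_{\dot H^{1/2}(\RR)}$, and I would simply set $u_\ell(x,z):=\widetilde\psi_\ell(x)$, independent of $z$. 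Then $\partial_z u_\ell=0$ and, thanks to the finite height of $\mathcal S$, $\|\partial_x u_\ell\|^2_{L^2(\mathcal S)}=D\|\partial_x\widetilde\psi_\ell\|^2_{L^2(\RR)}<\infty$, so $u_\ell\in\dot H^1(\mathcal S)$. The sum $u_{\rm h}+u_\ell$ has trace $\widetilde\psi_{\rm h}+\widetilde\psi_\ell=\widetilde\psi$ on $\{z=0\}$, which coincides with $\psi$ on $\Gamma^{\rm D}$, so setting ${\bf E}\psi:=(u_{\rm h}+u_\ell)_{|_\Omega}$ and chaining the estimates delivers a continuous linear right-inverse of ${\rm Tr}^{\rm D}$.

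The principal obstacle is the slow decay allowed in $\dot{H}^{1/2}(\RR)$: functions in this space may fail to be in $L^2$ (they can exhibit mild polynomial growth at infinity from their low-frequency part), which rules out not only the Poisson extension but also the simplest cutoff-type lift $u(x,z):=\eta(z)\widetilde\psi(x)$ since the term $\eta'(z)\widetilde\psi(x)$ is not in $L^2(\mathcal S)$ in general. The trick above works precisely because taking $u_\ell$ constant in $z$ eliminates the $\partial_z$ contribution, and the finite depth of $\mathcal S$ then converts the $L^2$-bound on $\partial_x\widetilde\psi_\ell$ into a finite $\dot H^1$-bound on $u_\ell$, even though $\widetilde\psi_\ell$ itself need not be controlled in $L^2$.
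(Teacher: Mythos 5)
Your construction follows the same three-step strategy as the paper (extend $\psi$ to $\widetilde\psi\in\dot{H}^{1/2}(\RR)$ via Lemma \ref{lemmaext}, lift to $\dot{H}^1$ on a strip $\RR\times(-D,0)\supset\Omega$, then restrict), the only difference being that you prove the strip-lifting step by an explicit Littlewood--Paley construction where the paper cites \cite{Lannes_book,Strichartz,LeoniTice}. Your construction is correct, but the motivating claim that the naive Poisson extension on the finite strip fails is actually mistaken: the gradient $L^2$-norm of $e^{z|D|}\widetilde\psi$ on $\RR\times(-D,0)$ is $\big(\int_\RR |\xi|(1-e^{-2D|\xi|})|\hat{\widetilde\psi}(\xi)|^2\,d\xi\big)^{1/2}$, and the multiplier $|\xi|(1-e^{-2D|\xi|})$ behaves like $|\xi|^2$ as $\xi\to 0$ (not like $|\xi|$) and like $|\xi|$ as $|\xi|\to\infty$, which is exactly the behavior of $\mathfrak{P}(\xi)^2=|\xi|^2/(1+|\xi|)$, so on a finite-depth strip the Poisson extension alone already yields a norm equivalent to $|\widetilde\psi|_{\dot{H}^{1/2}(\RR)}$ and the high/low split, though a valid way to see this, is not needed.
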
 
\begin{proof}
By Proposition \ref{propH12char}, functions in  $\dot{H}^{1/2}(\Gamma^{\rm D})$ can be continuously extended to functions in $\dot{H}^{1/2}(\RR)$.
Let $h_0>0$ be such that $\Omega \subset \RR\times (-h_0,0)$. The result now simply follows from the fact that the extension theorem is known in the case of a strip \cite{Lannes_book,Strichartz,LeoniTice}; more precisely,  for all $\psi\in \dot{H}^{1/2}(\RR)$, there exists $\phi\in \dot{H}^1(\RR\times (-h_0,0))$ such that $\Vert \nabla \phi \Vert_{L^2(\RR\times (-h_0,0))}\leq C \vert \psi\vert_{\dot{H}^{1/2}(\RR)}$, with a constant $C>0$ independent of $\psi$. Thanks to the lemma, it is clear that the restriction of $\phi$ to $\Omega$ furnishes the desired extension.
\end{proof}

Since $\dot{H}^1(\Omega)/\RR$ is a Banach space (see \cite{DenyLions}), we directly get the following corollary from Theorems \ref{theortrace} and \ref{theoremsurj}.
\begin{corollary}\label{coroBanach}
Let $\Omega$ and $\Gamma^{\rm D}$ be as in Assumption \ref{assconfig}. The space $\dot{H}^{1/2}(\Gamma^{\rm D})/\RR$ is a Banach space.
\end{corollary}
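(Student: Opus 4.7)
The claim is that the quotient $\dot{H}^{1/2}(\Gamma^{\rm D})/\RR$ is complete. I would start by observing that, as remarked after Definition \ref{defH12Gamma}, the kernel of the semi-norm $\vert\cdot\vert_{\dot{H}^{1/2}(\Gamma^{\rm D})}$ consists precisely of the functions that are equal to one and the same constant on every connected component $\cE_j$; this kernel can therefore be identified with $\RR$, and the quotient carries a genuine norm given, on any representative, by the original semi-norm value. The strategy is then to transport a Cauchy sequence to $\dot{H}^1(\Omega)/\RR$, which is a Banach space by Deny--Lions \cite{DenyLions}, and use the already-established continuity of both the trace and extension operators.

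Concretely, let $([\psi_n])_{n\in\NN}$ be a Cauchy sequence in $\dot{H}^{1/2}(\Gamma^{\rm D})/\RR$ and pick arbitrary representatives $\psi_n\in \dot{H}^{1/2}(\Gamma^{\rm D})$. Since the norm on the quotient coincides with the semi-norm on any representative, one has $\vert \psi_n-\psi_m\vert_{\dot{H}^{1/2}(\Gamma^{\rm D})}\to 0$ as $n,m\to\infty$. Applying the continuous right inverse ${\bf E}$ furnished by Theorem \ref{theoremsurj} and using its linearity, this yields
$$
\Vert {\bf E}\psi_n-{\bf E}\psi_m\Vert_{\dot{H}^1(\Omega)}=\Vert {\bf E}(\psi_n-\psi_m)\Vert_{\dot{H}^1(\Omega)}\leq C\vert\psi_n-\psi_m\vert_{\dot{H}^{1/2}(\Gamma^{\rm D})}\to 0,
$$
so that $([{\bf E}\psi_n])_{n\in\NN}$ is a Cauchy sequence in $\dot{H}^1(\Omega)/\RR$, and hence converges to some class $[\phi]$ there.

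Finally, since the trace mapping sends constants on $\Omega$ to constants on $\Gamma^{\rm D}$, it descends to a continuous map $\overline{\rm Tr}^{\rm D}:\dot{H}^1(\Omega)/\RR\to\dot{H}^{1/2}(\Gamma^{\rm D})/\RR$ (continuity being an immediate consequence of Theorem \ref{theortrace} together with the identification of quotient norms with semi-norms on representatives). Passing to the quotient in the identity ${\rm Tr}^{\rm D}\circ{\bf E}={\rm Id}$ gives $\overline{\rm Tr}^{\rm D}[{\bf E}\psi_n]=[\psi_n]$, and the continuity of $\overline{\rm Tr}^{\rm D}$ then yields $[\psi_n]\to\overline{\rm Tr}^{\rm D}[\phi]$ in $\dot{H}^{1/2}(\Gamma^{\rm D})/\RR$, proving completeness.

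There is no real obstacle here since all the substantive work has been done in Theorems \ref{theortrace} and \ref{theoremsurj}; the only point requiring a moment's care is to avoid assuming that ${\bf E}$ itself descends to the quotients (which would require ${\bf E}$ to map constants to constants, something not provided by the construction). The argument above sidesteps this by working with differences $\psi_n-\psi_m$ and exploiting the linearity of ${\bf E}$, so only $\overline{\rm Tr}^{\rm D}$ needs to be defined at the quotient level.
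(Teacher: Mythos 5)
Your proof is correct and follows essentially the same approach as the paper, which simply asserts that the result is immediate from Deny--Lions' completeness of $\dot{H}^1(\Omega)/\RR$ together with Theorems \ref{theortrace} and \ref{theoremsurj}; you have supplied the details the paper leaves to the reader. Your closing observation, that one should work with differences rather than assume ${\bf E}$ descends to the quotients, is a good piece of care.
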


\subsection{A density result}\label{sectdensity}

We prove here the following density result.
\begin{proposition}\label{propdense2}
Let $\Omega$ and $\Gamma^{\rm D}$ be as in Assumption \ref{assconfig}. Then ${\mathcal D}(\Gamma^{\rm D})$ is dense in $\dot{H}^{1/2}(\Gamma^{\rm D})$ for the norm  $\vert\cdot\vert_{\dot{H}^{1/2}(\Gamma^{\rm D})}$. 
\end{proposition}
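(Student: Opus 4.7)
The plan is to reduce to a one-component statement and then distinguish whether a component is bounded or not. Since $\mathcal D(\Gamma^{\rm D})=\prod_{j=1}^{N+1}\mathcal D(\cE_j)$ and the semi-norm of Definition \ref{defH12Gamma} decomposes as
$$
|\psi^n-\psi|_{\dot H^{1/2}(\Gamma^{\rm D})}=\sum_j |\psi_j^n-\psi_j|_{\dot H^{1/2}(\cE_j)}+\sum_j\bigl|\overline{\psi^n_{j+1}-\psi_{j+1}}-\overline{\psi^n_j-\psi_j}\bigr|,
$$
it suffices to build, for each $j$, a sequence $(\psi_j^n)\subset \mathcal D(\cE_j)$ with both $|\psi_j^n-\psi_j|_{\dot H^{1/2}(\cE_j)}\to 0$ and $\overline{\psi_j^n}\to \overline{\psi}_j$ (the latter handles the cross-component averages). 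The two pieces then assemble to an element of $\mathcal D(\Gamma^{\rm D})$ approximating $\psi$.

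\emph{Bounded components.} If $\cE_j$ is bounded, Proposition~\ref{propPoincare} combined with Remark~\ref{remHdot12H12} gives $\psi_j\in H^{1/2}(\cE_j)$, and the identification $H^{1/2}_0(\cE_j)=H^{1/2}(\cE_j)$ recalled in the notations produces a sequence $\psi_j^n\in\mathcal D(\cE_j)$ converging to $\psi_j$ in the full $H^{1/2}(\cE_j)$ norm. This forces both $L^2$ convergence (hence $\overline{\psi_j^n}\to\overline{\psi}_j$) and $\dot H^{1/2}_{\rm hom}(\cE_j)=\dot H^{1/2}(\cE_j)$ semi-norm convergence, as required.

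\emph{Unbounded components.} Consider $\cE_j=(x_N^{\rm r},+\infty)$; the other symmetric case is analogous. I would first normalize by splitting $\psi_j=\widetilde\psi_j+\overline{\psi}_j$ where $\widetilde\psi_j:=\psi_j-\overline{\psi}_j$ has zero average on the interval used in Definition~\ref{propaverage}. For the constant piece $\overline{\psi}_j$, pick a sequence $R_n\to\infty$ large enough that the fixed averaging interval lies inside $I_n:=(x_N^{\rm r},R_n)$, and use the bounded case on $I_n$ to produce $c_n\in\mathcal D(I_n)\subset\mathcal D(\cE_j)$ (extended by zero) converging to the constant $\overline{\psi}_j$ in $H^{1/2}(I_n)$ norm. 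The $L^2$ convergence guarantees $\overline{c_n}\to\overline{\psi}_j$, while the screened semi-norm on $\cE_j$ is controlled by the $H^{1/2}_{\rm hom}(I_n)$ semi-norm plus boundary cross-terms arising from the jump at $R_n$, both of which vanish in the limit. For the non-constant piece $\widetilde\psi_j$, the idea is to first truncate at scale $R_n$ and then approximate the truncation by $\mathcal D$ functions on a bounded sub-interval using the bounded-component argument. A diagonal extraction then combines both approximations.

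The main obstacle is the unbounded case, more precisely the rigorous treatment of the truncation of $\widetilde\psi_j$. A naive multiplicative cutoff $\chi_R\widetilde\psi_j$ produces a commutator-type error of the form $\iint_{|y-x|\le 1}(\chi_R(y)-\chi_R(x))^2\widetilde\psi_j(x)^2/(y-x)^2\,{\rm d}y\,{\rm d}x$ that cannot be controlled without knowing the tail behaviour of $\widetilde\psi_j$ in $L^2_{\rm loc}$. The correct fix is to choose a slowly varying cutoff (transition over length $L\to\infty$) combined with local average subtraction on a dyadic annulus near the transition: this uses the Poincar\'e inequality of Proposition~\ref{propPoincare} on the truncated subinterval to trade $\|\widetilde\psi_j\|_{L^2}$ for $|\widetilde\psi_j|_{\dot H^{1/2}}$ and shows the error is the tail of a convergent integral in the screened semi-norm. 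An equivalent route, avoiding this direct truncation, is to appeal to Proposition~\ref{propH12char} to extend $\widetilde\psi_j$ to $\dot H^{1/2}(\RR)$ and to invoke the density of $\mathcal D(\RR)$ in $\dot H^{1/2}(\RR)$ for the screened semi-norm (in the spirit of the density results of \cite{LeoniTice}), followed by a cutoff near the corner $x_N^{\rm r}$ whose error is then absorbed by a further application of the bounded-component density.
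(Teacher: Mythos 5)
Your second route in the unbounded case — extend each component to $\dot H^{1/2}(\RR)$ via Proposition~\ref{propH12char}, approximate there by $\mathcal D(\RR)$, and then cut off near the corner on a finite interval — is essentially the paper's proof. The paper organizes it slightly more cleanly: it extends the whole $f$ at once and approximates it by a single $\varphi\in\mathcal D(\RR)$, so that $\varphi_{|_{\cE_j}}$ is automatically in $H^{1/2}(\cE_j)$, compactly supported on unbounded $\cE_j$, and one can then apply the finite-interval density (Lemma~\ref{lemmadense2}) together with Remark~\ref{remconav} (average control) on every component uniformly, with no need to decompose $\psi_j$ into a constant plus a zero-average piece. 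That split, and your route (a) with a slowly varying cutoff, are not wrong in spirit but introduce precisely the delicate tail/transition analysis you flag as the obstacle; the paper sidesteps it entirely.

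Two points you should attend to. First, the fact that $\mathcal D(\RR)$ is dense in the screened $\dot H^{1/2}(\RR)$ is not a footnote — it is the substantive ingredient, and the paper proves it as Lemmas~\ref{lemmatempered} and~\ref{lemmadense1}: one must first establish that $\dot H^{1/2}(\RR)\subset\mathcal S'(\RR)$ (so that the Fourier characterization \eqref{equivtempered} via $\mathfrak P=|D|/(1+|D|)^{1/2}$ applies), and then split $f=f_{\rm low}+f_{\rm high}$ into frequencies $\leq 1$ and $\geq 1$, yielding $f_{\rm low}\in\dot H^1(\RR)$ and $f_{\rm high}\in H^{1/2}(\RR)$, for which density of $\mathcal D(\RR)$ is classical. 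Your proposal outsources this to a citation rather than proving it; that is legitimate in principle, but the relevant reference is \cite{StevensonTice}, not \cite{LeoniTice}, and the paper explicitly notes it gives its own elementary proof because the one in \cite{StevensonTice} relies on involved interpolation arguments. Second, the temperedness lemma is not cosmetic: without it the Fourier-multiplier manipulation has no meaning, and you would be left with the integral characterization alone, for which the low/high-frequency argument is not available.
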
 
\begin{proof}
Let us denote by $\widetilde{\psi}\in \dot{H}^{1/2}(\RR)$ the extension of $\psi$ furnished by Lemma \ref{lemmaext}. This extension is a tempered distribution in virtue of the following lemma. 
\begin{lemma}\label{lemmatempered}
If $f\in \dot{H}^{1/2}(\RR)$ then $f\in {\mathcal S}'(\RR)$.
\end{lemma}
\begin{proof}[Proof of the lemma]
Let $f\in \dot{H}^{1/2}(\RR)$. If we can prove that $F(x):=\int_0^x f$ is in ${\mathcal S}'(\RR)$, then we can conclude that $f$ is also in ${\mathcal S}'(\RR)$ since one has $F'=f$ almost everywhere. 

In order to prove that $ F\in {\mathcal S}'(\RR)$, we show that it has at most polynomial growth as $|x|\to \infty$. We consider the case $x\to \infty$, the case of negative values of $x$ being treated similarly.

Let $n=[x]$. Then we have $\vert F(x)\vert \leq \sum_{j=0}^n c_j$, with $c_j=\int_j^{j+1} \vert f \vert$. It follows that  
\begin{align*}
\vert F(x)\vert &\leq (n+1) c_0 +\sum_{j=1}^{n} (n-j+1) | c_{j}-c_{j-1} |.
\end{align*}
But we know from Proposition \ref{propconstequiv} that $| c_{j}-c_{j-1} | \leq C  \vert f\vert_{\dot{H}^{1/2}(\RR)} $ with a constant $C$ that does not depend on $j$ (this fact can easily be checked in the proof of Proposition \ref{propconstequiv}). Therefore
$$
\vert F(x)\vert \leq (x+1) c_0 + C (1+x)^2  \vert f\vert_{\dot{H}^{1/2}(\RR)};
$$
the function $F$ has therefore at most a polynomial growth at infinity and therefore belongs to ${\mathcal S}'(\RR)$, which concludes the proof.
\end{proof} 
Using Lemma \ref{lemmatempered}, we can prove the density of ${\mathcal D}(\RR)$ in $\dot{H}^{1/2}(\RR)$. Note that this result can be found in \cite{StevensonTice} where it is proved in a more general framework using involved interpolation arguments; for  the sake of completeness we give here an elementary proof.
\begin{lemma}\label{lemmadense1}
The space  ${\mathcal D}(\RR)$ is dense in $\dot{H}^{1/2}(\RR)$ for the $\dot{H}^{1/2}(\RR)$ semi-norm.
\end{lemma}
\begin{proof}[Proof of the lemma]
We want to prove that for all $f\in \dot{H}^{1/2}(\RR)$ and all $\epsilon>0$, one can find $\varphi\in {\mathcal D}(\RR)$ such that $\vert f-\varphi\vert_{\dot{H}^{1/2}(\RR)}\leq \epsilon$.

From Lemma \ref{lemmatempered}, functions in $\dot{H}^{1/2}(\RR)$ are tempered distributions; we can therefore use the Fourier characterization \eqref{equivtempered} of $\dot{H}^{1/2}(\RR)$, namely that the semi-norms $\vert {\mathfrak P} \cdot  \vert_{L^2}$ (with ${\mathfrak P}=\frac{\vert{D}\vert}{(1+\vert D\vert)^{1/2}}$)and $\vert \cdot \vert_{\dot{H}^{1/2}(\RR)}$ are equivalent. 

Let us decompose $f=f_{\rm low}+f_{\rm high}$ with
$$
f_{\rm low}={\bf 1}_{\vert D\vert\leq 1} f \quad\mbox{ and }\quad f_{\rm high}={\bf 1}_{\vert D\vert\geq 1} f,
$$
where ${\bf 1}_{\vert D\vert\leq 1}$ and ${\bf 1}_{\vert D\vert\geq 1}$ are Fourier multipliers associated to the projection on frequencies smaller and greater than $1$ respectively. One has $f_{\rm low} \in \dot{H}^1(\RR)$ and $f_{\rm high}\in H^{1/2}(\RR)$. Indeed, 
$$
\vert \vert D\vert f_{\rm low} \vert_{L^2(\RR)}
+ \vert (1+\vert D\vert^2)^{1/4}f_{\rm high}\vert_{L^2(\RR)}
\lesssim \vert \frac{\vert{D}\vert}{(1+\vert D\vert)^{1/2}} f\vert_{L^2(\RR)}.
$$

 It is also known that ${\mathcal D}(\RR)$ is dense in $\dot{H}^1(\RR)$ (see \cite{Sobolev}) and in $H^{1/2}(\RR)$, so that we can find $\varphi_1$ and $\varphi_2$ in ${\mathcal D}(\RR)$ such that $\vert f_{\rm low}-\varphi_1\vert_{\dot{H}^1(R)}<\epsilon/2$ and $\vert f_{\rm high}-\varphi_2\vert_{{H}^{1/2}(R)}<\epsilon/2$. Setting $\varphi=\varphi_1+\varphi_2$, we have
 \begin{align*}
 \vert f-\varphi\vert_{\dot{H}^{1/2}(\RR)}&\leq \vert f_{\rm low}-\varphi_1\vert_{\dot{H}^{1/2}(\RR)} +\vert f_{\rm high}-\varphi_2\vert_{\dot{H}^{1/2}(\RR)} \\
 &\leq  \vert f_{\rm low}-\varphi_1\vert_{\dot{H}^{1}(\RR)} +\vert f_{\rm high}-\varphi_2\vert_{{H}^{1/2}(\RR)}.
 \end{align*}
 Therefore, $ \vert f-\varphi\vert_{\dot{H}^{1/2}(\RR)}\leq \epsilon$, and since $\varphi\in {\mathcal D}(\RR)$, this proves the result.
\end{proof}
The lemma below shows the density of ${\mathcal D}(I)$ in  $\dot{H}^{1/2}(I)$ when $I$ is finite.
\begin{lemma}\label{lemmadense2}
Let $I$ be a finite non empty interval. Then ${\mathcal D}(I)$ is dense in $\dot{H}^{1/2}(I)$ for the $\dot{H}^{1/2}(I)$ semi-norm.
 \end{lemma}
 \begin{proof}[Proof of the lemma]
 We want to prove that for all $f\in \dot{H}^{1/2}(I)$, there is a function $\varphi\in {\mathcal D}(I)$ such that $\vert f-\varphi\vert_{\dot{H}^{1/2}(I)}\leq \epsilon$.
 Functions that are in $\dot{H}^{1/2}(I)$ also belong to ${H}^{1/2}(I)$ (see Remark \ref{remHdot12H12}). Since smooth functions compactly supported in $I$ are dense in $H^{1/2}(I)$, one can find  $\varphi\in {\mathcal D}(I)$ satisfying $\vert f-\varphi\vert_{{H}^{1/2}(I)}<\epsilon$, and therefore $\vert f-\varphi \vert_{\dot{H}^{1/2}(I)}<\epsilon$, which proves the result.
 \end{proof}
 \begin{remark}\label{remconav}
 With the notations used in the proof, it also follows that
 $$
 \frac{1}{\vert I\vert}\int_I \vert f-\varphi \vert \leq \frac{1}{\vert I\vert^{1/2}}\vert f-\varphi\vert_{L^2(I)} < \frac{1}{\vert I\vert^{1/2}}\epsilon,
 $$
 which shows that one can approximate functions of  $\dot{H}^{1/2}(I)$ at arbitrary precision   by functions in ${\mathcal D}(I)$ whose average over $I$ is arbitrarily close to the average of $f$.
 \end{remark} 
With this lemma we can conclude to the density of ${\mathcal D}(\Gamma^{\rm D})$ in $\dot{H}^{1/2}(\Gamma^{\rm D})$. 
Let $f\in \dot{H}^{1/2}(\Gamma^{\rm D})$. By Proposition \ref{propH12char} and Lemma \ref{lemmadense1}, for all $\epsilon>0$, one can find $\varphi\in {\mathcal D}(\RR)$ such that 
\begin{align*}
    \vert \varphi_{\vert_{\Gamma^{\rm D}}}-f\vert_{\dot{H}^{1/2}(\Gamma^{\rm D})} <\epsilon. 
\end{align*}
Now, by Lemma \ref{lemmadense2} and Remark \ref{remconav}, one can find $\widetilde{\varphi} \in {\mathcal D}(\Gamma^{\rm D})$ such that $\vert \varphi_{\vert_{\Gamma^{\rm D}}}-\widetilde{\varphi}\vert_{\dot{H}^{1/2}(\Gamma^{\rm D})}<\epsilon$. This shows that $f$ can be approximated at any precision in $\dot{H}^{1/2}(\Gamma^{\rm D})$ by some function in ${\mathcal D}(\Gamma^{\rm D})$, which is the desired result.
\end{proof}

\subsection{The spaces $\dot{\mathcal H}^{1/2}(\Gamma^{\rm D})$ and $\dot{\mathcal H}^{1}(\Gamma^{\rm D})$} \label{sectdotH012}

Defining $\overline{\psi}_1, \dots, \overline{\psi}_{N+1}$ as in Definition \ref{propaverage}, we can therefore define the subspace $\dot{\mathcal H}^{s}(\Gamma^{\rm D})$ of $\dot{H}^{s}(\Gamma^{\rm D})$, with $s=1/2$ or $1$, as follows
\begin{equation}\label{defHI120}
\dot{\mathcal H}^{s}(\Gamma^{\rm D})=\{ \psi \in \dot{H}^{s}(\Gamma^{\rm D}), \sum_{j=1}^{N+1} \vert I_j\vert \overline{\psi}_{j}=0\} \qquad (s=1/2,1),
\end{equation} 
where $I_j=\cE_j$ if $\cE_{j}$ is bounded and, as in Definition \ref{propaverage}, $I_j$ a non-empty subinterval of $\cE_{j}$ if it is unbounded.
\begin{remark}
If $\Gamma^{\rm D}$ is bounded, then $\psi\in \dot{H}^{s}(\Gamma^{\rm D})$ is in $\dot{\mathcal H}^{s}(\Gamma^{\rm D})$ if and only if $\int_{\Gamma^{\rm D}}\psi =0$.
\end{remark} 
The following proposition shows that $\dot{\mathcal H}^{1/2}(\Gamma^{\rm D})$ is a realization of $\dot{H}^{1/2}(\Gamma^{\rm D})$ in the sense of \cite{Bourdaud} (that is, a convenient choice of a particular representative in each class of $\dot{H}^{1/2}(\Gamma^{\rm D})$).
\begin{proposition}\label{propBanach}
Let $\Omega$ and $\Gamma^{\rm D}$ be as in Assumption \ref{assconfig}. Then the space $\dot{\mathcal H}^{1/2}(\Gamma^{\rm D})$ endowed with $\vert\cdot\vert_{\dot{H}^{1/2}(\Gamma^{\rm D})}$ is a Banach space of distributions. 
\end{proposition}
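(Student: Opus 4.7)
The plan is to deduce the result from Corollary \ref{coroBanach}, which asserts that $\dot{H}^{1/2}(\Gamma^{\rm D})/\RR$ is a Banach space, by interpreting the zero-mass condition in \eqref{defHI120} as selecting a distinguished representative of each equivalence class. Three points must be checked: that $|\cdot|_{\dot{H}^{1/2}(\Gamma^{\rm D})}$ is a norm (not just a semi-norm) on $\dot{\mathcal H}^{1/2}(\Gamma^{\rm D})$, that this normed space is complete, and that it embeds continuously into the space of distributions on $\Gamma^{\rm D}$.

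First I would verify the norm property. The adherence of zero for the semi-norm $|\cdot|_{\dot{H}^{1/2}(\Gamma^{\rm D})}$ consists of functions that are equal to a single constant $c$ on all components $\cE_j$ (as already noted in the remark following Definition~\ref{defH12Gamma}). If such a function also satisfies the zero-mass condition $\sum_j |I_j|\,\overline{\psi}_j = 0$, then $c\sum_j |I_j| = 0$ and hence $c = 0$. So $|\cdot|_{\dot{H}^{1/2}(\Gamma^{\rm D})}$ is indeed a norm on $\dot{\mathcal H}^{1/2}(\Gamma^{\rm D})$.

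Next I would establish completeness. Given a Cauchy sequence $(\psi_n)$ in $\dot{\mathcal H}^{1/2}(\Gamma^{\rm D})$, its image in the quotient space $\dot{H}^{1/2}(\Gamma^{\rm D})/\RR$ is Cauchy, and by Corollary~\ref{coroBanach} there exists $\psi \in \dot{H}^{1/2}(\Gamma^{\rm D})$ such that $|\psi_n - \psi|_{\dot{H}^{1/2}(\Gamma^{\rm D})} \to 0$. Setting $c := \frac{1}{\sum_j |I_j|}\sum_j |I_j|\,\overline{\psi}_j$, the representative $\psi - c$ satisfies the zero-mass condition and so lies in $\dot{\mathcal H}^{1/2}(\Gamma^{\rm D})$; since the semi-norm is insensitive to constants, $\psi_n \to \psi - c$ in $\dot{\mathcal H}^{1/2}(\Gamma^{\rm D})$.

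Finally I would show the embedding into ${\mathcal D}'(\Gamma^{\rm D})$ is continuous. Every $\psi \in \dot{\mathcal H}^{1/2}(\Gamma^{\rm D})$ lies in $\widetilde{L}^1_{\rm loc}(\Gamma^{\rm D})$, so acts as a distribution via integration. For a test function $\varphi \in {\mathcal D}(\Gamma^{\rm D})$ supported in some bounded set $K \subset \underline{\Gamma}^{\rm D}$, one writes $\int \psi \varphi = \int (\psi - \overline{\psi}_K) \varphi + \overline{\psi}_K \int \varphi$ on each component intersecting $K$, and applies Proposition~\ref{propPoincare} (together with Proposition~\ref{propconstequiv} to compare averages on different bounded subintervals of the same $\cE_j$) to bound $|\psi - \overline{\psi}_K|_{L^2}$ by $|\psi|_{\dot{H}^{1/2}(\Gamma^{\rm D})}$. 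The main obstacle---and the step I expect to require the most care---is controlling the constants $\overline{\psi}_K$ themselves: for this one writes $\overline{\psi}_j = \overline{\psi}_1 + \sum_{k<j}(\overline{\psi}_{k+1}-\overline{\psi}_k)$, uses the zero-mass condition to express $\overline{\psi}_1$ purely in terms of the successive differences $\overline{\psi}_{k+1}-\overline{\psi}_k$, and concludes that every $|\overline{\psi}_j|$, and therefore every $|\overline{\psi}_K|$, is bounded by a constant multiple of $|\psi|_{\dot{H}^{1/2}(\Gamma^{\rm D})}$. This yields $|\langle \psi,\varphi\rangle| \lesssim C(\varphi)\, |\psi|_{\dot{H}^{1/2}(\Gamma^{\rm D})}$, which is the required continuous embedding into ${\mathcal D}'(\Gamma^{\rm D})$, completing the proof.
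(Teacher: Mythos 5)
Your proposal is correct and takes essentially the same approach as the paper: both arguments rest on Corollary \ref{coroBanach}, the observation that the zero-mass constraint singles out a unique representative in each class modulo constants, and the combination of Propositions \ref{propPoincare} and \ref{propconstequiv} with the zero-mass condition to bound $\overline{\psi}_j$ and hence the pairing against a test function. The paper organizes it slightly differently (it proves the distribution-space lemma first and then obtains the Banach structure by exhibiting the bijection between $\dot{H}^{1/2}(\Gamma^{\rm D})/\RR$ and $\dot{\mathcal H}^{1/2}(\Gamma^{\rm D})$, rather than verifying the norm property and completeness as separate steps), but the underlying ideas and estimates are the same.
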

\begin{proof}
The fact that $(\dot{\mathcal H}^{1/2}(\Gamma^{\rm D}), \vert\cdot\vert_{\dot{H}^{1/2}(\Gamma^{\rm D})})$ is a distribution space is a consequence of the following lemma.
\begin{lemma}
If a sequence $(\psi^{n})_{n\in {\mathbb N}}$ is such that for all $n\in {\mathbb N}$, $\psi^n\in \dot{\mathcal H}^{1/2}(\Gamma^{\rm D})$ and $\vert \psi^n\vert_{\dot{H}^{1/2}(\Gamma^{\rm D})}\to 0$ as $n\to \infty$ then, for all $\varphi \in {\mathcal D}(\Gamma^{\rm D})$, $\int_{\Gamma^{\rm D}} \psi^n \varphi \to_{n\to \infty} 0$.
\end{lemma}
\begin{proof}[Proof of the lemma]
We can decompose  
$$
\int_{\Gamma^{\rm D}} \psi^n \varphi =\sum_{j=1}^{N+1} \big(\int_{\cE_j}  \big(\psi^n_j-\overline{\psi^n_j})\varphi +\overline{\psi^n_j} \int_{\cE_j} \varphi \big).
$$

 If $\cE_j$ is bounded, then we can write
\begin{align*}
\big\vert  \int_{\cE_j}  \big(\psi^n_j-\overline{\psi^n_j})\varphi \big\vert & \leq  \vert \psi^n_j-\overline{\psi^n_j}\vert_{L^2(\cE_j)} \vert \varphi\vert_{L^2(\cE_j)} \\
&\lesssim  \vert \psi_j^n\vert_{\dot{H}^{1/2}(\cE_j)} \vert \varphi\vert_{L^2(\cE_j)} ,
\end{align*}
the last line being a consequence of the Poincar\'e inequality of Proposition \ref{propPoincare}.

If $\cE_j$ is unbounded, then let $K\subset \cE_j$ be a compact interval that contains the intersection of the support of $\varphi$ with $\cE_j$. Denoting by $\overline{\overline{\psi^n}}_j=\frac{1}{\vert K\vert}\int_K \psi^n_j$, we have
\begin{align*}
\int_{\cE_j}  \big(\psi^n_j-\overline{\psi^n_j})\varphi &=\int_{K}  \big(\psi^n_j-\overline{\psi^n_j})\varphi \\
&= \int_{K}  \big(\psi^n_j-\overline{\overline{\psi^n_j}})\varphi + (  \overline{\overline{\psi^n_j}}-\overline{\psi^n_j} ) \int_K \varphi;
\end{align*}
the first term in the right-hand side can be handled as for the case when $\cE_j$ is bounded, while we can use Proposition \ref{propconstequiv} to control the second one. 

In the end, we get
$$
\big\vert \int_{\Gamma^{\rm D}} \psi^n \varphi \big\vert  \leq C(\varphi)  \sum_{j=1}^{N+1} (\vert \psi^n_j\vert_{\dot{H}^{1/2}(\cE_j)} +\vert \overline{\psi^n_j} \vert ),
$$
where $C(\varphi)$ is some finite constant depending on $\varphi$.

We can moreover write
$$
\overline{\psi^n_j}=\big( \sum_{k=1}^{N+1} \vert I_k \vert \big)^{-1} \big( \sum_{k=1}^{N+1} \vert I_k \vert \overline{\psi^n_k}+ \sum_{k=1}^{N+1} \vert I_k \vert (\overline{\psi^n_j} - \overline{\psi^n_k}) \big),
$$
and use the fact that $ \sum_{k=1}^{N+1} \vert I_k\vert \overline{\psi^n_k}=0$ by assumption to deduce the upper bound $\vert \overline{\psi^n_j}\vert \lesssim  \sum_{1}^{N}  \vert \overline{\psi_j^n}- \overline{\psi^n_{j+1}}\vert $. These observations, together with the definition of the semi-norm $\vert \cdot \vert_{\dot{H}^{1/2}(\Gamma^{\rm D})}$ imply that
$$
\big\vert \int_{\Gamma^{\rm D}} \psi^n \varphi \big\vert  \leq C(\varphi) \vert \psi^n\vert_{\dot{H}^{1/2}(\Gamma^{\rm D})},
$$
where we still denote by $C(\varphi)$ a finite constant depending on $\varphi$. The result follows immediately.
\end{proof}

We have seen that $\dot{H}^{1/2}(\Gamma^{\rm D})/\RR$ is a Banach space. 
For all $\psi^\bullet\in \dot{H}^{1/2}(\Gamma^{\rm D})/\RR$, it follows easily from Proposition \ref{propaverage} that there exists a unique representative $\psi^{(0)} \in \psi^\bullet$ such that 
$\sum_{j=1}^{N+1} \vert I_j\vert \overline{\psi^{(0)}}_{j}=0$, that is, such that ${\psi}^{(0)}\in \dot{\mathcal H}^{1/2}(\Gamma^{\rm D})$. Denoting by $\sigma(\psi^\bullet)=\psi^{(0)}$, the mapping $\sigma: \dot{H}^{1/2}(\Gamma^{\rm D})/\RR \to \dot{\mathcal H}^{1/2}(\Gamma^{\rm D})$ is bijective so that $ \dot{\mathcal H}^{1/2}(\Gamma^{\rm D})$ inherits the structure of Banach space of  $ \dot{H}^{1/2}(\Gamma^{\rm D})/\RR$.
\end{proof}

%
%

\section{The Laplace equation and the Dirichlet-Neumann operator in corner domains}\label{sectLaplace}

It is well known that for all Dirichlet data $\psi$ in the inhomogeneous Sobolev space $H^{1/2}(\Gamma^{\rm D})$ one can construct a unique velocity potential which is harmonic in $\Omega$, whose trace on $\Gamma^{\rm D}$ coincides with $\psi$ and with homogeneous Neumann boundary condition on $\Gamma^{\rm N}$. We shall denote by $\psi^\mfh$ this velocity potential, and will refer to it as the {\it harmonic extension} of $\psi$. Moreover, the mapping $\psi \in H^{1/2}(\Gamma^{\rm D}) \mapsto \psi^\mfh \in H^1(\Omega)$ is continuous.

We prove in \S \ref{sectharmext} that this classical result can be extended to the homogeneous case, that is, that the mapping $\psi \in \dot{H}^{1/2}(\Gamma^{\rm D}) \mapsto \psi^\mfh \in \dot{H}^1(\Omega)$ is well defined and continuous. This result, of independent interest, is optimal inasmuch as the trace on $\Gamma^{\rm D}$ of any velocity potential of finite kinetic energy belongs to $\dot{H}^{1/2}(\Gamma^{\rm D})$.\\
Once this is done, we rigorously construct in \S \ref{sectDN} the Dirichlet-Neumann operator $G_0: \psi \mapsto (\partial_z\psi^\mfh)_{\vert_{\Gamma^{\rm D}}}$ as a mapping from $\dot{H}^{1/2}(\Gamma^{\rm D})$ with values in its dual. Using Rellich type identities we also prove that it is a continuous elliptic operator from $\dot{H}^1(\Gamma^{\rm D})$ to $L^2(\Gamma^{\rm D})$. Other properties (self-adjointness and elliptic regularity) are also investigated.

\subsection{The harmonic extension on $\dot{H}^{1/2}(\Gamma^{\rm D})$}\label{sectharmext}

It is well known that if $\psi \in H^{1/2}(\Gamma^{\rm D})$, there is a unique harmonic function in  ${H}^1(\Omega)$, denoted by $\psi^\mfh$,  such that $\psi^\mfh_{\vert_{\Gamma^{\rm D}}}=\psi$  and with  homogeneous Neumman boundary conditions on $\Gamma^{\rm N}$. We show here that this result can be extended to functions in $\dot{H}^{1/2}(\Gamma^{\rm D})$.

Let us first note that if $u\in \dot{H}^1(\Omega)$ is a harmonic function, its outward normal derivative $(\partial_{\rm n} u)_{\vert_{\Gamma^{\rm N}}}$ on $\Gamma^{\rm N}$ can be defined in weak sense as 
\begin{equation}\label{defweakdn}
\forall v\in {H}^1_{\rm D}(\Omega), \qquad \langle (\partial_{\rm n} u)_{\vert_{{\Gamma^{\rm N}}}}, v_{\vert_{{\Gamma}^{\rm N}}} \rangle=\int_\Omega \nabla u\cdot \nabla v,
\end{equation}
where we recall that ${H}^1_{\rm D}(\Omega)$ denotes the subspace of $H^1(\Omega)$ of functions that vanish on $\Gamma^{\rm D}$, and where the brackets in \eqref{defweakdn} therefore stand for the $H_{00}^{1/2}(\Gamma^{\rm N})'-H_{00}^{1/2}(\Gamma^{\rm N})$ duality bracket, with  $H_{00}^{1/2}(\Gamma^{\rm N})=H^{1/2}(\Gamma^{\rm N})\cap {\rho^{1/2}}L^2(\Gamma^{\rm N})$  see the remark below.
\begin{remark}
Since the distance of every point of $\Omega$ to $\Gamma^{\rm D}$ is uniformly bounded in $\Omega$ under Assumption \ref{assconfig}, the Poincar\'e inequality implies that $H_{\rm D}^1(\Omega)$ and $\ker {\rm Tr}^{\rm D}$ are algebraically and topologically the same. Therefore, one could replace the condition $v\in H^1_{\rm D}(\Omega)$ in \eqref{defweakdn} by $v\in \dot{H}^1(\Omega)$ such that ${\rm Tr}^{\rm D}v=0$. From the standard trace theorem on corner domains \cite{Grisvard}, we know that the trace mapping ${\rm Tr}^{\rm N}: u \in H^1_{\rm D}(\Omega)\mapsto u_{\vert_{\Gamma^{\rm N}}}$ takes its values in $H^{1/2}_{00}(\Gamma^{\rm N})$ and is onto, this is why \eqref{defweakdn} defines $ (\partial_{\rm n} u)_{\vert_{\Gamma^{\rm N}}}$ in $H_{00}^{1/2}(\Gamma^{\rm N})'$.
\end{remark}
\begin{proposition}\label{propharmonic}
Let $\Omega$, $\Gamma^{\rm D}$ and $\Gamma^{\rm N}$ be as in Assumption \ref{assconfig}. Then for all $\psi\in \dot{H}^{1/2}(\Gamma^{\rm D})$ there exists a unique function in $\dot{H}^1(\Omega)$, denoted by $\psi^\mfh$, that solves the boundary value problem
$$
\begin{cases}
\Delta \psi^\mfh=0 &\quad\mbox{ in }\Omega,\\
\psi^\mfh=\psi& \quad \mbox{ on }\Gamma^{\rm D},\\
\partial_{\rm n}\psi^\mfh=0 & \quad \mbox{ on }\Gamma^{\rm N};
\end{cases}
$$
moreover, $\psi^\mfh$ minimizes the Dirichlet energy,
$$
\Vert \nabla\psi^\mfh \Vert_{L^2(\Omega)}^2= \min_{\phi \in \dot{H}^1(\Omega), {\rm Tr}^{\rm D}\phi=\psi}\int_\Omega \vert \nabla \phi\vert^2.
$$
\end{proposition}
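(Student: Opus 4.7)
The idea is to combine the continuous lifting provided by Theorem \ref{theoremsurj} with a Lax--Milgram argument in the standard (inhomogeneous) space $H^1_{\rm D}(\Omega)$. Concretely, given $\psi\in\dot H^{1/2}(\Gamma^{\rm D})$, set $\phi_0:={\bf E}\psi\in\dot H^1(\Omega)$, so that ${\rm Tr}^{\rm D}\phi_0=\psi$ and $\Vert\nabla\phi_0\Vert_{L^2}\lesssim\vert\psi\vert_{\dot H^{1/2}(\Gamma^{\rm D})}$. I look for $\psi^\mfh$ in the form $\psi^\mfh=\phi_0+u$ with $u\in H^1_{\rm D}(\Omega)$. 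The prescribed boundary conditions on $\psi^\mfh$ and the identity $\Delta\phi_0\in H^{-1}(\Omega)$ then reduce the problem to finding $u\in H^1_{\rm D}(\Omega)$ such that
\begin{equation*}
\forall v\in H^1_{\rm D}(\Omega),\qquad \int_\Omega\nabla u\cdot\nabla v=-\int_\Omega\nabla\phi_0\cdot\nabla v,
\end{equation*}
this being the weak formulation of $\Delta u=-\Delta\phi_0$ with $u_{\vert\Gamma^{\rm D}}=0$ and $\partial_{\rm n}u_{\vert\Gamma^{\rm N}}=-\partial_{\rm n}\phi_0_{\vert\Gamma^{\rm N}}$.

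\textbf{Existence via Lax--Milgram.} The right-hand side is a continuous linear form on $H^1_{\rm D}(\Omega)$ of norm controlled by $\Vert\nabla\phi_0\Vert_{L^2}\lesssim\vert\psi\vert_{\dot H^{1/2}(\Gamma^{\rm D})}$. The key analytic ingredient is the Poincar\'e inequality invoked in the remark preceding the statement: since under Assumption \ref{assconfig} the distance to $\Gamma^{\rm D}$ is uniformly bounded in $\Omega$, one has $\Vert v\Vert_{L^2(\Omega)}\lesssim\Vert\nabla v\Vert_{L^2(\Omega)}$ for all $v\in H^1_{\rm D}(\Omega)$. Hence the Dirichlet bilinear form is coercive on $H^1_{\rm D}(\Omega)$ and Lax--Milgram yields a unique $u\in H^1_{\rm D}(\Omega)$ satisfying the above identity. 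Setting $\psi^\mfh:=\phi_0+u$ gives an element of $\dot H^1(\Omega)$ whose trace on $\Gamma^{\rm D}$ is $\psi$, which is harmonic in $\Omega$ (test against arbitrary $v\in{\mathcal D}(\Omega)\subset H^1_{\rm D}(\Omega)$), and which satisfies $\partial_{\rm n}\psi^\mfh=0$ on $\Gamma^{\rm N}$ in the weak sense \eqref{defweakdn} (test against arbitrary $v\in H^1_{\rm D}(\Omega)$).

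\textbf{Uniqueness.} If $\psi_1^\mfh,\psi_2^\mfh\in\dot H^1(\Omega)$ both solve the problem, then $w:=\psi_1^\mfh-\psi_2^\mfh\in\dot H^1(\Omega)$ has zero trace on $\Gamma^{\rm D}$; by the remark, $w\in H^1_{\rm D}(\Omega)$. Moreover $w$ is harmonic with $\partial_{\rm n}w=0$ on $\Gamma^{\rm N}$, so testing \eqref{defweakdn} with $v=w$ (which is admissible since $w\in H^1_{\rm D}(\Omega)$) gives $\Vert\nabla w\Vert_{L^2}^2=0$; Poincar\'e then yields $w=0$.

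\textbf{Minimization.} For any competitor $\phi\in\dot H^1(\Omega)$ with ${\rm Tr}^{\rm D}\phi=\psi$, the difference $w:=\phi-\psi^\mfh$ lies in $H^1_{\rm D}(\Omega)$, and using $\psi^\mfh$ as a test function in \eqref{defweakdn} (which is legitimate since $\partial_{\rm n}\psi^\mfh=0$ on $\Gamma^{\rm N}$ and $w$ vanishes on $\Gamma^{\rm D}$) gives $\int_\Omega\nabla\psi^\mfh\cdot\nabla w=0$. Expanding
\begin{equation*}
\int_\Omega|\nabla\phi|^2=\int_\Omega|\nabla\psi^\mfh|^2+2\int_\Omega\nabla\psi^\mfh\cdot\nabla w+\int_\Omega|\nabla w|^2=\int_\Omega|\nabla\psi^\mfh|^2+\int_\Omega|\nabla w|^2
\end{equation*}
yields the minimization property. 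The only point that deserves care is the applicability of Poincar\'e on $H^1_{\rm D}(\Omega)$ when $\Omega$ is unbounded; this is precisely what the remark guarantees under Assumption \ref{assconfig}, so no separate smallness or geometric work is required.
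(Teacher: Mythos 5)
Your proof is correct and uses essentially the same variational route as the paper: both lift $\psi$ to $\dot H^1(\Omega)$ via the extension ${\bf E}$ of Theorem \ref{theoremsurj} and then project onto the closed subspace $H^1_{\rm D}(\Omega)$, with the Poincar\'e inequality on $H^1_{\rm D}(\Omega)$ (guaranteed by Assumption \ref{assconfig}) as the underlying analytic ingredient. The only stylistic difference is that the paper expresses the projection as an orthogonal decomposition $\dot H^1(\Omega)/\RR = H^1_{\rm D}(\Omega)^\bullet\oplus\mathcal H^\bullet$ in the quotient Hilbert space, whereas you construct it concretely by Lax--Milgram on $H^1_{\rm D}(\Omega)$; the two formulations are equivalent.
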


\begin{remark}
As shown in Proposition \ref{propequivnorms2} below, one has $\Vert \nabla\psi^{\mfh}\Vert_{L^2(\Omega)}\sim \vert \psi \vert_{\dot{H}^{1/2}(\Gamma^{\rm D})}$, which shows the optimality of Proposition \ref{propharmonic} (this equivalence is of course wrong if we work with $\psi\in H^{1/2}(\Gamma^{\rm D})$ rather than $ \dot{H}^{1/2}(\Gamma^{\rm D})$).
\end{remark}
\begin{proof}
We have now the tools to adapt the standard proof \cite{DenyLions}. Let us denote by $H^1_{\rm D}(\Omega)^{\bullet}$ the image of $H^{\rm D}(\Omega)$ in $\dot{H}^1(\Omega)/\RR$ by the canonical mapping $\phi \in \dot{H}^1(\Omega)\mapsto \phi^\bullet \in \dot{H}^1(\Omega)/\RR$, where $u^\bullet$ denotes the class of $u$ in $ \dot{H}^1(\Omega)/\RR$. 

Then $\dot{H}^1(\Omega)/\RR$ can be written as the direct sum of two orthogonal (for the $\dot{H}^1(\Omega)/\RR$-scalar product) spaces,
$$
\dot{H}^1(\Omega)/\RR=H^1_{\rm D}(\Omega)^{\bullet}\oplus {\mathcal H}^\bullet,
$$
where ${\mathcal H}^\bullet$ is the class of all distributions $\phi\in \dot{H}^1(\Omega)$ such that $\Delta \phi=0$ in $\Omega$ and $(\partial_{\rm n}\phi)_{\vert_{\Gamma^{\rm N}}}=0$. 

Indeed, if $\phi^\bullet\in \dot{H}^1(\Omega)/\RR$ is orthogonal to $H^1_{\rm D}(\Omega)^\bullet$ then for all $\phi \in \phi^\bullet$ and $v\in H^1_{\rm D}(\Omega)$, one has
$$
\int_\Omega \nabla \phi\cdot \nabla v=0.
$$

By considering $v$ compactly supported in $\Omega$, this implies that $\Delta \phi=0$, and we can then use \eqref{defweakdn} to deduce that $(\partial_{\rm n}\phi)_{\vert_{\Gamma^{\rm N}}}=0$.

It follows that any $\phi \in \dot{H}^1(\Omega)$ can be decomposed in a unique way under the form
$$
\phi=\phi_1+\phi_2,
$$
where $\phi_1\in H^1_{\rm D}(\Omega)$ and $\phi_2\in \dot{H}^1(\Omega)$ is such that $\Delta \phi_2=0$ with $(\partial_{\rm n}\phi_2)_{\vert_{\Gamma^{\rm N}}}=0$, and moreover,
$$
\Vert \phi\Vert_{\dot{H}^1(\Omega)}=\Vert \phi_1\Vert_{\dot{H}^1(\Omega)}+\Vert \phi_2\Vert_{\dot{H}^1(\Omega)}.
$$

For any $\psi\in \dot{H}^{1/2}(\Gamma^{\rm D})$, we can take $\phi$ to be the extension constructed in Theorem \ref{theoremsurj} and apply the above decomposition to obtain the result, with $\psi^\mfh=\phi_2$.
\end{proof}

\subsection{The Dirichlet-Neumann operator}\label{sectDN}

For all $\psi\in \dot{H}^{1/2}(\Gamma^{\rm D})$, the previous section provides an harmonic extension $\psi^\mfh\in \dot{H}^1(\Omega)$; its outwards normal derivative on $\Gamma^{\rm D}$ is then given in the weak sense by
\begin{equation}\label{defdnpsih}
\forall \psi'\in \dot{H}^{1/2}(\Gamma^{\rm D}),\qquad \langle (\partial_{\rm n}\psi^\mfh)_{\Gamma^{\rm D}}, \psi'\rangle=\int_\Omega \nabla \psi^\mfh\cdot \nabla (\psi')^\mfh,
\end{equation}
where the brackets here stand for the $\dot{H}^{1/2}(\Gamma^{\rm D})'-\dot{H}^{1/2}(\Gamma^{\rm D})$ duality brackets. 

We can now provide the following definition of the Dirichlet-Neumann operator on corner domains.  
\begin{definition}\label{propDN}
Let $\Omega$, $\Gamma^{\rm D}$ and $\Gamma^{\rm N}$ be as in Assumption \ref{assconfig}. The Dirichlet-Neumann operator $G_0$ is defined as
$$
G_0: \begin{array}{lcl}
\dot{H}^{1/2}(\Gamma^{\rm D})& \to & \dot{H}^{1/2}(\Gamma^{\rm D})' \\
\psi &\mapsto & (\partial_{\rm n}\psi^\mfh)_{\vert_{\Gamma^{\rm D}}}
\end{array},
$$
where $\psi^\mfh$ is given by Proposition \ref{propharmonic} and the normal derivative $(\partial_{\rm n}\psi^\mfh)_{\vert_{\Gamma^{\rm D}}}$ is defined in the sense of \eqref{defdnpsih}. In particular,
$$
 \forall \psi,\psi'\in \dot{H}^{1/2}(\Gamma^{\rm D}), \qquad \langle G_0\psi,\psi'\rangle= \int_\Omega \nabla \psi^\mfh\cdot \nabla (\psi')^\mfh.
$$
\end{definition}
\begin{remark}\label{remDN}
According to the proposition, the Dirichlet energy $\Vert \nabla \psi^\mfh\Vert_{L^2(\Omega)}$ that often appears in the computations can be written in terms of $G_0$ and $\psi$ only, namely,
\begin{equation}\label{G0kin}
\Vert \nabla\psi^\mfh\Vert_{L^2(\Omega)}^2=\langle G_0\psi,\psi\rangle;
\end{equation}
if $\psi$ and $G_0\psi$ are in $ L^2(\Gamma^{\rm D})$, then $\langle G_0\psi,\psi\rangle=(G_0\psi,\psi)$, where $(\cdot,\cdot)$ stands for the standard $L^2(\Gamma^{\rm D})$-scalar product. By abuse of notations, we often write $(G_0\psi,\psi)$ instead of $\langle G_0\psi,\psi\rangle$.
\end{remark}
We can now prove that $\langle G_0\psi,\psi \rangle^{1/2}$ defines on $\dot{H}^{1/2}(\Gamma^{\rm D})$ a semi-norm that is equivalent to $\vert \cdot \vert_{\dot{H}^{1/2}(\Gamma^{\rm d})}$. 
\begin{proposition}\label{propequivnorms2}
Let $\Omega$, $\Gamma^{\rm D}$ and $\Gamma^{\rm N}$ be as in Assumption \ref{assconfig}. There exists a constant $C>0$ such that for all  $\psi\in \dot{H}^{1/2}(\Gamma^{\rm D})$, one has
$$
\frac{1}{C}\vert \psi\vert^2_{\dot{H}^{1/2}(\Gamma^{\rm D})} \leq \langle G_0\psi,\psi\rangle \leq C\vert \psi\vert^2_{\dot{H}^{1/2}(\Gamma^{\rm D})}.
$$ 
\end{proposition}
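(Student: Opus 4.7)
The plan is to combine Remark \ref{remDN} (which identifies $\langle G_0\psi,\psi\rangle$ with the squared Dirichlet energy $\Vert \nabla\psi^\mfh\Vert^2_{L^2(\Omega)}$) with two previously established results: the continuity of the trace mapping ${\rm Tr}^{\rm D}:\dot{H}^1(\Omega)\to \dot{H}^{1/2}(\Gamma^{\rm D})$ from Theorem \ref{theortrace}, and the existence of a continuous right inverse ${\bf E}:\dot{H}^{1/2}(\Gamma^{\rm D})\to \dot{H}^1(\Omega)$ from Theorem \ref{theoremsurj}, together with the minimization property stated in Proposition \ref{propharmonic}.

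For the upper bound, I would argue as follows. Given $\psi\in \dot{H}^{1/2}(\Gamma^{\rm D})$, let ${\bf E}\psi\in \dot{H}^1(\Omega)$ be the extension produced by Theorem \ref{theoremsurj}, which satisfies ${\rm Tr}^{\rm D}({\bf E}\psi)=\psi$ and $\Vert \nabla({\bf E}\psi)\Vert_{L^2(\Omega)}\le C \vert\psi\vert_{\dot{H}^{1/2}(\Gamma^{\rm D})}$. Since $\psi^\mfh$ minimizes the Dirichlet energy among all functions in $\dot{H}^1(\Omega)$ whose trace on $\Gamma^{\rm D}$ equals $\psi$ (Proposition \ref{propharmonic}), we obtain $\Vert\nabla\psi^\mfh\Vert_{L^2(\Omega)}\le \Vert\nabla({\bf E}\psi)\Vert_{L^2(\Omega)}\le C\vert\psi\vert_{\dot{H}^{1/2}(\Gamma^{\rm D})}$. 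Combined with \eqref{G0kin}, this yields $\langle G_0\psi,\psi\rangle \le C^2\vert\psi\vert^2_{\dot{H}^{1/2}(\Gamma^{\rm D})}$.

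For the lower bound, I would apply Theorem \ref{theortrace} directly to $\psi^\mfh\in \dot{H}^1(\Omega)$. Since ${\rm Tr}^{\rm D}(\psi^\mfh)=\psi$ (again by Proposition \ref{propharmonic}), the continuity of ${\rm Tr}^{\rm D}$ gives
\[
\vert\psi\vert_{\dot{H}^{1/2}(\Gamma^{\rm D})}=\vert {\rm Tr}^{\rm D}(\psi^\mfh)\vert_{\dot{H}^{1/2}(\Gamma^{\rm D})} \le C \Vert\nabla\psi^\mfh\Vert_{L^2(\Omega)},
\]
and using \eqref{G0kin} once more converts the right-hand side into $C\langle G_0\psi,\psi\rangle^{1/2}$, which yields the desired inequality after squaring.

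There is no serious obstacle: both directions reduce to one-line invocations of results already in place, and no fresh estimate on the corner geometry is needed at this stage. The only minor point requiring care is the consistent use of the weak definition \eqref{defdnpsih} of $G_0$ so that $\langle G_0\psi,\psi\rangle$ really coincides with the Dirichlet energy (not merely dominates it), but this is exactly the content of Remark \ref{remDN}. Thus the whole argument is essentially a packaging of Theorems \ref{theortrace} and \ref{theoremsurj} with the variational characterization of the harmonic extension.
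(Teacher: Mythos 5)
Your proof is correct and follows exactly the same strategy as the paper: the upper bound via Theorem \ref{theoremsurj} combined with the minimizing property of the harmonic extension, and the lower bound via the continuity of the trace mapping from Theorem \ref{theortrace}, both mediated by the identity $\langle G_0\psi,\psi\rangle=\Vert\nabla\psi^\mfh\Vert_{L^2(\Omega)}^2$. The only difference is cosmetic: the paper cites the minimization property as being in Proposition \ref{propDN} (apparently a slip for Proposition \ref{propharmonic}), whereas you reference Proposition \ref{propharmonic} directly, which is the correct location.
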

\begin{proof}
The function $\psi^\mfh$ is in $\dot{H}^1(\Omega)$ and has trace $\psi$ on $\Gamma^{\rm D}$, so that we can use the continuity of the trace mapping proved in Theorem \ref{theortrace} to get the first inequality. Remark now that there  exists an extension of $\psi$ with finite Dirichlet energy bounded from above by  $C \vert \psi\vert_{\dot{H}^{1/2}(\Gamma^{\rm D})}$ (Theorem \ref{theoremsurj}). The second inequality follows from the minimization property stated in Proposition \ref{propharmonic}. 
\end{proof}

Since the angles between $\Gamma^{\rm N}$ and $\Gamma^{\rm D}$ at each corner are strictly less than $\pi$, it is possible to show that for $\epsilon>0$ small enough, one has $\psi^\mfh\in H^{3/2+\epsilon}(\Omega)$ if $\psi\in H^{1+\epsilon}(\Gamma^{\rm D})$ \cite{Dauge,Poyferre}. By the trace theorem, one deduces that $G_0\psi=(\dz \psi^\mfh)_{\vert_{\Gamma^{\rm D}}}$ belongs to $H^\epsilon(\Gamma^{\rm D})$. The Dirichlet-Neumann operator is therefore well defined as a continuous operator $G_0: H^{1+\epsilon}(\Gamma^{\rm D}) \to H^\epsilon(\Gamma^{\rm D})$. 

The following proposition shows that this remains true in the endpoint case $\epsilon=0$ (and with a homogeneous Sobolev space), which is the one we shall need in this paper. It is based on a Rellich identity, in the spirit of \cite{AgrawalAlazard} where the same identity is used to obtain refined estimates on the Dirichlet-Neumann operator for the Dirichlet problem in a half-domain delimited by the graph of a $C^1$-function. 

In the statement below, $\partial_{\rm tan} \psi^\mfh$ stands for the tangential derivative of $\psi^\mfh$ on the boundary, $\partial_{\rm tan} \psi^\mfh=\widetilde{\bf n}^\perp \cdot \nabla \psi^\mfh$, where $\widetilde{\bf n}$ denotes the outward unit normal vector on each point of $\Gamma^*$ (note that as the normal derivative, the tangential derivative can be defined in a weak sense).  We also recall that the semi-norm $\vert \psi \vert_{\dot{H}^1(\Gamma^{\rm D})}$, defined in Definition \ref{defdotH1Gamma} is larger and not equivalent to the semi-norm $\vert \dx \psi\vert_{L^2(\Gamma^{\rm D})}$. 
\begin{proposition}\label{propDNcont}
Let $\Omega$, $\Gamma^{\rm D}$ and $\Gamma^{\rm N}$ be as in Assumption \ref{assconfig}. 
Then for all $\psi\in \dot{H}^1(\Gamma^{\rm D})$, one has $G_0\psi\in L^2(\Gamma^{\rm D})$ and $(\partial_{\rm tan} \psi^\mfh)_{\vert_{\Gamma^{\rm N}}}\in L^2(\Gamma^{\rm N})$, and moreover
$$
\vert G_0\psi \vert_{L^2(\Gamma^{\rm D})}+\vert (\partial_{\rm tan}\psi^\mfh)_{\vert_{\Gamma^{\rm N}}} \vert_{L^2(\Gamma^{\rm N})}\leq C \vert \psi\vert_{\dot{H}^1(\Gamma^{\rm D})} , 
$$
for some constant $C>0$ that depends only on the geometry of $\Omega$.
\end{proposition}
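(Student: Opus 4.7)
The plan is to derive both estimates from two Rellich-type identities obtained by multiplying $\Delta \psi^\mfh = 0$ by $\vec\alpha \cdot \nabla \psi^\mfh$ for well-chosen Lipschitz vector fields $\vec\alpha$ on $\overline\Omega$. Integration by parts gives
\begin{equation*}
\int_{\partial\Omega}\bigl[2(\vec\alpha\cdot\nabla\psi^\mfh)(\widetilde{\bf n}\cdot\nabla\psi^\mfh)-(\vec\alpha\cdot\widetilde{\bf n})|\nabla\psi^\mfh|^2\bigr]\,d\sigma=\int_\Omega\bigl[2(\nabla\vec\alpha\,\nabla\psi^\mfh)\cdot\nabla\psi^\mfh-(\mathrm{div}\vec\alpha)|\nabla\psi^\mfh|^2\bigr],
\end{equation*}
whose bulk right-hand side is controlled by $C\Vert\vec\alpha\Vert_{W^{1,\infty}}\Vert\nabla\psi^\mfh\Vert_{L^2(\Omega)}^2$, which by Proposition \ref{propequivnorms2} and Proposition \ref{characdotH1} is bounded by $C\vert\psi\vert_{\dot H^1(\Gamma^{\rm D})}^2$.

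The first step is to construct two bounded Lipschitz vector fields on $\overline\Omega$: (a) $\vec\alpha^{(1)}$ satisfying $\vec\alpha^{(1)}\cdot\widetilde{\bf n}\equiv 0$ on $\Gamma^{\rm N}$ and $\vec\alpha^{(1)}\cdot\widetilde{\bf n}\geq c>0$ on $\Gamma^{\rm D}$; (b) $\vec\alpha^{(2)}$ satisfying $\vec\alpha^{(2)}\cdot\widetilde{\bf n}\leq -c<0$ on $\Gamma^{\rm N}$ (and bounded on $\Gamma^{\rm D}$). Both are built by patching: near the interior of $\Gamma^{\rm D}$ take $\vec\alpha^{(1)}=(0,1)$, near $\Gamma^{\rm N}$ take a smooth tangential extension, and interpolate near each corner using polar coordinates. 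Part~(iii) of Assumption~\ref{assconfig} (angle strictly in $(0,\pi)$) ensures that the direction $(0,1)$ on the $\Gamma^{\rm D}$ side and a tangent direction on the $\Gamma^{\rm N}$ side are not anti-parallel, so the interpolation can be done without vanishing and with the required sign conditions preserved. For $\vec\alpha^{(2)}$ I would simply take an inward smooth extension of $-\widetilde{\bf n}_{|\Gamma^{\rm N}}$.

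With $\vec\alpha^{(1)}$, the $\Gamma^{\rm N}$ boundary contribution vanishes; on $\Gamma^{\rm D}$ (where $\widetilde{\bf n}=(0,1)$, $\widetilde{\bf n}\cdot\nabla\psi^\mfh=G_0\psi$, $\partial_{\rm tan}\psi^\mfh=\partial_x\psi$) the boundary integrand becomes $(\vec\alpha^{(1)}\cdot\widetilde{\bf n})[(G_0\psi)^2-(\partial_x\psi)^2]+2(\vec\alpha^{(1)}\cdot{\bf t})(\partial_x\psi)(G_0\psi)$; absorbing the cross term via Young's inequality and using $\vec\alpha^{(1)}\cdot\widetilde{\bf n}\geq c$ yields $\vert G_0\psi\vert_{L^2(\Gamma^{\rm D})}^2\leq C\vert\psi\vert_{\dot H^1(\Gamma^{\rm D})}^2$. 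Applying the identity with $\vec\alpha^{(2)}$, the $\Gamma^{\rm N}$ contribution is $\int_{\Gamma^{\rm N}}(-\vec\alpha^{(2)}\cdot\widetilde{\bf n})(\partial_{\rm tan}\psi^\mfh)^2\geq c\,\vert\partial_{\rm tan}\psi^\mfh\vert_{L^2(\Gamma^{\rm N})}^2$, while the $\Gamma^{\rm D}$ contribution and the bulk term are bounded by $C(\vert G_0\psi\vert_{L^2}^2+\vert\partial_x\psi\vert_{L^2}^2)\leq C\vert\psi\vert_{\dot H^1(\Gamma^{\rm D})}^2$ thanks to the previous step; this yields the second estimate.

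The main obstacle is that the Rellich identity as written requires $\psi^\mfh\in H^2$ up to $\partial\Omega$, which is precisely what fails at the corners of $\Omega$. The remedy is approximation: approximate $\psi\in\dot H^1(\Gamma^{\rm D})$ by smooth compactly supported data $\psi_n\in{\mathcal D}(\Gamma^{\rm D})$ (density in $\dot H^1$ for its canonical semi-norm follows by mollification from Proposition \ref{propdense2}); for each $\psi_n$ the elliptic regularity recalled before the proposition gives $\psi_n^\mfh\in H^{3/2+\varepsilon}(\Omega)$, so $\nabla\psi_n^\mfh$ has an $L^2$ trace on $\partial\Omega$ and the identity holds classically after excising small disks of radius $\delta$ around each corner and letting $\delta\to 0$ (the circular arc boundary terms vanishing by the integrability of the standard $r^{\pi/(2\omega)-1}$ corner behavior of $\nabla\psi_n^\mfh$). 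Since the estimates obtained in the above steps are uniform in $n$, passing to the limit yields the bounds for arbitrary $\psi\in\dot H^1(\Gamma^{\rm D})$.
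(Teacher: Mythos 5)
Your argument is a correct Rellich-type proof that differs from the paper's in two respects, and the comparison is worth recording.

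\emph{The vector fields.} The paper constructs a single $\boldsymbol\alpha$ with $\boldsymbol\alpha\cdot\widetilde{\bf n}\leq-\delta$ on $\Gamma^{\rm D}$ and $\boldsymbol\alpha\cdot\widetilde{\bf n}\geq\delta$ on $\Gamma^{\rm N}$, and obtains the bounds on $G_0\psi$ and $(\partial_{\rm tan}\psi^\mfh)_{\vert_{\Gamma^{\rm N}}}$ simultaneously from one identity. You use $\vec\alpha^{(1)}$ tangent to $\Gamma^{\rm N}$ to kill the $\Gamma^{\rm N}$ boundary term and bound $G_0\psi$ first, then a separate $\vec\alpha^{(2)}$ with negative normal component on $\Gamma^{\rm N}$ and feed the first bound into the second identity. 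This works and is perhaps a little more transparent, at the cost of two patching constructions; note though that your $\vec\alpha^{(1)}$, being tangent to $\Gamma^{\rm N}$ and having positive vertical component at both contact points of a wetted arc $\Gamma^{(\rm w)}_j$, must necessarily vanish somewhere on that arc (the sign of the tangential component flips between the two corners), which is harmless but worth stating so that a reader does not expect a nonvanishing field. Interestingly, your $\vec\alpha^{(1)}$ is essentially the vector field the paper uses in the \emph{next} proposition (Proposition \ref{propDNell}), so you have in effect reorganized the paper's two Rellich lemmas.

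\emph{The regularity justification.} The paper cites \cite{Brown,LCB} for the validity of the Rellich identity under the regularity $\psi\in\dot H^1(\Gamma^{\rm D})$, treating bounded and unbounded domains respectively. You propose a self-contained approximation: approximate $\psi$ by $\psi_n\in{\mathcal D}(\Gamma^{\rm D})$, excise small disks at the corners, use the known $r^{\pi/(2\omega)-1}$ behavior of $\nabla\psi_n^\mfh$ to show the arc contributions vanish as $\delta\to 0$ (indeed the arc term scales like $\delta^{\pi/\omega-1}\to 0$ precisely when $\omega<\pi$, so Assumption~\ref{assconfig}~(iii) enters exactly here), and pass to the limit in $n$. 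This is a genuinely self-contained alternative to citing \cite{Brown,LCB}, but a few steps are glossed. First, the density of ${\mathcal D}(\Gamma^{\rm D})$ in $\dot H^1(\Gamma^{\rm D})$ for the $\dot H^1$ semi-norm is not the content of Proposition~\ref{propdense2} (which concerns $\dot H^{1/2}$); it needs a separate extend–mollify–truncate argument, with the truncation handled carefully on unbounded components and with the average-difference terms in the $\dot H^1(\Gamma^{\rm D})$ semi-norm checked to converge. Second, passing to the limit in $n$ requires a weak-$L^2$-compactness argument together with an identification of the weak limits with $G_0\psi$ (via convergence in $\dot H^{1/2}(\Gamma^{\rm D})'$) and with $\partial_{\rm tan}\psi^\mfh$ (via testing against compactly supported test functions away from corners and using $H^1_{\rm loc}$ convergence of $\psi_n^\mfh$); this is standard but should be spelled out. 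Third, when $\Omega$ is unbounded you also need to justify the divergence theorem at spatial infinity; for $\psi_n\in{\mathcal D}(\Gamma^{\rm D})$ one expects exponential decay of $\nabla\psi_n^\mfh$, but this is not free and should at least be mentioned. Filling these in would give a clean self-contained proof.
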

\begin{proof}
From the assumption on the angles at the boundaries, it is possible to find a smooth vector field ${\boldsymbol{\alpha}}=(\alpha_1,\alpha_2)^{\rm T}$ and a constant $\delta>0$ such that
$$
\begin{cases}
{\boldsymbol{\alpha}}\cdot \widetilde{\bf n} \leq -\delta & \mbox{on }\Gamma^{\rm D}\\
{\boldsymbol{\alpha}} \cdot \widetilde{\bf n} \geq \delta & \mbox{on }  \Gamma^{\rm N},
\end{cases}
$$
where $\widetilde{\bf n}$  is the outwards unit normal vector on  $\Gamma^{\rm D}\cup \Gamma^{\rm N}$. 

We now use the fact that it is possible to define the non-tangential trace at the boundary of solutions of mixed problems of the type 
\begin{equation}\label{systrellich}
\begin{cases}
\Delta u= f \mbox{ in }\Omega,\\
u_{\vert_{\Gamma^{\rm D}}}=g, \qquad (\partial_{\rm n} u)_{\vert_{\Gamma^{\rm N}}}=h,
\end{cases}
\end{equation}
provided that $f\in L^2(\Omega)$, $g\in \dot{H}^1(\Gamma^{\rm D})$ and $h\in L^2(\Gamma^{\rm N})$. 

We refer to \cite{Brown} where this is proved for a class of bounded domains that includes curvilinear polygons with angles smaller than $\pi$, and to \cite{LCB} where it is proved for unbounded domains that are the epigraph of a class of Lipschitz functions that includes curvilinear broken lines with angles smaller than $\pi$; these two references (which are generalizations to mixed boundary conditions of classical results for the purely Dirichlet or purely Neumann cases \cite{Dahlberg,JK,Kenig}) allow us to cover the configurations of Assumption \ref{assconfig}. 

It is also proved in \cite{Brown,LCB} that under this regularity, it is possible to derive a Rellich identity which reads
\begin{equation}\label{rellich1}
\int_{\Gamma} \vert \nabla u \vert^2 {\boldsymbol{\alpha}}\cdot \widetilde{\bf n}-2\int_{\Gamma } (\widetilde{\bf n}\cdot \nabla u)({\boldsymbol{\alpha}}\cdot \nabla u)=F,
\end{equation}
with
$$
F=\int_\Omega (\nabla\cdot \alpha)\vert{\nabla u}\vert^2-2\int_\Omega\big( (\nabla\alpha_1)\dx u +(\nabla\alpha_2)\dz u\big)\cdot \nabla u
- 2\int_\Omega ( {\boldsymbol{\alpha}}\cdot\nabla u) f.
$$
(this stems easily from Gauss' divergence theorem). 

Applying this identity to $\psi^\mfh$ (that is, taking $f=0$, $g=\psi$ and $h=0$, we obtain 
$$
\int_{\Gamma} \vert \nabla  \psi^\mfh \vert^2 {\boldsymbol{\alpha}}\cdot \widetilde{\bf n}-2\int_{\Gamma^{\rm D}}\partial_{z} \psi^\mfh({\boldsymbol{\alpha}}\cdot \nabla\psi^\mfh)=F,
$$
while $F$ takes the form
$$
F=\int_\Omega (\nabla\cdot \alpha)\vert{\nabla \psi^\mfh}\vert^2-2\int_\Omega\big( (\nabla\alpha_1)\dx\psi^\mfh+(\nabla\alpha_2)\dz\psi^\mfh\big)\cdot \nabla\psi^\mfh.
$$

Decomposing  $\vert{\nabla\psi^\mfh}\vert^2=(\partial_{\rm tan} \psi^\mfh)^2+(\partial_{\rm n}\psi^\mfh)^2$ on $\Gamma^{\rm D}\cup \Gamma^{\rm N}$, this can be rewritten as
$$
\int_{\Gamma^{\rm D}} (\partial_z \psi^\mfh)^2 \alpha_2
+\int_{\Gamma^{\rm N}} (\partial_{\rm tan} \psi^\mfh)^2 {\boldsymbol{\alpha}}\cdot \widetilde{\bf n}
-2\int_{\Gamma^{\rm D}}\partial_z \psi^\mfh({\boldsymbol{\alpha}}\cdot \nabla\psi^\mfh)=F
-\int_{\Gamma^{\rm D}} ( \dx \psi)^2 \alpha_2.
$$

Decomposing ${\boldsymbol{\alpha}}\cdot \nabla\psi^\mfh=\alpha_1\dx\psi+\alpha_2\partial_z\psi^\mfh$ in the third boundary integral, we get
\begin{equation}\label{Rellichbis}
-\int_{\Gamma^{\rm D}} (\partial_z \psi^\mfh)^2 \alpha_2
+\int_{\Gamma^{\rm N}} (\partial_{\rm tan} \psi^\mfh)^2 {\boldsymbol{\alpha}}\cdot \widetilde{\bf n}
=F
-\int_{\Gamma^{\rm D}} ( \partial_x \psi)^2 \alpha_2+2\int_{\Gamma^{\rm D}} \alpha_1\partial_z \psi^\mfh\partial_x\psi ,
\end{equation}
from which, using the properties of $\alpha$ and Young's inequality to control the last term, we obtain that there is a constant $C>0$ such that
$$
\int_{\Gamma^{\rm D}} (\partial_z \psi^\mfh)^2 +\int_{\Gamma^{\rm N}} (\partial_{\rm tan} \psi^\mfh)^2
\leq C\big( \int_{\Gamma^{\rm D}} ( \partial_x\psi)^2 +\int_\Omega \vert \nabla \psi^\mfh \vert^2 \big).
$$
The result then follows from the estimate of Propositions \ref{propequivnorms2} and \ref{characdotH1}.
\end{proof}

Using similar tools, it is also possible to prove an ellipticity result on $G_0$. More precisely, the proposition below shows that if $\psi\in \dot{H}^{1/2}(\Gamma^{\rm D})$ and $G_0\psi\in L^2(\Gamma^{\rm D})$ (and not only in $\dot{H}^{1/2}(\Gamma^{\rm D})'$), then $\psi\in \dot{H}^1(\Gamma^{\rm D})$.
\begin{proposition}\label{propDNell}
Let $\Omega$, $\Gamma^{\rm D}$ and $\Gamma^{\rm N}$ be as in Assumption \ref{assconfig}. 
Then if $\psi\in \dot{H}^{1/2}(\Gamma^{\rm D})$ and  $G_0\psi \in L^2(\Gamma^{\rm D})$, one has $\psi\in \dot{H}^1(\Gamma^{\rm D})$, and moreover
$$
\vert \psi \vert_{\dot{H}^1(\Gamma^{\rm D})} \leq C \big( \vert \psi\vert_{\dot{H}^{1/2}(\Gamma^{\rm D})}+ \vert G_0\psi\vert_{L^2(\Gamma^{\rm D})}\big),
$$
for some constant $C>0$ that depends only on the geometry of $\Omega$.
\end{proposition}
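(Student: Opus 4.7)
The plan is to proceed as in the proof of Proposition \ref{propDNcont}: we apply the Rellich identity \eqref{Rellichbis} to $\psi^\mfh$, but now with a multiplier $\boldsymbol{\alpha}$ whose signs on $\Gamma^{\rm D}$ and $\Gamma^{\rm N}$ are \emph{reversed} with respect to the ones used there, so as to read off a bound on the tangential derivative of $\psi$ on $\Gamma^{\rm D}$ from the normal one $G_0\psi$. Since by Proposition \ref{characdotH1} one has $\vert \psi\vert_{\dot{H}^1(\Gamma^{\rm D})}\sim \vert \psi\vert_{\dot{H}^{1/2}(\Gamma^{\rm D})}+\vert \dx\psi\vert_{L^2(\Gamma^{\rm D})}$, it suffices to bound $\vert \dx\psi\vert_{L^2(\Gamma^{\rm D})}$ by the right-hand side of the claimed inequality.

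The assumption that every corner angle lies strictly between $0$ and $\pi$ allows one to construct a smooth vector field $\boldsymbol{\alpha}=(\alpha_1,\alpha_2)^{\rm T}$ and a constant $\delta>0$ with $\alpha_2\geq \delta$ on $\Gamma^{\rm D}$ (so $\boldsymbol{\alpha}\cdot \widetilde{\bf n}\geq \delta$ there) and $\boldsymbol{\alpha}\cdot \widetilde{\bf n}\leq -\delta$ on $\Gamma^{\rm N}$. Plugging this $\boldsymbol{\alpha}$ into \eqref{Rellichbis} and regrouping the favorably signed boundary terms on the left-hand side yields
\[
\int_{\Gamma^{\rm D}} (\partial_x\psi)^2 \alpha_2 + \int_{\Gamma^{\rm N}} (\partial_{\rm tan}\psi^\mfh)^2 \vert \boldsymbol{\alpha}\cdot \widetilde{\bf n}\vert = F + \int_{\Gamma^{\rm D}} (\partial_z\psi^\mfh)^2 \alpha_2 + 2\int_{\Gamma^{\rm D}} \alpha_1 \, \partial_z\psi^\mfh \, \partial_x\psi.
\]
Using that $(\partial_z\psi^\mfh)_{\vert_{\Gamma^{\rm D}}}=G_0\psi$, the bound $\vert F\vert \leq C\Vert\nabla\psi^\mfh\Vert_{L^2(\Omega)}^2 \leq C\vert \psi\vert_{\dot{H}^{1/2}(\Gamma^{\rm D})}^2$ from Proposition \ref{propequivnorms2}, dropping the non-negative $\Gamma^{\rm N}$-contribution, and Young's inequality on the cross term to absorb $\frac{\delta}{2}\int_{\Gamma^{\rm D}}(\partial_x\psi)^2$ into the left-hand side, one obtains $\vert \dx\psi\vert_{L^2(\Gamma^{\rm D})}^2 \lesssim \vert\psi\vert_{\dot{H}^{1/2}(\Gamma^{\rm D})}^2 + \vert G_0\psi\vert_{L^2(\Gamma^{\rm D})}^2$, which together with Proposition \ref{characdotH1} proves the claim.

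The main obstacle is to justify this Rellich identity at the present level of regularity: the non-tangential trace results of \cite{Brown,LCB} invoked in the proof of Proposition \ref{propDNcont} were used with Dirichlet data in $\dot{H}^1(\Gamma^{\rm D})$, which is precisely the conclusion we are trying to reach. The natural substitute is to exploit the \emph{Neumann} side of the problem: $\psi^\mfh$ can also be viewed as the variational solution of a mixed problem with $L^2$ Neumann data, namely $G_0\psi$ on $\Gamma^{\rm D}$ and $0$ on $\Gamma^{\rm N}$, and the corresponding non-tangential maximal function estimates on Lipschitz corner domains should supply the $L^2$ non-tangential boundary values of $\nabla\psi^\mfh$ on $\Gamma^*$ needed to legitimate the identity. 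Concretely, I would apply \eqref{Rellichbis} first on smooth inner approximations $\Omega_\eta$ of $\Omega$, on which the identity holds classically, and then pass to the limit $\eta\to 0$ using these non-tangential convergence estimates.
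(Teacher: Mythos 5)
Your argument is essentially the paper's: apply the Rellich identity \eqref{Rellichbis} to $\psi^\mfh$ with a multiplier $\boldsymbol{\alpha}$ chosen so that the tangential derivative $\partial_x\psi$ on $\Gamma^{\rm D}$ appears with a definite sign, absorb the remaining cross term by Young's inequality, and conclude from Propositions \ref{propequivnorms2} and \ref{characdotH1}. The only differences are cosmetic and to your credit: the paper makes $\boldsymbol{\alpha}\cdot\widetilde{\bf n}$ vanish on $\Gamma^{\rm N}$ so that boundary term disappears, whereas you reverse its sign so it can be dropped (gaining, as a bonus, a control of $\partial_{\rm tan}\psi^\mfh$ on $\Gamma^{\rm N}$ as well), and you are more scrupulous than the paper about justifying the Rellich identity at this a priori regularity, correctly observing that one should regard $\psi^\mfh$ as the solution of a (mixed/pure) Neumann problem with $L^2$ data — which avoids the circularity of invoking the $\dot{H}^1$ Dirichlet theory that is only the conclusion — and pass to the limit from smooth interior approximations.
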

\begin{proof}
We proceed as in the proof of the previous proposition but construct the vector field ${\boldsymbol{\alpha}}$ in such a way that ${\boldsymbol{\alpha}}\cdot \widetilde{\bf n}=0$ on $\Gamma^{\rm N}$. The identity \eqref{Rellichbis} then becomes
$$
-\int_{\Gamma^{\rm D}} ( \partial_x \psi)^2 \alpha_2= F-\int_{\Gamma^{\rm D}} (\partial_z \psi^\mfh)^2 \alpha_2+2\int_{\Gamma^{\rm D}} \alpha_1\partial_z \psi^\mfh\partial_x\psi.
$$

As  in the proof of the previous proposition, we can use Young's inequality to obtain that there is a constant $C>0$ such that
\begin{align*}
\vert \dx \psi \vert_{L^2(\Gamma^{\rm D})}^2 &\leq C \big( \Vert \nabla \psi^\mfh\Vert_{L^2(\Omega)}^2 + \vert G_0\psi\vert_{L^2(\Gamma^{\rm D})}^2 \big)\\
&\leq C \big( \vert \psi\vert_{\dot{H}^{1/2}(\Gamma^{\rm D})} + \vert G_0\psi\vert_{L^2(\Gamma^{\rm D})}^2 \big),
\end{align*}
where we used Proposition \ref{propequivnorms} to derive the second inequality. The result then follows directly from Proposition \ref{characdotH1}.
\end{proof}


As a corollary, we can prove the fact that the operator $G_0$ is self-adjoint. Such a result is well known when $\Omega$ is bounded and when Dirichlet boundary conditions are imposed on the whole boundary of $\Omega$ (i.e. $\Gamma^{\rm N}=\emptyset$), even if the boundary of $\Omega$ is rough (see for instance \cite{ArendtElst}); for unbounded domains it is proved in \cite{RoussetTzvetkov,Lannes_book} in the context of water waves (the Dirichlet data is given on the surface of a strip of fluid, which is assumed to be a smooth graph). 

Proposition \ref{propDNell} allows us to simplify this proof and to generalize it to the present case of corner domains with mixed boundary conditions. We will not use this result in the rest of this article, but we state it for its independent interest. It is stated in the framework of standard non homogeneous Sobolev spaces.
\begin{corollary}
Let $\Omega$, $\Gamma^{\rm D}$ and $\Gamma^{\rm N}$ be as in Assumption \ref{assconfig}. 
The Dirichlet-Neumann operator $G_0$ defined in Definition \ref{propDN} admits a self-adjoint realization on $L^2(\Gamma^{\rm D})$ with domain $H^1(\Gamma^{\rm D})$.
\end{corollary}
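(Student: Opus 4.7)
The plan is to construct the self-adjoint realization via the classical Friedrichs/form representation theorem, using the symmetric bilinear form associated with $G_0$ and then identifying its domain through the elliptic regularity already established in Proposition \ref{propDNell}.

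First, I would introduce the form
$$
a(\psi,\psi') := \langle G_0\psi,\psi'\rangle = \int_\Omega \nabla\psi^\mfh\cdot\nabla(\psi')^\mfh
$$
on the Hilbert space $V := H^{1/2}(\Gamma^{\rm D})$ (which, by Proposition \ref{propequivter}, is $L^2(\Gamma^{\rm D})\cap \dot H^{1/2}(\Gamma^{\rm D})$ with the equivalent norm $\vert\cdot\vert_{L^2}+\vert\cdot\vert_{\dot H^{1/2}}$). Clearly $a$ is symmetric, nonnegative, and by Proposition \ref{propequivnorms2} bounded on $V$. To invoke the representation theorem for symmetric forms I would consider the shifted form $b(\psi,\psi'):=(\psi,\psi')_{L^2(\Gamma^{\rm D})}+a(\psi,\psi')$; its quadratic form is equivalent to the $H^{1/2}(\Gamma^{\rm D})$ norm (again by Proposition \ref{propequivnorms2}), so $V$ is complete for this form-norm, i.e. $b$ is a closed, densely defined, symmetric, coercive form on $L^2(\Gamma^{\rm D})$ with form domain $V$.

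By the representation theorem (see e.g.\ Kato, VI.2), there is a unique self-adjoint operator $A$ on $L^2(\Gamma^{\rm D})$ with domain
$$
D(A)=\{\psi\in V:\ \psi'\mapsto b(\psi,\psi')\ \text{extends continuously to}\ L^2(\Gamma^{\rm D})\},
$$
such that $b(\psi,\psi')=(A\psi,\psi')_{L^2}$ for $\psi\in D(A)$, $\psi'\in V$. Setting $\widetilde G_0 := A-\mathrm{Id}_{L^2(\Gamma^{\rm D})}$ yields a self-adjoint realization of $G_0$ on $L^2(\Gamma^{\rm D})$ with the same domain, and by construction $\widetilde G_0\psi = G_0\psi$ (in the distributional sense) for $\psi\in D(A)$.

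It remains to identify $D(A)$ with $H^1(\Gamma^{\rm D})$. By the defining property of $D(A)$, a function $\psi\in V$ lies in $D(A)$ if and only if $G_0\psi\in L^2(\Gamma^{\rm D})$. Proposition \ref{propDNcont} gives the inclusion $\dot H^1(\Gamma^{\rm D})\cap V \subset D(A)$, and Proposition \ref{propDNell} gives the reverse inclusion $D(A)\subset \dot H^1(\Gamma^{\rm D})\cap V$. It then suffices to check the algebraic identification $\dot H^1(\Gamma^{\rm D})\cap L^2(\Gamma^{\rm D}) = H^1(\Gamma^{\rm D})$, which is immediate from Proposition \ref{characdotH1}: the constraint $\dx\psi\in L^2(\Gamma^{\rm D})$ together with $\psi\in L^2(\Gamma^{\rm D})$ forces $\psi\in H^1(\Gamma^{\rm D})$, and conversely any $H^1(\Gamma^{\rm D})$ function is in $L^2\cap \prod_{j}\dot H^1(\cE_j)\subset L^2\cap \dot H^{1/2}(\Gamma^{\rm D})\cap \prod_j\dot H^1(\cE_j)$ thanks to Proposition \ref{propequivter}.

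The main obstacle, in my view, is not the abstract form argument (which is routine once $b$ is shown to be closed) but rather the sharp identification of the domain: one must rule out the possibility that $D(A)$ is strictly larger than $H^1(\Gamma^{\rm D})$, and this rests entirely on the elliptic estimate of Proposition \ref{propDNell}, which is itself delicate because of the corners and the mixed boundary conditions. Once that proposition is available the identification reduces to bookkeeping between the semi-normed homogeneous spaces $\dot H^s(\Gamma^{\rm D})$ and their $L^2$-intersections.
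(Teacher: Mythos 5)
Your proof is correct, but it follows a genuinely different route from the paper's. You construct the self-adjoint realization \emph{a priori} via the representation theorem for closed, densely defined, symmetric, coercive forms (Kato/Lax--Milgram--Lions), working with the shifted form $b=(\cdot,\cdot)_{L^2(\Gamma^{\rm D})}+\langle G_0\cdot,\cdot\rangle$ on $V=H^{1/2}(\Gamma^{\rm D})$, whose form-norm is equivalent to the $H^{1/2}(\Gamma^{\rm D})$ norm by Propositions \ref{propequivnorms2} and \ref{propequivter}; Propositions \ref{propDNcont} and \ref{propDNell} then enter only to identify $D(A)=\{\psi\in V : G_0\psi\in L^2(\Gamma^{\rm D})\}$ with $H^1(\Gamma^{\rm D})$. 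The paper instead works directly with the symmetric operator: it takes $G_0$ on $H^1(\Gamma^{\rm D})$, notes density and closability, shows from the definition of the adjoint that any $\psi\in D(G_0^*)$ has $G_0\psi\in L^2(\Gamma^{\rm D})$, hence $\psi\in H^1(\Gamma^{\rm D})$ by Proposition \ref{propDNell}, and closes the loop with the chain $H^1(\Gamma^{\rm D})\subset D(\overline{G_0})\subset D(G_0^*)\subset H^1(\Gamma^{\rm D})$. Both routes hinge on the same elliptic estimate of Proposition \ref{propDNell}, and you rightly flag this as the crux. The trade-off: the paper's operator-theoretic argument is more elementary (only the definition of adjoint is used) and shows in passing that $G_0$ is already closed on $H^1(\Gamma^{\rm D})$; the form method front-loads the abstract existence of a self-adjoint extension and isolates the elliptic regularity to the single task of domain identification, which some may find conceptually cleaner. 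One small remark: after Proposition \ref{propDNell} you obtain $\psi\in L^2(\Gamma^{\rm D})\cap\dot H^1(\Gamma^{\rm D})$, and the identification with $H^1(\Gamma^{\rm D})$ really just follows from the definition of $\dot H^1(\Gamma^{\rm D})$ (namely $\partial_x\psi_j\in L^2(\cE_j)$ on each component) rather than from Proposition \ref{characdotH1}, but this is only a matter of which reference to cite.
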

\begin{proof}
The operator $G_0: {D}(G_0) \to L^2(\Gamma^{\rm D})$ is obviously symmetric since for all $\psi,\psi'\in {D}(G_0)$, one has
$$
 (G_0\psi ,\psi')_{L^2(\Gamma^{\rm D})}=\int_\Omega \nabla\psi^\mfh \cdot \nabla (\psi')^\mfh=(\psi,G_0\psi')_{L^2(\Gamma^{\rm D})};
$$
moreover, the domain $ {D}(G_0)$ is dense in $L^2(\Gamma^{\rm D})$ since by Proposition \ref{propDNcont} it contains $H^1(\Gamma^{\rm D})$. 

The operator $G_0$ is therefore closable; by abuse of notation, we still denote by $G_0$ its closure, which is also symmetric, so that ${D}(G_0)\subset {D}(G_0^*)$, where the domain of the adjoint operator  ${D}(G_0^*)$ is by definition given by
$$
{ D}(G_0^*)=\{ \psi \in L^2(\Gamma^{\rm D}), \quad \exists C>0, \forall \psi'\in {D}(G_0), \vert (\psi,G_0\psi')\vert \leq C \vert \psi' \vert_{L^2(\Gamma^{\rm D})} \}.
$$
For  $\psi\in L^2(\Gamma^{\rm D})$,  one can define $G_0\psi \in H^{-1}(\Gamma^{\rm D})$ by $\langle  G_0\psi,\psi' \rangle_{H^{-1}-H_0^1}=(\psi, G_0\psi')$ for all $\psi'\in H_0^1(\Gamma^{\rm D})$. 

Therefore, if $\psi\in {D}(G_0^*)$ one actually has $G_0\psi\in L^{2}(\Gamma^{\rm D})$; by Proposition \ref{propDNell}, this implies that $\psi\in H^1(\Gamma^{\rm D})$. To summarize, we have proved the inclusions $H^1(\Gamma^{\rm D})\subset {D}(G_0)\subset {D}(G_0^*)\subset H^1(\Gamma^{\rm D})$. It follows that all these inclusions are equalities, which proves the corollary.
\end{proof}

We finally prove a higher order ellipticity result stating basically that if $G_0\psi$ is in $H^{1/2}(\Gamma^{\rm D})$ it remains true that $\psi$ is one order more regular. Note that in the case where $\Omega$ is unbounded, we must consider $\psi \in {H}^{1/2}(\Gamma^{\rm D})$ rather than its homogeneous version $\dot{H}^{1/2}(\Gamma^{\rm D})$.
\begin{proposition}\label{propDNHO}
Let $\Omega$, $\Gamma^{\rm D}$ and $\Gamma^{\rm N}$ be as in Assumption \ref{assconfig}. \\ 
- If $\Omega$ is bounded then if $\psi\in \dot{H}^{1/2}(\Gamma^{\rm D})$ and  $G_0\psi \in H^{1/2}(\Gamma^{\rm D})$, one has $\dx \psi\in {H}^{1/2}(\Gamma^{\rm D})$, and moreover
$$
\vert \partial_x \psi \vert_{H^{1/2}(\Gamma^{\rm D})}\leq C\big( \vert \psi \vert_{\dot{H}^{1/2}(\Gamma^{\rm D})}+\vert G_0\psi\vert_{H^{1/2}(\Gamma^{\rm D})}\big),
$$
for some constant $C$ independent of $\psi$.\\
- If $\Omega$ is unbounded  then if $\psi\in {H}^{1/2}(\Gamma^{\rm D})$ and  $G_0\psi \in H^{1/2}(\Gamma^{\rm D})$, one has $\psi\in {H}^{3/2}(\Gamma^{\rm D})$, and moreover
$$
\vert  \psi \vert_{H^{3/2}(\Gamma^{\rm D})}\leq C\big( \vert \psi \vert_{{H}^{1/2}(\Gamma^{\rm D})}+\vert G_0\psi\vert_{H^{1/2}(\Gamma^{\rm D})}\big),
$$
for some constant $C$ independent of $\psi$.
\end{proposition}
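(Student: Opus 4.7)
The plan is to combine the baseline elliptic estimate of Proposition~\ref{propDNell} with the Kondratiev--Maz'ya--Rossmann singular function expansion of $\psi^\mfh$ near each corner of $\Omega$, using the extra $H^{1/2}$-regularity of $G_0\psi$ to kill the leading singular term exactly when it would otherwise obstruct $\partial_x\psi\in H^{1/2}$. I describe the bounded case; the unbounded case differs only in that one works with an explicit representative in the non-homogeneous space $H^{1/2}(\Gamma^{\rm D})$, so that the final estimate applies to $\vert\psi\vert_{H^{3/2}}$ rather than just to $\vert\partial_x\psi\vert_{H^{1/2}}$.

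First, Proposition~\ref{propDNell} applied with $G_0\psi\in H^{1/2}(\Gamma^{\rm D})\hookrightarrow L^2(\Gamma^{\rm D})$ gives the baseline regularity $\psi\in\dot{H}^1(\Gamma^{\rm D})$ together with
$$
\vert\psi\vert_{\dot{H}^1(\Gamma^{\rm D})}\lesssim \vert\psi\vert_{\dot{H}^{1/2}(\Gamma^{\rm D})}+\vert G_0\psi\vert_{H^{1/2}(\Gamma^{\rm D})}.
$$
Away from the corners, $G_0$ coincides (modulo lower order) with the pseudodifferential operator $|D|$ on the flat pieces of $\Gamma^{\rm D}$, so standard interior pseudodifferential calculus immediately upgrades this to $\partial_x\psi\in H^{1/2}_{\rm loc}$ there. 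The remaining task is to obtain a uniform $H^{1/2}$-control near each corner $c_j$.

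Near such a corner with opening angle $\omega_j\in(0,\pi)$ between $\Gamma^{\rm D}$ and $\Gamma^{\rm N}$, the KMR theory for mixed Dirichlet--Neumann problems on curvilinear polygons~\cite{Grisvard,Dauge,KMR} yields, in polar coordinates $(r,\theta)$ centered at $c_j$, the expansion
$$
\psi^\mfh=\sum_{\lambda_{k,j}<\Lambda} c_k^{(j)}\,r^{\lambda_{k,j}}\Phi_{k,j}(\theta)+\psi^\mfh_{\rm reg},\qquad \lambda_{k,j}=\frac{(2k+1)\pi}{2\omega_j},
$$
with $\psi^\mfh_{\rm reg}$ of $H^2$-regularity for a conveniently chosen truncation $\Lambda$, $\Phi_{k,j}$ a trigonometric polynomial in $\theta$, and the coefficients $c_k^{(j)}$ continuous linear functionals of the boundary data. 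A direct computation shows that the Dirichlet trace of $r^{\lambda_{k,j}}\Phi_{k,j}$ behaves like $r^{\lambda_{k,j}}$ while its contribution to $G_0\psi$ behaves like $r^{\lambda_{k,j}-1}$. Recalling that $r^\mu_+\chi(r)\in H^{1/2}(\mathbb{R})$ if and only if $\mu>0$, the assumption $G_0\psi\in H^{1/2}(\Gamma^{\rm D})$ forces $c_k^{(j)}=0$ whenever $\lambda_{k,j}\le 1$, i.e.\ whenever $\omega_j\ge(2k+1)\pi/2$. Since Assumption~\ref{assconfig}(3) gives $\omega_j<\pi$, the only possibly offending index is $k=0$ (when $\omega_j\ge\pi/2$), in which case $c_0^{(j)}$ must vanish. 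All surviving singular terms then satisfy $\lambda_{k,j}>1$, so their contribution to $\partial_x\psi$, proportional to $r^{\lambda_{k,j}-1}$, belongs to $H^{1/2}$ locally; together with the $H^{3/2}$-trace of $\psi^\mfh_{\rm reg}$, this yields the claimed bound.

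The main obstacle is making this quantitative: bounding the singular coefficients $c_k^{(j)}$ and the $H^2$-norm of the regular part $\psi^\mfh_{\rm reg}$ continuously by $\vert\psi\vert_{\dot{H}^{1/2}}+\vert G_0\psi\vert_{H^{1/2}}$, uniformly in the corner. This requires a careful adaptation of the KMR theory to the homogeneous Sobolev scale used throughout this paper (where the Dirichlet principle of Proposition~\ref{propharmonic} replaces the usual coercivity on $H^1$), and, in the unbounded case, combining these local expansions with a global argument controlling the decay of $\psi^\mfh$ at infinity to promote the half-derivative gain on $\partial_x\psi$ into the full $H^{3/2}$-norm on $\psi$.
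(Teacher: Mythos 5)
Your approach is correct in spirit but genuinely different from the paper's, and it is considerably more technical. The paper's key observation is that, since $G_0\psi=(\partial_z\psi^\mfh)_{\vert_{\Gamma^{\rm D}}}$ is by assumption in $H^{1/2}(\Gamma^{\rm D})$, one may \emph{reformulate} the boundary value problem solved by $\psi^\mfh$: rather than a mixed Dirichlet--Neumann problem with Dirichlet data $\psi$, view $\psi^\mfh$ as the solution of the \emph{pure Neumann} problem $\Delta u=0$, $\partial_{\rm n}u=0$ on $\Gamma^{\rm N}$, $\partial_{\rm n}u=G_0\psi$ on $\Gamma^{\rm D}$ (this is Lemma~\ref{lemmaBVP}). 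For the pure Neumann problem, the singular exponents at a corner of opening $\omega$ are $k\pi/\omega$, $k\geq 1$, all strictly larger than $1$ when $\omega<\pi$, so $H^2(\Omega)$ regularity holds unconditionally under Assumption~\ref{assconfig}(3) — no singular expansion, no vanishing of coefficients, just the standard Dauge/KMR/Poyferr\'e isomorphism for the Neumann Laplacian plus the bounded inverse theorem. You instead stay with the mixed Dirichlet--Neumann formulation, whose singular exponents $\lambda_{k,j}=(2k+1)\pi/(2\omega_j)$ genuinely fall below (or at) $1$ when $\omega_j\geq\pi/2$, and you argue that the hypothesis $G_0\psi\in H^{1/2}$ kills the offending coefficient $c_0^{(j)}$. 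That argument is sound (the Neumann trace of the singular term scales like $r^{\lambda_{0,j}-1}$, which fails to be in $H^{1/2}$ when $\lambda_{0,j}\leq 1$ unless the coefficient vanishes), but it forces you into exactly the quantitative singular-coefficient analysis you flag at the end as the ``main obstacle.'' The paper's reformulation sidesteps the whole issue: the gain in regularity of the Neumann data is converted into a regularity-friendly boundary condition, rather than into a vanishing condition on singular coefficients.

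Two further caveats. First, in the borderline case $\omega_j=\pi/2$ you should note that $\lambda_{0,j}=1$ is an integer and one must rule out a logarithmic singular function; for the Dirichlet--Neumann pair at a right angle it does not occur (the singular function is just a linear polynomial), but this needs to be said. Second, your sketch of the unbounded case — ``differs only in that one works with an explicit representative'' — substantially understates it. The paper decomposes $\psi^\mfh=\phi_{\rm l}+\phi_{\rm r}$ with a cutoff $\chi_M$, treats $\phi_{\rm l}$ on an artificially closed bounded corner domain via Lemma~\ref{lemmaBVP}, and treats $\phi_{\rm r}$ on a strip by differentiating the equation in $x$ and performing a separate variational estimate; none of that is subsumed by simply choosing a representative in $H^{1/2}(\Gamma^{\rm D})$.
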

\begin{proof}
We first prove the result in the case where $\Omega$ is bounded. By Proposition \ref{propDNell}, we have 
$$
\vert \partial_x \psi \vert_{L^2(\Gamma^{\rm D})}\leq C\big( \vert \psi \vert_{\dot{H}^{1/2}(\Gamma^{\rm D})}+\vert G_0\psi\vert_{L^{2}(\Gamma^{\rm D})}\big),
$$
so that, by Proposition \ref{propequivter}, it is enough to prove that
$$
\vert \partial_x \psi \vert_{\dot{H}^{1/2}(\Gamma^{\rm D})}\leq C\big( \vert \psi \vert_{\dot{H}^{1/2}(\Gamma^{\rm D})}+\vert G_0\psi\vert_{H^{1/2}(\Gamma^{\rm D})}\big)
$$
to get the result. 

An important step is to get estimate for the Neumann problem. This is done in the following lemma, where we use the following functional spaces,
$$
\dot{H}^2(\Omega):=\{u\in \dot{H}^{1}(\Omega), \quad\Vert u \Vert_{\dot{H}^2(\Omega)}:= \Vert \nabla u \Vert_{{H}^1(\Omega)} <\infty \}
$$
and, in the case where $\Omega$ is bounded and identifying functions $g$ on $\Gamma^{\rm D}$ as an $(N+1)$-uplet of functions defined on the intervals $\cE_j$ associated with the connected components of $\Gamma^{\rm D}$ (see Assumption \ref{assconfig} for the  notations),  
$$
{\mathbb H}:=\{f\in L^2(\Omega), g=(g_1,\dots,g_{N+1}) \in \prod_{j=1}^{N+1} {H}^{1/2}(\cE_j), \quad \int_\Omega f=\int_{\Gamma^{\rm D}} g \},
$$
endowed with the canonical norm of $L^2(\Omega)\times  \prod_{j=1}^{N+1} {H}^{1/2}(\cE_j)$.
\begin{lemma}\label{lemmaBVP}
Let $\Omega$ be as in Assumption \ref{assconfig}, and assume moreover that $\Omega$ is bounded. For all $(f,g)\in {\mathbb H}$, 
 there exists a  solution $u\in \dot{H}^2(\Omega)$, unique up to a constant, to the boundary value problem
 $$
\begin{cases}
\Delta u=f \quad\mbox{ in }\Omega,\\
\partial_{\rm n} u =0 \quad \mbox{ on }\Gamma^{\rm N},\\
\partial_{\rm n} u=g  \quad \mbox{ on }\Gamma^{\rm D},
\end{cases}
$$
and there exists a constant $C>0$ independent of $f$ and $g$ such that
$$
\Vert u \Vert_{\dot{H}^2(\Omega)}\leq C  \Vert (f,g)\Vert_{\mathbb H}.
$$
\end{lemma}
\begin{proof}[Proof of the lemma]
The existence and uniqueness up to a constant of a variational solution $u\in \dot{H}^1(\Omega)$ to the boundary value problem stated in the lemma is classical.

Since $\Omega$ is bounded and because the angles at the corners of $\Omega$ are assumed to be smaller than $\pi$, we know that $u \in H^2(\Omega)$ (see for instance Theorem 14.6 in \cite{Dauge}, Example 6.6.2 in \cite{KMR} or Section 3.4 in \cite{Poyferre}). 

The mapping
\begin{equation}\label{mappingelliptic}
 \begin{array}{lcl}
\dot{H}^2(\Omega)\backslash \RR & \to &  L^2(\Omega)\times  \prod_{j=1}^{N+1} {H}^{1/2}(\cE_j) \\
u & \mapsto &(\Delta u,(\partial_{\rm n} u)_{\vert_{\cE_1\times \{0\}}}, \dots, (\partial_{\rm n} u)_{\vert_{ \cE_{N+1}\times \{0\}}})
\end{array}
\end{equation}
is therefore an isomorphism of Banach spaces. 

Since it is continuous, the bounded inverse theorem implies that its inverse is also continuous, so that  there exists a constant $C>0$ such that for all $(f,g)\in {\mathbb H}$, the corresponding solution $u\in \dot{H}^2(\Omega)$ to the boundary value problem (defined up to a constant), satisfies 
$$
\Vert u \Vert_{\dot{H}^2(\Omega)}\leq C  \Vert (f,g)\Vert_{\mathbb H}.
$$
\end{proof}

We can now prove the proposition when $\Omega$ is bounded. Indeed, the harmonic extension $\psi^\mfh$ solves a boundary value problem belonging to the class considered in the lemma, namely, with $f=0$ and $g=G_0\psi$. We therefore have under the assumptions of the proposition that $\Vert \psi^\mfh \Vert_{\dot{H}^2(\Omega)}\lesssim \vert G_0\psi \vert_{H^{1/2}}$. In particular, $\Vert \dx \psi^\mfh \Vert_{\dot{H}^1(\Omega)}\lesssim \vert G_0\psi \vert_{H^{1/2}}$. By the trace Theorem \ref{theortrace}, and because the trace of $\dx\psi^\mfh $ on $\Gamma^{\rm D}$ is $\dx \psi$, this implies that $\vert \dx \psi \vert_{\dot{H}^{1/2}(\Gamma^{\rm D})}\lesssim\vert G_0\psi \vert_{H^{1/2}}$ which, as explained above, is enough to get the result.

Let us now consider the case where $\Omega$ is unbounded. As in the proof of Lemma \ref{lemmaext}, we consider for the sake of clarity
 the reference configuration of  Figure \ref{fig:image3} for which $N=1$ and ${\mathcal E}_1=(x^{\rm r}_0,x_1^{\rm l})$ and ${\mathcal E}_2=(x_1^{\rm r},+\infty)$ with $x_0^{\rm r}>-\infty$. The general case does not raise additional difficulty.

We denote as usual by $\psi^\mfh$ the harmonic extension of $\psi$;
since $\psi$ belongs to $H^{1/2}(\Gamma^{\rm D})$, we have $\psi^\mfh \in H^1(\Omega)$ and moreover $\Vert \psi^\mfh \Vert_{H^1(\Omega)}\lesssim \vert \psi \vert_{H^{1/2}(\Gamma^{\rm D})}$. We also know from Proposition \ref{propDNell} that $\psi\in H^1(\Gamma^{\rm D})$ and $\vert \psi \vert_{{H}^1(\Gamma^{\rm D})} \lesssim C \big( \vert \psi\vert_{{H}^{1/2}(\Gamma^{\rm D})}+ \vert G_0\psi\vert_{L^2(\Gamma^{\rm D})}\big)$ (we use the fact proved in Proposition \ref{propequivter} that $L^2(\Gamma^{\rm D})\cap \dot{H}^{1/2}(\Gamma^{\rm D})$ can be identified with ${H}^{1/2}(\Gamma^{\rm D})$).

Introduce $M>x^{\rm r}_1$  and $\chi_M$  a smooth positive cutoff function defined on $\overline{\Omega}$, such that $\chi_M\equiv 0$ on $\overline{\Omega}\cap \{x <M \}$ and $\chi_M\equiv 1$ on $\overline{\Omega}\cap \{x>M+1\}$, and satisfying $\partial_{\rm n} \chi_M=0$ on $\Gamma^{\rm D}\cup \Gamma^{\rm N}$. 

We further decompose
 $$
 \psi^\mfh=\phi_{\rm l}+\phi_{\rm r}
 \quad\mbox{ with }\quad \phi_{\rm l}=(1-\chi_M)\psi^\mfh \quad \mbox{ and }\quad \phi_{\rm r}=\chi_M\psi^\mfh.
 $$
 Since by definition $G_0\psi=(\dz \psi^\mfh)_{\vert_{\Gamma^{\rm D}}}$, we get that $(\partial_{\rm n} \phi_{\rm l})_{\vert_{\Gamma^{\rm D}}}=(1-\chi_M)G_0\psi$ and $(\partial_{\rm n} \phi_{\rm r})_{\vert_{\Gamma^{\rm D}}}=\chi_MG_0\psi$. Let us first examine $\phi_{\rm l}$ and then $\phi_{\rm r}$.

Let $\widetilde{\Gamma}^{{\rm b}}$ be a smooth curve parametrized by a function $\widetilde{b}$ defined on $(x_0^{\rm r}, M+2)$, coinciding with the bottom parametrization $b$ on $(x_0^{\rm r}, M+1)$, increasing on $(M+1,M+2)$ and such that $\lim_{x\to M+2}\widetilde{b}(x)=0$. 

The restriction of $\Gamma^{(\rm top)}$ and the adherence of $\widetilde{\Gamma}^{{\rm b}}$ enclose a bounded domain $\Omega_{\rm l}$ which satisfies Assumption \ref{assconfig}.
Moreover, the function $\phi_{\rm l}$ solves a boundary value problem that belongs to the class considered in Lemma \ref{lemmaBVP}, with
$f=-(\Delta \chi_M)\psi^\mfh- 2\nabla \chi_M\cdot \nabla \psi^\mfh$ and $g=(1-\chi_M)G_0\psi$. 

Since $\Vert f\Vert_{L^2(\Omega_{\rm l})}\lesssim \Vert \psi^\mfh \Vert_{H^1(\Omega)}$, we easily deduce from the lemma and the trace theorem that $(\phi_{\rm l})_{\vert_{\Gamma^{\rm D}\cap \{x<M+2\}}}\in H^{3/2}(\Gamma^{\rm D}\cap \{x<M+2\})$ and that its norm is bounded, up to a multiplicative constant, by $\vert G_0\psi\vert_{H^{1/2}(\Gamma^{\rm D})}+\vert \psi \vert_{H^{1/2}(\Gamma^{\rm D})}$.

By definition of $\phi_{\rm l}$ and because $(1-\chi_M)$ vanishes for $x\geq M+1$, this implies that
\begin{equation}\label{contleft}
\vert (1-\chi_M)\psi \vert_{H^{3/2}(\Gamma^{\rm D})}\lesssim \vert G_0\psi\vert_{H^{1/2}(\Gamma^{\rm D})}+\vert \psi \vert_{H^{1/2}(\Gamma^{\rm D})}.
\end{equation}

Let us turn now to investigate $\phi_{\rm r}$. We consider here a smooth extension $\widetilde{b}$ of $b$, which is defined on $\RR$ and such that $\widetilde{b}=b$ on $\{x>M\}$ and  $\inf_{\RR} \widetilde b>-\infty$, $\sup_{\RR} \widetilde b <0$.  We remark that the extension of ${\phi}_{\rm r}$ by $0$ on the strip ${\mathcal S}_{\widetilde{b}}:=\{(x,z)\in \RR^2,\widetilde b(x) <z<0 \}$, denoted by $\widetilde{\phi_{\rm r}}$ solves
$$
\begin{cases}
\Delta \widetilde{\phi_{\rm r}}=-\widetilde{f} &\mbox{ in } {\mathcal S}_{\widetilde{b}},\\
\partial_{\rm n} \widetilde{\phi_{\rm r}}= \chi_M\widetilde{G_0\psi} &\mbox{ on } \{z=0\},\\
\partial_{\rm n} \widetilde{\phi_{\rm r}}= 0 &\mbox{ on } \{z=\widetilde{b}\},
 \end{cases}
$$
where $\widetilde{f}$ denotes the extension of $f$ by zero on ${\mathcal S}_{\widetilde{b}}$ and $\widetilde{G_0\psi}$ the extension by zero of $G_0\psi$ to $\RR$.

The classical variational estimate yields $\Vert \nabla \widetilde{\phi_{\rm r}}\Vert_{L^2({\mathcal S}_{\widetilde{b}})}\lesssim \Vert \psi^\mfh \Vert_{H^1(\Omega)}$. If $\widetilde{b}$ is constant, then applying $\partial_x$ to the equation (we actually should use a mollified version of $\dx$, or differential quotients and then pass to the limit  in order to manipulate only meaningful quantities, but we omit these classical technicalities for the sake of clarity and refer for instance to the proof of Lemma 2.38 in \cite{Lannes_book} for the details), we obtain
$$
\begin{cases}
\Delta \dx\widetilde{\phi_{\rm r}}=\dx\widetilde{f}&\mbox{ in } {\mathcal S}_{\widetilde{b}},\\
\partial_{\rm n} \dx\widetilde{\phi_{\rm r}}= \dx (\chi_M\widetilde{G_0\psi}) &\mbox{ on } \{z=0\},\\
\partial_{\rm n} \dx \widetilde{\phi_{\rm r}}= 0 &\mbox{ on } \{z=\widetilde{b}\}.
 \end{cases}
$$
We therefore get the variational estimate
\begin{align*}
\Vert \nabla \dx \widetilde{\phi_{\rm r}}\Vert_{L^2({\mathcal S}_{\widetilde{b}})}^2 &\leq \big\vert \int_{\{z=0\}}  \dx (\chi_M\widetilde{G_0\psi}) \dx\widetilde{\phi_{\rm r}} \big\vert
+\big\vert \int_{{\mathcal S}_{\widetilde{b}} }\dx\widetilde{f} \dx  \widetilde{\phi_{\rm r}}\big\vert \\
&\lesssim \big( \vert G_0\psi \vert_{H^{1/2}(\Gamma^{\rm D})}  + \Vert \widetilde{f}\Vert_{L^2({\mathcal S}_{\widetilde{b}})} \big)\Vert  \Vert  \dx \widetilde{\phi_{\rm r}}\Vert_{H^1({\mathcal S}_{\widetilde{b}})},
\end{align*}
where we used the trace theorem to control the first term of the right-hand side in the first inequality, and integrate by parts to control the second one.

We can therefore conclude that
\begin{align*}
\Vert \dx  \widetilde{\phi_{\rm r}}\Vert_{ H^1({\mathcal S}_{\widetilde{b}})} &\lesssim (\Vert \psi^\mfh \Vert_{H^1(\Omega)}+ \vert G_0\psi \vert_{H^{1/2}(\Gamma^{\rm D})} )  \\
&  \lesssim (\vert \psi \vert_{H^{1/2}(\Omega)}+ \vert G_0\psi \vert_{H^{1/2}(\Gamma^{\rm D})} ).
\end{align*}
We then deduce from the trace theorem and the definition of $ \widetilde{\phi_{\rm r}}$ that
\begin{equation}\label{contright}
\vert \chi_M\psi \vert_{H^{3/2}(\Gamma^{\rm D})}\lesssim \vert G_0\psi\vert_{H^{1/2}(\Gamma^{\rm D})}+\vert \psi \vert_{H^{1/2}(\Gamma^{\rm D})}.
\end{equation}

When $\widetilde{b}$ is not constant, then one can go back to the case of a flat strip using a diffeomorphism as in Chapter 2 of \cite{Lannes_book} and show without difficulty that this estimates still holds.

The result is then a consequence of \eqref{contleft} and \eqref{contright}. 
 \end{proof}

\section{Well-posedness in the energy space and consequences}
\label{sectWP}

We show in this section that the evolution equation
\begin{equation}\label{CP}
\begin{cases}
\dt \zeta - G_0 \psi=f,\\
\dt \psi+\gr \zeta=g
\end{cases}
\quad\mbox{ on }\quad \RR^+\times \Gamma^{\rm D}
\end{equation}
with initial condition
\begin{equation}\label{CP0}
(\zeta,\psi)_{\vert_{t=0}}=(\zeta^{\rm in},\psi^{\rm in})
\end{equation}
is well posed for data in the energy space, and regular in time under additional assumptions. 

We will often write \eqref{CP} in the abstract form
\begin{equation}\label{CPcomp}
\partial_t U +{\bf A}U=F,
\end{equation}
with $U=(\zeta,\psi)^{\rm T}$, $F=(f,g)^{\rm T}$ and 
$$
{\bf A}=\begin{pmatrix} 0 & -G_0  \\  \gr  & 0\end{pmatrix}.
$$

A difficulty when dealing with the Cauchy problem \eqref{CP}-\eqref{CP0} is that the natural energy space $L^2(\Gamma^{\rm D})\times \dot{H}^{1/2}(\Gamma^{\rm D})$ associated with the equations is only a semi-normed space. 

When $\Gamma^{\rm D}$ is bounded, it is possible to remove this difficulty by working with $\psi$ in the realization $\dot{\mathcal H}^{1/2}(\Gamma^{\rm D})$ of  $\dot{H}^{1/2}(\Gamma^{\rm D})$ introduced in \S \ref{sectdotH012}; indeed, we know by Proposition \ref{propBanach} that $\dot{\mathcal H}^{1/2}(\Gamma^{\rm D})$ is a Banach space. Imposing in addition a zero mass condition on $\zeta$ (i.e. $\int_{\Gamma^{\rm D}}\zeta=0$), we can show in \S \ref{subsectskew} that the operator ${\bf A}$ is skew-adjoint, from which the well-posedness of \eqref{CP}-\eqref{CP0} is deduced  in \S \ref{subsectWPbnded} using semi-group theory. 

When $\Gamma^{\rm D}$ is unbounded, this approach  no longer works since the zero mass condition $\int_{\Gamma^{\rm D}}\zeta=0$ does not make sense anymore. We therefore construct a solution by a duality method in a semi-normed functional space of function of space \emph{and} time. Instead of working simply with a realization of  $\dot{H}^{1/2}(\Gamma^{\rm D})$ we need to introduce a realization of this functional space that involves also the time variable. We show that this realization can be physically interpreted as a convenient choice of the Bernoulli constant. 
A well posedness result is established in this framework in \S \ref{subsectWPunbnded}. 

Under additional assumptions on the data, we show in \S \ref{sectHOTR} that it is possible to construct solutions that are more regular in time. We then investigate in \S \ref{sectlimreg} whether it is possible to deduce space regularity from this time regularity; we show in particular that proceeding like this, one can reach the $H^1(\Gamma^{\rm D})\times \dot{H}^{3/2}(\Gamma^{\rm D})$ regularity but  going above this threshold requires smallness assumptions on the angles at the corners of the fluid domain.

\subsection{Skew-adjointness of the evolution operator when $\Gamma^{\rm D}$ is bounded}\label{subsectskew}

When $\Gamma^{\rm D}$ is bounded, then we recall that $\dot{\mathcal H}^{1/2}(\Gamma^{\rm D})$ consists of all the $f \in \dot{H}^{1/2}(\Gamma^{\rm D})$ such that $\int_{\Gamma^{\rm D}}f=0$. We define similarly
\begin{equation}\label{calL2}
\begin{cases}
{\mathcal L}^2(\Gamma^{\rm D})&=\{ f\in L^2(\Gamma^{\rm D}), \int_{\Gamma^{\rm D}} f=0\}, \\
{\mathcal H}^s(\Gamma^{\rm D})&=\{ f\in H^s(\Gamma^{\rm D}), \int_{\Gamma^{\rm D}} f=0\}, \quad (s=0,1/2).
\end{cases}
\end{equation}

Let us define also
$$
{\mathbb X}={\mathcal L}^2(\Gamma^{\rm D})\times \dot{\mathcal H}^{1/2}(\Gamma^{\rm D});
$$
 it forms a Hilbert space for the scalar product
$$
\langle (\zeta_1,\psi_1),(\zeta_2,\psi_2)\rangle_{\mathbb X}=\gr (\zeta_1,\zeta_2)_{L^2(\Gamma^{\rm D})}+\langle G_0\psi_1,\psi_2\rangle,
$$
where we recall that by definition $\langle G_0\psi_1,\psi_2\rangle=\int_\Omega \nabla\psi_1^\mfh\cdot \nabla\psi_2^\mfh$; indeed, by Proposition \ref{propequivnorms2}, $\langle G_0\psi,\psi \rangle$ defined a semi-norm equivalent to $\vert \cdot \vert_{\dot{H}^{1/2}(\Gamma^{\rm D})}$ and, by Proposition \ref{propBanach} is therefore a norm on $\dot{\mathcal H}^{1/2}(\Gamma^{\rm D})$, and $\dot{\mathcal H}^{1/2}(\Gamma^{\rm D})$ is a Hilbert space for the scalar product $\langle G_0\psi_1,\psi_2\rangle$. 

The following proposition shows that ${\bf A}$ is skew-adjoint for this scalar product.
\begin{proposition}\label{propSA}
Let $\Omega$, $\Gamma^{\rm D}$ and $\Gamma^{\rm N}$ be as in Assumption \ref{assconfig}. 
If $\Gamma^{\rm D}$ is bounded then the operator ${\bf A}$ admits a skew-adjoint realization on ${\mathbb X}$ with domain ${\mathcal H}^{1/2}(\Gamma^{\rm D})\times \dot{\mathcal H}^1(\Gamma^{\rm D})$. \end{proposition}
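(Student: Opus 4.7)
The plan is to verify the three conditions that characterize skew-adjointness: $\mathbf{A}$ is densely defined on $D(\mathbf{A}) = \mathcal{H}^{1/2}(\Gamma^{\rm D}) \times \dot{\mathcal{H}}^1(\Gamma^{\rm D})$ with values in $\mathbb{X}$, it is skew-symmetric, and the ranges of both $I - \mathbf{A}$ and $I + \mathbf{A}$ exhaust $\mathbb{X}$; the standard von Neumann criterion then yields $\mathbf{A}^{*} = -\mathbf{A}$.

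\medbreak
First I will check that $\mathbf{A}$ maps $D(\mathbf{A})$ into $\mathbb{X}$. The component $\gr\zeta$ belongs to $\dot{\mathcal{H}}^{1/2}(\Gamma^{\rm D})$ by Proposition~\ref{propequivter} and the fact that the zero-mass condition $\int_{\Gamma^{\rm D}}\zeta = 0$ is inherited directly by $\gr\zeta$. For $-G_0\psi$, Proposition~\ref{propDNcont} gives $G_0\psi \in L^2(\Gamma^{\rm D})$, and the identity
\[
\int_{\Gamma^{\rm D}} G_0\psi = \int_{\partial\Omega}\partial_{\rm n}\psi^{\mfh} = \int_\Omega \Delta\psi^{\mfh} = 0,
\]
using the homogeneous Neumann condition on $\Gamma^{\rm N}$ and the boundedness of $\Omega$, shows that $G_0\psi \in \mathcal{L}^2(\Gamma^{\rm D})$. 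Density of $D(\mathbf{A})$ in $\mathbb{X}$ follows from Proposition~\ref{propdense2}. Skew-symmetry is then a two-line computation based on the symmetry identity $\langle G_0\psi,\varphi\rangle = \langle G_0\varphi,\psi\rangle = \int_\Omega \nabla\psi^{\mfh}\cdot\nabla\varphi^{\mfh}$ from Remark~\ref{remDN}.

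\medbreak
The substantive step is the range condition. Fixing $(f,g) \in \mathbb{X}$ and $\lambda > 0$, the system $(\lambda I - \mathbf{A})(\zeta,\psi) = (f,g)$ reads $\lambda\zeta + G_0\psi = f$ and $\lambda\psi - \gr\zeta = g$, and eliminating $\zeta = (\lambda\psi - g)/\gr$ reduces it to the scalar elliptic equation
\[
\Bigl(G_0 + \frac{\lambda^2}{\gr}\Bigr)\psi = f + \frac{\lambda}{\gr}g
\]
on $\dot{\mathcal{H}}^{1/2}(\Gamma^{\rm D})$. The key point is that, since $\Gamma^{\rm D}$ is bounded, the Poincaré-type inequality of Proposition~\ref{propPoincare} applied on each component $\mathcal{E}_j$, combined with the control of each average $\overline{\psi}_j$ by the jumps $\overline{\psi}_{j+1}-\overline{\psi}_j$ (which is possible because the defining condition $\sum_j \lvert \mathcal{E}_j\rvert\overline{\psi}_j = 0$ of $\dot{\mathcal{H}}^{1/2}(\Gamma^{\rm D})$ eliminates the global constant), yields the continuous embedding $\dot{\mathcal{H}}^{1/2}(\Gamma^{\rm D}) \hookrightarrow L^2(\Gamma^{\rm D})$. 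The bilinear form $a(\psi,\varphi) = \langle G_0\psi,\varphi\rangle + \frac{\lambda^2}{\gr}(\psi,\varphi)_{L^2}$ is therefore continuous and coercive on the Banach space $\dot{\mathcal{H}}^{1/2}(\Gamma^{\rm D})$ (Proposition~\ref{propBanach}), and Lax--Milgram produces a unique variational solution $\psi \in \dot{\mathcal{H}}^{1/2}(\Gamma^{\rm D})$.

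\medbreak
To finish I upgrade $\psi$ to the domain: reading the equation as $G_0\psi = f + \frac{\lambda}{\gr}g - \frac{\lambda^2}{\gr}\psi \in L^2(\Gamma^{\rm D})$ and invoking the elliptic regularity result of Proposition~\ref{propDNell}, one obtains $\psi \in \dot{H}^1(\Gamma^{\rm D})$, hence $\psi \in \dot{\mathcal{H}}^1(\Gamma^{\rm D})$ once the zero-mass condition is added back; the companion $\zeta = (\lambda\psi - g)/\gr$ then lies in $H^{1/2}(\Gamma^{\rm D})$ with vanishing mean, i.e.\ in $\mathcal{H}^{1/2}(\Gamma^{\rm D})$. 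The argument for $\lambda I + \mathbf{A}$ is identical. I expect the main obstacle to be not any individual estimate but the careful bookkeeping between the semi-normed space $\dot{H}^{1/2}(\Gamma^{\rm D})$, its realization $\dot{\mathcal{H}}^{1/2}(\Gamma^{\rm D})$, and the domain $D(\mathbf{A})$, so that the Poincaré embedding, coercivity, and the ellipticity of $G_0$ can be chained together cleanly.
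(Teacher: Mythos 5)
Your proof is correct and takes a genuinely different route from the paper's. The paper establishes skew-adjointness by the domain-inclusion argument: after identifying $D(\mathbf{A}) = \mathcal{H}^{1/2}(\Gamma^{\rm D}) \times \dot{\mathcal{H}}^1(\Gamma^{\rm D})$, showing its density and the skew-symmetry, they take $U=(\zeta,\psi) \in D(\mathbf{A}^*)$ and, via the Riesz representation theorem applied to $\psi' \mapsto (\zeta,G_0\psi')$ on $\dot{\mathcal{H}}^{1/2}(\Gamma^{\rm D})$ together with a direct estimate of $\langle G_0\psi,\zeta'\rangle$ against $\vert\zeta'\vert_{L^2}$, conclude that $\zeta\in\mathcal{H}^{1/2}(\Gamma^{\rm D})$ and $G_0\psi\in L^2(\Gamma^{\rm D})$, whence $\psi\in\dot{\mathcal{H}}^1(\Gamma^{\rm D})$ by Proposition~\ref{propDNell}; this forces $D(\mathbf{A}^*)=D(\mathbf{A})$. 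You instead verify the range condition $\mathrm{ran}(\lambda I\pm\mathbf{A})=\mathbb{X}$ (the von Neumann / Stone criterion) by reducing it to the scalar elliptic equation $(G_0+\lambda^2/\gr)\psi=f+(\lambda/\gr)g$ on the realization $\dot{\mathcal{H}}^{1/2}(\Gamma^{\rm D})$ and invoking Lax--Milgram; the Poincar\'e embedding $\dot{\mathcal{H}}^{1/2}(\Gamma^{\rm D})\hookrightarrow L^2(\Gamma^{\rm D})$ you use (which does follow from Proposition~\ref{propPoincare} on each $\mathcal{E}_j$, the jump terms of the $\dot{H}^{1/2}(\Gamma^{\rm D})$ seminorm, and the zero-mass constraint --- essentially the argument already embedded in the proof of Proposition~\ref{propBanach}) supplies continuity of the $L^2$ pairings, while $\langle G_0\cdot,\cdot\rangle$ is coercive by Proposition~\ref{propequivnorms2}. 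Both proofs ultimately upgrade $\psi$ through Proposition~\ref{propDNell}, so they share the key elliptic ingredient; your version is more constructive in that it exhibits the resolvent, the paper's is shorter because it sidesteps the stationary problem entirely.

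Two points in your plan still need to be filled in. First, the resolvent criterion requires $\mathbf{A}$ to be \emph{closed} on $D(\mathbf{A})$; this follows readily from Propositions~\ref{propDNcont} and~\ref{propDNell} (if $\psi_n\to\psi$ in $\dot{\mathcal{H}}^{1/2}$ and $G_0\psi_n\to V_1$ in $L^2$, then $G_0\psi=V_1\in L^2$ because $L^2\hookrightarrow(\dot{\mathcal{H}}^{1/2})'$, so $\psi\in\dot{\mathcal{H}}^1$), but it must be said. Second, Lax--Milgram only yields $\langle G_0\psi,\varphi\rangle=(h,\varphi)_{L^2}$ for $\varphi\in\dot{\mathcal{H}}^{1/2}(\Gamma^{\rm D})$, i.e.\ for mean-zero test functions; to pass to $G_0\psi=h$ in $L^2(\Gamma^{\rm D})$ you should observe that both sides vanish on constants (since $\int_{\Gamma^{\rm D}}h=0$), hence agree on all of $\dot{H}^{1/2}(\Gamma^{\rm D})$ and, by density of $\mathcal{D}(\Gamma^{\rm D})$ from Proposition~\ref{propdense2}, as distributions. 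This is exactly the bookkeeping you flagged at the end; it does go through, but it is not free.
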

\begin{proof}
The operator ${\bf A}: D({\bf A})\to {\mathbb X}$ is obviously skew-symmetric since for all $U_1=(\zeta_1,\psi_1)$, $U_2=(\zeta_2,\psi_2)$ in $D({\bf A})$, one has
$$
\langle U_1, {\bf A}U_2\rangle_{{\mathbb X}}=-\gr (\zeta_1, G_0\psi_2)+\gr (\psi_1,G_0\zeta_2).
$$
This shows that $\langle U_1, {\bf A}U_2\rangle_{{\mathbb X}}=-\langle U_2, {\bf A}U_1\rangle_{{\mathbb X}}$ and therefore that ${\bf A}$ is skew-symmetric.

Moreover, the domain $D({\bf A})$ is given by
$$
D({\bf A})=\big({\mathcal L}^2(\Gamma^{\rm D})\cap \dot{\mathcal H}^{1/2}(\Gamma^{\rm D})\big)\times \{ \psi \in \dot{\mathcal H}^{1/2}(\Gamma^{\rm D}), G_0\psi \in {\mathcal L}^2(\Gamma^{\rm D}) \};
$$
since $\int_{\Gamma^{\rm D}} G_0\psi=0$ for all $\psi \in \dot{H}^{1/2}(\Gamma^{\rm D})$ by Green's identity, and using Proposition \ref{propequivter}, we can rewrite
$$
D({\bf A})={\mathcal H}^{1/2}(\Gamma^{\rm D})\big)\times \{ \psi \in \dot{\mathcal H}^{1/2}(\Gamma^{\rm D}), G_0\psi \in L^2(\Gamma^{\rm D}) \}.
$$

By Propositions  \ref{propDNcont} and \ref{propDNell} we get that $\{ \psi \in \dot{\mathcal H}^{1/2}(\Gamma^{\rm D}), G_0\psi \in L^2(\Gamma^{\rm D}) \}= \dot{\mathcal H}^{1/2}(\Gamma^{\rm D})\cap \dot{H}^1(\Gamma^{\rm D})$, and therefore that $D({\bf A})={\mathcal H}^{1/2}(\Gamma^{\rm D})\big)\times \dot{\mathcal H}^1(\Gamma^{\rm D})$. Since $D({\bf A})$ contains ${\mathcal D}(\Gamma^{\rm D})\times {\mathcal D}(\Gamma^{\rm D})$, it is dense in ${\mathbb X}$ by Proposition \ref{propdense2}. 
 The operator ${\bf A}$ is therefore closable; by abuse of notation, we still denote by ${\bf A}$ its closure, which is also symmetric, so that $D({\bf A})\subset D({\bf A}^*)$, where the domain of the adjoint operator $D({\bf A}^*)$ is by definition given by
$$
D({\bf A}^*)=\{ U \in {\mathbb X}, \quad \exists C>0, \forall U'\in {D}({\bf A}), \vert \langle U,{\bf A} U'\rangle_{\mathbb X}\vert \leq C \vert U' \vert_{{\mathbb X}} \}.
$$

In particular, if $U\in D({\bf A}^*)$ then $\zeta \in {\mathcal L}^2(\Gamma^{\rm D})$ and $\psi' \in \dot{\mathcal H}^1(\Gamma^{\rm D})\mapsto (\zeta, G_0 \psi')$ can be extended as a continuous linear form on $\dot{\mathcal H}^{1/2}(\Gamma^{\rm D})$; by Riesz theorem, there exists a unique $ g \in \dot{\mathcal H}^{1/2}(\Gamma^{\rm D})$ such that $ (\zeta, G_0 \psi')=\langle g, G_0 \psi' \rangle$, for all $\psi'\in \dot{\mathcal H}^{1/2}(\Gamma^{\rm D})$. It follows that $\zeta=g$ and therefore $\zeta\in {\mathcal H}^{1/2}(\Gamma^{\rm D})$.

One also has that $\psi \in \dot{\mathcal H}^{1/2}(\Gamma^{\rm D})$ and that
$$
\forall \zeta'\in {\mathcal H}^{1/2}(\Gamma^{\rm D}), \qquad \langle G_0\psi,\zeta'\rangle \leq C \vert \zeta'\vert_{L^2(\Gamma^{\rm D})}.
$$
If $\zeta' \in H^{1/2}(\Gamma^{\rm D})$, then denoting $\langle \zeta' \rangle := \frac{1}{\vert \Gamma^{\rm D}\vert}\int_{\Gamma^{\rm D}}\zeta'$, we have $\zeta'-\langle\zeta'\rangle \in {\mathcal H}^{1/2}(\Gamma^{\rm D})$, and therefore
$ \langle G_0\psi,(\zeta'-\langle \zeta'\rangle)\rangle \leq C \vert \zeta'-\langle \zeta'\rangle\vert_{L^2(\Gamma^{\rm D})}$. 

Since moreover $\langle G_0\psi, \langle \zeta'\rangle \rangle=0$ and because $\vert \zeta' -\langle \zeta'\rangle \vert^2_{L^2(\Gamma^{\rm D})} \leq 2\vert \zeta' \vert^2_{L^2(\Gamma^{\rm D})}$, we deduce that
$$
\forall \zeta'\in {H}^{1/2}(\Gamma^{\rm D}), \qquad \langle G_0\psi,\zeta'\rangle \leq 2 C \vert \zeta'\vert_{L^2(\Gamma^{\rm D})}.
$$
This shows that $G_0\psi$, which is well defined in $\dot{\mathcal H}^{1/2}(\Gamma^{\rm D})'$, is actually in $ L^2(\Gamma^{\rm D})$. By Proposition \ref{propDNell}, this implies that $\psi\in \dot{\mathcal H}^1(\Gamma^{\rm D})$. 

We have therefore proved the inclusions ${\mathcal H}^{1/2}(\Gamma^{\rm D}) \times \dot{\mathcal H}^1(\Gamma^{\rm D})\subset D({\bf A})\subset D({\bf A}^*)\subset {\mathcal H}^{1/2}(\Gamma^{\rm D}) \times \dot{\mathcal H}^1(\Gamma^{\rm D})$. It follows that all these inclusions are equalities, which proves the proposition. 
\end{proof}

\subsection{Well-posedness theorem when $\Gamma^{\rm D}$ is bounded} \label{subsectWPbnded}

We can now prove that the Cauchy problem \eqref{CP}-\eqref{CP0} is well posed, in the weak sense in the energy space ${\mathcal L}^2(\Gamma^{\rm D})\times \dot{\mathcal H}^{1/2}(\Gamma^{\rm D})$ and classically in ${\mathcal H}^{1/2}(\Gamma^{\rm D})\times \dot{\mathcal H}^{1}(\Gamma^{\rm D})$. 

Let us first recall the definition of the Hilbert space ${\mathbb X}$ and introduce the space  ${\mathbb X}_1$,
\begin{equation}\label{defXbounded}
{\mathbb X}={\mathcal L}^2(\Gamma^{\rm D})\times \dot{\mathcal H}^{1/2}(\Gamma^{\rm D})
\quad\mbox{ and }\quad
{\mathbb X}^1={\mathcal H}^{1/2}(\Gamma^{\rm D})\times \dot{\mathcal H}^{1}(\Gamma^{\rm D}),
\end{equation}
where we recall that for $U=(\zeta,\psi)^{\rm T}$, one has 
\begin{align*}
\vert  U\vert_{\mathbb X}^2&=\langle U,U\rangle_{\mathbb X}\\
&={\mathtt g}\vert \zeta \vert_{L^2({\Gamma^{\rm D}})}^2+\langle G_0 \psi, \psi \rangle;
\end{align*}
we also recall  the fact that $\vert \cdot \vert_{\mathbb X}$ is equivalent to the canonical norm of ${\mathcal L}^2(\Gamma^{\rm D})\times \dot{\mathcal H}^{1/2}(\Gamma^{\rm D})$ stemming from Proposition \ref{propequivnorms2}. We also endow ${\mathbb X}^1$  with its canonical norm. 
\begin{theorem}\label{theoWPbounded}
Let $\Omega$, $\Gamma^{\rm D}$ and $\Gamma^{\rm N}$ be as in Assumption \ref{assconfig}, and ${\mathbb X}$ and ${\mathbb X}^1$ be as defined in \eqref{defXbounded}, and assume moreover that $\Gamma^{\rm D}$ is bounded. 
Let $F=(f,g)^{\rm T}\in C(\RR^+;{\mathbb X})$. For all $U=(\zeta^{\rm in},\psi^{\rm in})\in {\mathbb X}$, there is a unique weak solution $U\in C(\RR^+;  {\mathbb X})$ to \eqref{CP}-\eqref{CP0}. Moreover, one has
$$
\forall t\geq 0, \qquad \vert U(t) \vert_{\mathbb X}\leq \vert U^{\rm in} \vert_{\mathbb X} +\int_0^t \vert F(t') \vert_{\mathbb X}{\rm d}t',
$$
with equality if $F=0$.

If in addition $(\zeta^{\rm in},\psi^{\rm in})\in {\mathbb X}^1$ and $F \in  C^1(\RR^+;{\mathbb X})$ then the solution $U$ belongs to $C(\RR^+;  {\mathbb X}^1)\cap C^1(\RR^+; {\mathbb X}) $.
\end{theorem}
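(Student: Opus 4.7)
The proof is essentially a direct application of Stone's theorem and standard semigroup theory, once the heavy lifting done in Proposition \ref{propSA} is available. The plan is as follows.

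First I would invoke Proposition \ref{propSA}, which says that $\mathbf{A}$ admits a skew-adjoint realization on the Hilbert space $\mathbb{X}$ with dense domain $D(\mathbf{A})=\mathbb{X}^1$. Stone's theorem then furnishes a strongly continuous group $(T(t))_{t\in\RR}$ of unitary operators on $\mathbb{X}$ with infinitesimal generator $-\mathbf{A}$; in particular $\Vert T(t)U\Vert_{\mathbb{X}}=\Vert U\Vert_{\mathbb{X}}$ for all $U\in\mathbb{X}$ and $t\in\RR$.

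Next, for data $U^{\rm in}\in\mathbb{X}$ and $F\in C(\RR^+;\mathbb{X})$ I would define
$$
U(t):=T(t)U^{\rm in}+\int_0^t T(t-s)F(s)\,{\rm d}s,
$$
so that $U\in C(\RR^+;\mathbb{X})$ and, by unitarity and Minkowski's inequality,
$$
\vert U(t)\vert_{\mathbb{X}}\le \vert U^{\rm in}\vert_{\mathbb{X}}+\int_0^t\vert F(s)\vert_{\mathbb{X}}\,{\rm d}s,
$$
with equality for $F\equiv 0$. That $U$ solves \eqref{CP}-\eqref{CP0} in the weak (mild) sense follows by the standard pairing argument: for any $V\in D(\mathbf{A}^*)=D(\mathbf{A})$, the function $t\mapsto\langle U(t),V\rangle_{\mathbb{X}}$ is absolutely continuous with derivative $\langle U(t),-\mathbf{A}V\rangle_{\mathbb{X}}+\langle F(t),V\rangle_{\mathbb{X}}$, which is the weak formulation of \eqref{CPcomp}. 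Uniqueness is obtained by applying the energy estimate to the difference of two solutions with $U^{\rm in}=0$ and $F=0$. I would also briefly verify that the zero-mean conditions encoded in the realization $\mathbb{X}={\mathcal L}^2(\Gamma^{\rm D})\times\dot{\mathcal H}^{1/2}(\Gamma^{\rm D})$ are propagated in time: Green's identity gives $\int_{\Gamma^{\rm D}}G_0\psi=0$ for any $\psi\in\dot{H}^{1/2}(\Gamma^{\rm D})$, so the zero-mean conditions on $\zeta$ and $\psi$ are preserved provided $f,g$ share them, which is guaranteed by $F(t)\in\mathbb{X}$.

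Finally, for the higher-order statement, when $U^{\rm in}\in D(\mathbf{A})=\mathbb{X}^1$ and $F\in C^1(\RR^+;\mathbb{X})$, I would appeal to the classical regularity result for inhomogeneous evolution equations generated by a skew-adjoint (hence $m$-dissipative) operator: the Duhamel formula then yields a classical solution, namely $U\in C(\RR^+;\mathbb{X}^1)\cap C^1(\RR^+;\mathbb{X})$ satisfying \eqref{CPcomp} pointwise (see, e.g., Pazy's semigroup theory). Since the entire argument is packaged into standard semigroup theory, the only real content of the proof is the skew-adjointness established in the previous subsection; there is accordingly no substantial obstacle to overcome here, merely the need to phrase the conclusions carefully and to check the propagation of the zero-mass constraint built into the realization of $\dot{H}^{1/2}(\Gamma^{\rm D})$.
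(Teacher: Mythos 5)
Your proposal is correct and follows essentially the same route as the paper: skew-adjointness from Proposition \ref{propSA}, Stone's theorem to get the unitary group, Duhamel's formula for the inhomogeneous problem, and the standard regularity result for data in $D(\mathbf{A})=\mathbb{X}^1$. The remark about propagation of the zero-mean constraints (via $\int_{\Gamma^{\rm D}}G_0\psi=0$) is a useful explicit check that the paper leaves implicit in the definition of the spaces, but it does not change the argument.
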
 
\begin{remark}\label{remzeromasszeta1}
The zero mass assumption is not restrictive. Indeed, if $c_1=\int_{\Gamma^{\rm D}}\zeta^{\rm in}$ and $c_2=\int_{\Gamma^{\rm D}}\psi^{\rm in}$ are such that $\vert c_1 \vert+\vert c_2\vert>0$, one can look for $(\zeta,\psi)$ under the form $\zeta=c_1+\tilde{\zeta}$ and $\psi=c_2-\gr c_1 t +\tilde{\psi}$; then $(\tilde{\zeta},\tilde{\psi})$ solves the same equation, but with initial data that satisfy the zero mass assumption. 

\end{remark} 
\begin{proof}
Since ${\bf A}$ is skew-adjoint on ${\mathbb X}$, it generates a unitary group on ${\mathbb X}$; this proves the first part of the theorem when $F=0$ since the conservation of energy is a consequence of the fact group generated by ${\bf A}$ is unitary. Moreover, if $U_0\in D({\bf A})$, then there is a unique solution $U\in C( \RR^+; {\mathbb X}^1)\cap C^1(\RR^+;{\mathbb X})$ to the equation $\dot{U}+{\bf A}U=0$ such that $U(t=0)=U_0$, and where ${\mathbb X}^1$ denotes $D({\bf A})$ equipped with the graph norm. As seen in the proof of Proposition \ref{propSA}, ${\mathbb X}^1$ can be identified with ${\mathcal H}^{1/2}\times \dot{\mathcal H}^1(\Gamma^{\rm D})$.
This completes the proof in the case $F=0$. The general case classically follows from Duhamel's formula.
\end{proof}

\subsection{Well-posedness theorem when $\Gamma^{\rm D}$ is possibly unbounded}\label{subsectWPunbnded}

The proof of Theorem \ref{theoWPbounded} relied on the fact that it was possible to work with the realization $\dot{\mathcal H}^{1/2}(\Gamma^{\rm D})$ of $\dot{H}^{1/2}(\Gamma^{\rm D})$ provided that we imposed the zero mass condition $\int_{\Gamma^{\rm D}}\zeta=0$ on $\zeta$. 

In the unbounded case, this is no longer possible because the integral  $\int_{\Gamma^{\rm D}}\zeta$  might not be defined. It is however possible to state the following result, but with the spaces ${\mathbb X}$ and ${\mathbb X}^1$ now defined as
\begin{equation}\label{defXunbounded}
{\mathbb X}={ L}^2(\Gamma^{\rm D})\times \dot{H}^{1/2}(\Gamma^{\rm D})
\quad\mbox{ and }\quad
{\mathbb X}^1={H}^{1/2}(\Gamma^{\rm D})\times \dot{H}^{1}(\Gamma^{\rm D});
\end{equation}
note that contrary to the definition \eqref{defXbounded} of these spaces in the bounded case, ${\mathbb X}$ and ${\mathbb X}_1$ are now only semi-normed spaces.
\begin{remark}
Let us consider the homogeneous problem $\dt U +{\bf A}U=0$ with initial condition $U_{\vert_{t=0}}=0$. The fact that ${\mathbb X}$ is a semi-norm space implies that uniqueness cannot be deduced from energy conservation. Indeed, any couple of the form $(\zeta,\psi)(t,x)=(0,\underline{\psi}(t))$ has zero energy. However among those functions, only those for which $\underline{\psi}(t)\equiv\underline{\psi}$ is time independent satisfy $\dt U +{\bf A}U=0$ almost everywhere on $\Gamma^{\rm D}$, and among those, there is only one, corresponding to $\underline{\psi}=0$ that solves the  initial condition $U_{\vert_{t=0}}=0$. In the theorem below, when we say that there is a unique solution $U\in C(\RR^+;  {\mathbb X})$ to \eqref{CP}-\eqref{CP0} we understand that the equation and the initial condition on $\psi$ are not only satisfied in $\dot{H}^{1/2}(\Gamma^{\rm D})$ but also almost everywhere.
\end{remark}
In the statement below, compared to Theorem \ref{theoWPbounded}, the assumptions that $\Gamma^{\rm D}$ is bounded is removed, and no zero mass assumption is made on the data.
\begin{theorem}\label{theoWPunbounded}
Let $\Omega$, $\Gamma^{\rm D}$ and $\Gamma^{\rm N}$ be as in Assumption \ref{assconfig},  and ${\mathbb X}$ and ${\mathbb X}^1$ be as defined in \eqref{defXunbounded}.   
Let $F=(f,g)^{\rm T}\in C(\RR^+;{\mathbb X})$. For all $U^{\rm in}=(\zeta^{\rm in},\psi^{\rm in})\in {\mathbb X}$, there is a unique solution $U\in C(\RR^+;  {\mathbb X})$ to \eqref{CP}-\eqref{CP0}. Moreover, one has
$$
\forall t\geq 0, \qquad \vert U(t) \vert_{\mathbb X}^2\leq \vert U^{\rm in} \vert_{\mathbb X}^2 +\int_0^t \vert F(t') \vert_{\mathbb X}^2{\rm d}t',
$$
with equality if $F=0$.

If in addition $(\zeta^{\rm in},\psi^{\rm in})\in {\mathbb X}^1$ and $F\in C^1(\RR^+;{\mathbb X})$ then the solution $U$ belongs to $C(\RR^+;  {\mathbb X}^1)\cap C^1(\RR^+; {\mathbb X}) $.
\end{theorem}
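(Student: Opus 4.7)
The obstacle is that $\mathbb{X}$ is only semi-normed, the kernel of its semi-norm being the one-dimensional subspace $K := \{0\} \times \RR$ of pairs with $\zeta = 0$ and $\psi$ constant in space (by Proposition \ref{propequivnorms2}), so that standard semi-group theory does not apply directly. The plan is to solve the problem in the quotient Hilbert space $\overline{\mathbb{X}} := \mathbb{X}/K$ by semi-group theory, and then lift the solution back to $\mathbb{X}$ by fixing a representative through a judicious choice of the space-independent Bernoulli constant.

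Since $G_0$ annihilates constants (because the harmonic extension of $\psi + c$ is $\psi^\mfh + c$), the evolution operator ${\bf A}$ descends to a densely defined operator $\overline{\bf A}$ on $\overline{\mathbb{X}}$. By Corollary \ref{coroBanach} combined with Proposition \ref{propequivnorms2}, $\overline{\mathbb{X}}$ is a Hilbert space. Adapting the proof of Proposition \ref{propSA} without the zero-mass condition---density of the domain via Proposition \ref{propdense2}, skew-symmetry from the symmetry of $G_0$, and identification of the adjoint domain via the elliptic regularity of Proposition \ref{propDNell}---one shows that $\overline{\bf A}$ is skew-adjoint on $\overline{\mathbb{X}}$ with domain $H^{1/2}(\Gamma^{\rm D}) \times (\dot{H}^1(\Gamma^{\rm D})/\RR)$. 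Stone's theorem and Duhamel's formula then furnish, for every $\overline{U}^{\rm in} \in \overline{\mathbb{X}}$ and $F \in C(\RR^+; \mathbb{X})$, a unique $\overline{U} \in C(\RR^+; \overline{\mathbb{X}})$ solving the quotient problem together with the semi-norm energy estimate, and the extra regularity $\overline{U} \in C(\RR^+; \overline{\mathbb{X}}^1) \cap C^1(\RR^+; \overline{\mathbb{X}})$ when the data lie in the domain.

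The main new step, and the principal obstacle, is to recover a genuine solution $(\zeta, \psi) \in C(\RR^+; \mathbb{X})$ of \eqref{CP}--\eqref{CP0} whose second equation holds pointwise a.e.\ on $\Gamma^{\rm D}$, and not merely in the quotient. The first component $\zeta(t) \in L^2(\Gamma^{\rm D})$ is determined unambiguously by $\overline{U}(t)$. For $\psi$, I would select a measurable-in-time representative $\widetilde{\psi}(t) \in [\psi(t)]$ with $\widetilde{\psi}(0) = \psi^{\rm in}$; such a choice is available by a continuous linear section of the quotient map $\dot{H}^{1/2}(\Gamma^{\rm D}) \to \dot{H}^{1/2}(\Gamma^{\rm D})/\RR$ (for instance the representative with zero average on a fixed bounded subinterval of one connected component of $\Gamma^{\rm D}$). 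By the quotient equation, there is a function $C(t)$ of time alone---the Bernoulli constant---such that
\begin{equation*}
\partial_t \widetilde{\psi}(t) + \gr \zeta(t) = g(t) + C(t) \quad \text{in } \dot{H}^{1/2}(\Gamma^{\rm D}),
\end{equation*}
interpreted distributionally in time if needed, with $C$ inheriting the time regularity of the other terms. The realized solution is then defined by
\begin{equation*}
\psi(\cdot, t) := \widetilde{\psi}(\cdot, t) - \int_0^t C(s)\,ds,
\end{equation*}
which still belongs to the class $[\psi(t)]$, agrees with $\psi^{\rm in}$ at $t=0$, and for which the Bernoulli constant now vanishes, so that $\partial_t \psi + \gr \zeta = g$ holds a.e.\ on $\Gamma^{\rm D}$; the first equation is unaffected since $G_0$ depends only on the class of $\psi$. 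Uniqueness in $\mathbb{X}$ follows because any solution of the homogeneous problem with zero initial data descends to the zero quotient solution, so that $\zeta \equiv 0$ and $\psi(t)$ is constant in space for each $t$; the second equation then forces $\partial_t \psi \equiv 0$, hence $\psi \equiv \psi^{\rm in} = 0$. Finally, the semi-norm energy estimate and the higher-regularity statement transfer directly from the quotient problem, since the Bernoulli-constant shift preserves both the $\mathbb{X}$ semi-norm and the $\mathbb{X}^1$ semi-norm.
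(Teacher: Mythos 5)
Your plan is correct in its main lines but takes a genuinely different route from the paper's. The paper does not use semi-group theory at all: after proving an a priori estimate (Lemma \ref{lemmaNRJest}), it constructs a weak solution by a duality method --- defining a linear form on $(-\partial_t-{\bf A}){\mathcal E}$ for a suitable test class ${\mathcal E}$, extending it by Hahn--Banach to the \emph{semi-normed} space $L^2([0,T];{\mathbb X})$, and applying Riesz in the quotient Hilbert space $L^2([0,T];{\mathbb X})/K$ --- and then establishes uniqueness through a time-mollification argument (Lemma \ref{lemmaunique}). You instead quotient by the spatial kernel $K=\{0\}\times\RR$ first, invoke Stone's theorem on $\overline{\mathbb X}$, and lift afterwards. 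Both routes converge on the same essential idea: the solution in ${\mathbb X}$ is singled out by integrating away the space-independent Bernoulli ``constant'' in the second equation (compare your $\int_0^t C(s)\,{\rm d}s$ with the paper's $\int_0^t{\mathfrak c}(t')\,{\rm d}t'$ in Lemma \ref{lemmaweaksol}). Your lifting argument and uniqueness argument are sound, and the semi-group route is arguably shorter and more transparent once skew-adjointness of $\overline{\bf A}$ is in hand.

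One point deserves more care than your ``one shows'': the adaptation of Proposition \ref{propSA}'s adjoint-domain argument to the unbounded case is not quite a verbatim transcription. There, from $|(\zeta,G_0\psi')_{L^2}|\le C|\psi'|_{\dot{H}^{1/2}}$ one obtains a Riesz representative $g\in\dot{\mathcal H}^{1/2}(\Gamma^{\rm D})\subset L^2(\Gamma^{\rm D})$ and concludes $\zeta=g$ immediately. In the unbounded case the Riesz representative lies only in $\dot{H}^{1/2}(\Gamma^{\rm D})/\RR$ and need not have an $L^2$ lift, so identifying $\zeta$ with a representative of $g$ requires an additional argument (testing against $\psi'=G_0^{-1}\phi$ for compactly supported, mean-zero $\phi$, or similar). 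A cleaner way to establish skew-adjointness of $\overline{\bf A}$ in the unbounded case is to verify maximality directly by solving the resolvent system $(I\pm\overline{\bf A})\overline{U}=\overline{F}$: eliminating $\psi=g-\gr\zeta$ reduces it to $\zeta+\gr G_0\zeta=f+G_0 g$, which Lax--Milgram solves on $H^{1/2}(\Gamma^{\rm D})$, after which $G_0\psi=\zeta-f\in L^2(\Gamma^{\rm D})$ and Proposition \ref{propDNell} give $\psi\in\dot{H}^1(\Gamma^{\rm D})$. With that supplied, your argument goes through. One advantage of the paper's duality route is precisely that it sidesteps the need to prove skew-adjointness in this unbounded setting.
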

\begin{remark}
In the \emph{bounded} case, we defined the space ${\mathbb X}$ as ${\mathbb X}={\mathcal L}^2(\Gamma^{\rm D})\times \dot{\mathcal H}^{1/2}(\Gamma^{\rm D})$ where we recall that $\dot{\mathcal H}^{1/2}(\Gamma^{\rm D})$  is the realization of the semi-normed space $\dot{H}^{1/2}(\Gamma^{\rm D})$ consisting of its elements that have zero mean on $\Gamma^{\rm D}$.  
In order to solve the equations  \eqref{CP}-\eqref{CP0}, we had to impose a similar zero mean condition on $\zeta$; it was crucial in the proof of Theorem \ref{theoWPbounded} that these zero mean conditions were propagated by the equations. 
In the \emph{unbounded} case, one can also define a realization $\dot{\mathcal H}^{1/2}(\Gamma^{\rm D})$ of $\dot{\mathcal H}^{1/2}(\Gamma^{\rm D})$ (see Section \ref{sectdotH012}); unfortunately, this realization is not compatible with the equations in the sense that the constraint used in \eqref{defHI120} to define this realization is not propagated anymore by the equations. As we show below, instead of working with a realization of the space $\dot{H}^{1/2}(\Gamma^{\rm D})$, which is a homogeneous space of functions of the space variable, we need to work with a realization of the space $L^2([0,T];\dot{H}^{1/2}(\Gamma^{\rm D})$, which is a homogeneous space of functions of the space and time variables.
 The idea is the following: we construct a solution by a duality method in $L^2([0,T]; {\mathbb X})$; with ${\mathbb X}$ now defined by \eqref{defXunbounded}; since this space is only semi-normed, the equations \eqref{CP} are only satisfied up to some element of the adherence of $0$ in $L^2([0,T]; {\mathbb X})$. This can be physically interpreted as the fact that the Bernoulli equation (the second equation in \eqref{CP}) is satisfied only up to a constant in time. Setting this constant equal to $0$ (and imposing that the initial condition is satisfied almost everywhere) provides us with the relevant realization of $L^2([0,T]; {\mathbb X})$.  
\end{remark}
\begin{remark}\label{remzeromasszeta2}
When $\Gamma^{\rm D}$ is bounded, one can apply Theorem \ref{theoWPunbounded} if the zero mass assumption does not hold, instead of proceeding as in Remark \ref{remzeromasszeta1}. 
\end{remark}
\begin{proof}
The structure of the proof is the following: we first show a priori estimates for regular enough solutions; by a duality argument, we then construct a weak solution. In order to prove that it is unique, we use a mollification in time of the equations to approximate this weak solution by a convergent sequence of more regular solutions when the initial data is equal to zero. The energy estimate applies to these approximations and passing to the limit, we deduce that the weak solution also satisfies it; it is therefore unique. We then extend the result to the case of non-zero initial data, and prove the higher order regularity stated in the second part of the theorem. The a priori energy estimate is provided by the following lemma. 
\begin{lemma}\label{lemmaNRJest}
Let $T>0$. There exists $C>0$ such that for all  $V\in H^1([0,T];{\mathbb X})\cap L^2([0,T];{\mathbb X}^1)$ one has, for all $0\leq t\leq T$,
$$
\vert V(t) \vert_{\mathbb X}^2 \leq C \big( \vert V(0) \vert_{\mathbb X}^2 + \int_0^t \vert (\dt +{\bf A}) V \vert_{{\mathbb X}}^2 \big).
$$
\end{lemma}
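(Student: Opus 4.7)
\noindent\textbf{Proof plan for Lemma \ref{lemmaNRJest}.}

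The approach is the standard energy method adapted to the semi-normed setting, exploiting the structural symmetry of $\mathbf{A}$ (expressed via Green's identity for $G_0$). Writing $V=(\zeta,\psi)^{\rm T}$ and $F=(f,g)^{\rm T}:=(\dt+\mathbf{A})V$, the two scalar equations are $\dt\zeta-G_0\psi=f$ and $\dt\psi+\gr\zeta=g$. The plan is to compute $\frac{d}{dt}\vert V\vert_{\mathbb{X}}^2=\frac{d}{dt}\bigl(\gr\vert\zeta\vert_{L^2(\Gamma^{\rm D})}^2+\langle G_0\psi,\psi\rangle\bigr)$ and absorb the resulting cross terms, then close the estimate by Gr\"onwall's lemma.

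Under the regularity assumed on $V$, namely $V\in H^1([0,T];\mathbb{X})\cap L^2([0,T];\mathbb{X}^1)$, one has $\zeta\in H^1([0,T];L^2)\cap L^2([0,T];H^{1/2})$, so $\frac{d}{dt}\vert\zeta\vert_{L^2}^2=2(\zeta,\dt\zeta)$ holds in the sense of distributions in $t$. Likewise $\psi\in H^1([0,T];\dot{H}^{1/2})\cap L^2([0,T];\dot{H}^1)$ and the harmonic extension map is linear and continuous by Proposition \ref{propharmonic}, so $\psi^{\mathfrak h}\in H^1([0,T];\dot{H}^1(\Omega))$ and
\[
\frac{d}{dt}\langle G_0\psi,\psi\rangle=\frac{d}{dt}\int_\Omega|\nabla\psi^{\mathfrak h}|^2=2\int_\Omega\nabla\psi^{\mathfrak h}\cdot\nabla(\dt\psi)^{\mathfrak h}=2\langle G_0\psi,\dt\psi\rangle,
\]
where the last identity is the definition \eqref{defdnpsih} of $G_0$ applied with test function $\dt\psi\in\dot{H}^{1/2}(\Gamma^{\rm D})$. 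Substituting $\dt\zeta=G_0\psi+f$ and $\dt\psi=-\gr\zeta+g$ and using the symmetry $\gr(\zeta,G_0\psi)=\gr\langle G_0\psi,\zeta\rangle$ (recall that $G_0\psi\in\dot{H}^{1/2}(\Gamma^{\rm D})'$ and $\zeta\in L^2\subset\dot{H}^{1/2}(\Gamma^{\rm D})$ through the canonical pairing when $\int_\Omega\nabla\zeta^{\mathfrak h}\cdot\nabla\psi^{\mathfrak h}$ is symmetric in its arguments), the antisymmetric cross terms cancel and I obtain
\[
\frac{d}{dt}\vert V\vert_{\mathbb{X}}^2=2\gr(\zeta,f)+2\langle G_0\psi,g\rangle.
\]

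For the two source terms, the first is bounded by $\gr\vert\zeta\vert_{L^2}^2+\gr\vert f\vert_{L^2}^2$ by Young's inequality. For the second, the Cauchy--Schwarz inequality applied to $\int_\Omega\nabla\psi^{\mathfrak h}\cdot\nabla g^{\mathfrak h}$ gives
\[
|\langle G_0\psi,g\rangle|\leq\langle G_0\psi,\psi\rangle^{1/2}\langle G_0 g,g\rangle^{1/2}\leq\tfrac12\langle G_0\psi,\psi\rangle+\tfrac12\langle G_0 g,g\rangle,
\]
so that altogether $\frac{d}{dt}\vert V\vert_{\mathbb{X}}^2\leq\vert V\vert_{\mathbb{X}}^2+C\vert F\vert_{\mathbb{X}}^2$ for some constant $C$ depending only on $\gr$. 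Gr\"onwall's lemma on $[0,T]$ then yields
\[
\vert V(t)\vert_{\mathbb{X}}^2\leq e^T\vert V(0)\vert_{\mathbb{X}}^2+C\,e^T\int_0^t\vert F(s)\vert_{\mathbb{X}}^2\,{\rm d}s,
\]
which is the claimed inequality.

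The main subtlety, rather than obstacle, is the rigorous justification of the time differentiation of $\langle G_0\psi,\psi\rangle$ in the semi-normed setting: one needs that the harmonic extension commutes with $\dt$ (which it does since it is a bounded linear map between Banach spaces quotients by constants), and that the pairing $\langle G_0\cdot,\cdot\rangle$ is symmetric even though $\mathbb{X}$ is only semi-normed. Both are immediate consequences of the variational definition of $G_0$ in Definition \ref{propDN} and the fact that replacing $\psi$ by $\psi+c$ does not affect $\psi^{\mathfrak h}$ up to the same constant $c$, hence does not affect $\nabla\psi^{\mathfrak h}$.
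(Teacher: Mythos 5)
Your proof is correct and takes essentially the same route as the paper: the paper also pairs the equation with $V$ in the (non-definite) $\mathbb{X}$ scalar product, uses the skew-symmetry $\langle\mathbf{A}U,U\rangle_{\mathbb{X}}=0$ on $\mathbb{X}^1$ to kill the cross terms, and closes with the obvious Young/Grönwall step; you have simply written out explicitly the cancellation, the Cauchy--Schwarz bound on $\langle G_0\psi,g\rangle$, and the time-differentiation justification that the paper treats as immediate. One small imprecision: to make sense of the duality pairing $\langle G_0\psi,\zeta\rangle$ you need $\zeta\in\dot{H}^{1/2}(\Gamma^{\rm D})$ (not merely $\zeta\in L^2$), which holds here precisely because $V\in\mathbb{X}^1$ gives $\zeta\in H^{1/2}(\Gamma^{\rm D})$, while $G_0\psi\in L^2(\Gamma^{\rm D})$ from $\psi\in\dot{H}^1(\Gamma^{\rm D})$ makes the $L^2$ pairing meaningful and equal to the duality pairing (Remark \ref{remDN}); it would be cleaner to cite that directly than to assert $L^2\subset\dot{H}^{1/2}$.
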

\begin{proof}[Proof of the lemma] 
Let us write $G=  (\dt +{\bf A}) V $. Taking the ${\mathbb X}$ (non-definite) scalar product of this equation with $V$, and since $\langle {\bf A}U,U\rangle_{{\mathbb X}}$ is well defined and equal to $0$ when $U\in {\mathbb X}^1$, we get
$$
\frac{1}{2}\frac{{\rm d}}{{\rm d} t } \vert V \vert^2_{{\mathbb X}}=\langle G, V \rangle_{{\mathbb X}},
$$
from which the result follows easily.
\end{proof}
\begin{remark}\label{remreverse}
Reversing in time, we also get that for $\forall V\in H^1([0,T];{\mathbb X})\cap L^2([0,T];{\mathbb X}^1)$ one has, for all $0\leq t\leq T$,
$$
\vert V(t) \vert_{\mathbb X}^2 \leq C \big( \vert V(T) \vert_{\mathbb X}^2 + \int_t^T \vert (\dt +{\bf A}) V \vert_{{\mathbb X}}^2 \big).
$$
\end{remark}
This a priori estimate can be used to run a duality argument and establish the existence of a weak solution. Special attention must be paid to the fact that we work with a semi-normed space. Taking the quotient of this space by the adherence of zero for the seminorm makes it a Banach space; the proof shows that setting the Bernoulli constant equal to zero in the second equation of \eqref{CP} is equivalent to choosing a particular representative of the solution in this quotient space.

\begin{lemma}\label{lemmaweaksol}
There exists a weak solution $U\in L^2([0,T];{\mathbb X})$ to the initial value problem \eqref{CP}-\eqref{CP0}.
\end{lemma}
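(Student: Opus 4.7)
The plan is to construct the weak solution by a Lions--type duality argument, whose coercivity is provided by the \emph{a priori} estimate of Lemma~\ref{lemmaNRJest} used in its time-reversed form (Remark~\ref{remreverse}) on backward test functions. Because $\mathbb{X}$ is only semi-normed, this procedure will determine $U$ merely as an equivalence class in $L^2([0,T];\mathbb{X})$ modulo the null space of the semi-norm; the final and most subtle step will then consist in picking a genuine representative in that class, which physically amounts to fixing the Bernoulli constant to zero as announced before the theorem.

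Concretely, I would introduce the space of backward test functions
\[
\Phi := \{V \in C^1([0,T];\mathbb{X}) \cap C([0,T];\mathbb{X}^1) : V(T) = 0\}
\]
and the map $\Psi : V \mapsto (-\partial_t - {\bf A})V \in L^2([0,T];\mathbb{X})$. Since ${\bf A}$ is skew-symmetric for $\langle\cdot,\cdot\rangle_{\mathbb{X}}$, applying Remark~\ref{remreverse} to $V \in \Phi$ yields the coercivity bound
\[
\sup_{t \in [0,T]} |V(t)|_{\mathbb{X}}^2 \leq C\,\|\Psi V\|_{L^2([0,T];\mathbb{X})}^2.
\]
Defining the linear form
\[
L(V) := \langle U^{\rm in}, V(0)\rangle_{\mathbb{X}} + \int_0^T \langle F(t), V(t)\rangle_{\mathbb{X}}\,dt,
\]
the Cauchy--Schwarz inequality gives $|L(V)| \lesssim \bigl(|U^{\rm in}|_{\mathbb{X}} + \|F\|_{L^2([0,T];\mathbb{X})}\bigr)\,\|\Psi V\|_{L^2([0,T];\mathbb{X})}$. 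Hence $\widetilde{L}(\Psi V) := L(V)$ is a bounded linear functional on $\Psi(\Phi) \subset L^2([0,T];\mathbb{X})$; a Hahn--Banach extension followed by the Riesz representation theorem, applied in the quotient Hilbert space of $L^2([0,T];\mathbb{X})$ by the null space of its semi-norm, will produce a class $[U]$ with representative $U$ satisfying
\[
\int_0^T \langle U(t), \Psi V(t)\rangle_{\mathbb{X}}\,dt = L(V), \qquad \forall V\in\Phi.
\]
Testing successively against $V$ compactly supported in $(0,T)$ and against $V$ with prescribed value $V(0)$, an integration by parts will recast this identity as the weak form of~\eqref{CP}--\eqref{CP0}.

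The main obstacle will be that the null space of the $L^2([0,T];\mathbb{X})$ semi-norm consists precisely of functions of the form $(0,c(t))$ with $c \in L^2([0,T])$ constant on $\Gamma^{\rm D}$, so that the duality step leaves $\psi$ undetermined up to a time-dependent additive constant, the Bernoulli constant. To produce an actual element of $L^2([0,T];\mathbb{X})$, I would first realize $\zeta$ canonically in $L^2([0,T];L^2(\Gamma^{\rm D}))$, and then use the Bernoulli equation itself to select the realization of $\psi$ by setting
\[
\psi(t) := \psi^{\rm in} + \int_0^t \bigl(g(s) - {\mathtt g}\,\zeta(s)\bigr)\,ds,
\]
which forces $\partial_t \psi + {\mathtt g}\zeta = g$ pointwise and enforces $\psi(0) = \psi^{\rm in}$ in $\dot{H}^{1/2}(\Gamma^{\rm D})$. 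The delicate point to verify will be that this realization lies in $L^2([0,T];\dot{H}^{1/2}(\Gamma^{\rm D}))$ and belongs to the equivalence class produced by the duality step; for the latter, one uses crucially that $G_0$ annihilates spatial constants, so that the first equation in~\eqref{CP} is insensitive to the chosen Bernoulli representative, which is precisely what allows the semi-normed framework developed in the paper to replace the standard semi-group theory that is unavailable here.
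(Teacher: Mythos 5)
Your proposal follows the paper's own proof essentially step by step: the backward test space, the coercivity supplied by Remark~\ref{remreverse}, the Hahn--Banach extension in the semi-normed setting, the Riesz representation in the quotient $L^2([0,T];\mathbb{X})/K$, the identification of $K$ with $\{0\}\times L^2([0,T];\RR)$, and the physical interpretation of the realization choice as fixing the Bernoulli constant. One link in your chain is weaker than it should be. To conclude that the explicit formula
\[
\psi(t) := \psi^{\rm in} + \int_0^t \bigl(g(s) - {\mathtt g}\,\zeta(s)\bigr)\,ds
\]
produces a \emph{representative} of the equivalence class $U^\bullet$ obtained from duality, one differentiates the difference $\psi-\widetilde\psi$ in time and finds it equals $-{\mathfrak c}(t)$, which is indeed constant in $x$; but this reduces the claim to the statement that the \emph{initial} difference $\psi^{\rm in}-\widetilde\psi(0)$ is also constant in $x$, and the observation that $G_0$ annihilates spatial constants does not give you that. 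The paper closes this gap by first using the equations to show $\widetilde{U}\in C\bigl([0,T];(\dot{H}^{1/2}(\Gamma^{\rm D})'+L^2(\Gamma^{\rm D}))\times (L^2(\Gamma^{\rm D})+\dot{H}^{1/2}(\Gamma^{\rm D}))\bigr)$, and then reading off $\widetilde{U}(0)=U^{\rm in}+(0,{\mathfrak c}_0)^{\rm T}$ for some scalar ${\mathfrak c}_0$ from the representation identity \eqref{repres} tested against $V$ with $V(0)\neq 0$. You mention the $V(0)\neq 0$ testing step, but you should connect it explicitly to this verification: it is what pins down $\widetilde\psi(0)$ up to an additive scalar, and hence what guarantees that your integral formula lands inside the equivalence class and in $L^2([0,T];\dot{H}^{1/2}(\Gamma^{\rm D}))$.
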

\begin{proof}
Let us introduce the space
$$
{\mathcal E}=\{V \in H^1([0,T] ; {\mathbb X}) \cap L^2([0,T];{\mathbb X}^1), \quad V(T)=0\};
$$
from Remark \ref{remreverse}, we get that for all $V \in {\mathcal E}$ and $0\leq t\leq T$, one has
$$
\vert V(t) \vert_{\mathbb X}^2 \leq C  \int_0^T \vert (\dt +{\bf A}) V \vert_{{\mathbb X}}^2 ;
$$
the operator $(-\dt -{\bf A})$ is therefore one-to-one on ${\mathcal E}$ and we can therefore define a linear form $\ell$ on $(-\dt -{\bf A}){\mathcal E}$ by
$$
\ell((-\dt -{\bf A})V)=\int_0^T \langle F(t), V(t) \rangle_{{\mathbb X}}{\rm d}t +\langle U^{\rm in}, V(0) \rangle_{{\mathbb X}}.
$$
Using the above estimate, we easily get that
$$
\ell((-\dt -{\bf A})V)^2 \leq C\big(\vert F \vert_{L^2([0,T];{\mathbb X})}^2 + \vert U(0)\vert_{{\mathbb X}}^2 \big)   \vert (-\dt -{\bf A}) V \vert_{L^2([0,T];{\mathbb X})}^2,
$$
so that $\ell$ is continuous for the semi-norm $\vert \cdot \vert_{L^2([0,T];{\mathbb X})}^2$. \\
By the Hahn-Banach theorem on semi-normed spaces, $\ell$ can be extended as a linear continuous form on $L^2([0,T];{\mathbb X})$. We can canonically associate to $\ell$ a continuous linear form, denoted by $\ell^\bullet$, on the Hilbert space $L^2([0,T];{\mathbb X})/K$, where $K$ is the adherence of $0$ in $L^2([0,T];{\mathbb X})$, which consists in all functions $V\in L^2([0,T];{\mathbb X})$ such that $\int_0^T \vert V(t)\vert^2_{{\mathbb X}}{\rm d}t=0$, that is, $K=\{0\}\times L^2([0,T];\RR)$. \\
Since $L^2([0,T];{\mathbb X})/K$ is a Hilbert space, one can use the Riesz representation theorem to deduce that there exists $U^\bullet \in L^2([0,T];{\mathbb X})/K$ such that
$$
\ell^\bullet\big( ((-\dt -{\bf A})V)^\bullet\big)= \langle U^\bullet , (-\dt -{\bf A})V \rangle_{L^2([0,T];{\mathbb X})},
$$
for all $V\in {\mathcal E}$, and where $((-\dt -{\bf A})V)^\bullet$ denotes the class of $(-\dt -{\bf A})V$ in $L^2([0,T];{\mathbb X})/K$. \\
Since by definition $\ell^\bullet\big( ((-\dt -{\bf A})V)^\bullet\big)=\ell\big( (-\dt -{\bf A})V\big)$, this means by the definition of $\ell$ that
\begin{equation}\label{repres}
\int_0^T \langle F(t), V(t) \rangle_{{\mathbb X}}{\rm d}t +\langle U^{\rm in}, V(0) \rangle_{{\mathbb X}}
=\int_0^T\langle \widetilde{U}(t), (-\dt -{\bf A})V(t) \rangle_{\mathbb X}{\rm d}t,
\end{equation}
where $\widetilde{U}=(\widetilde{\zeta},\widetilde{\psi})^{\rm T} \in L^2([0,T];{\mathbb X})$ is any representative of $U^\bullet$.\\
For all $V \in {\mathcal D}(\Gamma^{\rm D})^2$, we have
$$
\int_0^T \langle F(t), V(t) \rangle_{{\mathbb X}}{\rm d}t 
=\int_0^T\langle  (\dt +{\bf A}) \widetilde{U}(t),V(t) \rangle_{\mathbb X}{\rm d}t.
$$
This implies that $ (\dt +{\bf A}) \widetilde{U}- F$ belongs to $K$; in particular, there exists a function ${\mathfrak c} \in L^2([0,T];\RR)$ such that
$$
\begin{cases}
\dt \widetilde{\zeta}-G_0\widetilde{\psi}=f,\\
\dt \widetilde{\psi} +{\mathtt g} \widetilde{\zeta}={\mathfrak c} +g
\end{cases}
$$
is satisfied almost-everywhere.
It is possible to take ${\mathfrak c}=0$ by taking a different representative of $U^\bullet$, namely, ${U}=(\zeta,\psi)$ with $\zeta=\widetilde{\zeta}$ and  $\psi=\widetilde{\psi}-\int_0^t {\mathfrak c}(t'){\rm d}t'$; this representative is still defined up to a time independent constant. We can then use the equation to get that $\dt U \in L^2([0,T];(\dot{H}^{1/2}(\Gamma^{\rm D})'+L^2(\Gamma^{\rm D}))\times (L^2(\Gamma^{\rm D})+\dot{H}^{1/2}(\Gamma^{\rm D}))$ and therefore $U \in C([0,T];(\dot{H}^{1/2}(\Gamma^{\rm D})'+L^2(\Gamma^{\rm D}))\times (L^2(\Gamma^{\rm D})+\dot{H}^{1/2}(\Gamma^{\rm D}))$, and we get from \eqref{repres} that $U(0)=U^{\rm in}+(0,{\mathfrak c}_0)^{\rm T}$, for some ${\mathfrak c}_0\in \RR$. Since $\psi$ is defined up to a constant, we can choose in such a way that ${\mathfrak c}_0=0$.
\end{proof}
We can now show that the weak solution constructed above can be approximated by a sequence of strong solutions.
\begin{lemma}\label{lemmaunique}
Let $F\in L^2([0,T];{\mathbb X})$. The Cauchy problem \eqref{CP}-\eqref{CP0} with initial data $U^{\rm in}=0$, has a unique weak solution $U$. Moreover, $U \in C([0,T]; {\mathbb X})$ and satisfies the energy estimate of Lemma \ref{lemmaNRJest}, and it is a limit in $C([0, T ]; {\mathbb X})$ of a sequence $U^\epsilon \in H^1([0,T];{\mathbb X})\cap L^2([0,T];{\mathbb X}^1)$ such that $U^\epsilon(0)=0$ and $(\dt +{\bf A})U^\epsilon \to F$ in $L^2([0,T];{\mathbb X})$ as $\epsilon \to 0$.
\end{lemma}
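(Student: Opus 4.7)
Starting from the weak solution $U\in L^2([0,T];{\mathbb X})$ furnished by Lemma \ref{lemmaweaksol}, my plan is to regularize in time by convolution with a one-sided mollifier so as to obtain a sequence $U^\epsilon\in H^1([0,T];{\mathbb X})\cap L^2([0,T];{\mathbb X}^1)$ to which the a priori estimate of Lemma \ref{lemmaNRJest} applies. Cauchy-ness of $U^\epsilon$ in $C([0,T];{\mathbb X})$ combined with passage to the limit will then deliver the claimed continuity in time, the energy estimate and the approximation property; uniqueness will follow from the same a priori bound applied to the difference of two candidate solutions.

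\textbf{Construction of $U^\epsilon$.} Fix the representative of $U$ given by the proof of Lemma \ref{lemmaweaksol} (namely ${\mathfrak c}=0$ and ${\mathfrak c}_0=0$, so that the second equation of \eqref{CP} holds literally and $U(0)=0$). Extend $U$ and $F$ by $0$ on $(-\infty,0)$; since $U^{\rm in}=0$ no delta term arises at $t=0$, and $(\dt+{\bf A})\tilde U=\tilde F$ holds in ${\mathcal D}'(\RR;{\mathbb X})$. Let $\rho_\epsilon(t)=\epsilon^{-1}\rho(t/\epsilon)$ with $\rho\in{\mathcal D}(\RR)$ non-negative of unit mass supported in $[0,1]$, and set $U^\epsilon=\rho_\epsilon\ast_t\tilde U$, $F^\epsilon=\rho_\epsilon\ast_t\tilde F$. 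The one-sided support of $\rho_\epsilon$ gives $U^\epsilon(0)=0$; moreover $U^\epsilon\in C^\infty(\RR;{\mathbb X})$, $(\dt+{\bf A})U^\epsilon=F^\epsilon$ holds literally, and $F^\epsilon\to F$ in $L^2([0,T];{\mathbb X})$.

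\textbf{Regularity, Cauchy-ness and limit.} Rewriting ${\bf A}U^\epsilon=F^\epsilon-\dt U^\epsilon$, the right-hand side lies in $L^2([0,T];{\mathbb X})$ (since $U^\epsilon$ is smooth in time with values in ${\mathbb X}$). Componentwise this says $G_0\psi^\epsilon\in L^2([0,T];L^2(\Gamma^{\rm D}))$ and $\zeta^\epsilon\in L^2([0,T];\dot H^{1/2}(\Gamma^{\rm D}))$; Proposition \ref{propDNell} then upgrades $\psi^\epsilon$ to $L^2([0,T];\dot H^1(\Gamma^{\rm D}))$, and Proposition \ref{propequivter} gives $\zeta^\epsilon\in L^2([0,T];H^{1/2}(\Gamma^{\rm D}))$. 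Hence $U^\epsilon\in H^1([0,T];{\mathbb X})\cap L^2([0,T];{\mathbb X}^1)$, and Lemma \ref{lemmaNRJest} applied to $U^\epsilon-U^{\epsilon'}$ (which vanishes at $t=0$) yields
\[ \sup_{t\in[0,T]}\vert U^\epsilon(t)-U^{\epsilon'}(t)\vert_{{\mathbb X}}^2\leq C\int_0^T\vert F^\epsilon(s)-F^{\epsilon'}(s)\vert_{{\mathbb X}}^2\,{\rm d}s, \]
so $\{U^\epsilon\}$ is Cauchy in $C([0,T];{\mathbb X})$ for the semi-norm. Componentwise, $\zeta^\epsilon$ converges in $C([0,T];L^2(\Gamma^{\rm D}))$ to some $\zeta$, and the ambiguity in $\psi^\epsilon$ is removed by integrating the literal equation $\dt\psi^\epsilon=g^\epsilon-\gr\zeta^\epsilon$ from $t=0$; this produces a definite $\psi^\epsilon$ that converges in $C([0,T];\dot H^{1/2}(\Gamma^{\rm D}))$ to some $\psi$. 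The limit $U=(\zeta,\psi)$ then belongs to $C([0,T];{\mathbb X})$, satisfies $U(0)=0$ and $(\dt+{\bf A})U=F$, and is therefore the weak solution; the energy estimate for $U$ follows by passage to the limit in the analogous bound for $U^\epsilon$.

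\textbf{Uniqueness and main obstacle.} If $U_1,U_2$ are two weak solutions with the same data then $W=U_1-U_2$ has $W(0)=0$ and zero source; mollifying as above, $W^\epsilon$ satisfies $(\dt+{\bf A})W^\epsilon=0$ with $W^\epsilon(0)=0$, so Lemma \ref{lemmaNRJest} forces $\vert W^\epsilon(t)\vert_{{\mathbb X}}=0$ for all $t$. This identifies $\zeta_1^\epsilon=\zeta_2^\epsilon$; the literal equation $\dt(\psi_1^\epsilon-\psi_2^\epsilon)=-\gr(\zeta_1^\epsilon-\zeta_2^\epsilon)=0$, together with the vanishing of $\psi_1^\epsilon-\psi_2^\epsilon$ at $t=0$, kills the remaining kernel ambiguity and forces $W^\epsilon\equiv 0$, hence $U_1=U_2$ in the limit. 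The main obstacle throughout the argument is precisely this semi-normed subtlety: Cauchy-ness and energy bounds only control functions modulo the kernel $\{0\}\times\RR$ of the semi-norm, and one must check that the realization chosen in Lemma \ref{lemmaweaksol} (zero Bernoulli constant, literal initial condition) is compatible with time-mollification so that no spurious time-dependent constant in $\psi$ slips into either the approximation or the limit.
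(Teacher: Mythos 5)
Your proof is correct and follows essentially the same strategy as the paper: extend the weak solution of Lemma \ref{lemmaweaksol} by zero for negative times, mollify in time with a one-sided kernel so that $U^\epsilon(0)=0$, upgrade $U^\epsilon$ to $H^1([0,T];\mathbb X)\cap L^2([0,T];\mathbb X^1)$ using the equation together with Propositions \ref{propDNell} and \ref{propequivter}, apply the a priori estimate of Lemma \ref{lemmaNRJest} to $U^\epsilon-U^{\epsilon'}$, and pass to the limit. The only cosmetic difference is your choice of a compactly supported bump $\rho_\epsilon$ in place of the paper's one-sided exponential kernel $J_\epsilon u(t)=\epsilon^{-1}\int_{-\infty}^t e^{(s-t)/\epsilon}u(s)\,{\rm d}s$; both are one-sided and serve the same purpose.
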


\begin{proof}
In order to construct the sequence $(U^\epsilon)_\epsilon$, we need to introduce mollifiers in time. 

If $u\in L^2((-\infty,T)$, then we define, for all $\epsilon>0$ and $0\leq t\leq T$,
$$
J_\epsilon u(t)=\epsilon^{-1}\int_{-\infty}^t e^{(s-t)/\epsilon}u(s){\rm d}s=\epsilon^{-1}\int_0^\infty e^{-s/\epsilon} u(t-s){\rm d}s.
$$
The operators $J_\epsilon$ and $\epsilon\dt J_\epsilon$ are uniformly bounded on $L^2((-\infty,T)$ and $\vert J_\epsilon u -u \vert_{L^2([0,T])}\to 0$ as $\epsilon \to 0$. 
We also notice that if $u\in H^1(-\infty,T)$, then $\dt J_\epsilon u=J_\epsilon \dt u$.

If $U^{\rm in}=0$, and $U$ an associated weak solution (as provided by Lemma \ref{lemmaweaksol}) then we can extend $U$ by zero for negative times; this extension, still denoted by $U$, is a weak solution to \eqref{CP} on $(-\infty,T)$. 

Denoting $U_\epsilon=J_\epsilon U$ and $F_\epsilon=J_\epsilon F$, we get after applying the mollifier $J_\epsilon$ to \eqref{CP} that $\dt U_\epsilon +{\bf A}U_\epsilon = F_\epsilon$ in the weak sense. Since $\dt U_\epsilon= \dt J_\epsilon U$, we get from the aforementioned properties of $J_\epsilon$ that $\dt U_\epsilon$ and $F_\epsilon$ are in  $L^2((-\infty,T);{\mathbb X})$. 
From the equations, this implies  $G_0\psi^\epsilon \in L^2((-\infty,T);L^2(\Gamma^{\rm D}))$ and $\zeta^\epsilon \in L^2((-\infty,T); \dot{H}^{1/2}(\Gamma^{\rm D}))$. Together with the fact that $U^\epsilon \in L^2((-\infty,T);{\mathbb X})$, this implies that $U^\epsilon \in L^2((-\infty,T);{\mathbb X}^1)$.

We have thus constructed a sequence $U^\epsilon \in H^1([0,T];{\mathbb X}) \cap L^2([0,T];{\mathbb X}_1)$ that converges to $U$ in $ L^2([0,T];{\mathbb X})$ and such that $F^\epsilon=(\dt +{\bf A})U^\epsilon$ converges to $F$ in $ L^2([0,T];{\mathbb X})$  as $\epsilon\to 0$.
Since we have enough regularity to apply Lemma \ref{lemmaweaksol}, we can apply the energy estimate to $U_\epsilon -U_{\epsilon'}$, and get that the sequence $(U_\epsilon)_{\epsilon}$ is a Cauchy sequence in $C([0,T];{\mathbb X})$ and therefore converges in this space as $\epsilon \to 0$. Its limit $U$ therefore belongs to $C([0,T];{\mathbb X})$ and satisfies the energy estimate. Uniqueness follows directly, up to a time depending constant for $\psi$; as above, uniqueness follows from the fact that the equation and the initial data must be satisfied almost everywhere.
\end{proof}

In order to prove the first point of the theorem, we need to consider the case $U^{\rm in}\neq 0$. By density of ${\mathbb X}^1$ in ${\mathbb X}$, it is enough to consider $U^{\rm in}\in {\mathbb X}^1$. 

In this case, $\tilde U:=U-U_0$ solves 
\[
(\dt +{\bf A})\tilde{U}=\tilde{F},
\]
with $\tilde F=F-{\bf A}U^{\rm in} \in L^2([0,T];{\mathbb X})$. 

Since $\tilde{U}(0)=0$, we can apply Lemma \ref{lemmaunique} and denote by $\tilde{U}^\epsilon$ the approximated sequence furnished by the lemma. The sequence $(U^\epsilon)_\epsilon$ defined by $U^\epsilon=U^{\rm in}+\tilde{U}^\epsilon$ is therefore an approximating sequence of $U$ that has enough regularity to apply the energy estimate of Lemma \ref{lemmaNRJest}. As in the proof of Lemma \ref{lemmaunique}, we can deduce that the weak solution to \eqref{CP}-\eqref{CP0} is unique, belongs to $C([0,T];{\mathbb X})$ and satisfies the energy energy estimate.

In order to prove the second point, we now consider $U^{\rm in}\in {\mathbb X}^1$, so that $U^{\rm in}_1:= -{\bf A}U^{\rm in}+F(0)\in {\mathbb X}$. By the first point of the theorem, there exists therefore a unique solution $U_1\in C([0,T];{\mathbb X})$ to \eqref{CP} with initial data $U^{\rm in}_1$  and source term $\dt F$.

We then define $\widetilde U \in C^1(\RR^+;  {\mathbb X})$ by
$$
\widetilde U=U^{\rm in}+\int_0^t U_1;
$$
in particular, we have $\dt \widetilde{U}=U_1$ and therefore
$$
\dt^2 \widetilde{U}+{\bf A}\dt \widetilde{U}=\dt F
$$
and
$$
(\dt \widetilde{U})_{\vert_{t=0}}=U^{\rm in}_1, \qquad \widetilde{U}_{\vert_{t=0}}=U^{\rm in}.
$$

Integrating in time, we therefore get
$$
\dt \widetilde{U}+{\bf A} \widetilde{U}= F,
$$
so that $\widetilde{U}$ and $U$ solve \eqref{CP} with the same initial data $U^{\rm in}$. By uniqueness, we deduce that $U=\widetilde{U}$, so that $U \in C^1([0,T];{\mathbb X})$. Using the equation, this implies that ${\bf A}U \in C([0,T];{\mathbb X})$, and therefore that $U\in  C([0,T];{\mathbb X}^1)$. This concludes the proof of the theorem.
\end{proof}

\subsection{Higher order time regularity}\label{sectHOTR}

We have seen in Theorems \ref{theoWPbounded} and \ref{theoWPunbounded}  that if $U^{\rm in} \in {\mathbb X}^1=\{U\in {\mathbb X}, (-{\bf A})U \in {\mathbb X}\}$ endowed with the graph norm and $F\in C^1({\mathbb R}^+;{\mathbb X})$, then the solution $U$ to \eqref{CPcomp} belongs to $C^1(\RR^+;{\mathbb X})\cap C(\RR^+;{\mathbb X}^1)$, with ${\mathbb X}$ defined by \eqref{defXbounded} when $\Gamma^{\rm D}$ is bounded and by \eqref{defXunbounded} otherwise.
In order to study time regularity or order $n\in {\mathbb N}$, we need to introduce ${\mathbb X}^n$ as
$$
{\mathbb X}^n=\{U\in {\mathbb X}, \, \forall 0\leq j\leq n, \, (-{\bf A})^jU \in {\mathbb X}\},
$$
endowed with the semi-norm $\vert U \vert_{{\mathbb X}^n}:= \sum_{j=0}^n \vert (-{\bf A})^jU \vert_{\mathbb X}$; 
for time depending function, we also define the space ${\mathbb V}^n_T$ for $T>0$ by
\begin{equation}\label{defWT}
{\mathbb V}^n_T:=\left\lbrace U\in C^n([0,T];{\mathbb X}), \, \forall 0\leq j\leq n, \, (-{\bf A})^j U \in C^{n-j}([0,T];{\mathbb X}\right\rbrace;
\end{equation}
for all $t\in [0,T]$ and $U\in {\mathbb V}^n_T$, we also use the notation
$$
	\vertiii{U(t)}_n:= 
	\sum_{j=0}^n \vert \dt^{j} U (t)\vert_{{\mathbb X}^{n-j}}.
$$  
We show here that if $F\in {\mathbb V}^n_T$ and $U^{\rm in}$ are such that $\vertiii{U(0)}_n$ as defined above is finite, then the solution $U$ provided by Theorem \ref{theoWPbounded} or \ref{theoWPunbounded} (depending on whether $\Gamma^{\rm D}$ is bounded or not) belongs to ${\mathbb V}^j_T$.
 \begin{corollary}\label{corohigherT}
Let $\Omega$, $\Gamma^{\rm D}$ and $\Gamma^{\rm N}$ be as in Assumption \ref{assconfig}. 
Let $T>0$ and $n\in {\mathbb N}^*$. If $U^{\rm in}\in {\mathbb X}^n$ and $F\in {\mathbb V}^n_T$ then the solution $U$ to \eqref{CP}-\eqref{CP0} furnished by Theorem \ref{theoWPbounded} if $\Gamma^{\rm D}$ is bounded or Theorem \ref{theoWPunbounded} if $\Gamma^{\rm D}$ is unbounded belongs to ${\mathbb V}^n_T$ and for all $t\in [0,T]$,
$$
	\vertiii{U(t)}_n \leq \vertiii{U(0)}_n+ \int_0^t \vertiii{F(t')}_{n}{\rm d}t'.
$$
\end{corollary}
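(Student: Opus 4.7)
The plan is to proceed by induction on $n$, using Theorem~\ref{theoWPbounded} (if $\Gamma^{\rm D}$ is bounded) or Theorem~\ref{theoWPunbounded} (otherwise) as the black box. The base case $n=1$ already gives $U\in C([0,T];{\mathbb X}^1)\cap C^1([0,T];{\mathbb X})={\mathbb V}^1_T$ by the second part of those theorems. To upgrade the bare estimate $|U(t)|_{\mathbb X}\leq|U^{\rm in}|_{\mathbb X}+\int_0^t|F(t')|_{\mathbb X}\,{\rm d}t'$ to the $\vertiii{\cdot}_1$ bound, I would observe that the two auxiliary quantities $V:=\partial_t U$ and $W:=-{\bf A}U$ themselves solve Cauchy problems of the same form~\eqref{CP}: namely $(\partial_t+{\bf A})V=\partial_t F$ with $V(0)=F(0)-{\bf A}U^{\rm in}$, and $(\partial_t+{\bf A})W=-{\bf A}F$ with $W(0)=-{\bf A}U^{\rm in}$. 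The hypothesis $F\in{\mathbb V}^1_T$ ensures both source terms belong to $C([0,T];{\mathbb X})$, so the basic energy estimate applies to $U$, $V$ and $W$ separately; summing the three inequalities and using $\partial_t U(0)=F(0)-{\bf A}U^{\rm in}$ reproduces exactly $\vertiii{U(0)}_1$ on the right-hand side.

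For the inductive step, assume the corollary at level $n\geq1$ and take $U^{\rm in}\in{\mathbb X}^{n+1}$, $F\in{\mathbb V}^{n+1}_T$. A direct reading of the definitions shows $V^{\rm in}:=F(0)-{\bf A}U^{\rm in}\in{\mathbb X}^n$ and $\partial_t F\in{\mathbb V}^n_T$, so the induction hypothesis produces a unique $V\in{\mathbb V}^n_T$ solving $(\partial_t+{\bf A})V=\partial_t F$ with initial data $V^{\rm in}$. Set $\tilde U(t):=U^{\rm in}+\int_0^tV(s)\,{\rm d}s$. Since $V\in C([0,T];{\mathbb X}^1)$ (which holds because $n\geq1$), the operator ${\bf A}$ may be pulled under the time integral, so $\partial_t({\bf A}\tilde U)={\bf A}V$ and hence $\partial_t(\partial_t\tilde U+{\bf A}\tilde U)=\partial_t V+{\bf A}V=\partial_t F$; combined with the compatibility $\partial_t\tilde U(0)+{\bf A}\tilde U(0)=V^{\rm in}+{\bf A}U^{\rm in}=F(0)$, this shows that $\tilde U$ solves \eqref{CP}-\eqref{CP0}, hence $\tilde U=U$ by uniqueness in $C([0,T];{\mathbb X})$. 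The identities $(-{\bf A})^j U=(-{\bf A})^{j-1}F-(-{\bf A})^{j-1}V$ for $1\leq j\leq n+1$ then transfer the regularity of $V\in{\mathbb V}^n_T$ and $F\in{\mathbb V}^{n+1}_T$ into the requirement $(-{\bf A})^jU\in C^{n+1-j}([0,T];{\mathbb X})$, proving $U\in{\mathbb V}^{n+1}_T$.

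Once $U\in{\mathbb V}^{n+1}_T$ is secured, the energy estimate at level $n+1$ comes out cleanly. For every pair $(j,k)$ with $j+k\leq n+1$, the quantity $V_{j,k}:=(-{\bf A})^k\partial_t^j U$ is in $C([0,T];{\mathbb X})$, and a short computation from $\partial_tU=-{\bf A}U+F$ gives
$$
(\partial_t+{\bf A})V_{j,k}=(-{\bf A})^k\partial_t^j F.
$$
Applying the linear energy estimate of Theorem~\ref{theoWPunbounded} to each $V_{j,k}$ and summing over the triangular index set $\{(j,k):j+k\leq n+1\}$ reproduces $\vertiii{U(t)}_{n+1}=\sum_{j+k\leq n+1}|V_{j,k}(t)|_{\mathbb X}$ on the left and $\vertiii{U(0)}_{n+1}+\int_0^t\vertiii{F(t')}_{n+1}\,{\rm d}t'$ on the right.

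The main obstacle is not analytic but one of bookkeeping: one must check at every stage that the auxiliary initial data $V^{\rm in}$ and source $\partial_t F$ really satisfy the induction hypothesis at the lower level, and one must legitimately commute ${\bf A}$ with $\partial_t$ and with the time integral that reconstructs $\tilde U$. Both points follow directly from the definitions of ${\mathbb X}^n$ and ${\mathbb V}^n_T$ together with the fact that at each induction step $V$ has one more derivative of spatial regularity than is nominally required ($V\in C([0,T];{\mathbb X}^1)$ when $n\geq1$), which is precisely what makes the inductive scheme close. The semi-normed nature of ${\mathbb X}$ in the unbounded case introduces no new issue since uniqueness in $C([0,T];{\mathbb X})$ (understood in the a.e.\ sense) is already built into Theorem~\ref{theoWPunbounded}.
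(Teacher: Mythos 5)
Your proof is correct and follows essentially the same route as the paper's: both iterate the bootstrap from the second part of Theorem \ref{theoWPunbounded} (differentiate in time, apply the lower-order result to $V=\partial_t U$, reconstruct $\tilde U$ by integration, and conclude $\tilde U=U$ by uniqueness) to upgrade regularity to ${\mathbb V}^n_T$, then obtain the estimate by applying the basic energy inequality to each $(-{\bf A})^k\partial_t^j U$ with $j+k\le n$ and summing over the triangular index set. Your write-up is merely more explicit about the induction and the commutation of ${\bf A}$ with the time integral, details the paper compresses into a reference to the method already used in Theorem \ref{theoWPunbounded}.
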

\begin{remark}\label{exprCI}
We can of course use the equation to write
$$
(\dt^j U)_{\vert_{t=0}}=(-{\bf A})^jU^{\rm in}+\sum_{k=0}^{j-1}(-{\bf A})^k (\dt^{j-1-k}F)_{\vert_{t=0}},
$$
so that $\vertiii{U(0)}_n$ can be bounded from above in terms of $U^{\rm in}$ and $F$, namely,
$$
\vertiii{U(0)}_n\lesssim  \vert U^{\rm in}\vert_{{\mathbb X}^n}+\vertiii{F(0)}_{n-1}.
$$
\end{remark}
\begin{proof}
The proof is the same in the bounded and unbounded case.  One just has to apply the method used in the proof of the second point of Theorem \ref{theoWPunbounded} to get that $U\in C^n([0,T];{\mathbb X})$ and that for
all $0\leq j+k\leq n$, $(-{\bf A})^j \dt^{k}U$ is a strong solution to the initial boundary value problem
$$
\dt [(-{\bf A})^j \dt^{k}U] +{\bf A} [(-{\bf A})^j \dt^{k}U] = (-{\bf A})^j\dt^{k} F \quad \mbox{ on }\RR^+\times \Gamma^{\rm D},
$$ 
with initial condition $[(-{\bf A})^j \dt^{k}U]_{\vert_{t=0}}$, so that we can apply the energy estimate of Theorem \ref{theoWPunbounded} and obtain
$$
\vert(-{\bf A})^j \dt^{k} U (t)\vert_{\mathbb X}\leq \vert(-{\bf A})^j \dt^{k} U (0) \vert_{{\mathbb X}}+\int_0^t \vert (-{\bf A})^j \dt^{k}  F(t')\vert_{\mathbb X}{\rm d}t',
$$
from which the result follows easily.
\end{proof}

\subsection{Consequences on the space regularity}\label{sectlimreg}  
When $n= 2$, we show that it is possible to deduce space regularity from the time regularity of the previous section. For higher values of $n$, this is still possible, but one has to impose smallness conditions on the corners of the fluid domain. Let us start with the case $n=2$. 

\begin{corollary}\label{space regularity} 
Let $\Omega$, $\Gamma^{\rm D}$ and $\Gamma^{\rm N}$ be as in Assumption \ref{assconfig}.  
Let $T>0$ and $U^{\rm in}\in {\mathbb X}^2$, $F\in {\mathbb V}^2_T$. Let  also  $U \in {\mathbb V}_T^2$ be the solution to \eqref{CP}-\eqref{CP0} furnished by Corollary \ref{corohigherT}.\\
- If $\Gamma^{\rm D}$ is bounded  then $U$ also belongs to $C([0,T];H^1(\Gamma^{\rm D})\times \dot{H}^{3/2}(\Gamma^{\rm D}))$ and there exists  some constant $C>0$ independent of $U$ such that for all $0\leq t\leq T$,
\[
\vert U(t)\vert_{H^1(\Gamma^{\rm D})\times \dot{H}^{3/2}(\Gamma^{\rm D})}\le C
\Big(\vert U^{\rm in} \vert_{{\mathbb X}^2}+ \vertiii{F(0)}_{1}+\int_0^t \vertiii{F(t')}_{2}{\rm d}t'\Big).
\]
-  If $\Gamma^{\rm D}$ is unbounded, and if moreover $\psi^{\rm in}\in L^2(\Gamma^{\rm D})$ and $g\in L^1([0,T];L^2(\Gamma^{\rm D}))$ then
$U$ also belongs to $C([0,T];H^1(\Gamma^{\rm D})\times {H}^{3/2}(\Gamma^{\rm D}))$ and there exists  some constant $C>0$ independent of $U$ such that for all $0\leq t\leq T$,
\begin{align*}
\vert U(t)\vert_{H^1(\Gamma^{\rm D})\times {H}^{3/2}(\Gamma^{\rm D})}\le C
\Big(&\vert \psi^{\rm in}\vert_{L^2(\Gamma^{\rm D})}+\vert U^{\rm in} \vert_{{\mathbb X}^2}+ \vertiii{F(0)}_{1}\\
&+\int_0^t ( \vert g(t')\vert_{L^2(\Gamma^{\rm D})}+\vertiii{F(t')}_{2}){\rm d}t'\Big).
\end{align*}
\end{corollary}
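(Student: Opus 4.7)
The plan is to extract the required elliptic data for $(\zeta,\psi)$ from the time regularity encoded in $U\in {\mathbb V}^2_T$ and then apply the elliptic regularity results of Section~\ref{sectLaplace}. A direct computation yields
$$
(-{\bf A})U=(G_0\psi,-{\mathtt g}\zeta)^{\rm T},
\qquad
(-{\bf A})^2U=\bigl(-{\mathtt g}G_0\zeta,\, -{\mathtt g}G_0\psi\bigr)^{\rm T}.
$$
Since $U\in{\mathbb V}^2_T$ ensures that $(-{\bf A})U$ and $(-{\bf A})^2U$ both lie in $C([0,T];{\mathbb X})$, I read off that $G_0\psi\in C([0,T];L^2(\Gamma^{\rm D}))$ (from the first component of ${\bf A}U$), that $G_0\zeta\in C([0,T];L^2(\Gamma^{\rm D}))$ (from the first component of ${\bf A}^2U$), and that $G_0\psi\in C([0,T];\dot{H}^{1/2}(\Gamma^{\rm D}))$ (from the second component of ${\bf A}^2U$). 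Combining the last two pieces of information and using Proposition~\ref{propequivter} gives $G_0\psi\in C([0,T];H^{1/2}(\Gamma^{\rm D}))$. All of these quantities are bounded by $\vertiii{U(t)}_2$, which in turn is controlled by the right-hand side of the claimed inequality thanks to Corollary~\ref{corohigherT} together with Remark~\ref{exprCI}.

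For $\zeta$ the argument is then identical in the bounded and unbounded cases: since $\zeta\in\dot{H}^{1/2}(\Gamma^{\rm D})$ and $G_0\zeta\in L^2(\Gamma^{\rm D})$, Proposition~\ref{propDNell} gives $\zeta\in\dot{H}^1(\Gamma^{\rm D})$ with the quantitative bound $\vert\zeta\vert_{\dot{H}^1}\lesssim \vert\zeta\vert_{\dot{H}^{1/2}}+\vert G_0\zeta\vert_{L^2}$. Since $\zeta\in L^2(\Gamma^{\rm D})$ in either setting (by definition of ${\mathbb X}$ when $\Gamma^{\rm D}$ is unbounded, and because ${\mathcal L}^2(\Gamma^{\rm D})\subset L^2(\Gamma^{\rm D})$ combined with the Poincaré estimate when $\Gamma^{\rm D}$ is bounded), we conclude that $\zeta\in H^1(\Gamma^{\rm D})$. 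For $\psi$ in the bounded case I apply the first part of Proposition~\ref{propDNHO}: from $\psi\in\dot{H}^{1/2}(\Gamma^{\rm D})$ and $G_0\psi\in H^{1/2}(\Gamma^{\rm D})$, one gets $\dx\psi\in H^{1/2}(\Gamma^{\rm D})$, which by Definition~\ref{defdotH1Gamma} (and the finiteness of the average jumps, already included in $\vert\psi\vert_{\dot{H}^{1/2}}$) means exactly $\psi\in\dot{H}^{3/2}(\Gamma^{\rm D})$.

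The unbounded case requires one additional ingredient, since Proposition~\ref{propDNHO} only yields $H^{3/2}$ regularity if one controls $\psi$ in $H^{1/2}(\Gamma^{\rm D})$ (rather than only in $\dot{H}^{1/2}(\Gamma^{\rm D})$). This is where the hypotheses $\psi^{\rm in}\in L^2(\Gamma^{\rm D})$ and $g\in L^1([0,T];L^2(\Gamma^{\rm D}))$ come in. Using the second equation of \eqref{CP} we write
$$
\psi(t)=\psi^{\rm in}+\int_0^t\bigl(g(t')-{\mathtt g}\,\zeta(t')\bigr){\rm d}t',
$$
which together with the energy estimate of Theorem~\ref{theoWPunbounded} gives
$$
\vert\psi(t)\vert_{L^2(\Gamma^{\rm D})}\leq \vert\psi^{\rm in}\vert_{L^2(\Gamma^{\rm D})}+\int_0^t\vert g(t')\vert_{L^2(\Gamma^{\rm D})}{\rm d}t'+{\mathtt g}\int_0^t\vert\zeta(t')\vert_{L^2(\Gamma^{\rm D})}{\rm d}t'.
$$
Combined with $\psi\in\dot{H}^{1/2}(\Gamma^{\rm D})$ and Proposition~\ref{propequivter}, this gives $\psi\in H^{1/2}(\Gamma^{\rm D})$, and the second part of Proposition~\ref{propDNHO} then yields $\psi\in H^{3/2}(\Gamma^{\rm D})$.

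The main obstacle, beyond bookkeeping the various semi-norms, is exactly this last point: in the unbounded setting the elliptic regularity result Proposition~\ref{propDNHO} is stated with an $H^{1/2}$ (not $\dot{H}^{1/2}$) control on $\psi$, and recovering it from the time regularity alone is impossible without integrating the Bernoulli equation. Once the additional $L^2$ assumptions on $\psi^{\rm in}$ and $g$ are used, the rest of the argument is a mechanical combination of the elliptic estimates of Section~\ref{sectLaplace} with the higher-order time estimate of Corollary~\ref{corohigherT}. Continuity in time of the resulting regularities then follows from the continuity of $t\mapsto (G_0\zeta,G_0\psi)$ with values in $L^2\times H^{1/2}$ (itself a consequence of $U\in{\mathbb V}^2_T$) and the continuity of the inverses furnished by Propositions~\ref{propDNell} and \ref{propDNHO}.
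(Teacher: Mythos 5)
Your proof is correct and follows essentially the same route as the paper: use Corollary~\ref{corohigherT} with $n=2$ to control $(-{\bf A})^jU$ in ${\mathbb X}$ for $j\le 2$, read off $G_0\zeta\in L^2(\Gamma^{\rm D})$ and $G_0\psi\in H^{1/2}(\Gamma^{\rm D})$ (using Proposition~\ref{propequivter} to combine the $L^2$ and $\dot H^{1/2}$ controls), apply Propositions~\ref{propDNell} and \ref{propDNHO}, and in the unbounded case recover the $L^2$ control on $\psi$ by integrating the Bernoulli equation under the extra hypotheses on $\psi^{\rm in}$ and $g$. The only difference is cosmetic: you make the matrix computation of $(-{\bf A})^jU$ explicit rather than invoking the characterization of ${\mathbb X}^1$ as the domain of ${\bf A}$, which is equivalent.
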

\begin{remark}
In the unbounded case, if the initial data $\psi^{\rm in}$ is not in $L^2(\Gamma^{\rm D})$, then one can write $U=U^\sharp+U^\flat$ with  $U^\flat:=(0,(1+G_0)^{-1}\psi^{\rm in})^{\rm T}$ and
$U^\sharp$ solving
$$
\dt U^\sharp +{\bf A}U^\sharp=F^\sharp
$$
with
$$
F^\sharp:=F+\begin{pmatrix} G_0(1+G_0)^{-1}\psi^{\rm in} \\ 0 \end{pmatrix}
$$
and initial condition
$$
U^\sharp_{\vert_{t=0}}=(\zeta^{\rm in}, G_0(1+G_0)^{-1}\psi^{\rm in})^{\rm T}.
$$
Since $G_0(1+G_0)^{-1}\psi^{\rm in}  \in L^2(\Gamma^{\rm D})$, $(1+G_0)^{-1}\psi^{\rm in} \in \dot{H}^{1/2}(\Gamma^{\rm D})$ and $\vert G_0(1+G_0)^{-1}\psi^{\rm in} \vert_{L^2}+\vert (1+G_0)^{-1}\psi^{\rm in} \vert_{\dot{H}^{1/2}}\lesssim \vert \psi^{\rm in}\vert_{\dot{H}^{1/2}}$, we can apply the second point of the corollary to $U^\sharp$. The solution $U$ can therefore be represented as the sum of a time independent function $U^\flat \in {\mathbb X}^2$ and a perturbation $U^\sharp\in C([0,T]; H^1(\Gamma^{\rm D})\times H^{3/2}(\Gamma^{\rm D}))$.
\end{remark} 
\begin{remark}\label{rem20}
As commented in \S \ref{ssectparadox} of the introduction, we are here at the critical regularity threshold  for $\psi$, namely $\dx\psi\in H^{1/2}(\Gamma^{\rm D})$. It is therefore worth considering whether the assumption $U^{\rm in}\in {\mathbb X}^2$ hides some compatibility condition on $\psi$. This assumption implies in particular that $G_0\psi^{\rm in}\in H^{1/2}(\Gamma^{\rm D})$, which in turns implies, as we saw in Proposition \ref{propDNHO} , that $(\psi^{\rm in})^\mfh \in H^2(\Omega)$, and therefore that $(\dx (\psi^{\rm in})^\mfh)_{\vert_{\Gamma}}\in H^{1/2}(\Gamma)$. Since $(\dx (\psi^{\rm in})^\mfh )_{\vert_{\Gamma^{\rm N}}}$ vanishes in the vicinity of the contact point, this implies that $\dx\psi=(\dx (\psi^{\rm in})^\mfh)_{\vert_{\Gamma^{\rm D}}}$ belongs $\rho^{1/2}L^2$ (\cite{Grisvard}, Theorem 1.5.2.3), which can be viewed as weak form of the compatibility condition $\dx\psi=0$ that arises, as we saw in the introduction, when $((\psi^{\rm in})^\mfh)\in H^{2+\epsilon}(\Omega)$, with $\epsilon >0$. Since Corollary \ref{corohigherT} implies that $G_0\psi \in C([0,T]; H^{1/2}(\Gamma^{\rm D})$, this compatibility condition is propagated by the flow.
\end{remark} 
\begin{proof}
 Applying Corollary \ref{corohigherT} with $n=2$ shows directly that the solution $U(t)\in \mathbb V^2_T$ satisfies 
\[
\sum_{j=0}^2\vert (-{\bf A})^jU(t)\vert_{\mathbb X}\le C
\Big( \vert U^{\rm in} \vert_{{\mathbb X}^2}+ \vertiii{F(0)}_{1}+\int_0^t \vertiii{F(t')}_{2}{\rm d}t'\Big),
\]
with some constant $C$.  
Consequently,  using this estimate, and omitting the dependence on time for the sake of clarity, we know that 
\[
\zeta\in H^{1/2}(\Gamma^{\rm D}),\ \psi\in \dot H^1(\Gamma^{\rm D}),\ G_0\zeta\in L^2(\Gamma^{\rm D}), \ G_0\psi\in H^{1/2}(\Gamma^{\rm D}),
\]
and that they can all be controlled by the right side above.

When $\Gamma^{\rm D}$ is bounded, thanks to Proposition \ref{propDNHO}, we know immediately that $\psi\in  \dot{H}^{3/2}(\Gamma^{\rm D})$ with the desired estimate. Moreover, we also know from Proposition \ref{propDNell} that $\zeta\in H^1(\Gamma^{\rm D})$ with the desired estimate.

When $\Gamma^{\rm D}$ is unbounded, the additional assumptions imply that $\psi=\psi^{\rm in}+\int_0^t (g-{\mathtt g}\zeta)$ is in $C([0,T];L^2)$ and that for some $C>0$, 
$$
\forall 0\leq t\leq T, \qquad \vert \psi (t)\vert_{L^2(\Gamma^{\rm D})}\leq C\big( \vert \psi^{\rm in}\vert_{L^2}+ \int_0^t \vert \zeta(t') \vert_{L^2)}+\vert g(t') \vert_{L^2)}){\rm d}t';
$$
we can therefore use the second point of Proposition \ref{propDNHO} to complete the proof of the proposition.
\end{proof}

When $n\geq 3$, it is still possible to infer space regularity from the time regularity, but this requires a smallness assumption on the angles of the corners of the fluid domain.
\begin{corollary}\label{space regularity2} 
Let $n\in {\mathbb N}^*$ and $\Omega$, $\Gamma^{\rm D}$ and $\Gamma^{\rm N}$ be as in Assumption \ref{assconfig},  and assume moreover that the angles at each corner point between $\Gamma^{\rm N}$ and $\Gamma^{\rm D}$ are strictly less than $2\pi/n$.
Let $T>0$ and $U^{\rm in}\in {\mathbb X}^n$, $F\in {\mathbb V}^n_T$. Let also $U \in {\mathbb V}_T^n$ be the solution to \eqref{CP}-\eqref{CP0} furnished by Corollary \ref{corohigherT}.\\
- If $\Gamma^{\rm D}$ is bounded then $U$ also belongs to $C([0,T];H^{n/2}(\Gamma^{\rm D})\times \dot{H}^{(n+1)/2}(\Gamma^{\rm D}))$ and there exists  some constant $C>0$ independent of $U$ such that for all $0\leq t\leq T$,
\[
\vert U(t)\vert_{H^{n/2}(\Gamma^{\rm D})\times \dot{H}^{(n+1)/2}(\Gamma^{\rm D})}\le C
\Big(\vert U^{\rm in} \vert_{{\mathbb X}^n}+ \vertiii{F(0)}_{n-1}+\int_0^t \vertiii{F(t')}_{n}{\rm d}t'\Big).
\] 
-  If $\Gamma^{\rm D}$ is unbounded, and if moreover $\psi^{\rm in}\in L^2(\Gamma^{\rm D})$ and $g\in L^1([0,T];L^2(\Gamma^{\rm D}))$ then $U$ also belongs to $C([0,T];H^{n/2}(\Gamma^{\rm D})\times H^{(n+1)/2}(\Gamma^{\rm D}))$ and there exists  some constant $C>0$ independent of $U$ such that for all $0\leq t\leq T$,
\begin{align*}
\vert U(t)\vert_{H^{n/2}(\Gamma^{\rm D})\times H^{(n+1)/2}(\Gamma^{\rm D})} \le C
\Big(&\vert \psi^{\rm in}\vert_{L^2(\Gamma^{\rm D})}+\vert U^{\rm in} \vert_{{\mathbb X}^n}+ \vertiii{F(0)}_{n-1}\\
&+\int_0^t ( \vert g(t')\vert_{L^2(\Gamma^{\rm D})}+\vertiii{F(t')}_{n}){\rm d}t'\Big).
\end{align*}
\end{corollary}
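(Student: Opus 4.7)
The plan is to combine the time-regularity estimate of Corollary \ref{corohigherT} with an iterated elliptic regularity argument, the smallness assumption $\omega_0<2\pi/n$ on the angles being used only at the latter stage.

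First I would apply Corollary \ref{corohigherT} at level $n$ to obtain $U\in{\mathbb V}^n_T$ together with the bound $\vertiii{U(t)}_n\le \vertiii{U(0)}_n+\int_0^t\vertiii{F(t')}_n\,{\rm d}t'$, where $\vertiii{U(0)}_n\lesssim |U^{\rm in}|_{{\mathbb X}^n}+\vertiii{F(0)}_{n-1}$ by Remark \ref{exprCI}; this already produces the right-hand side of the desired estimate. Next I would unfold this time regularity using the explicit form of $-{\bf A}=\bigl(\begin{smallmatrix} 0 & G_0 \\ -{\mathtt g} & 0\end{smallmatrix}\bigr)$. Since $(-{\bf A})^2=-{\mathtt g}\,G_0\otimes{\rm Id}$, one has $(-{\bf A})^{2k}U=(-{\mathtt g})^k(G_0^k\zeta,G_0^k\psi)^{\rm T}$ and $(-{\bf A})^{2k+1}U=(-{\mathtt g})^k(G_0^{k+1}\psi,-{\mathtt g}\,G_0^k\zeta)^{\rm T}$. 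From $U\in{\mathbb V}^n_T$ one thus extracts, uniformly in $t\in[0,T]$, the controls $G_0^k\zeta\in L^2(\Gamma^{\rm D})$, $G_0^k\psi\in\dot H^{1/2}(\Gamma^{\rm D})$ for $0\le 2k\le n$, and $G_0^{k+1}\psi\in L^2(\Gamma^{\rm D})$, $G_0^k\zeta\in\dot H^{1/2}(\Gamma^{\rm D})$ for $0\le 2k+1\le n$. Combining via Proposition \ref{propequivter} one obtains in particular $G_0^k\psi\in H^{1/2}(\Gamma^{\rm D})$ for $1\le k\le\lfloor n/2\rfloor$, and similarly for $\zeta$ on a slightly smaller range of $k$.

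The core of the argument will be a higher-order elliptic regularity statement extending Proposition \ref{propDNHO}: under $\omega_0<2\pi/n$, if $\psi\in \dot H^{1/2}(\Gamma^{\rm D})$ and $G_0\psi\in H^s(\Gamma^{\rm D})$ with $0\le s\le (n-1)/2$, then $\psi\in\dot H^{s+1}(\Gamma^{\rm D})$, with a corresponding quantitative estimate. This follows from the Kozlov--Maz'ya--Rossmann theory for the Laplace equation with mixed Dirichlet--Neumann boundary conditions in corner domains (Theorem 1.4.3 and Corollary 6.3.1 of \cite{KMR2001}); the condition $\omega_0<2\pi/n$ is precisely what keeps the required Sobolev index below the critical threshold set by the leading corner singularity $r^{\pi/(2\omega_0)}$ of the harmonic extension $\psi^\mfh$, i.e. it prevents the higher-order analogue of Lemma \ref{lemmaBVP} from degenerating. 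In the unbounded case, I would mimic the second part of the proof of Proposition \ref{propDNHO} with a cutoff and a localization to a bounded subdomain plus a flat-strip piece, on each of which the higher-order regularity is classical.

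Armed with this tool the bootstrap is immediate: starting from the deepest level $G_0^{\lfloor n/2\rfloor}\psi\in H^{1/2}$ (promoted to $\dot H^1$ via Proposition \ref{propDNell} when $n$ is odd, to absorb the extra $L^2$-control on $G_0^{\lfloor n/2\rfloor+1}\psi$ available in that case), I would apply the higher-order ellipticity iteratively, peeling off one factor of $G_0$ per iteration and gaining one derivative, so that after $\lfloor n/2\rfloor$ iterations one reaches $\psi\in\dot H^{(n+1)/2}(\Gamma^{\rm D})$; the largest $s$ used along the way is exactly $(n-1)/2$, which is why the threshold $\omega_0<2\pi/n$ appears. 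The regularity $\zeta\in H^{n/2}(\Gamma^{\rm D})$ is obtained either by the same bootstrap starting from the deepest $G_0^k\zeta$-control or, equivalently, by inverting the second equation of \eqref{CP} as $\zeta=(g-\dt\psi)/{\mathtt g}$ and using the space regularity of $\dt\psi\in{\mathbb V}^{n-1}_T$. In the unbounded case, the additional assumptions $\psi^{\rm in}\in L^2(\Gamma^{\rm D})$ and $g\in L^1([0,T];L^2(\Gamma^{\rm D}))$ allow me to integrate the second equation to get $\psi=\psi^{\rm in}+\int_0^t(g-{\mathtt g}\zeta)\in L^2(\Gamma^{\rm D})$, thereby upgrading the semi-norm bound on $\psi$ to a full $H^{(n+1)/2}$-bound. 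The hard part of the whole plan is the higher-order elliptic regularity: without the smallness assumption on the angles, Proposition \ref{propDNHO} does not iterate because $\psi^\mfh$ has an intrinsic regularity ceiling set by the corner singularity, and the hypothesis $\omega_0<2\pi/n$ is calibrated precisely to lift this ceiling above $H^{n/2+1}(\Omega)$, so that $\lfloor n/2\rfloor$ iterations actually go through.
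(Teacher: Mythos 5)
Your proposal follows essentially the same route as the paper's (terse) proof: convert the time regularity of Corollary \ref{corohigherT} into controls on $G_0^k\zeta$, $G_0^k\psi$ via the explicit form of $(-{\bf A})^{2k}$ and $(-{\bf A})^{2k+1}$, then bootstrap through a higher-order version of Lemma \ref{lemmaBVP} (the Kozlov--Maz'ya--Rossmann isomorphism for the pure Neumann problem on the curvilinear polygon), with the unbounded case handled by the cutoff-and-flat-strip localization of Proposition \ref{propDNHO}. Your unfolding of the power $(-{\bf A})^j$ and the explicit iteration peeling off one $G_0$ per step is a cleaner way to phrase the ``proceeding as in Corollary \ref{space regularity}'' shorthand in the paper.

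One slip in the justification is worth correcting, because it would lead you to a \emph{different} threshold if you tried to derive it from scratch. You attribute the condition $\omega_0<2\pi/n$ to the singularity $r^{\pi/(2\omega_0)}$ of the harmonic extension $\psi^\mfh$. That exponent is the leading one for the \emph{mixed} Dirichlet--Neumann wedge (Dirichlet on $\Gamma^{\rm D}$, Neumann on $\Gamma^{\rm N}$), and demanding $H^{n/2+1}(\Omega)$ regularity against it gives $n/2+1<1+\pi/(2\omega_0)$, i.e.\ $\omega_0<\pi/n$, which is not the stated condition. The elliptic regularity that is actually invoked -- the higher-order analogue of Lemma \ref{lemmaBVP} -- concerns the \emph{pure Neumann} boundary value problem solved by $\psi^\mfh$ (normal data $G_0\psi$ on $\Gamma^{\rm D}$ and $0$ on $\Gamma^{\rm N}$), for which the wedge exponents are $k\pi/\omega$ and the leading non-trivial one is $r^{\pi/\omega_0}$; the requirement $n/2+1<1+\pi/\omega_0$ then gives exactly $\omega_0<2\pi/n$, which is the condition in the statement and which recovers $\omega_0<\pi$ for $n=2$ (no restriction beyond Assumption \ref{assconfig}) as it should. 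The point is that once $G_0\psi$ is known to be regular, the coefficient of the mixed-type singularity $r^{\pi/(2\omega_0)}$ must vanish, and it is the next Neumann-type exponent that sets the ceiling.
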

\begin{remark}
For $n=3$, the assumption that the angle is smaller than $120$ degrees still allows a perpendicular contact. With the better regularity enjoyed by the solution, the compatibility condition at the contact point now holds in the strong sense for $\psi$, namely, $\dx\psi=0$; moreover, we also have now a compatibility condition on $\zeta$, but only in weak form, namely, $\dx\zeta \in \rho^{1/2}L^2$ near the contact point.
 With the same arguments as in Remark \ref{rem20}, if $U^{\rm in}\in {\mathbb X}^3$, then one necessarily has $\partial_x\psi^{\rm in}=0$ and $\partial_x\zeta^{\rm in}\in  \rho^{1/2}L^2(\Gamma^{\rm D})$ and these conditions are propagated by the equations. The contact angle remains therefore equal to $\pi/2$ consistently with the arguments of \cite{ABZ3}. On the contrary,  initial data, even if they are very smooth, cannot be in ${\mathbb X}^n$ with $n>2$ if the initial contact angle is not $\pi/2$, that is, if $\partial_x\zeta^{\rm in}\neq 0$ at the contact point. In this situation, further regularity far from the corner can be obtained with weighted estimates as in Sections 5 and 6. Let us mention however that, as shown by the example of the Stokes wave given in the introduction, the most relevant possible obstruction to the argument of \cite{ABZ3}  comes from the nonlinear terms not considered in this article.
\end{remark}
\begin{proof}
Proceeding as in the proof of Corollary \ref{space regularity}, the result follows easily if we can establish that the solution $u$ to the elliptic boundary value problem
$$
\begin{cases}
\Delta u=f \quad\mbox{ in }\Omega,\\
\partial_{\rm n} u =0 \quad \mbox{ on }\Gamma^{\rm N},\\
\partial_{\rm n} u=g  \quad \mbox{ on }\Gamma^{\rm D},
\end{cases}
$$
belongs to $H^{j/2+1}(\Omega)$ for all $0\leq j\leq n$ if $g$ belongs to $H^{(n-1)/2}(\Gamma^{\rm D})$. If $\Omega$ is a bounded domain, this is established as in the proof of Lemma \ref{lemmaBVP} since the fact that the mapping \eqref{mappingelliptic} is an isomorphism of Banach spaces if the angles at 
 the corners of $\Omega$ are smaller than $\pi$, is a special case (corresponding to $n=2$) of a more general theorem stating that 
 $$
 \begin{array}{lcl}
\dot{H}^{n/2+1}(\Omega)\backslash \RR & \to &  H^{n/2-1}(\Omega)\times  \prod_{j=1}^{N+1} {H}^{(n-1)/2}(\cE_j) \\
u & \mapsto &(\Delta u,(\partial_{\rm n} u)_{\vert_{\cE_1\times \{0\}}}, \dots, (\partial_{\rm n} u)_{\vert_{ \cE_{N+1}\times \{0\}}})
\end{array},
$$
provided that the angles are all strictly smaller than $2\pi/n$. The generalization to the case where $\Omega$ is unbounded is then obtained as in the proof of Proposition \ref{propDNHO}.
\end{proof}

\section{Commutator estimates with weighted derivatives}  \label{sectcommutator}
We have so far constructed, in Corollary \ref{corohigherT}, solutions that are regular in time, more precisely, that belong to ${\mathbb V}^n_T$, with ${\mathbb V}^n_T$ defined in \eqref{defWT}. We have also shown in Corollary \ref{space regularity} that for $n\leq 2$, this time regularity could be used to obtain space regularity, namely that the solution $U=(\zeta,\psi)^{\rm T}$ belongs to $C([0,T];H^{n/2}(\Gamma^{\rm D})\times \dot{H}^{((n+1)/2}(\Gamma^{\rm D}))$. We then have proved in Corollary \ref{space regularity2} that in order to extend this result to $n\geq 3$, one has to make smallness assumptions on the angles at corners of the fluid domain due to the limited elliptic regularity in corner domains.

In order to obtain space regularity without making any smallness assumption on the angles, we can use weighted estimates. Such estimates are obtained by deriving energy estimates for weighted derivatives of the solution, the weighted derivatives being of the form $\rho\dx$, where $\rho$ is a bounded function that behaves, near each boundary point of $\Gamma^{\rm D}$, as the distance function to this boundary point. 

This allows to get high order regularity estimates of the solution in the interior of $\Gamma^{\rm D}$, even if the regularity near the boundaries is no larger than $H^1(\Gamma^{\rm D})\times \dot{H}^{3/2}(\Gamma^{\rm D})$. 
Applying $(\rho\dx)^j$ to \eqref{CPcomp}, one obtains formally
$$
\dt (\rho\dx)^j U +{\bf A} (\rho\dx)^j U = (\rho\dx)^j F + \begin{pmatrix} [(\rho\dx)^j, G_0]\psi \\ 0 \end{pmatrix},
$$
so that one can expect to recover an estimate on $\vert (\rho\dx)^j U\vert_{\mathbb X}$ from Theorem \ref{theoWPbounded} or \ref{theoWPunbounded} provided that the commutator estimate $[(\rho\dx)^j, G_0]\psi$ can be controlled. This section is devoted to derive estimates for this commutator.

We introduce several notations in \S \ref{sectnotref} and then construct in \S \ref{sectweight} the weight function $\rho$ which is defined on $\overline{\Omega}$. In \S \ref{sectTN}, we construct an extension $T$ of the tangent vector ${\bf t}$ defined on $\Gamma^*$; this extension is used to construct an extension $X=\rho T\cdot \nabla$ defined on $\Omega\cup \Gamma^*$ of the weighted derivative $\rho\dx$ used to measure space regularity on $\Gamma^{\rm D}$. 
This vector field $X$ is then used in \S \ref{sectHOHE} to measure regularity of the harmonic extension $\psi^\mfh$; more precisely, we show that $X^n\psi^\mfh$ is well defined in $\dot{H}^1(\Omega)$ if $(\rho\dx)^j\psi \in \dot{H}^{1/2}(\Gamma^{\rm D})$ for $0\leq j\leq n$; moreover, we show that $X^n \psi^\mfh$ is equal to $((\rho\dx)\psi)^\mfh$ up to lower order terms in $\dot{H}^1(\Omega)$. Estimates for the commutators $[(\rho\dx)^j, G_0]\psi $ are then derived in \S \ref{sectCEDN}.

\medbreak

To simplify the computations, we assume from now on that the immersed objects and the bottom are not curved near the contact points with the surface of the water.
\begin{assumption}\label{assconfig2}
Assumption \ref{assconfig} is satisfied, and moreover, $\Gamma^{\rm N}$ is flat in the vicinity of each corner.
\end{assumption}

\subsection{Notations}\label{sectnotref}

We use the notations of Assumption \ref{assconfig}. On each point of $\Gamma^*$, we denote by ${\bf t}$ the \emph{rightwards} unit tangent vectors (on $\Gamma_j^{\rm w}$, this means that it is oriented from $C_j^{(\rm l)}$ to $C_j^{(\rm r)}$), while ${\bf n}={\bf t}^\perp$ stands for the normal vector.

Let us denote by ${\mathcal C}$ the set of all finite corners of $\Gamma$, which is the union of the set of left finite corners ${\mathcal C}^{\rm l}$ and right finite corners  ${\mathcal C}^{\rm r}$,
\begin{equation}\label{deffinC}
{\mathcal C}={\mathcal C}^{\rm l}\cup {\mathcal C}^{\rm r}
\quad\mbox{ with }\quad
\begin{cases}
{\mathcal C}^{\rm l}=\{C_j^{(\rm l)}\quad (1\leq j\leq N), \quad C_{N+1} \mbox{ if   it is finite}\} ,\\
{\mathcal C}^{\rm r}=\{C_j^{(\rm r)}\quad (1\leq j\leq N), \quad C_{0} \mbox{ if it is finite}\}.
\end{cases}
\end{equation}
For ${\rm c}\in{\mathcal C}$, we denote by $(r^\cor (x,z),\theta^\cor (x,z))$ the polar coordinates of the point $M(x,z)$ in the polar frame centered at the corner $c$ and with $\theta^\cor =0$ corresponding to the horizontal half-line starting from ${\rm c}$ and unbounded in the right-direction. For all $M\in \Omega$, we also denote by ${\bf e}_{\rm r}^\cor (M)$ the unit radial vector ${\bf e}_{\rm r}^\cor (M)=\frac{1}{\vert{{\rm c} M}\vert}\overrightarrow{{\rm c} M}$.

According to Assumption \ref{assconfig2},  in the neighborhood of each corner, the fluid domain is a portion of half-cone with interior angle $\omega^\cor$; more precisely, there exist positive real numbers $R^\cor >0$ such that $\Omega^\cor :=\Omega\cap B({\rm c} ,R^\cor )$ are given by
\begin{align*}
\Omega^{(\rm c)}&=\{(X,z), 0<r^{(\rm c)}(x,z)<R^{(\rm c)}, \quad -\omega^{(\rm c)}<\theta^{(\rm c)}(x,z)<0\} & \mbox{ if } {\rm c}\in {\mathcal C}^{\rm r},\\
\Omega^{(\rm c)}&=\{(X,z), 0<r^{(\rm c)}(x,z)<R^{(\rm c)}, \quad -\pi<\theta^{(\rm c)}(x,z)<-\omega^{(\rm c)}\}  & \mbox{ if } {\rm c}\in {\mathcal C}^{\rm l},
\end{align*}
 and where  $0<\omega^\cor <2\pi$. The numbers $R^\cor>0$ are chosen small enough, in order for the sets $\Omega^\cor$ to be disjoint.
\begin{figure}[htbp]
                \centering
                \includegraphics[width=0.6\textwidth]{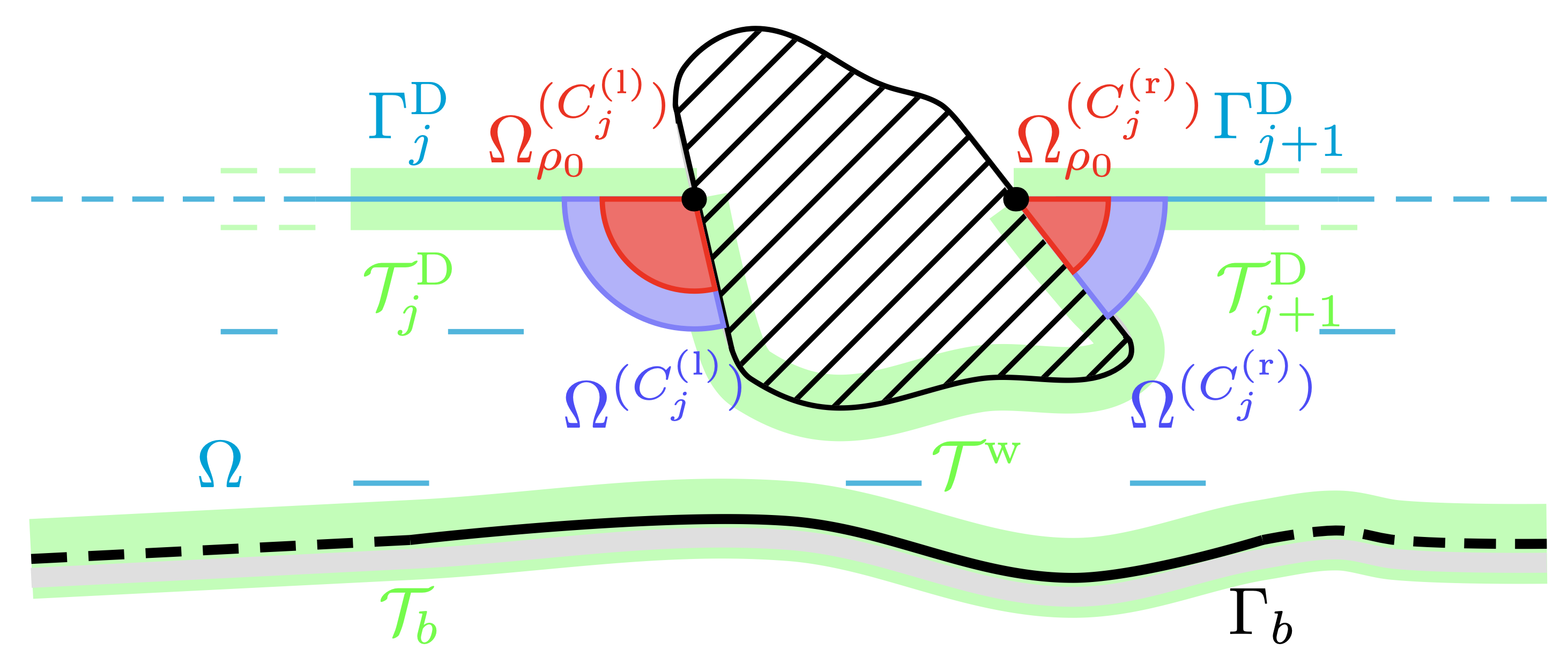}
                \caption{The sectors and the tubular neighborhoods}
        \label{fig:sectors}
\end{figure}

Let us now denote by $\chi: \RR_+\to \RR_+$  a smooth positive function, compactly supported in $[0,1)$, equal to $1$ on $[0,1/2)$ and with values in $[0,1]$. We then introduce
\begin{equation}\label{defchic}
	\chi^\cor (x,z)=\chi(\frac{r^\cor (x,z)}{R^\cor }),
\end{equation}
 with $R^\cor$ as in the previous section; taking the $R^\cor $ smaller if necessary, we can suppose that
\begin{equation}\label{asssuppchi}
 \chi^\cor \equiv 0 \quad  \mbox{ on }\quad  \mbox{\rm Supp }\chi^{(\rm c')} \quad  \mbox{ if }\quad {\rm c}\neq {\rm c'}\qquad ({\rm c}, {\rm c'}\in {\mathcal C}).
 \end{equation}
 
 For all $0<\rho_0<R^\cor$, we also denote by $\Omega^\cor_{\rho_0}\subset \Omega^\cor$ the domain
 $$
 \Omega^{(\rm c)}_{\rho_0}:=\Omega^{(\rm c)}\cap B({\rm c},\rho_0) \quad\mbox{ if }\quad {\rm c}\in {\mathcal C};
 $$ 
 if $C_0$ or $C_{N+1}$ is infinite, we then define
 \begin{align*}
 \Omega_{\rho_0}^{(C_0)}&=\Omega \cap\{(x,z)\in {\mathbb R}^2, { x}<-\frac{1}{\rho_0} \} &\mbox{ if } C_0 \mbox{ is infinite},\\
 \Omega_{\rho_0}^{(C_{N+1})}&=\Omega \cap\{(x,z)\in {\mathbb R}^2, { x}>\frac{1}{\rho_{0}} \} &\mbox{ if } C_{N+1} \mbox{ is infinite};
 \end{align*}
we choose $\rho_0>0$ small enough in order for the sets $\Omega_{\rho_0}^{(\rm c)}$ to be disjoint.

 Finally, we introduce a partition of unity $(\Theta^\cor)_{{\rm c}\in {\mathcal C}_\infty}$, where ${\mathcal C}_{\infty}$ consists of the union of ${\mathcal C}$ and $C_0$ and $C_{N+1}$ if they are infinite. More precisely, the $\Theta^\cor$ are smooth functions $\Theta^\cor$ such that $0\leq \Theta^\cor\leq 1$ and $\sum_{{\rm c}\in {\mathcal C}_\infty}\Theta^\cor=1$ on $\Omega$, and we also take it in such a way that
 \begin{equation}\label{defpartunit}
 \Theta^\cor\equiv 1 \quad\mbox{ on }\quad    \Omega^\cor 
 \quad\mbox{ and }\quad
  \Theta^\cor\equiv 0 \quad\mbox{ on }\quad \Omega^{({\rm c}')} \quad\mbox{ if }\quad {\rm c}'\neq {\rm c}
 \end{equation}
(with the convention that  $\Omega^\cor:= \Omega^\cor_{\rho_0} $ if ${\bf c}$ is infinite), 
 and 
  \begin{equation}\label{defpartunit2}
 \forall {\rm c}\in {\mathcal C}_\infty,\qquad  \partial_{\rm n} \Theta^\cor=0 \quad\mbox{ on }\quad \Gamma^*.
  \end{equation}

  We can also choose these functions in such a way that if $\Gamma^\side$ is a connected component of $\Gamma^*$ and if ${\rm c}$ is not a finite or infinite endpoint of $\Gamma^\side$, then $\Theta^{\rm c}\equiv 0$ on $\Gamma^\side$.
\subsection{Construction of the weight function}\label{sectweight}

 The following lemma provides a weight function that behaves near each corner $C^\cor$ as the distance $r^\cor$ to this corner, that is bounded in $\Omega$ and that satisfies a homogenous Neumann boundary condition on $\Gamma^*$.
 \begin{lemma}\label{LMcomm2}
Under Assumption \ref{assconfig2} and with the notations of Section \ref{sectnotref}, there exists a function $\rho\in C^\infty(\Omega\cup \Gamma^*)\cap C(\overline{\Omega})$ and a constant $0<\rho_0\leq\frac{1}{2} \min_{{\rm c}\in{\mathcal C}} R^\cor$ such that:
\begin{enumerate}
\item One has $0<\rho\leq 2\rho_0$ on $\Omega$.
\item For all ${\rm c} \in {\mathcal C}$, one has   $\rho=r^\cor $ in ${\Omega_{\rho_0}^\cor}$, while $\rho=\rho_0$ in ${\Omega} \backslash \big(\bigcup_{c\in{\mathcal C}} \Omega^\cor\big)$.
\item One has $(\partial_{\rm n} \rho)_{\vert_{\Gamma^*}}=0$.
\end{enumerate}
\end{lemma}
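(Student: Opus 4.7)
The construction will paste together one radial profile around each finite corner with the constant $\rho_0$ elsewhere. First, I would fix $\rho_0>0$ small enough that $\rho_0\leq \frac{1}{2}\min_{{\rm c}\in\cC}R^\cor$ and the sets $\Omega^\cor$ for ${\rm c}\in\cC$ are pairwise disjoint (both conditions can be met simultaneously by shrinking the $R^\cor$ if necessary, as already allowed in \S\ref{sectnotref}). Then for each ${\rm c}\in\cC$ I would build a smooth profile $\phi^\cor\in C^\infty([0,+\infty))$ satisfying (i) $\phi^\cor(r)=r$ for $r\in[0,\rho_0]$, (ii) $\phi^\cor(r)=\rho_0$ for $r\geq R^\cor-\epsilon$ for some small $\epsilon>0$, and (iii) $0<\phi^\cor(r)\leq 2\rho_0$ for all $r\geq 0$. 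The existence of such a profile is elementary but requires some care: on the matching interval $[\rho_0,R^\cor-\epsilon]$ one needs a smooth function whose Taylor data at the left endpoint is $(\rho_0,1,0,0,\ldots)$ (to glue to $r\mapsto r$) and at the right endpoint is $(\rho_0,0,0,\ldots)$ (to glue to the constant); this can be realized, for instance, as a convex combination $\phi^\cor(r)=\chi_\cor(r)\,\psi^\cor(r)+(1-\chi_\cor(r))\,\rho_0$ where $\chi_\cor$ is a cutoff equal to $1$ on $[0,\rho_0]$ and supported in $[0,R^\cor-\epsilon]$, and $\psi^\cor$ is any smooth extension of $r\mapsto r$ to $[0,+\infty)$ staying in $(0,2\rho_0]$.

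Next I would define
\[
\rho(x,z)=\begin{cases}\phi^\cor(r^\cor(x,z))&\text{if }(x,z)\in\Omega^\cor\text{ for some }{\rm c}\in\cC,\\ \rho_0&\text{otherwise}.\end{cases}
\]
Disjointness of the $\Omega^\cor$ makes this unambiguous. At the interior arc $\partial\Omega^\cor\cap\Omega=\{r^\cor=R^\cor\}$ the two definitions agree (both equal $\rho_0$), and the all-order vanishing of derivatives of $\phi^\cor$ at $r=R^\cor$ makes $\rho$ smooth across this arc, so $\rho\in C^\infty(\Omega)$. Smoothness extends up to $\Gamma^*$: within each $\Omega^\cor$, $\rho=\phi^\cor(r^\cor)$ is smooth as long as $r^\cor>0$, i.e.\ away from the corner point ${\rm c}$ itself, and ${\rm c}\notin\Gamma^*$ since $\Gamma^*$ is the union of \emph{open} arcs; outside the sectors, $\rho=\rho_0$ is smooth. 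Continuity on $\overline{\Omega}$ at each ${\rm c}\in\cC$ follows from $\rho(x,z)=r^\cor(x,z)\to 0$ as $(x,z)\to {\rm c}$.

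Conditions (1) and (2) of the lemma then read off by construction. For condition (3), the Neumann condition, I would use the radial structure of $\rho$: by Assumption \ref{assconfig2}, $\Gamma^*\cap\Omega^\cor$ is a union of radial segments emanating from ${\rm c}$, so the outward unit normal $\mathbf{n}$ to $\Gamma^*$ satisfies $\mathbf{n}\cdot{\bf e}_{\rm r}^\cor=0$ along these segments. Since $\nabla\rho=(\phi^\cor)'(r^\cor)\,{\bf e}_{\rm r}^\cor$ in $\Omega^\cor$, this yields $\partial_{\rm n}\rho=0$ on $\Gamma^*\cap\Omega^\cor$. Outside $\bigcup_{{\rm c}\in\cC}\Omega^\cor$, $\rho\equiv\rho_0$, so $\nabla\rho=0$ and the condition is trivial.

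The only subtle point (rather than a genuine obstacle) lies in the profile construction: since $\phi^\cor(\rho_0)=\phi^\cor(R^\cor-\epsilon)=\rho_0$ but $(\phi^\cor)'(\rho_0)=1\neq 0=(\phi^\cor)'(R^\cor-\epsilon)$, the profile cannot be monotone on the transition interval and must overshoot $\rho_0$ before returning to it; one has to verify that this overshoot stays below $2\rho_0$, which follows from choosing $\psi^\cor$ uniformly bounded by $2\rho_0$ on $[0,R^\cor]$ and $\chi_\cor$ with range in $[0,1]$.
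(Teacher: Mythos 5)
Your proof is correct and follows essentially the same strategy as the paper's: build a radially symmetric profile around each finite corner that equals $r^\cor$ near the corner and flattens out to the constant $\rho_0$ well before leaving $\Omega^\cor$, then glue these to the constant $\rho_0$ elsewhere, and read off the Neumann condition from the fact that $\nabla\rho$ is radial in each sector (where Assumption \ref{assconfig2} makes $\Gamma^*$ consist of radial segments) and vanishes where $\rho$ is constant. The paper packages the gluing with the cutoffs $\chi(r^\cor/2\rho_0)$ and the partition of unity $(\Theta^\cor)$ of \S\ref{sectnotref}, whereas you write the same object as an explicit piecewise formula with per-corner profiles $\phi^\cor$ that are constant near $\partial\Omega^\cor\cap\Omega$; both realize the same function, and your version makes the overshoot/bound $\rho\le 2\rho_0$ slightly more explicit.
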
 
\begin{proof}
If $\rho_0>0$ satisfies the smallness properties of Section \ref{sectnotref}, and if in addition $\rho_0=\frac{1}{2}\min_{{\rm c}\in {\mathcal C}} R^\cor $, we define
$$
\rho=\sum_{{\rm c}\in {\mathcal C}} \Theta^\cor \big[  r^\cor \chi\big(\frac{r^\cor }{2\rho_0}\big)   +\rho_0 \big(1-\chi\big(\frac{r^\cor }{2\rho_0}\big) \big)  \big],
$$
with the same function $\chi$ as the one used for \eqref{asssuppchi} and with $\Theta^\cor$ as in \eqref{defpartunit} and  \eqref{defpartunit2}. This choice of $\rho_0$ and this function satisfy all the requirements of the lemma.
\end{proof}

\subsection{Construction of the extensions of the tangent and normal vector fields}\label{sectTN}

The following lemma provides a vector field $T$ whose restriction to $\Gamma^*$ coincides with the unit tangent vector ${\bf t}$, that coincides with the local unit radial vector near each corner, and such that $T\cdot \nabla$ preserves homogeneous boundary conditions on $\Gamma^{\rm N}$. Note that the reason we introduce the coefficient $\sigma({\rm l})=-1$ in the second point of the lemma is because ${\bf e}_{\rm r}^{(\rm l)}$ is pointing leftwards in $\Omega^{(\rm l)}$.
\begin{lemma}\label{LMcomm1}
Under Assumption \ref{assconfig2} and with $\rho_0$ as in Lemma \ref{LMcomm2}, there exist two vector fields $T,N \in C^\infty(\Omega\cup \Gamma^* ; \RR^2)$ that satisfy the following properties:
\begin{enumerate}
\item $T={\bf t}$ and $N={\bf n} $ on  $\Gamma^*$.
\item $T=\sigma({\rm c}){\bf e}_{\rm r}^\cor $ in $\Omega^\cor_{\rho_0}$ with $\sigma({\rm c})=-1$ if ${\rm c}\in {\mathcal C}^{\rm l}$ and $\sigma({\rm c})=1$ if ${\rm c}\in {\mathcal C}^{\rm r}$.
\item For all $\phi \in C^\infty(\Omega\cup \Gamma^{\rm N})$ such that $(\partial_{\rm n} \phi)_{\vert_{\Gamma^{\rm N} }}=0$, one has $(\partial_{\rm n}   (T\cdot \nabla \phi) )_{\vert_{\Gamma^{\rm N} }}=0$.
\item One has $T\in L^\infty(\Omega)$ and, for all $n\in {\mathbb N}$, $\rho^n \partial^n T\in L^\infty(\Omega)$, where $\partial^n$ stands for any spatial derivative of order $n$.
\end{enumerate}
\end{lemma}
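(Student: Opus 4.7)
The plan is to construct $T$ and $N$ by patching three local constructions via the partition of unity $(\Theta^\cor)_{{\rm c}\in{\mathcal C}_\infty}$ of \eqref{defpartunit}-\eqref{defpartunit2} together with radial cutoffs. Near each finite corner ${\rm c}\in{\mathcal C}$, Assumption \ref{assconfig2} makes $\Omega^\cor$ a flat angular sector, so we set $T^\cor := \sigma({\rm c})\,\mathbf{e}_r^\cor$ and $N^\cor := \sigma({\rm c})\,\mathbf{e}_\theta^\cor$; these are orthonormal and smooth on $\Omega^\cor\cup(\partial\Omega^\cor\setminus\{{\rm c}\})$, and since the two boundary segments bounding $\Omega^\cor$ are radial lines through ${\rm c}$, their restrictions to $\Gamma^*\cap\partial\Omega^\cor$ are exactly $\mathbf{t}$ and $\mathbf{n}$, the sign $\sigma({\rm c})$ being designed for this. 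Away from the finite corners, each smooth component of $\Gamma^*$ admits a one-sided tubular parametrization $(s,r)\mapsto \gamma(s)+r\,\mathbf{n}(\gamma(s))$, and we define $T^{\rm tub}:=\gamma'(s)$, $N^{\rm tub}:=\mathbf{n}(\gamma(s))$, extended to be constant in $r$; as coordinate vector fields in the tubular chart they Lie-commute, $[T^{\rm tub},N^{\rm tub}]=0$. In the bulk we take any smooth bounded extension.

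These pieces are glued so that $T=T^\cor$ on $\Omega^\cor_{\rho_0}$ (giving (2)), $T=T^{\rm tub}$ in the tubular neighborhood outside $\bigcup_{\rm c}\Omega^\cor_{\rho_0}$, with interpolation via a radial cutoff of the form $\chi(r^\cor/R^\cor)$ on the transition annuli. The compatibility needed for (1) is that on the radial boundary segments emanating from ${\rm c}$, both $T^\cor$ and $T^{\rm tub}$ equal $\mathbf{t}$, thanks to Assumption \ref{assconfig2}; the interpolation therefore leaves the boundary values unchanged. Property (4) follows since $\mathbf{e}_r^\cor$ is a unit vector whose $n$-th Cartesian derivatives are $O((r^\cor)^{-n})$, while Lemma \ref{LMcomm2} gives $\rho=r^\cor$ on $\Omega^\cor_{\rho_0}$, so $\rho^n\partial^n T=O(1)$ there; elsewhere the fields are smooth with bounded derivatives.

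For (3), the general identity
\begin{equation*}
\partial_{\rm n}(T\cdot\nabla\phi) = (N\cdot\nabla T)\cdot\nabla\phi + T\cdot\nabla(N\cdot\nabla\phi) - (T\cdot\nabla N)\cdot\nabla\phi
\end{equation*}
restricts on $\Gamma^{\rm N}$, where $N\cdot\nabla\phi\equiv 0$ and $\nabla\phi$ is purely tangential, to $-([T,N]\cdot\mathbf{t})\,\partial_{\mathbf{t}}\phi$; thus it suffices to check that $[T,N]\cdot\mathbf{t}=0$ on $\Gamma^{\rm N}$. In the polar region a direct calculation yields $[T^\cor,N^\cor]=-(r^\cor)^{-1}\mathbf{e}_\theta^\cor$, which is orthogonal to $\mathbf{t}$; in the tubular region $[T^{\rm tub},N^{\rm tub}]=0$. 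On the transition annulus, since $T^\cor$ and $T^{\rm tub}$ agree on $\Gamma^{\rm N}$, the $\chi'$-contribution to the Lie bracket vanishes on $\Gamma^{\rm N}$, and the tangential component of the interpolated bracket is a convex combination of vanishing tangential components.

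The main obstacle is forcing the polar and tubular constructions to agree exactly on the portions of $\Gamma^*$ near each corner, so that the interpolation is transparent; this is precisely what Assumption \ref{assconfig2} secures by making the boundary segments radial from ${\rm c}$. Without this hypothesis, the polar construction would fail to match $\mathbf{t}$ along a curved $\Gamma^{(\rm w)}_j$, and one would have to bend $\mathbf{e}_r^\cor$ to follow the local tangent direction — complicating both the commutator identity used for (3) and the weighted bounds in (4).
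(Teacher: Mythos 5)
Your overall strategy (polar near the corners, tubular near the flat/curved parts of $\Gamma^*$, glued with the partition of unity and the radial cutoffs) is the same as the paper's, and your Lie-bracket reformulation of point (3) — reducing $(\partial_{\rm n}(T\cdot\nabla\phi))_{\vert_{\Gamma^{\rm N}}}=0$ to the tangential-bracket condition $[T,N]\cdot\mathbf{t}=0$ on $\Gamma^{\rm N}$ — is a valid reorganization of the paper's more hands-on computation. However there is a genuine error in your tubular construction.

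You define $T^{\rm tub}:=\gamma'(s)$ and $N^{\rm tub}:={\bf n}(\gamma(s))$, ``extended to be constant in $r$,'' and then assert that ``as coordinate vector fields in the tubular chart they Lie-commute.'' These two descriptions are inconsistent whenever the boundary is curved. In the chart $\Phi(s,r)=\gamma(s)+r\,{\bf n}(\gamma(s))$ one has $\partial_s\Phi=(1-r\kappa(s))\gamma'(s)$, so the coordinate vector field $\partial_s$ (pushed forward to a Euclidean field) is $(1-r\kappa)\gamma'(s)$, not $\gamma'(s)$ constant in $r$. With your constant-in-$r$ choice a direct computation gives
\[
[T^{\rm tub},N^{\rm tub}]=-\frac{\kappa}{1-r\kappa}\,T^{\rm tub},
\]
which is a \emph{tangential} vector field; on $\Gamma^{\rm N}$ it contributes $[T^{\rm tub},N^{\rm tub}]\cdot\mathbf{t}=-\kappa$, and this is generically nonzero on the curved wetted surfaces $\Gamma^{({\rm w})}_j$ (and on a curved bottom). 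Your verification of (3) therefore fails exactly in the purely tubular region, not just in the transition annuli. The fix is the one the paper implicitly uses: take $T^{\rm tub}$ to be the actual coordinate field $\partial_s$, i.e.\ $T^{\rm tub}=(1-r\kappa)\gamma'(s)$ (which is \emph{not} unit away from $r=0$ but still restricts to $\mathbf t$), so that $[T^{\rm tub},N^{\rm tub}]=0$ everywhere in the tubular neighborhood. This change does not disturb points (1), (2) or (4): at $r=0$ one still has $T^{\rm tub}=\mathbf t$, and the weighted derivative bounds only require smoothness of $\kappa$ and of the chart. Assumption \ref{assconfig2} ensures $\kappa=0$ near the corners so that the corrected $T^{\rm tub}$ is still constant (and equal to $\sigma({\rm c}){\bf e}_r^\cor$) near each contact point, which is what makes the gluing with the polar piece transparent.

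A secondary point: your statement that on the transition annuli ``the tangential component of the interpolated bracket is a convex combination of vanishing tangential components'' is correct in the end, but is not self-evident and deserves a line of justification. The cross-term in the bracket, $\chi(1-\chi)\big([T_1,N_2]+[T_2,N_1]\big)$, is not literally a convex combination of $[T_1,N_1]$ and $[T_2,N_2]$. What saves the claim is that on $\Gamma^{\rm N}$ one has $\mathbf t\cdot[T_i,N_j]=\mathbf t\cdot\partial_{\mathbf t}{\bf n}-\mathbf t\cdot\partial_{\bf n}T_i$, which depends only on the $T$-index (because all four fields restrict to $\mathbf t$ or ${\bf n}$ on $\Gamma^{\rm N}$ and only the normal derivative of $T_i$ survives), so the quadratic form in $(\chi,1-\chi)$ collapses to a linear one. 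You should spell this out; as written the step reads as a hand-wave.
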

\begin{proof}
For each connected component $\Gamma^\side$ of $\Gamma^*$, we denote by  $({\bf n}^\side ,{\bf t}^\side )$ the restriction to $\Gamma^\side$ of the normal and tangential vectors ${\bf n}$ and ${\bf t}$ defined on $\Gamma^*$.
 There exists a tubular neighborhood ${\mathcal T}^\side $ of $\Gamma^\side $ and a smooth extension $(N^\side ,T^\side )$ of $({\bf n}^\side ,{\bf t}^\side )$ to $ \overline{\Omega}$ such that 
\begin{equation}\label{eqdercommut}
N^{(\rm s)}\cdot T^{(\rm s)}=0,\qquad \vert N^\side \vert=1 \quad\mbox{ and } \quad [N^\side \cdot \nabla,T^\side \cdot \nabla]=0 \quad\mbox{ on }\quad {\mathcal T}^\side \cap \Omega,
\end{equation}
with the neighborhoods ${\mathcal T}^\side$ chosen small enough to have
\begin{equation}\label{condsupp}
\forall {\rm s}'\neq {\rm s},\quad N^\side \equiv 0, \quad T^\side \equiv 0 \quad\mbox{ on }\quad \big({\mathcal T}^{({\rm s}')}\cap \Omega\big) \backslash \bigcup_{{\rm c}\in {\mathcal C}} \Omega^\cor_{\rho_0}.
\end{equation}

This can be achieved by taking a compactly supported extension  of the  normal and tangent vectors classically used to define normal-tangential coordinates near the boundary. In particular, we can further assume that $T^\side$ and $N^\side$ are constant in ${\mathcal T}^\side \cap \Omega^\cor$ if ${\rm c}$ is a finite endpoint of $\Gamma^\side$.

We then define
$$
T= \sum_{{\rm c}\in{\mathcal C} }\Theta^\cor \big(\sigma\cor\chi^\cor{\bf e}_{\rm r}^\cor + (1-\chi^\cor )\sum_{{\rm s}}T^\side\big) + \big(\sum_{{\rm c}\in {\mathcal C}_\infty\backslash {\mathcal C}} \Theta^\cor\big) \big(\sum_{{\rm s}}T^\side\big) ,
$$
with $\sigma({\rm c})$ as in the statement of the lemma, $\Theta^\cor$ as in \eqref{defpartunit} and \eqref{defpartunit2}, and with the summation on ${\rm s}$ being taken over all the connected components of $\Gamma^*$; for later use, we define similarly
$$
N= \sum_{{\rm c}\in{\mathcal C} } \Theta^\cor \big(\sigma\cor\chi^\cor({\bf e}_{\rm r}^\cor)^\perp + (1-\chi^\cor )\sum_{{\rm s} }N^\side\big)+  \big(\sum_{{\rm c}\in {\mathcal C}_\infty\backslash {\mathcal C}} \Theta^\cor\big) \big(\sum_{{\rm s}}N^\side\big).
$$

Let us prove the first assertion of the lemma. Let us consider a bounded connected component $\Gamma^\side$ of $\Gamma^*$  (the case where it is unbounded is treated similarly). Let ${\rm c}_{\rm s}, {\rm c}'_{\rm s}
\in {\mathcal C}$ be the two endpoints of $\Gamma^\side$. Since $\Theta^{({\rm c}_{\rm s})}=\chi^{({\rm c}_{\rm s})}=1$ on $\Gamma^\side\cap \Omega_{\rho_0}^{({\rm c}_{\rm s})}$, we obtain that $T=\sigma({\rm c}_{\rm s}){\bf e}_{\rm r}^{({\rm c}_{\rm s})}={\bf t}^\side$ on  $\Gamma^\side\cap \Omega_{\rho_0}^{({\rm c}_{\rm s})}$. The same result holds similarly on  $\Gamma^\side\cap \Omega_{\rho_0}^{({\rm c}'_{\rm s})}$. 

Finally, on $\Gamma^\side\backslash\big( \Omega_{\rho_0}^{({\rm c}_{\rm s})}\cup \Omega_{\rho_0}^{({\rm c}'_{\rm s})}\big)$ we obtain using \eqref{condsupp} that 
$$
T= \Theta^{({\rm c}_{\rm s})} \big[\sigma({\rm c}_{\rm s})\chi^{({\rm c}_{\rm s})}{\bf e}_{\rm r}^{({\rm c}_{\rm s})}+(1-\chi^{({\rm c}_{\rm s})})T^\side\big]+
\Theta^{({\rm c}_{\rm s}')} \big[ \sigma({\rm c}_{\rm s}')\chi^{({\rm c}'_{\rm s})}{\bf e}_{\rm r}^{({\rm c}_{\rm s}')}+(1-\chi^{({\rm c}'_{\rm s})})T^\side\big].
$$
Since $T^\side={\bf t}^\side$ on $\Gamma^\side$ and that $\sigma({\rm c}_{\rm s}){\bf e}_{\rm r}^{({\rm c}_{\rm s})}={\bf t}^\side$ on $\Gamma^\side \cap \mbox{Supp}(\chi^{({\rm c}_{\rm s})})$, and proceeding similarly at ${\rm s}'$, we deduce that
$$
T=(\Theta^{({\rm c}_{\rm s})}+\Theta^{({\rm c}_{\rm s}')}\big){\bf t}^\side { ={\bf t}^\side,}
$$
where we used the fact that $\Theta^\cor=0$ on $\Gamma^\side$ if ${\rm c}\not\in \{ {\rm c}_{\rm s},{\rm c}_{\rm s}'\}$.
This concludes the proof of the first assertion.

 Let us turn to the proof of the second one.
Since  $\chi^\cor \equiv 0$ on $\Omega^{({\rm c'})}$ if ${\rm c}'\neq {\rm c}$, and $\chi^\cor\equiv 1 $ in $\Omega^\cor_{\rho_0}$, and using \eqref{defpartunit}, we obtain that
$$
T=\sigma({\rm c}){\bf e}_{\rm r}^\cor+\big(\sum_{{\rm c}'\neq {\rm c}}\Theta^{({\rm c}')}\big)  \big(\sum_{{\rm s}}T^\side\big) \quad \mbox{ on }\quad \Omega_{\rho_0}^\cor;
$$
since $\sum_{{\rm c}'}\Theta^{(\rm c)'}=1$ on $\Omega$ and $\Theta^\cor=1$ on $ \Omega_{\rho_0}^\cor$, we deduce that $\sum_{{\rm c}'\neq {\rm c}}\Theta^{({\rm c}')}=0$ on $ \Omega_{\rho_0}^\cor$, from which one deduces the first assertion of the lemma.

For the third point, we prove here that if $(\partial_{\rm n}\phi)_{\vert_{\Gamma^{(\rm b)} }}=0$ then $\big[\partial_{\rm n}((T\cdot \nabla)\phi)\big]_{\vert_{\Gamma^{(\rm b)} }}=0$. The results for the other components of $\Gamma^{\rm N}$ is established in the same way. We consider the case where the left endpoint $C_0$ of $\Gamma^{(\rm b)}$ is finite, while the right-one $C_{N+1}$ is infinite (the adaptation to treat the other possible configurations is straightforward).
We want to prove that
\begin{equation}\label{commnortan}
\big[(N \cdot \nabla)(T\cdot \nabla)\phi\big]_{\vert_{\Gamma^{(\rm b)} }}=0,
\end{equation}
if $\partial_{\rm n}\phi=0$ on $\Gamma^{(\rm b)}$. 

In $ \Gamma^{(\rm b)} \cap B(C_0,\rho_0)$, and using the second point of the lemma,
 one has
 $$
 \big[(N\cdot \nabla)(T\cdot \nabla)\phi\big]_{\vert_{\Gamma^{(\rm b)}}}=\big[\frac{1}{r}\partial_\theta\partial_r \phi\big]_{\vert_{\Gamma^{(\rm b)}}},
 $$
 where for the sake of clarity we wrote $(r,\theta)$ instead of $(r^{(C_0)},\theta^{(C_0)})$. It follows that
 \begin{align*}
  \big[(N\cdot \nabla)(T\cdot \nabla)\phi\big]_{\vert_{\Gamma^{(\rm b)}}}&=\frac{1}{r}\partial_r \big[\partial_\theta \phi\big]_{\vert_{\Gamma^{(\rm b)}}}\\
  &=0,
 \end{align*}
 where we used the fact that by Assumption \ref{assconfig2}, $\theta=-\omega^{(C_0)}$ is constant on $\Gamma^{(\rm b)} \cap B(C_0,\rho_0)$ and that $\partial_\theta \phi=r\partial_{\rm n}\phi=0$ on this portion of the boundary.

 On $\Gamma^{(\rm b)} \cap \{ x> \frac{1}{\rho_0}\}$ we deduce from the properties of the family $(\Theta^\cor)_{\rm c}$ that $T=T^{(\rm b)}$ and $N=N^{(\rm b)}$, so that the result stems from \eqref{eqdercommut}.  
 
 We are therefore left to prove \eqref{commnortan} on the portion of $\Gamma^{(\rm b)}$ far from the endpoints $C_0$ and $C_{N+1}$, namely, on $\Gamma^{(\rm b)}\backslash ({B}(C_0,\rho_0)\cup  \{ x> \frac{1}{\rho_0}\})$. 
 From the properties of $(\Theta^\cor)_{\rm c}$, we know  that ${\mathcal T}^{(\rm b)}$ can be chosen thin enough so that in this region $\Theta^\cor\equiv 0$ if ${\rm c}\neq C_0,C_{N+1}$. 
 
 One then has, using also \eqref{condsupp},
 $$
T= \Theta^{(C_0)} \big(\chi^{(C_0)} {\bf e}_{\rm r}^{(C_0)}  + (1-\chi^{(C_0)}  ) T^{(\rm b)}\big) + \Theta^{(C_{N+1})}T^{(\rm b)};
 $$
moreover,  one has $T=(\Theta^{(C_0)}+\Theta^{(C_{N+1})})T^{(\rm b)}=T^{(\rm b)}$ and similarly $N=N^{(\rm b)}$ outside ${\rm Supp} \chi^{(C_0)}$ so this case is also  done.   

Remarking further that on ${\rm Supp } \chi^{(C_0)}$ one has $ \Theta^{(C_0)} \equiv 1$ and $\Theta^{(C_{N+1})}\equiv 0$, we deduce that
  $$
 T=\chi^{(C_0)}{\bf e}_{\rm r}^{(C_0)}+(1-\chi^{(C_0)})T^{(\rm b)}
 \quad\mbox{ in }\quad
 \Gamma^{(\rm b)}\backslash ({B}(C_0,\rho_0)\cup  \{ x> \frac{1}{\rho_{0}}  \}),
 $$
 and that similarly $N=({\bf e}_{\rm r}^{(C_0)})^\perp+(1-\chi^{(C_0)})N^{(\rm b)} $. 
 
 Since by construction $\partial_{\rm n}\chi^{(C_0)}=0$ on $\Gamma^{(\rm b)}\cap \mbox{\rm Supp }\chi^{(C_0)}$, we can infer that on $ \Gamma^{(\rm b)}\backslash ({B}(C_0,\rho_0)\cup  \{ x> {\frac{1}{\rho_{0}} } \})$, one has
\begin{align*}
(N\cdot \nabla)&(T\cdot \nabla)\phi=(\chi^{(C_0)})^2 (({\bf e}_{\rm r}^{(C_0)})^\perp\cdot \nabla)({\bf e}_{\rm r}^{(C_0)}\cdot \nabla)\phi\\
&+(1-\chi^{(C_0)}) \chi^{(C_0)} \big[   (N^{(\rm b)}\cdot \nabla)({\bf e}_{\rm r}^{(C_0)}\cdot \nabla)\phi+ (({\bf e}_{\rm r}^{(C_0)})^\perp\cdot \nabla)(T^{(\rm b)}\cdot \nabla)\phi \big]\\
&+(1-\chi^{(C_0)})^2(N^{(\rm b)}\cdot \nabla)(T^{(\rm b)}\cdot \nabla)\phi;
\end{align*}
the first term of the right-hand side is similar to the one handled in $ \Gamma^{(\rm b)}\cap B(C_0,\rho_0)$ and vanishes for the same reasons; the third one also vanishes because of the commutation property  \eqref{eqdercommut} in ${\mathcal T}^{(\rm b)}$.

Finally, for the second term, assuming without loss of generality that $\phi$ is supported in the tubular neighborhood ${\mathcal T}^{(\rm b)}$ where $N^{(\rm b)}$ and $T^{(\rm b)}$ are constant (because the boundary is flat in that region), we can rewrite the term between brackets as
\begin{equation}\label{polcart}
\partial_r (  (N^{(\rm b)}\cdot \nabla) \phi)+\frac{1}{r}(T^{(\rm b)}\cdot \partial_\theta\nabla)  \phi .
\end{equation}
The first term obviously vanishes on the flat portion of $\Gamma^{(\rm b)}$ near $C_0$. For the second term, we remark that
$$
\partial_\theta \nabla \phi= {\bf e}_{\rm r}^{(\rm b)} \big(-\frac 1 r \partial_\theta\phi+\partial_r\partial_\theta \phi\big) 
+({\bf e}_{\rm r}^{(\rm b)})^\perp \big(\partial_r+\frac{1}{r}\partial_\theta^2\big)\phi.
$$
The first component vanishes on this flat portion of the boundary for the same reason as above. The second component does not vanish but it is orthogonal to $T_{\rm b}$ on  $ \Gamma^{(\rm b)}\cap B(C_0,\rho_0)$ so that its corresponding contribution in \eqref{polcart} also cancels at the boundary. This concludes the proof of the third point of the lemma.

In the end, the forth point of the lemma is a consequence of the fact that near each corner ${\rm c}$, one has $(r^{(\rm c)})^n\partial^n {\bf e}_{\rm r}^{(\rm c)}\in L^\infty(\Omega^{(\rm c)})$. This completes the proof of the lemma.
\end{proof}

\subsection{Higher order weighted estimates for the harmonic extension in corner domains}\label{sectHOHE}

Recall that $\rho$ is the function constructed in Lemma \ref{LMcomm2} and which behaves, near each corner, as the distance to the corner. We know by Proposition \ref{propharmonic} that if $\psi\in \dot{H}^1(\Gamma^{(\rm D)})$, then it is possible to construct its harmonic extension $\psi^\mfh\in \dot{H}^1(\Omega)$; if the Dirichlet data $\psi$ is more regular in the sense that $(\rho\dx)^i\psi\in \dot{H}^{1/2}(\Gamma^{\rm D})$ for $0\leq i\leq n$, then one can construct the extensions $((\rho\dx)^i\psi)^\mfh\in H^1(\Omega)$.

In order to compare these extensions to derivatives of $\psi^\mfh$, we need to introduce an extension of the weighted derivative $\rho\dx$, namely, we introduce the vector field $X$ as
\[
X=\rho T\cdot\nabla;
\]
on $\Gamma^{(\rm D)}$, $X$ coincides with $\rho\dx$, and it is also tangential to all the connected components $\Gamma^\side$ of $\Gamma^*$.

The following proposition shows that $X^n\psi^\mfh$ is also well defined in $H^1(\Omega)$, and provides an upper bound for its norm. It also shows that the difference $\phi_n:=X^n\psi^\mfh-((\rho\dx)^n\psi)^\mfh$ can be controlled by lower order terms.
\begin{proposition}\label{propXnpsih}
Let Assumption \ref{assconfig2} be satisfied and $n\in {\mathbb N}\backslash\{0\}$,  and assume that one has $(\rho\dx)^i\psi\in \dot{H}^{1/2}(\Gamma^{\rm D})$ for all $0\leq i\leq n$. Then $X^n \psi^\mfh \in \dot{H}^1(\Omega)$ and there exists a constant $C>0$ independent of $\psi$ such that
$$
\Vert \nabla (X^n\psi^\mfh) \Vert_{L^2(\Omega)}\leq C \sum_{i=0}^{n}\vert (\rho \dx)^i\psi  \vert_{\dot{H}^{1/2}(\Gamma^{\rm D})}.
$$
Moreover, at leading order, $X^n\psi^\mfh $ behaves like $((\rho\dx)^n\psi)^\mfh$ in the sense that
$$
\Vert \nabla \big(X^n\psi^\mfh -((\rho\dx)^n\psi)^\mfh\big)\Vert_{L^2(\Omega)}\leq C \sum_{i=0}^{n-1} \vert (\rho \dx)^i\psi  \vert_{\dot{H}^{1/2}(\Gamma^{\rm D})}.
$$
\end{proposition}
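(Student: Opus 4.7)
\medskip
\noindent
\textbf{Proof plan.} We argue by induction on $n \geq 1$; the base case $n=0$ is exactly Proposition \ref{propequivnorms2}. The heart of the proof is the second estimate: once we control $\phi_n := X^n\psi^\mfh - ((\rho\dx)^n\psi)^\mfh$ in $\dot{H}^1$, the first estimate follows at once by the triangle inequality and by Proposition \ref{propequivnorms2} applied to $((\rho\dx)^n\psi)^\mfh$. By density (Proposition \ref{propdense2}) and linearity of all operations involved, we may first assume $\psi$ is smooth enough that $X^n\psi^\mfh$ is classically defined, and then pass to the limit in the final estimate.

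The key structural observation is that $X^n\psi^\mfh$ satisfies, up to explicit lower-order terms, the same mixed boundary value problem as $((\rho\dx)^n\psi)^\mfh$. First, I would verify the boundary behavior of $X^n\psi^\mfh$. By the tangentiality of $X$ on $\Gamma^{\rm D}$ (where $X=\rho\dx$, Lemma \ref{LMcomm1}(1)), induction gives $(X^n\psi^\mfh)_{\vert_{\Gamma^{\rm D}}}=(\rho\dx)^n\psi$; and by the stability property Lemma \ref{LMcomm1}(3), $(\partial_{\rm n}(X^n\psi^\mfh))_{\vert_{\Gamma^{\rm N}}}=0$. Consequently, $\phi_n$ has zero trace on $\Gamma^{\rm D}$ and zero normal derivative on $\Gamma^{\rm N}$. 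Second, I would compute $\Delta(X^n\psi^\mfh)=[\Delta,X^n]\psi^\mfh$. On each $\Omega^\cor_{\rho_0}$ one has $X=\sigma({\rm c})\,r^\cor\partial_{r^\cor}$ (Lemma \ref{LMcomm1}(2)) and the classical polar-coordinate identity $[\Delta, r\partial_r]=2\Delta$ yields by iteration $[\Delta,X^n]\psi^\mfh=0$ on $\Omega^\cor_{\rho_0}$. Outside these corner neighborhoods, the coefficients of $X$ are smooth and bounded together with all their derivatives (Lemma \ref{LMcomm1}(4) and Lemma \ref{LMcomm2}), so that $[\Delta,X^n]\psi^\mfh$ can be expanded, via induction and Leibniz-type expansions, as a finite linear combination of terms of the form $a_\alpha(x,z)\,\partial^\beta X^i\psi^\mfh$ with $0\le i\le n-1$, $|\beta|\le 2$, and $a_\alpha$ smooth and compactly supported in $\Omega\setminus \bigcup_{\rm c}\Omega^\cor_{\rho_0}$.

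Next I would close the estimate by a variational argument. Since $\phi_n\in H^1_{\rm D}(\Omega)$ satisfies $\partial_{\rm n}\phi_n=0$ on $\Gamma^{\rm N}$, testing the equation $\Delta\phi_n = [\Delta,X^n]\psi^\mfh$ against $\phi_n$ and integrating by parts (the boundary integrals vanishing by the two boundary conditions) yields
\begin{equation*}
\int_\Omega |\nabla\phi_n|^2 = -\int_\Omega \big([\Delta,X^n]\psi^\mfh\big)\phi_n.
\end{equation*}
To avoid needing second derivatives of $\psi^\mfh$, each term $\partial^\beta X^i\psi^\mfh$ with $|\beta|=2$ appearing in $[\Delta,X^n]\psi^\mfh$ is integrated by parts once against $\phi_n$, transferring one derivative onto $\phi_n$ (or onto the smooth coefficient $a_\alpha$). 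Since the $a_\alpha$ are supported in a region where elliptic regularity away from corners gives $X^i\psi^\mfh\in H^1_{\rm loc}$ with controllable norms by the inductive hypothesis applied to the order-$i$ version of the statement (for $i\le n-1$), we end up with
\begin{equation*}
\int_\Omega|\nabla\phi_n|^2 \lesssim \|\nabla\phi_n\|_{L^2}\sum_{i=0}^{n-1}\vert(\rho\dx)^i\psi\vert_{\dot{H}^{1/2}(\Gamma^{\rm D})},
\end{equation*}
from which the second estimate follows by dividing; the first then follows by the triangle inequality.

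The main obstacle will be the rigorous treatment of $[\Delta, X^n]\psi^\mfh$ in the transition region between the corner polar zones and the smooth part of $\Omega$, where $\psi^\mfh$ only has the weak regularity provided by Proposition \ref{propDNcont}. The induction on $n$ has to be set up so that at step $n$ the quantities controlled at step $n-1$ are exactly what is needed to handle these commutator terms; in particular one must integrate by parts to trade second-order derivatives of $X^i\psi^\mfh$ for first-order ones, using that the commutator's coefficients vanish on a neighborhood of each corner so that the integrations by parts do not produce unwanted boundary terms on the singular part of $\Gamma$. A secondary technical point is the initial justification (by approximation of $\psi$) that all manipulations are legitimate, which is where the density result Proposition \ref{propdense2} and the standard interior elliptic regularity come in.
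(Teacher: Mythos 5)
Your overall architecture matches the paper's: define $\phi_n = X^n\psi^\mfh - ((\rho\dx)^n\psi)^\mfh$, note it satisfies a homogeneous mixed Dirichlet/Neumann problem with source $[\Delta,X^n]\psi^\mfh$, and close a variational estimate by a finite induction on $n$. Your observation that $[\Delta,r\partial_r]=2\Delta$ makes $X^n\psi^\mfh$ harmonic in each $\Omega^\cor_{\rho_0}$, so the commutator vanishes identically there, is correct (it rests crucially on Assumption \ref{assconfig2}), and it is not exploited in the paper -- the paper instead keeps weighted coefficients like $\rho X^{i-k}(\partial^2(\rho T))$ and pairs the resulting $1/\rho$ loss with Hardy's inequality for $\phi_n\in H^1_{\rm D}(\Omega)$. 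Your version trades that weight/Hardy step for a support condition, which is a tidier route under the stated geometric assumption.

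However, there is a genuine gap in the step where you integrate by parts the second-order terms $\int_\Omega a_\alpha\,\partial^\beta(X^i\psi^\mfh)\,\phi_n$, $|\beta|=2$. You assert that because the $a_\alpha$ vanish near the corners, ``the integrations by parts do not produce unwanted boundary terms on the singular part of $\Gamma$.'' But the boundary term produced by a single integration by parts lives on \emph{all of} $\partial\Omega$; the portion on $\Gamma^{\rm D}$ disappears because $\phi_n=0$ there, the portion near the corners disappears because $a_\alpha=0$ there, but the remaining integral over $\Gamma^{\rm N}$ away from the corners is not zero and is not controlled by $\Vert\nabla\phi_n\Vert_{L^2}$ and $\Vert\nabla X^i\psi^\mfh\Vert_{L^2}$ alone (a trace of $\partial(X^i\psi^\mfh)$ on $\Gamma^{\rm N}$ is required, which is not available at the regularity assumed). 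The paper's proof handles exactly this term: since Lemma \ref{LMcomm1}(3) gives $\partial_{\rm n}(X^i\psi^\mfh)=0$ on $\Gamma^{\rm N}$, one replaces the normal component of the derivative there by the tangential one, and the resulting tangential boundary integral is converted back into an $L^2(\Omega)$ bilinear form via a second Green's identity with $\nabla^\perp$ (see \eqref{penible1}--\eqref{penible2}). Your argument must either perform the integration by parts along the $T/N$ frame so that the boundary contribution is proportional to $\partial_{\rm n}(X^i\psi^\mfh)\vert_{\Gamma^{\rm N}}=0$, or use the paper's $\nabla^\perp$ Green identity; the support-of-$a_\alpha$ reasoning you gave does not close this term.
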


\begin{proof}
Let us denote $\phi_n=X^n \psi^\mfh-\big((\rho\dx)^n \psi\big)^\mfh$, so that $\phi_n$ solves the elliptic boundary value problem
\begin{equation}\label{systphin}
\begin{cases}
\Delta \phi_n=[\Delta,X^n]\psi^\mfh& \quad \mbox{ in }\quad \Omega,\\
\phi =0 & \quad\mbox{ on }\quad \Gamma^{\rm D},\\
\partial_{\rm n}\phi=0& \quad\mbox{ on }\quad \Gamma^{\rm N}
\end{cases}
\end{equation}
(the fact that $\partial_{\rm n}\phi=0$ on $\Gamma^{\rm N}$ stems from the third point of Lemma \ref{LMcomm1}).

We want to prove that $\phi_n\in H^1_{\rm D}(\Omega)$ and that $\nabla\phi_n$ satisfies the estimate of the proposition. A key step is the following control on the source term in \eqref{systphin}. Note that in the left-hand side of the estimate of the lemma, the integral over $\Omega$ must be understood as a $H^1_{\rm D}(\Omega)'-H^{1/2}_{\rm D}(\Omega)$ duality product. For the sake of clarity we often use this abuse of notation.
\begin{lemma}\label{lemm2}
Let Assumption \ref{assconfig2} be satisfied and $n\in {\mathbb N}$, $n\neq 0$. If $\nabla (X^j \psi^\mfh)\in L^2(\Omega)$ for all $0\leq j\leq n-1$, then $[\Delta,X^n]\psi^\mfh\in H^1_{\rm D}(\Omega)'$ and for all $\phi \in H^1_{\rm D}(\Omega)$, one has
$$
\int_\Omega [\Delta,X^n]\psi^\mfh \phi \leq C \big(\sum_{i=0}^{n-1} \Vert \nabla (X^i \psi^\mfh)\Vert_{L^2(\Omega)}\big)\Vert \nabla \phi \Vert_2.
$$
\end{lemma}
\begin{proof}[Proof of the lemma]
Recalling that if $A$ and $B$ are two operators, one can decompose $[A,B^n]=\sum_{i=0}^{n-1}B^i[A,B]B^{n-i-1}$, and remarking that
$$
 [\Delta,\rho T]\cdot \nabla=(\Delta (\rho T))\cdot \nabla +2 \sum_{j=x,z} \partial_j(\rho T)\cdot \partial_j \nabla ,
$$
one can decompose the source term in \eqref{systphin} as
\begin{align}
\label{decompcomm}
[\Delta,X^n]\psi^\mfh&=  \sum_{i=0}^{n-1} X^{i} \big([\Delta,\rho T]\cdot \nabla X^{n-1-i}\psi^\mfh \big)\\
\nonumber
&=:  \sum_{i=0}^{n-1} (S_{i}^1+S_{i}^2),
\end{align}
with
$$
S_{i}^1=X^{i} \big(\Delta (\rho T)\cdot \nabla X^{n-1-i}\psi^\mfh \big)
\quad\mbox{ and }\quad
S_{i}^2=2  \sum_{j=x,z} X^i \big(\partial_j(\rho T)\cdot \partial_j \nabla X^{n-1-i}\psi^\mfh \big).
$$

It is therefore enough to prove that for all $\phi\in H^1_{\rm D}(\Omega)$, one has
\begin{equation}\label{estSik}
\int_\Omega S^\iota_{i,k}\phi
\leq C \big(\sum_{i=0}^{n-1} \Vert \nabla (X^i \psi^\mfh)\Vert_{L^2(\Omega)}\big)\Vert \nabla \phi \Vert_2
\end{equation}
for $\iota=1,2$ and $0\leq k\leq i$, and where $S^1_{i,k}$ and $S^2_{i,k}$ are defined as
$$
S_{i,k}^1=(X^{i-k} \Delta (\rho T)) \cdot X^k \nabla X^{n-1-i}\psi^\mfh \big)
\quad\mbox{ and }\quad
S_{i,k}^2=X^{i-k}\partial(\rho T)\cdot X^{k}\partial \nabla X^{n-1-i}\psi^\mfh,
$$
where, in each occurrence, $\partial$ can be replaced by either $\partial_x$ or $\partial_z$.

\medskip
\noindent- {\bf Control of $S_{i,k}^1$.} Let us show first that  $\rho S_{i,k}^1 \in L^2(\Omega)$ and 
\begin{equation}\label{estlmS}
\Vert \rho S_{i,k}^1\Vert_{L^2(\Omega)}\leq C \sum_{i=0}^{n-1} \Vert \nabla (X^i \psi^\mfh)\Vert_{L^2(\Omega)}.
\end{equation}
We can notice that for any smooth enough function $u$, the commutator $[X^k,\partial] u$ is a sum of terms of the form
\begin{equation}\label{exprcom1}
X^{k_1}\partial (\rho T_{\beta_1})\times \dots\times X^{k_m}\partial (\rho T_{\beta_m})\times \partial X^{k_0}u,
\end{equation}
for some $m\in {\mathbb N}$, with $\beta_1,\dots \beta_l\in\{1,2\}$, $k_0+\dots+k_m=k-1$. 

Since $X^l \partial(\rho T)$  belongs to $L^\infty(\Omega)$ for all $l\in {\mathbb N}$, 
the fact that $S_{i,k}^1$ satisfies \eqref{estlmS} can be deduced from the fact that for all $0\leq k'\leq k$, one has
$$
\Vert \rho X^{i-k}(\partial^2(\rho T)) \partial (X^{n-1-i+k'}\psi^\mfh ) \Vert_{L^2(\Omega)}
\leq C \Vert  \nabla (X^{n-1-i+k'}\psi^\mfh ) \Vert_{L^2(\Omega)},
$$
which is a direct consequence of the fact that $\rho X^{i-k}(\partial^2(\rho T))$ is bounded in $L^\infty(\Omega)$. 

We now show that the weighted estimate \eqref{estlmS} implies that $S^1_{i,k}\in H^1_{\rm D}(\Omega)'$ and satisfies \eqref{estSik}. For all $\phi \in H^1_{\rm D}(\Omega)$, one has indeed
$$
\int_\Omega S^1_{i,k}\phi \leq \Vert \rho S^1_{i,k}\Vert_{L^2(\Omega)}\Vert \frac{1}{\rho}\phi \Vert_{L^2(\Omega)};
$$
the result follows therefore from \eqref{estlmS} and Hardy's inequality (see for instance Theorem 1.4.4.4 in \cite{Grisvard}) which implies that $\Vert \frac{1}{\rho}\phi \Vert_{L^2(\Omega)}\leq C \Vert \nabla\phi \Vert_{L^2(\Omega)}$ for all $\phi\in H^1_{\rm D}(\Omega)$.

\medskip
\noindent- {\bf Control of $S_{j,k}^2$.} Using twice the fact that the commutator $[X^k,\partial]u$ is a sum of terms of the form \eqref{exprcom1}, the control of $S_{i,k}^2$ is reduced, up to terms that can be treated as $S^1_{i,k}$, to the control of terms of the form $F \partial \nabla X^{j}\psi^\mfh$, with $0\leq j\leq n-1$ and $F$ a bounded function in $\Omega$ with values in $\RR^2$ such that $\rho\partial F$ is also bounded, we have for all $\phi \in H^1_{\rm D}(\Omega)$,
\begin{equation}\label{penible1}
\int_\Omega F\cdot \partial \nabla X^j \psi^\mfh\phi= -\int_\Omega \partial (X^j\psi^\mfh)  \nabla\cdot (F \phi)+\int_{\Gamma^{\rm N}} F\cdot \widetilde{{\bf n}} \partial (X^j\psi^\mfh )\phi ,
\end{equation}
where $\widetilde{\bf n}$ stands for the outwards normal vector on $\Gamma$ (so that $\widetilde{\bf n}=-{\bf n}$ on $\Gamma^{(\rm b)}$).

Using the assumption that $\rho\partial F$ is bounded and Hardy's inequality as above, we have $\Vert \nabla\cdot (F\phi)\Vert_{L^2(\Omega)}\leq C \Vert \nabla\phi\Vert_{L^2(\Omega)}$ and the first term of the right-hand side satisfies the estimate \eqref{estSik}. We therefore focus on the boundary integral.

Since we know by Lemma \ref{LMcomm1} that $\partial_{\rm n} X^j\psi^\mfh=0$ on $\Gamma^{\rm N}$, we can replace $\partial={\bf e}\cdot \nabla$ by $({\bf e}\cdot \widetilde{\bf n}^\perp)\widetilde{\bf n}^\perp\cdot \nabla$ in this boundary integral. Writing $\widetilde{f}=({\bf e}\cdot N^\perp)F\cdot N$,  where $N$ is the extension of ${\bf n}$ provided by Lemma \ref{LMcomm1}, we must therefore control
\begin{equation}\label{penible2}
\int_{\Gamma^{\rm N}} \widetilde{f} \widetilde{\bf n}^\perp\cdot \nabla (X^j\psi^\mfh )\phi=
\int_\Omega \nabla(X^j \psi^\mfh)\cdot \nabla^\perp (\widetilde{f}\phi),
\end{equation}
the right-hand side stemming from Green's identity. 

Again, since $f$ and $\rho \nabla\widetilde{f}$ are in $L^\infty(\Omega)$, Hardy's inequality implies that 
\[
\Vert \nabla^\perp (\widetilde{f}\phi)\Vert_{L^2(\Omega)}\leq C \Vert \nabla \phi\Vert_{L^2(\Omega)},
\]
so that it follows easily that the boundary integral also satisfies the upper bound \eqref{estSik}, which concludes the proof of the lemma.
\end{proof}
If $\nabla (X^i\psi^\mfh)\in L^2(\Omega)$ for $0\leq i\leq n-1$ than the  variational inequality associated with \eqref{systphin} and the lemma yield
\begin{align*}
\Vert \nabla \phi_n\Vert_{L^2(\Omega)}^2 &\leq \big\vert \int_\Omega [\Delta,X^n]\psi^\mfh \phi_n \big\vert\\
&\leq C \sum_{i=0}^{n-1} \Vert \nabla X^i\psi^\mfh \Vert_{L^2(\Omega)} \Vert \nabla \phi_n\Vert_{L^2(\Omega)},
\end{align*}
and therefore
\begin{equation}\label{lmestphin}
\Vert \nabla \phi_n\Vert_{L^2(\Omega)}\leq C \sum_{i=0}^{n-1} \Vert \nabla X^i\psi^\mfh \Vert_{L^2(\Omega)}.
\end{equation}
We can now prove the following lemma.
\begin{lemma}\label{lemmeborneXj}
Under the assumptions of the proposition, one has for all $0\leq j\leq n$ that $\nabla (X^j\psi^\mfh)\in L^2(\Omega)$ with
$$
\Vert \nabla  (X^j\psi^\mfh)\Vert_{L^2(\Omega)}\leq C \sum_{k=0}^{j}\vert (\rho\dx)^k \psi  \vert_{\dot{H}^{1/2}(\Gamma^{\rm D})},
$$
for some constant $C$ independent of $\psi$.
\end{lemma}
\begin{proof}[Proof of the lemma]
The proof is done by induction. The result is obviously true for $j=0$. We assume that it is true for all $0\leq j'\leq j\leq n-1$ and show that it also holds for $j+1$. One has $X^{j+1}\psi^\mfh=((\rho\dx)^{j+1}\psi)^\mfh+\phi_{j+1}$; since \eqref{lmestphin} holds for $\phi_{j+1}$ with $n$ replaced by $j+1\leq n$, the result follows directly since by \eqref{G0kin} and Proposition \ref{propequivnorms2} one has $\Vert\nabla \psi^\mfh\Vert_{L^2(\Omega)}\sim \vert \psi \vert_{\dot{H}^{1/2}(\Gamma^{\rm D})}$.
\end{proof}
Plugging the estimate of the lemma into \eqref{lmestphin} completes the proof of  the proposition.
\end{proof}

\subsection{Commutator estimates for the Dirichlet-Neumann operator}\label{sectCEDN}
%
%
We derive in this section two commutator estimates that will be crucial to  obtain space regularity on the solutions to the Cauchy problem \eqref{CP}-\eqref{CP0}. The first one is stated in the following proposition.

\begin{proposition}\label{propcomm1}
Let Assumption \ref{assconfig2} be satisfied and $n\in {\mathbb N}$, $n\neq 0$.  Let also  $\psi\in \dot{H}^{1/2}(\Gamma^{\rm D})$ be such that $(\rho\dx)^j\psi\in \dot{H}^{1/2}(\Gamma^{\rm D})$ for $0\leq j\leq n$.
Then 
there is a constant $C>0$ independent of $\psi$ such that for 
 all $\tpsi\in \dot{H}^{1/2}(\Gamma^{\rm D})$, one has
$$
\int_{\Gamma^{\rm D}} [(\rho\dx)^n,G_0]\psi \rho \dx  \tpsi
 \leq C\times \big(\sum_{j=0}^{n}\vert (\rho\dx)^j\psi))^\mfh\vert_{\dot{H}^{1/2}(\Gamma^{\rm D})}\big) \vert \tpsi\vert_{\dot{H}^{1/2}(\Gamma^{\rm D})}. 
$$
\end{proposition}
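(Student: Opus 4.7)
The strategy is to express the boundary commutator on $\Gamma^{\rm D}$ as a sum of bulk integrals in $\Omega$ that can be controlled through a suitable adaptation of Lemma~\ref{lemm2}, taking advantage of the fact that the extension $X=\rho T\cdot\nabla$ produced in Lemma~\ref{LMcomm1} carries a built-in factor of $\rho$.

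Setting $\phi=\psi^\mfh$ and $\tphi=\tpsi^\mfh$, I would first use Proposition~\ref{propXnpsih} to write $\phi_n:=X^n\phi-((\rho\dx)^n\psi)^\mfh\in H^1_{\rm D}(\Omega)$, with $\Vert\nabla\phi_n\Vert_{L^2(\Omega)}$ controlled by $\sum_{j\le n-1}\vert(\rho\dx)^j\psi\vert_{\dot{H}^{1/2}(\Gamma^{\rm D})}$ and $\Delta\phi_n=[\Delta,X^n]\phi$. Since $X$ is tangential on $\Gamma^{\rm D}$ (where $T=(1,0)$) and $\partial_{\rm n}=\dz$ there, a direct computation gives the decomposition on $\Gamma^{\rm D}$:
$$[(\rho\dx)^n,G_0]\psi=[X^n,\dz]\phi\big\vert_{\Gamma^{\rm D}}+(\partial_{\rm n}\phi_n)\big\vert_{\Gamma^{\rm D}},$$
which splits the integral as $I=I_1+I_2$. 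The test object on $\Gamma^{\rm D}$ is $\rho\dx\tpsi=(X\tphi)\big\vert_{\Gamma^{\rm D}}$; this extension $X\tphi$ satisfies $\partial_{\rm n}(X\tphi)=0$ on $\Gamma^{\rm N}$ (combine Lemma~\ref{LMcomm1}.3 with $\partial_{\rm n}\rho=0$ from Lemma~\ref{LMcomm2}.3) and, although it is not in $H^1(\Omega)$, enjoys the crucial pointwise bound $\vert X\tphi\vert\le\rho\vert\nabla\tphi\vert$, hence $\Vert X\tphi/\rho\Vert_{L^2(\Omega)}\le\Vert T\cdot\nabla\tphi\Vert_{L^2(\Omega)}\lesssim\vert\tpsi\vert_{\dot{H}^{1/2}(\Gamma^{\rm D})}$.

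For $I_2$, I would apply Green's identity to $\phi_n$ against $X\tphi$; the boundary contribution on $\Gamma^{\rm N}$ vanishes, giving
$$I_2=\int_\Omega\nabla\phi_n\cdot\nabla X\tphi+\int_\Omega[\Delta,X^n]\phi\cdot X\tphi.$$
Each piece is treated by a variant of Lemma~\ref{lemm2} in which the test function $\phi$ is replaced by $X\tphi$: the $S^1_{i,k}$-type contributions are estimated by $\vert\int_\Omega S^1_{i,k}X\tphi\vert\le\Vert\rho S^1_{i,k}\Vert_{L^2}\Vert X\tphi/\rho\Vert_{L^2}$, where the $\rho$-weight of $X$ plays exactly the role that Hardy's inequality played in the proof of Lemma~\ref{lemm2}; the $S^2_{i,k}$-type contributions are treated by one further integration by parts using the key cancellation $T\cdot{\bf n}=0$ on $\Gamma^*$ from Lemma~\ref{LMcomm1}.1, which prevents any boundary term from arising when moving an $X$-derivative, reducing everything to bounded multiplicative factors times $\nabla X^i\phi$ and $\nabla\tphi$. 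The term $\int_\Omega\nabla\phi_n\cdot\nabla X\tphi$ is handled analogously, after rewriting $\nabla X\tphi=X\nabla\tphi+(\nabla(\rho T))\cdot\nabla\tphi$ and IBP-ing the $X$ onto $\nabla\phi_n$, again with no boundary contribution. Proposition~\ref{propXnpsih} then turns the resulting bound $\Vert\nabla\phi_n\Vert_{L^2}+\sum_i\Vert\nabla X^i\phi\Vert_{L^2}$ into the expected right-hand side.

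For $I_1$, I would observe that $[X^n,\dz]\phi\big\vert_{\Gamma^{\rm D}}=\partial_{\rm n}(X^n\phi)\big\vert_{\Gamma^{\rm D}}-X^n(\partial_{\rm n}\phi)\big\vert_{\Gamma^{\rm D}}$ and, after subtracting and adding $G_0((\rho\dx)^n\psi)$, express $I_1$ as the difference of two boundary pairings that each fit into the Green's framework just described (the first with $u=X^n\phi$ and the second with $u=((\rho\dx)^n\psi)^\mfh$), both testing against $X\tphi$; the same weighted commutator bookkeeping applies. The main obstacle is the lack of $H^1$-regularity of $X\tphi$: every integration by parts must be justified either by the vanishing boundary term $T\cdot{\bf n}=0$ on $\Gamma^*$ or by a density argument on smooth $\psi,\tpsi$ with compact support away from corners, in which the limit is controlled uniformly thanks to the $\rho$-weight absorbed into $X$. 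Once this point is handled, all remaining manipulations are quantitative variants of Lemma~\ref{lemm2} and Proposition~\ref{propXnpsih}, and the estimate closes with the factor $\vert\tpsi\vert_{\dot{H}^{1/2}(\Gamma^{\rm D})}$ coming uniformly from $\Vert\nabla\tphi\Vert_{L^2(\Omega)}$ via Proposition~\ref{propequivnorms2}.
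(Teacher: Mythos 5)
Your overall decomposition $I=I_1+I_2$, with $I_1=\int_{\Gamma^{\rm D}}[X^n,\dz]\phi\,X\tphi$ and $I_2=\int_{\Gamma^{\rm D}}(\dz\phi_n)X\tphi$, is exactly the paper's \eqref{eqcom1}, and your idea of using Green's identity and exploiting the built-in $\rho$-weight of $X$ as a substitute for Hardy's inequality when testing against $X\tphi$ (which is not in $H^1_{\rm D}(\Omega)$) is the right starting point. However, there are two genuine gaps.

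\textbf{The treatment of $I_1$ is circular.} You propose to write $[X^n,\dz]\phi|_{\Gamma^{\rm D}}=\dz(X^n\phi)|_{\Gamma^{\rm D}}-X^n(\dz\phi)|_{\Gamma^{\rm D}}$ (there is a sign slip here; the commutator is $X^n\dz-\dz X^n$) and then add and subtract $G_0((\rho\dx)^n\psi)$. But this rearrangement produces
\[
[X^n,\dz]\phi\big\vert_{\Gamma^{\rm D}}
=\big[(\rho\dx)^n G_0\psi-G_0((\rho\dx)^n\psi)\big]-\dz\phi_n\big\vert_{\Gamma^{\rm D}}
=[(\rho\dx)^n,G_0]\psi-\dz\phi_n\big\vert_{\Gamma^{\rm D}},
\]
i.e.\ $I_1=I-I_2$, and $I=I_1+I_2$ collapses to $I=I$. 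Nothing is gained. Worse, if you instead try to control the piece $\int_{\Gamma^{\rm D}}(\rho\dx)^n G_0\psi\,\rho\dx\tpsi$ directly by a Green identity with $u=((\rho\dx)^n\psi)^\mfh$ you end up needing $(\rho\dx)^{n+1}\psi\in\dot H^{1/2}(\Gamma^{\rm D})$, which the statement does not assume. The crucial structural fact you are missing is that $[X,\dz]$ \emph{restricted to $\Gamma^{\rm D}$} is a zeroth-order multiplication operator, not first order: since $\dz\rho=0$ on $\Gamma^{\rm D}$ and near each corner $\dz T$ equals $(\chi^\cor/r^\cor)\,{\bf e}_z$ there, one has
\[
\big([X,\dz]\varphi\big)\big\vert_{\Gamma^{\rm D}}=F_0^{(1)}(\dz\varphi)\big\vert_{\Gamma^{\rm D}},\qquad F_0^{(1)}=-\Big(\sum_{\rm c}\chi^\cor\frac{\rho}{r^\cor}\Big)\Big\vert_{\Gamma^{\rm D}}\in W^{1,\infty}(\Gamma^{\rm D}),
\]
because $\rho\sim r^\cor$ near each corner. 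Iterating, $[X^n,\dz]\phi|_{\Gamma^{\rm D}}=\sum_{i=0}^{n-1}F_i^{(n)}\big[G_0((\rho\dx)^i\psi)+(\dz\phi_i)|_{\Gamma^{\rm D}}\big]$ with bounded $F_i^{(n)}$. This is what makes the index range stop at $n-1$, so the $G_0((\rho\dx)^i\psi)$ terms can be handled (they cost one more $\rho\dx$ derivative, but since $i\le n-1$ the hypothesis $(\rho\dx)^{i+1}\psi\in\dot H^{1/2}$ is still available). Without this cancellation the induction cannot close.

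\textbf{The $S^2$-type boundary terms on $\Gamma^{\rm D}$ do not vanish.} In your treatment of $I_2$ you assert that the cancellation $T\cdot{\bf n}=0$ on $\Gamma^*$ prevents any boundary term from arising. That cancellation applies when you integrate by parts an $X$-derivative, but the key integration by parts in the $S^2$-terms moves an ordinary $\partial$ ($\dx$ or $\dz$). The resulting boundary integral carries the factor $F\cdot\widetilde{\bf n}$ with $F$ built from $\partial(\rho T)$ (not from $T$), which has no reason to vanish on $\Gamma^{\rm D}$. Since $X\tphi$ does not vanish on $\Gamma^{\rm D}$ (its trace there is precisely $\rho\dx\tpsi$), this boundary integral is genuinely nonzero, and to control it one has to distinguish $\partial=\dx$ from $\partial=\dz$: for $\dx$ one uses $\dx=-{\bf n}^\perp\cdot\nabla$ and a further Green identity, while for $\dz={\bf n}\cdot\nabla$ one is forced back to a term of the form $\int_\Omega\Delta(X^l\psi^\mfh)\,fX\tphi$ with $l\le j-1$, which requires a finite induction over $j$ — this is exactly what makes the paper's Lemma~\ref{lemm2var} (stating the claim for every $f$ with $f,\rho\nabla f\in L^\infty$) necessary, and it is the part your sketch skips.
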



\begin{proof}
Thanks to Proposition \ref{propdense2}, it is enough to prove the estimate of the lemma for $\tpsi\in  {\mathcal D}(\Gamma^{\rm D})$ (in which case the integral of the left-hand side is well defined).
The strategy of the proof is to decompose the commutator $[ (\rho\dx)^n,G_0]\psi$ into a sum of terms involving $(\dz\phi_j)_{\vert_{\Gamma^{\rm D}}}$ ($0\leq j\leq n$) and $G_0 ((\rho\dx)^j \psi)$ ($0\leq j\leq n-1$). 

Using Green's identity, we  can write
\begin{equation}\label{eqBrux}
 \int_{\Gamma^{\rm D}}\dz \phi_j  \rho \dx\tpsi 
=\int_\Omega \Delta \phi_j (\rho\dx \tpsi)^\mfh+\int_\Omega \nabla\phi_j\cdot \nabla(\rho\dx \tpsi)^\mfh,
\end{equation}
for all $\tpsi \in {\mathcal D}(\Gamma^{\rm D})$.

We first give in Lemma \ref{lemm2var} a variant of Lemma \ref{lemm2} that allows one to control the first term of the right-hand side. The control of $(\dz\phi_j)_{\vert_{\Gamma^{\rm D}}}$ is then deduced in Lemma \ref{compcomdual}, while the control of  $G_0 ((\rho\dx)^j \psi)$ is established in Lemma \ref{compcomdual1}. Finally, the afore mentioned decomposition of the commutator is given in Lemma \ref{compcomdual2} after which the end of the proof follows easily.   

The following Lemma is a variant of Lemma \ref{lemm2}, in which we derived an upper bound for $\int_\Omega \Delta\phi_j \phi $ when $\phi\in H^1_{\rm D}(\Omega)$. In the lemma below, $\phi$ is replaced by $X \tpsi^\mfh$, which does not belong to $H^1_{\rm D}(\Omega)$, so that the proof needs to be modified. The assumption that $\phi\in H^1_{\rm D}(\Omega)$ was crucial in two places in the proof of Lemma \ref{lemm2}. 
The first place was that it allowed to use Hardy's inequality $\Vert \frac{1}{\rho}\phi\Vert_{L^2(\Omega)}\leq C \Vert \nabla \phi\Vert_{L^{2}(\Omega)}$. This step is replaced by the observation that, from the definition of $X=\rho T\cdot \nabla$, one has $\Vert \frac{1}{\rho} X\tpsi^\mfh \Vert_{L^2(\Omega)}\leq C \Vert \nabla \tpsi^\mfh\Vert_{L^{2}(\Omega)}$. 
The second place was that since $\phi$ vanishes on $\Gamma^{\rm D}$, there was no boundary integral on this portion of the boundary in \eqref{penible1}. The adaptation requires an analysis of this new boundary integral whose control exploits the specific structure of $X\tpsi^\mfh$.
\begin{lemma}\label{lemm2var}
Under the assumptions of the proposition,  let $0\leq j\leq n$, and  $\psi\in \dot{H}^{1/2}(\Gamma^{\rm D})$ be such that for all $0\leq k\leq j$, one has $(\rho\dx)^k\psi\in \dot{H}^{1/2}(\Gamma^{\rm D})$. Then for all $\tpsi\in \dot{H}^{1/2}(\Gamma^{\rm D})$.
$$
\int_\Omega \Delta\phi_j X\tpsi^\mfh \leq C\times \big(\sum_{k=0}^{j}\vert  (\rho\dx)^k\psi)\vert_{\dot{H}^{1/2}(\Gamma^{\rm D})}\big) \vert \tpsi\vert_{\dot{H}^{1/2}(\Gamma^{\rm D})},
$$
where we recall that $\phi_j=X^j\psi^\mfh-((\rho\dx)^j\psi)^\mfh$.
\end{lemma}
\begin{proof}[Proof of the lemma]
As above, it is sufficient to prove the estimate for $\tpsi\in  {\mathcal D}(\Gamma^{\rm D})$. We actually prove a slightly more general result, namely, that for all $0\leq j \leq n$ and all $f\in L^\infty(\Omega)$ such that $\rho\nabla f\in L^\infty(\Omega)$, one has
\begin{equation}\label{claim2}
\int_\Omega f \Delta\phi_j X\tpsi^\mfh \leq C(\Vert f\Vert_{L^\infty(\Omega)},\Vert \rho\nabla f\Vert_{L^\infty(\Omega)})\big(\sum_{k=0}^{j}\Vert \nabla( (\rho\dx)^k\psi))^\mfh\Vert_2\big) \Vert \nabla\tpsi^\mfh\Vert_2.
\end{equation}

Since $\phi_0=0$, the claim \eqref{claim2} is satisfied for $j=0$. In order to prove that it holds for all $1\leq j\leq n$, we proceed by a finite induction. Let us assume that \eqref{claim2} is satisfied for $0\leq i \leq j-1$, and let us prove that it holds for $j$ also. 

Decomposing $\Delta\phi_j=[\Delta,X^j]\tpsi^\mfh$ as in \eqref{decompcomm}, we are led to prove that the upper bound of \eqref{claim2} holds for
$$
\int_\Omega f S^\iota_{i,k} X\widetilde{\psi}^\mfh,
$$
with $\iota=1,2$ and  $0\leq k\leq i\leq j-1$, and where we recall that
\begin{align*}
S_{i,k}^1&=(X^{i-k} \Delta (\rho T)) \cdot X^k \nabla X^{j-1-i}\psi^\mfh \big),\\
S_{i,k}^2&=X^{i-k}\partial(\rho T)\cdot X^{k}\partial \nabla X^{j-1-i}\psi^\mfh.
\end{align*}

When $\iota=1$, we write
$$
\int_\Omega f S^1_{i,k}X\widetilde{\psi}^\mfh=\int_\Omega f (\rho S^1_{i,k}) T\cdot \nabla\tpsi^\mfh,
$$
so that the upper bound \eqref{claim2} stems from Cauchy-Schwarz inequality and the upper bound \eqref{estlmS} on $\Vert \rho S^1_{i,k}\Vert_{L^2(\Omega)}$. 

For $\iota=2$ and proceeding as in the proof of Lemma \ref{lemm2} one sees that it is enough to establish the desired upper bound for terms of the form
$$
I_l:= \int_\Omega F \cdot (\nabla \partial  X^{l}\psi^\mfh) X\tpsi^\mfh \qquad(0\leq l\leq j-1),
$$
where  $\Vert F\Vert_{L^\infty(\Omega)}+\Vert \rho \nabla F\Vert_{L^\infty(\Omega)}\leq C\big( \Vert f\Vert_{L^\infty(\Omega)}+\Vert \rho \nabla f\Vert_{L^\infty(\Omega)} )$.  

Integrating by parts, and using the fact that $\widetilde{\bf n}={\bf n}$ on $\Gamma^{\rm D}$, we get
$$
I_l=-\int_\Omega  (\partial  X^{l}\psi^\mfh)\nabla\cdot (F X\tpsi^\mfh)+\int_{\Gamma^{\rm N}} F\cdot \widetilde{\bf n} \partial  X^{l}\psi^\mfh X\tpsi^\mfh+\int_{\Gamma^{\rm D}} F\cdot {\bf n} \partial  X^{l}\psi^\mfh X\tpsi^\mfh
$$
(note that the boundary integral on $\Gamma^{\rm D}$ was not present in the computations of the proof of Lemma \ref{lemm2} because $\phi$ vanished on $\Gamma^{\rm D}$). We use the notation $A\sim B$ if $\vert A-B\vert$ is bounded from above by a term of the same type as the right-hand side \eqref{claim2}.

For the first term in the right-hand side of $I_l$, one has
\[
\begin{split}
\int_\Omega  (\partial  X^{l}\psi^\mfh)\nabla\cdot (F X\tpsi^\mfh)=& \int_\Omega  (\partial  X^{l}\psi^\mfh\sum_{m=1,2}\nabla\cdot (F \rho T_m) \partial_m\tpsi^\mfh+\int_\Omega  F(\partial  X^{l}\psi^\mfh) \cdot X\nabla\tpsi^\mfh\\
\sim& \int_\Omega  F(\partial  X^{l}\psi^\mfh)\cdot X\nabla\tpsi^\mfh,
\end{split}
\]
where we used Lemma \ref{lemmeborneXj} to bound $\Vert \partial X^l \psi^\mfh\Vert_2$ from above. 
Integrating by parts on the last integral leads to
\begin{align}
\label{simplIPP1}
\int_\Omega  (\partial  X^{l}\psi^\mfh)\nabla\cdot (F X\tpsi^\mfh)&\sim \int_\Omega  X^*\big(F(\partial  X^{l}\psi^\mfh)\big) \cdot\nabla\tpsi^\mfh\\
\nonumber
&\sim  \int_\Omega  F(\partial  X^{l+1}\psi^\mfh) \cdot\nabla\tpsi^\mfh,
\end{align}
which is controlled by the desired upper bound owing to Lemma \ref{lemmeborneXj}.

The second term of the right-hand side of $I_1$ is controlled exactly as the boundary integral in \eqref{penible1}.
It remains therefore to control the third term, that is, the integral on the component $\Gamma^{\rm D}$ of the boundary. We need to distinguish whether $\partial=\partial_x$ or $\partial=\partial_y$.
\begin{itemize}
\item If $\partial=\partial_x$. Since on $\Gamma^{\rm D}$, one has $\dx=-{\bf n}^\perp\cdot \nabla$, we get by Green's identity that
\begin{equation}\label{ifdx}
\int_{\Gamma^{\rm D}} F\cdot \widetilde{\bf n} \partial_x  X^{l}\psi^\mfh X\tpsi^\mfh=-\int_\Omega  \nabla(X^{l}\psi^\mfh )\cdot \nabla^\perp\big( g X\tpsi^\mfh\big),
\end{equation}
where $g=(F\cdot N)\chi_{\rm D}$ and $\chi_{\rm D}\in C^\infty(\Omega\cup \Gamma^*)\cap L^\infty(\Omega)$ is a cutoff function such that $\chi_{\rm D}=1$ on $\Gamma^{\rm D}$ and $\chi_{\rm N}=0$ on $\Gamma^{\rm N}$ and satisfying $\rho\nabla\chi_{\rm D}\in L^\infty(\Omega)$. The right-hand side in the above equality satisfies therefore
\begin{align*}
-\int_\Omega  \nabla(X^{l}\psi^\mfh )\cdot \nabla^\perp\big( g X\tpsi^\mfh\big)
=&-\int_\Omega  \nabla(X^{l}\psi^\mfh )\cdot (\rho \nabla^\perp g ) T\cdot \nabla \tpsi^\mfh\\
&-\int_\Omega  \nabla(X^{l}\psi^\mfh )\cdot \big(g \nabla^\perp (X\tpsi^\mfh)\big)\\
\sim & -\int_\Omega  \nabla(X^{l}\psi^\mfh )\cdot \big(g X\nabla^\perp \tpsi^\mfh\big),
\end{align*}
where we used the control on $\Vert \nabla (X^l \psi^\mfh)\Vert_{L^2(\Omega)}$ provided by Proposition \ref{propXnpsih} for the first integral and also used the fact that $X$ commutes with $\nabla^\perp$ in the second integral, up to terms that satisfy the upper bound stated in the lemma. 

We can now conclude by observing that
\begin{align}
\label{simplIPP2}
\int_\Omega  \nabla(X^{l}\psi^\mfh )\cdot \big(g X\nabla^\perp \tpsi^\mfh\big)&=
\int_\Omega  X^*\big( g \nabla(X^{l}\psi^\mfh )\big)\cdot \nabla^\perp \tpsi^\mfh\\
\nonumber
&\sim \int_\Omega  \big( g \nabla(X^{l+1}\psi^\mfh )\big)\cdot \nabla^\perp \tpsi^\mfh.
\end{align}
Since $l\leq j-1$, this last term also satisfies the desired upper bound.
\item If $\partial=\partial_z$. Since on $\Gamma^{\rm D}$ one has $\dz={\bf n}\cdot \nabla$, and using the fact that $\partial_{\rm n} (X^l \psi^\mfh)=0$ on $\Gamma^{\rm N}$ by the third point of Lemma \ref{LMcomm1},  Green's identity gives this time,
\begin{align*}
\int_{\Gamma^{\rm D}} 
F\cdot {\bf n} \partial_z  X^{l}\psi^\mfh X\tpsi^\mfh=&\int_\Omega \Delta (X^l\psi^\mfh) (
F\cdot N) X\tpsi^\mfh\\
&+\int_\Omega  \nabla(X^{l}\psi^\mfh )\cdot \nabla\big( (F\cdot N) X\tpsi^\mfh\big).
\end{align*}
The first term of the right-hand side is of the form considered in the claim \eqref{claim2} (with $F\cdot N$ playing the role of $f$); since $l\leq j-1$, the induction assumption ensures that it satisfies the upper bound of the claim. The second term is treated exactly as the right-hand side of  \eqref{ifdx} above.
\end{itemize}
The induction is therefore complete and \eqref{claim2} is proved. The lemma follows by taking $f=1$.
\end{proof}

We can now provide a control for the left-hand side of \eqref{eqBrux}.
\begin{lemma}\label{compcomdual}
Under the assumptions of the proposition, let $0\leq j\leq n$, and  $\psi\in \dot{H}^{1/2}(\Gamma^{\rm D})$ be such that for all $0\leq k\leq j$, one has $(\rho\dx)^k\psi\in \dot{H}^{1/2}(\Gamma^{\rm D})$. Then 
  there exists a constant
 $C>0$ independent of $\psi$ such that for all $\tpsi\in \dot{H}^{1/2}(\Gamma^{\rm D})$,
$$
\int_{\Gamma^{\rm D}}  \dz\phi_j \rho\dx \tpsi \leq  C\times \big(\sum_{k=0}^{j}\vert (\rho\dx)^k\psi)\vert_{\dot{H}^{1/2}(\Gamma^{\rm D})}\big) \vert \tpsi \vert_{\dot{H}^{1/2}(\Gamma^{\rm D})}.  
$$
\end{lemma}   
%
\begin{proof}[Proof of the lemma]
As above, we can work with $\tpsi \in {\mathcal D}(\Gamma^{\rm D})$. We recall that by \eqref{eqBrux} we have
\begin{align*}
\int_{\Gamma^{\rm D}}\dz \phi_j  \rho \dx\tpsi 
&=\int_\Omega \Delta \phi_j (\rho\dx \tpsi)^\mfh+\int_\Omega \nabla\phi_j\cdot \nabla(\rho\dx \tpsi)^\mfh\\
&=:K_1+K_2,
\end{align*}
for all $\tpsi\in {\mathcal D}(\Gamma^{\rm D})$.
We now turn to control $K_1$ and $K_2$. \\
- Control of $K_1$.  We can decompose 
$$
K_1=-\int_\Omega \Delta \phi_j  
\widetilde{\phi}_1
+\int_\Omega (\Delta \phi_j )  
X\widetilde{\psi}^\mfh,
$$
with $\widetilde{\phi}_1=-((\rho\dx) \widetilde{\psi})^\mfh+X \widetilde{\psi}^\mfh $.
The first term can be controlled by Lemma \ref{lemm2} because $\widetilde{\phi}_1\in H^1_{\rm D}(\Omega)$, while the second one is controlled by its variant Lemma \ref{lemm2var}.\\
- Control of $K_2$. We have  
\begin{align*}
K_2
  &= \int_\Omega \nabla\phi_j\cdot 
  \nabla\big((\rho\dx \tpsi)^\mfh-X\tpsi^\mfh\big)
  +\int_\Omega \nabla\phi_j\cdot \nabla X\tpsi^\mfh\\
  &=\int_\Omega \nabla\phi_j\cdot 
  \nabla\big((\rho\dx \tpsi)^\mfh-X\tpsi^\mfh\big)
  +\int_\Omega \nabla\phi_j\cdot [\nabla, X]\tpsi^\mfh+\int_\Omega X^*\nabla\phi_j\cdot \nabla \tpsi^\mfh,
\end{align*}
so that the desired upper bound easily follows from Proposition \ref{propXnpsih} and the equivalence of norms $\Vert\nabla \tpsi^\mfh\Vert_{L^2(\Omega)}\sim \vert \tpsi \vert_{\dot{H}^{1/2}(\Gamma^{\rm D})}$.


This completes the proof of the lemma.
\end{proof}

To complement Lemma \ref{compcomdual}, we provide a similar control for $G_0((\rho \dx)^j\psi)$ (note however that the sum on the right-hand side takes its range in $0\leq k\leq j+1$ rather than $0\leq k\leq j$).
\begin{lemma}\label{compcomdual1}
Under the assumptions of the proposition, let $0\leq j\leq n-1$, and  $\psi\in \dot{H}^{1/2}(\Gamma^{\rm D})$ be such that for all $0\leq k\leq j+1$, one has $(\rho\dx)^k\psi\in \dot{H}^{1/2}(\Gamma^{\rm D})$. Then there exists a constant $C>0$ independent of $\psi$ such that for all $\tpsi\in \dot{H}^{1/2}(\Gamma^{\rm D})$,
$$
\int_{\Gamma^{\rm D}} G_0((\rho\dx)^{j}\psi) (\rho\dx\tpsi) \leq C\times \big(\sum_{k=0}^{j+1}\vert (\rho\dx)^k\psi) \vert_{\dot{H}^{1/2}(\Gamma^{\rm D})}\big) \vert \tpsi\vert_{\dot{H}^{1/2}(\Gamma^{\rm D})}.
$$
\end{lemma}  
\begin{proof}
As above, we can work with $\tpsi\in {\mathcal D}(\Gamma^{\rm D})$. Throughout this proof, we write $A\sim B$ if $\vert A-B\vert$ is bounded from above by the right-hand side of the estimate stated in the lemma. 

From the variational definition of the Dirichlet-Neumann operator, we have
\begin{align*}
\int_{\Gamma^{\rm D}} G_0((\rho\dx)^{j}\psi) (\rho\dx\tpsi )&=\int_\Omega \nabla ((\rho\dx)^{j}\psi)^\mfh\cdot \nabla (\rho\dx\tpsi )^\mfh\\
&\sim \int_\Omega \nabla ((\rho\dx)^{j}\psi)^\mfh\cdot X \nabla \tpsi ^\mfh,
\end{align*}
the second line stemming as usual from Proposition \ref{propXnpsih}. Integrating by parts and recalling that $X^*=-X-\nabla\cdot (\rho T)$, it follows that
\begin{align*}
\int_{\Gamma^{\rm D}} G_0((\rho\dx)^{j}\psi) (\rho\dx\tpsi )&\sim - \int_\Omega X\nabla ((\rho\dx)^{j}\psi)^\mfh\cdot  \nabla \tpsi ^\mfh\\
&\sim  - \int_\Omega \nabla ((\rho\dx)^{j+1}\psi)^\mfh\cdot  \nabla \tpsi ^\mfh.
\end{align*}
The result then easily follows from Cauchy-Schwartz inequality.
\end{proof}
The following lemma now provides a decomposition of the commutator $[(\rho\dx)^n,G_0]\psi$ into several terms that can be controlled using the above results.

\begin{lemma}\label{compcomdual2}
Under the assumptions of the proposition, let $n\in {\mathbb N}$, and assume that $\psi$ is such that  $(\rho\dx)^i\psi\in \dot{H}^{1/2}(\Gamma^{\rm D})$ for all $0\leq i\leq n$. Then there are functions $F_i^{(n)}\in W^{1,\infty}(\Gamma^{\rm D})$ ($0\leq i\leq n-1$) that do not depend on $\psi$ and such that
$$
[(\rho\dx)^n,G_0]\psi=(\dz \phi_n)_{\vert_{\Gamma^{\rm D}}}+\sum_{i=0}^{n-1}F_i^{(n)} \big[G_0 ((\rho\dx)^i\psi)+(\dz \phi_{i})_{\vert_{\Gamma^{\rm D}}}\big].
$$
\end{lemma}  
\begin{proof}[Proof of the lemma]
Since by definition of $G_0$  we have $G_0\psi=(\partial_z \psi^\mfh)_{\vert_{\Gamma^{\rm D}}}$, and because $X \psi^\mfh=\rho T\cdot\nabla \psi^\mfh=\rho\dx \psi^\mfh$ on $\Gamma^{\rm D}$,  we can write
\begin{align}
\nonumber
[(\rho\dx)^n,G_0]\psi&= \big(X^n \dz \psi^\mfh\big)_{\vert_{\Gamma^{\rm D}}}-\big(\partial_z ((\rho\dx)^n\psi)^\mfh\big)_{\vert_{\Gamma^{\rm D}}}\\
\label{eqcom1}
&= ([X^n,\dz]  \psi^\mfh)_{\vert_{\Gamma^{\rm D}}}+\big(\partial_z \phi_n\big)_{\vert_{\Gamma^{\rm D}}}.
\end{align}

In order to understand the structure of the commutator in the right-hand side, let us recall that $X=\rho T\cdot \nabla$, so that we have for all $\varphi\in {\mathcal D}(\Omega\cup \Gamma^*)$,
$$
X \dz \varphi=\dz  (\rho T\cdot \nabla\varphi)
-\dz (\rho T) \cdot \nabla \varphi.
$$
Since $\dz \rho$ vanishes on $\Gamma^{\rm D}$, we deduce that
$$
\big([X,\dz]\varphi\big)_{\vert_{\Gamma^{\rm D}}}=-\rho (\dz T\cdot \nabla\varphi\big)_{\vert_{\Gamma^{\rm D}}}.
$$

Choosing $T$  as in the proof of Lemma \ref{LMcomm1} and using the fact that $\partial_z \chi^\cor=0$ and $\partial_z(\sigma\cor {\bf e}_{\rm r}^\cor)=\frac{1}{r^\cor}{\bf e}_z$ on $\Gamma^{\rm D}$, one gets that
 $$
 (\dz T)_{\vert_{\Gamma^{\rm D}}}=\big(\sum_{{\rm c}\in {\mathcal C}}\frac{\chi^\cor}{r^\cor}\big){\bf e}_z,
 $$
so that we can conclude that
$$
\big([X, \dz ]\varphi\big)_{\vert_{\Gamma^{\rm D}}}={F_0^{(1)}}  (\dz \varphi)_{\vert_{\Gamma^{\rm D}}},
\quad\mbox{ with }\quad
F_0^{(1)}=- \big(\sum_{{\rm c}\in {\mathcal C}}\chi^\cor\frac{\rho}{r^\cor}\big)_{\vert_{\Gamma^{\rm D}}} .
$$

Using this identity repeatedly, one can prove by a finite induction that for all $n\in {\mathbb N}$, $n\geq 1$, one has
$$
\big([X^n, \dz ]\varphi\big)_{\vert_{\Gamma^{\rm D}}}=\sum_{i=0}^{n-1}{F_i^{(n)}} (\dz X^i\varphi)_{\vert_{\Gamma^{\rm D}}},
$$
where the functions $F^{(j)}_i$ belong to  $W^{1,\infty}(\Gamma^{\rm D})$. Since $(\dz X^i\psi^\mfh)_{\vert_{\Gamma^{\rm D}}}=G_0 ((\rho\dx)^i\psi)+(\dz \phi_{i})_{\vert_{\Gamma^{\rm D}}}$, the above formula applied with $\varphi=\psi^\mfh$ yields
$$
([X^n, \dz ]  \psi^\mfh)_{\vert_{\Gamma^{(\rm D)}}}=\sum_{i=0}^{n-1}{F_i^{(n)}} \big[G_0 ((\rho\dx)^i\psi)+(\dz \phi_{i})_{\vert_{\Gamma^{\rm D}}}\big].
$$
Together with \eqref{eqcom1},  this directly implies the result.
\end{proof}
The proposition is now a direct consequence of Lemmas  \ref{compcomdual},  \ref{compcomdual1}  and  \ref{compcomdual2}. 
\end{proof}

When $n=1$, a variant of the commutator estimate given in Proposition \ref{propcomm1} is the following (it requires a control of $\rho\dx \tpsi$ in $H^{1/2}(\Gamma^{\rm D})$ instead of $\rho\dx \psi \in \dot{H}^{1/2}(\Gamma^{\rm D})$).
\begin{proposition}\label{propcomm2}
Let Assumption \ref{assconfig2} be satisfied. There exists $C>0$ such that for all  $\psi\in \dot{H}^{1/2}(\Gamma^{\rm D})$
and
 all $\tpsi\in \dot{H}^{1/2}(\Gamma^{\rm D})$ such that $\rho\dx\tpsi\in {H}^{1/2}(\Gamma^{\rm D})$,  one has
$$
\int_{\Gamma^{\rm D}} [\rho\dx,G_0]\psi \rho \dx  \tpsi
 \leq C \vert \psi \vert_{\dot{H}^{1/2}} \big(  \vert \tpsi \vert_{\dot{H}^{1/2}}  +  \vert \rho\dx \tpsi \vert_{{H}^{1/2}}  \big).
 $$
\end{proposition}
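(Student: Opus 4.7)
The overall strategy is to retrace the proof of Proposition~\ref{propcomm1} for $n=1$, but at each step where the original argument integrates by parts to transfer the vector field $X$ from $\tpsi^\mfh$ back onto $\psi^\mfh$, to make the opposite choice. In Proposition~\ref{propcomm1}, that transfer is what produces a factor $\Vert\nabla(X\psi^\mfh)\Vert_{L^2(\Omega)}$, which by Proposition~\ref{propXnpsih} is controlled by $\vert\psi\vert_{\dot{H}^{1/2}}+\vert\rho\dx\psi\vert_{\dot{H}^{1/2}}$ and accounts for the extra regularity on $\psi$ needed there. Keeping $X$ on the $\tpsi^\mfh$ side instead yields, via the same proposition, $\Vert\nabla(X\tpsi^\mfh)\Vert_{L^2(\Omega)}\lesssim \vert\tpsi\vert_{\dot{H}^{1/2}}+\vert\rho\dx\tpsi\vert_{\dot{H}^{1/2}}$, which matches the right-hand side of the statement. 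By Proposition~\ref{propdense2} it suffices to prove the estimate for $\psi\in \cD(\Gamma^{\rm D})$, so that Lemma~\ref{compcomdual2} with $n=1$ applies and gives
\[
[\rho\dx,G_0]\psi = F_0^{(1)}\,G_0\psi + (\dz\phi_1)_{\vert_{\Gamma^{\rm D}}},
\]
with $\phi_1 = X\psi^\mfh - ((\rho\dx)\psi)^\mfh \in H^1_{\rm D}(\Omega)$.

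For the contribution of $F_0^{(1)}G_0\psi$ I will use duality: the variational definition of $G_0$ gives $\vert\langle G_0\psi,\varphi\rangle\vert\le C\vert\psi\vert_{\dot{H}^{1/2}}\vert\varphi\vert_{\dot{H}^{1/2}}$, and since $F_0^{(1)}$ is smooth and supported near the corners while $\rho\dx\tpsi$ vanishes there, a multiplier estimate combined with Proposition~\ref{propequivter} gives $\vert F_0^{(1)}\rho\dx\tpsi\vert_{\dot{H}^{1/2}}\lesssim \vert\rho\dx\tpsi\vert_{H^{1/2}}$; this is precisely where the stronger norm $H^{1/2}$ rather than the semi-norm $\dot{H}^{1/2}$ is required, because the multiplier estimate calls on the $L^2$ norm of $\rho\dx\tpsi$. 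For the contribution of $(\dz\phi_1)_{\vert_{\Gamma^{\rm D}}}$ I will follow the proof of Lemma~\ref{compcomdual}: Green's identity transforms it into $\int_\Omega\Delta\phi_1\,(\rho\dx\tpsi)^\mfh + \int_\Omega\nabla\phi_1\cdot\nabla(\rho\dx\tpsi)^\mfh$. The second integral is bounded by Cauchy--Schwarz together with $\Vert\nabla\phi_1\Vert_{L^2}\lesssim \vert\psi\vert_{\dot{H}^{1/2}}$ (Lemma~\ref{lemm2} with $n=1$, which only requires $\nabla\psi^\mfh\in L^2$) and the harmonic extension isometry $\Vert\nabla(\rho\dx\tpsi)^\mfh\Vert_{L^2}\lesssim \vert\rho\dx\tpsi\vert_{\dot{H}^{1/2}}$. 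For the first, I split $(\rho\dx\tpsi)^\mfh = X\tpsi^\mfh - \tilde\phi_1$, with $\tilde\phi_1 \in H^1_{\rm D}(\Omega)$ the analogue of $\phi_1$ for $\tpsi$; the $\tilde\phi_1$ piece is again controlled by Lemma~\ref{lemm2}, using Proposition~\ref{propXnpsih} to bound $\Vert\nabla\tilde\phi_1\Vert_{L^2}\lesssim \vert\tpsi\vert_{\dot{H}^{1/2}}+\vert\rho\dx\tpsi\vert_{\dot{H}^{1/2}}$.

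The remaining and delicate term is $\int_\Omega\Delta\phi_1\,X\tpsi^\mfh = \int_\Omega [\Delta,X]\psi^\mfh\,X\tpsi^\mfh$, which I will treat by adapting the proof of Lemma~\ref{lemm2var} to the case $j=1$. Expanding the commutator and integrating by parts once, as in that lemma, the estimate reduces to a bulk term of the form $\int_\Omega \partial\psi^\mfh\,(F\cdot X\nabla\tpsi^\mfh)$ together with boundary integrals on $\Gamma^{\rm N}$ and $\Gamma^{\rm D}$. At this point, the proof of Lemma~\ref{lemm2var} applies $X^*$ to move $X$ back onto $\psi^\mfh$, which is exactly what produces the factor $\vert\rho\dx\psi\vert_{\dot{H}^{1/2}}$; instead, I will rewrite $X\nabla\tpsi^\mfh = \nabla(X\tpsi^\mfh) + [X,\nabla]\tpsi^\mfh$, absorb the commutator $[X,\nabla]\tpsi^\mfh = -(\nabla(\rho T))^{\rm T}\nabla\tpsi^\mfh$ in $L^2$ by $\vert\tpsi\vert_{\dot{H}^{1/2}}$, and close with Cauchy--Schwarz using Proposition~\ref{propXnpsih} to control $\Vert\nabla(X\tpsi^\mfh)\Vert_{L^2}$. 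The boundary terms on $\Gamma^{\rm N}$ (using $\partial_{\rm n}\psi^\mfh = 0$ there) and on $\Gamma^{\rm D}$ (using harmonicity of $\psi^\mfh$ together with Green's identity to return to bulk integrals) are treated analogously, always avoiding any integration by parts that would require regularity of $\rho\dx\psi$. The main obstacle will be verifying that this asymmetric treatment propagates uniformly through all boundary contributions, particularly through the $\Gamma^{\rm D}$ boundary integral where $X\tpsi^\mfh$ does not vanish and one must rely delicately on the harmonicity of $\psi^\mfh$ to rewrite the term as a bulk integral and then exploit Proposition~\ref{propXnpsih} on the $\tpsi$ side.
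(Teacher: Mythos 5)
Your proof is correct and follows essentially the same route as the paper's own (terse) proof: invoke Lemma \ref{compcomdual2}, bound the $F_0^{(1)}G_0\psi$ piece by duality and a multiplier estimate in $H^{1/2}$, and control the $(\dz\phi_1)_{\vert_{\Gamma^{\rm D}}}$ piece by adapting Lemma \ref{lemm2var} while skipping the integrations by parts \eqref{simplIPP1} and \eqref{simplIPP2} that would transfer $X$ onto $\psi^\mfh$. Your explicit reduction to $\psi\in\mathcal{D}(\Gamma^{\rm D})$ via Proposition \ref{propdense2} is a welcome clarification, since Lemma \ref{compcomdual2} formally requires $\rho\dx\psi\in\dot{H}^{1/2}(\Gamma^{\rm D})$, an assumption not made in the statement and which the paper's direct invocation leaves implicit.
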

\begin{remark}\label{remcommutateur}
Remarking that  $([\dx(\rho^2 \dx \cdot),G_0]\psi,\tpsi)_{L^2}=(\rho\dx \psi,[\rho\dx,G_0]\tpsi)_{L^2}
-([\rho\dx,G_0]\psi,\rho\dx \tpsi)_{L^2}$, we can combine Propositions \ref{propcomm1} and \ref{propcomm2} to obtain
$$
\big( [\dx(\rho^2 \dx \cdot),G_0]\psi,\tpsi\big)_{L^2(\Gamma^{\rm D})} \lesssim \big( \vert \psi \vert_{\dot{H}^{1/2}(\Gamma^{\rm D})}+\vert \rho \dx \psi \vert_{{H}^{1/2}(\Gamma^{\rm D})} \big) \vert \tpsi \vert_{\dot{H}^{1/2}(\Gamma^{\rm D})}.
$$
\end{remark}
\begin{proof}
By Lemma \ref{compcomdual2} we can decompose
$$
[\rho\dx,G_0]\psi  = (\partial_z \phi_1)_{\vert_{\Gamma^{\rm D}}}+F G_0 \psi,
$$
with $F\in W^{1,\infty}(\Gamma^{\rm D})$ and $\phi_1=X \psi^\mfh -(\rho\dx\psi)^\mfh$. 

Since 
\[
\vert \int_{\Gamma^{\rm D}} F G_0 \psi  \rho \dx  \tpsi \vert \lesssim \vert \psi \vert_{\dot{H}^{1/2}(\Gamma^{\rm D})} \vert \rho\dx \tpsi\vert_{{H}^{1/2}(\Gamma^{\rm D})},
\]
we are left to control the contribution of $ (\partial_z \phi_1)_{\vert_{\Gamma^{\rm D}}}$. 

The proof is very similar to the proof of Lemma \ref{lemm2var} (it is actually simpler since it consists in skipping the integration by parts of \eqref{simplIPP1} and \eqref{simplIPP2}), and is therefore omitted.
\end{proof}

\section{Well-posedness in partially weighted spaces}

We have seen that Corollary \ref{corohigherT} allows us to construct solutions in $U\in {\mathbb V}^n_T$, with ${\mathbb V}^n_T$ defined in \eqref{defWT} and $n\in {\mathbb N}$, to the evolution equation
\begin{equation}\label{CPcompbis}
\partial_t U +{\bf A}U=F,
\end{equation}
with $U=(\zeta,\psi)^{\rm T}$, $F=(f,g)^{\rm T}$ and ${\bf A}=\begin{pmatrix} 0 & -G_0  \\  \gr  & 0\end{pmatrix}$, 
with initial condition
\begin{equation}\label{CP0bis}
U_{\vert_{t=0}}=U^{\rm in}.
\end{equation}
This implies that for all $0\leq j \leq n$, one has $(-{\bf A})^j U \in C([0,T];{\mathbb X})$, where we recall that ${\mathbb X}$ is the energy space defined as ${\mathbb X}={\mathcal L}^2(\Gamma^{\rm D})\times \dot{{\mathcal H}}^{1/2}(\Gamma^{\rm D})$ if $\Gamma^{\rm D}$ is bounded and ${\mathbb X}=L^2(\Gamma^{\rm D})\times \dot{H}^{1/2}(\Gamma^{\rm D})$ otherwise. 

From this result, it is possible to deduce that  $U=(\zeta,\psi)^{\rm T}$ belongs to $C([0,T];H^{n/2}(\Gamma^{\rm D})\times \dot{H}^{((n+1)/2}(\Gamma^{\rm D}))$ if $n\leq 2$, as shown in Corollary \ref{space regularity}, and for $n\geq 3$ under an additional smallness assumption on the angles at the corners of the fluid domain, as shown in  Corollary \ref{space regularity2}. 

We also derived in Section \ref{sectcommutator} some estimates on the commutators $[(\rho\dx)^j, G_0]\psi$ of the Dirichlet-Neumann operator with weighted derivatives which are classically used in the analysis of elliptic PDEs in corner domains \cite{Grisvard,Dauge,MR}. The motivation to derive these commutator estimates was to use them to control $(\rho\dx)^j U$, and more generally $(\rho\dx)^j (-{\bf A})^kU$, with $0\leq j+k\leq n$,   in $C([0,T];{\mathbb X})$ and without any smallness assumption on the angles. 

This suggests that a functional space containing all the functions $V\in {\mathbb X}$ such that all the partially weighted quantities $(\rho\dx)^j (-{\bf A})^k V$, with $0\leq j+k\leq n$, are in ${\mathbb X}$ is appropriate for a good well-posedness theory for the initial value problem \eqref{CPcompbis}-\eqref{CP0bis}. Such partially weighted functional spaces are introduced in \S \ref{sectmixspaces}. 

In \S \ref{sectkeyprop}, we then use the commutator estimates of Section \ref{sectcommutator} to establish a key result of transfer of regularity, namely, that a control of $(\rho\dx)^j (-{\bf A})^{k+1}U$ in $C([0,T];{\mathbb X})$ allows one to control $(\rho\dx)^{j+1} (-{\bf A})^{k}U$ in the same space. Starting from Corollary  \ref{corohigherT} that provides a control of $(-{\bf A})^n U$, this transfer property allows one to control all the partially weighted norm of order $n$ and to prove, in \S \ref{sectMR}, a well-posedness result in partially weighted functional spaces. This result allows one to go beyond the $H^1\times \dot{H}^{3/2}$ regularity threshold in the interior of $\Gamma^{\rm D}$, without any smallness assumption on the angles.

\subsection{The spaces ${\mathbb Y}^n$ and ${\mathbb W}^n_T$}\label{sectmixspaces}
Recalling that ${\mathbb X}$ is defined by \eqref{defXbounded} if $\Gamma^{\rm D}$ is bounded and by \eqref{defXunbounded} otherwise,  we introduce the spaces ${\mathbb Y}^n$ defined for all $n\in {\mathbb N}$ as
\begin{equation}\label{defV0}
{\mathbb Y}^n:= \lbrace U\in {\mathbb X}, \, \vert U \vert_{{\mathbb Y}^n}<\infty \rbrace,
\end{equation}
with
$$
\vert U \vert_{{\mathbb Y}^n}:=\sum_{0\leq j+k\leq n} \vert (\rho \dx )^{j}(-{\bf A})^{k} U\vert_{\mathbb X}.
$$
For time dependent functions, we also need to introduce the spaces ${\mathbb W}^n_T$ for $T>0$ by
\begin{equation}\label{defWT2}
{\mathbb W}^n_T:= \bigcap_{l=0}^n C^l([0,T];{\mathbb Y}^{n-l}), 
\end{equation}
endowed with the norm
$
\Vert U \Vert_{{\mathbb W}^n_T}:=\sup_{t\in [0,T]} {\mathcal N}^n(U(t))<0$, where
\begin{equation}\label{defN}
	{\mathcal N}^n(U(t)):= 
	\sum_{l=0}^n \vert  \dt^{l} U (t)\vert_{{\mathbb Y}^{n-l}}.
\end{equation}
Throughout this section, we use the following notation,
$$
U_{j,k}:=(\rho\dx)^j(-{\bf A})^k U.
$$


\subsection{A key proposition}\label{sectkeyprop}

The following proposition shows that one can control $(\rho\dx)^{j+1}(-{\bf A})^k U$ in terms of $(\rho\dx)^{j}(-{\bf A})^{k+1}U$, which is the key step to deduce our main result from Theorem \ref{theoWPbounded} or \ref{theoWPunbounded}. 
Thanks to this property, it will be possible to deduce weighted estimates from the time regularity provided by Corollary \ref{corohigherT}.
\begin{proposition}\label{keyprop}
Let Assumption \ref{assconfig2} be satisfied.  
Let $T>0$, $n\in {\mathbb N}$ and $U\in C([0,T];{\mathbb Y}^n)$ solve \eqref{CPcompbis}-\eqref{CP0bis}. Let also $0\leq j,k\leq n$ be such that $j+k=n$. If moreover $U_{j,k+1}\in C([0,T];{\mathbb X})$, $U_{j+1,k}^{\rm in}\in {\mathbb X}$ and $F_{j+1,k}\in C([0,T];{\mathbb X})$ then  one has $U_{j+1,k}\in C([0,T];{\mathbb X})$ and the following energy estimate holds
$$
 \vert U_{j+1,k}(t) \vert_{\mathbb X}^2\leq  e^t \vert U_{j+1,k}^{\rm in} \vert_{\mathbb X}^2+ C \int_0^t e^{t-s}\big( \vert F_{j+1,k}(s,\cdot) \vert_{\mathbb X}^2 + \vert U(s,\cdot) \vert_{{\mathbb Y}^n}^2 + \vert U_{j,k+1} (s,\cdot)\vert_{\mathbb X}^2 \big){\rm d}s,
$$
for some constant $C>0$ independent of $U$ and $F$.
\end{proposition}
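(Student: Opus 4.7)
The plan is to apply the differential operator $(\rho\dx)^{j+1}(-{\bf A})^k$ to \eqref{CPcompbis}, perform a natural energy estimate in ${\mathbb X}$, and exploit the commutator estimate of Proposition~\ref{propcomm1} together with a structural identity built into the equation.

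First, observe that $\rho\dx$ commutes with $\dt$ and with the constant ${\mathtt g}$, and that $(\rho\dx)^{j+1}$ commutes with $(-{\bf A})^k$ except through its commutator with $G_0$, which is the only operator-valued entry of ${\bf A}$. A direct computation gives
\[
[{\bf A},(\rho\dx)^{j+1}] = \begin{pmatrix} 0 & [(\rho\dx)^{j+1},G_0] \\ 0 & 0 \end{pmatrix},
\]
so that applying $(\rho\dx)^{j+1}(-{\bf A})^k$ to \eqref{CPcompbis} yields
\[
(\dt+{\bf A})\,U_{j+1,k} = F_{j+1,k} + \begin{pmatrix} [(\rho\dx)^{j+1},G_0]\,\psi_k \\ 0 \end{pmatrix},
\]
where $\psi_k$ denotes the $\psi$-component of $(-{\bf A})^k U$. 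Taking the ${\mathbb X}$-inner product with $U_{j+1,k}$ and using the skew-symmetry $\langle{\bf A}V,V\rangle_{\mathbb X}=0$ (Proposition~\ref{propSA} in the bounded case, Lemma~\ref{lemmaNRJest} in the unbounded one) leads formally to
\[
\frac{1}{2}\frac{d}{dt}|U_{j+1,k}|_{\mathbb X}^2 = \langle F_{j+1,k},U_{j+1,k}\rangle_{\mathbb X} + {\mathtt g}\int_{\Gamma^{\rm D}} Z_{j+1,k}\,[(\rho\dx)^{j+1},G_0]\psi_k,
\]
where $Z_{j+1,k}=(\rho\dx)^{j+1}\zeta_k$ is the $\zeta$-component of $U_{j+1,k}$ and $\zeta_k$ the $\zeta$-component of $(-{\bf A})^kU$.

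The crucial step is to write $Z_{j+1,k}=\rho\dx(Z_{j,k})$ with $Z_{j,k}=(\rho\dx)^j\zeta_k$, so that Proposition~\ref{propcomm1} applied with $n=j+1$, $\psi=\psi_k$, $\tpsi = Z_{j,k}$ yields
\[
\Big|\int_{\Gamma^{\rm D}} Z_{j+1,k}\,[(\rho\dx)^{j+1},G_0]\psi_k\Big| \leq C\Big(\sum_{l=0}^{j+1}|(\rho\dx)^l\psi_k|_{\dot{H}^{1/2}}\Big)\,|Z_{j,k}|_{\dot{H}^{1/2}}.
\]
The factors $|(\rho\dx)^l\psi_k|_{\dot H^{1/2}}\leq |U_{l,k}|_{\mathbb X}$ are absorbed into $|U|_{{\mathbb Y}^n}$ for $0\leq l\leq j$ and reproduce $|U_{j+1,k}|_{\mathbb X}$ itself when $l=j+1$. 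The factor $|Z_{j,k}|_{\dot H^{1/2}}$ is where the structure of the system is decisive: computing $(-{\bf A})^{k+1}U=(-{\bf A})(-{\bf A})^kU$ componentwise gives $\psi_{k+1}=-{\mathtt g}\,\zeta_k$, so that
\[
Z_{j,k} = -{\mathtt g}^{-1}(\rho\dx)^j\psi_{k+1}, \qquad |Z_{j,k}|_{\dot{H}^{1/2}} \leq {\mathtt g}^{-1}|U_{j,k+1}|_{\mathbb X}.
\]
In other words the missing $\rho\dx$-derivative of $\zeta_k$, which is only controlled in $L^2$ by $|U_{j,k}|_{\mathbb X}$, is traded for one extra $(-{\bf A})$-derivative applied to the $\psi$-slot, which is exactly the hypothesis on $U_{j,k+1}$. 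Combining the Cauchy--Schwarz bound $|\langle F_{j+1,k},U_{j+1,k}\rangle_{\mathbb X}|\leq |F_{j+1,k}|_{\mathbb X}|U_{j+1,k}|_{\mathbb X}$ with the above commutator estimate and Young's inequality gives
\[
\frac{d}{dt}|U_{j+1,k}|_{\mathbb X}^2 \leq |U_{j+1,k}|_{\mathbb X}^2 + C\bigl(|F_{j+1,k}|_{\mathbb X}^2 + |U|_{{\mathbb Y}^n}^2 + |U_{j,k+1}|_{\mathbb X}^2\bigr),
\]
and Gronwall's inequality produces the advertised bound.

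The main obstacle is not the energy identity itself, but its rigorous justification: the forcing $([(\rho\dx)^{j+1},G_0]\psi_k,0)^{\rm T}$ that drives the equation for $U_{j+1,k}$ only makes sense a priori as an element of a dual space (cf.\ Definition~\ref{propDN}), so existence of $U_{j+1,k}$ in $C([0,T];{\mathbb X})$ must be obtained by approximation, for instance by smoothing $U^{\rm in}$ and $F$ to produce more regular solutions for which the formal energy identity and the skew-symmetry $\langle{\bf A}U_{j+1,k},U_{j+1,k}\rangle_{\mathbb X}=0$ are licit, applying the a priori estimate above uniformly in the regularization, and passing to the limit. In the unbounded setting one must additionally fix a realization of the semi-normed space ${\mathbb X}$ consistently across all the $U_{l,m}$, as in the proof of Theorem~\ref{theoWPunbounded}.
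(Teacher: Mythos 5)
Your formal a priori estimate reproduces Step 1 of the paper's proof exactly: applying $(\rho\dx)^{j+1}(-{\bf A})^k$ to the equation, pairing with $U_{j+1,k}$ in ${\mathbb X}$, invoking Proposition~\ref{propcomm1} on the commutator, and trading the $\dot{H}^{1/2}$-control of $\zeta_{j,k}$ for $\vert U_{j,k+1}\vert_{\mathbb X}$ via the algebraic identity relating the $\zeta$-component of $(-{\bf A})^kU$ to the $\psi$-component of $(-{\bf A})^{k+1}U$. Gronwall then gives the advertised bound. You also correctly identify that the real difficulty is the rigorous justification, not the formal computation.

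The gap is in the regularization strategy. Smoothing $U^{\rm in}$ and $F$ does not obviously produce solutions for which the energy identity for $U^\epsilon_{j+1,k}$ is licit, and the reason is essentially circular: to justify $\langle {\bf A}U^\epsilon_{j+1,k},U^\epsilon_{j+1,k}\rangle_{\mathbb X}=0$ one needs $U^\epsilon_{j+1,k}(t)\in H^{1/2}(\Gamma^{\rm D})\times\dot{H}^{1/2}(\Gamma^{\rm D})$ for $t>0$, and the propagation of that extra weighted derivative under the flow is precisely what the proposition is trying to establish. Nothing in Theorems~\ref{theoWPbounded}--\ref{theoWPunbounded}, Corollary~\ref{corohigherT} or Corollary~\ref{space regularity2} (without angle-smallness) tells you that a smooth initial datum yields a solution with bounded $(\rho\dx)^{j+1}(-{\bf A})^k$-norm at later times. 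The paper avoids this trap by regularizing at the level of the solution itself rather than the data, setting $V_\epsilon=\rho\dx K_\epsilon U_{j,k}$ with the elliptic operator $K_\epsilon=(1-\epsilon\,\dx(\rho^2\dx\cdot))^{-1}$. The choice of $K_\epsilon$ is not incidental: because $-\dx(\rho^2\dx\cdot)$ is essentially $(\rho\dx)^2$ up to lower order, $K_\epsilon$ interacts tractably with $\rho\dx$ (Lemma~\ref{lemmaregul}), and the needed regularity of $V_\epsilon$ is deduced directly from the hypotheses $U_{j,k+1}\in C([0,T];{\mathbb X})$ and $F_{j,k}\in C([0,T];{\mathbb X})$. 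The price is a set of extra commutator terms $C_1$, $C_2$, $C_3$ involving $K_\epsilon$, whose control occupies the bulk of the proof and requires the companion estimate of Proposition~\ref{propcomm2} via Remark~\ref{remcommutateur}. Your proposal neither anticipates these additional commutators nor supplies a mechanism for obtaining the required regularity of the approximants, so the justification step is missing.
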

\begin{proof}
We first establish in Step 1 the energy estimate stated in the lemma for more regular solutions $U$. In order to show that the energy estimate remains valid at the level of regularity assumed in the statement of the proposition, we approximate $U_{j+1,k}$ by a regularizing sequence $(V_\epsilon)$, where the $V_\epsilon$ satisfy the energy estimate, and then pass to the limit. The regularization process is studied in Step 2, and the energy estimates for $V_\epsilon$, which involve additional commutator estimates compared to the computations of Step 1 due to the presence of smoothing operators, are performed in Step 3. The convergence step and conclusion is then done in Step 4.

\medskip

\noindent{\bf Step 1.} A priori estimates for more regular solutions. Under the assumption $U\in C([0,T];{\mathbb Y}^n)$, we know that $U_{j,k}\in C([0,T];{\mathbb X})$. The idea is to write the system satisfied by $\rho\dx U_{j,k}$ and to perform energy estimates, assuming that we have enough regularity on $U_{j,k}$ to do so. 

Applying $(\rho\dx)^{j+1}(-{\bf A})^k$ to \eqref{CPcompbis}, one gets
$$
\dt (\rho \dx U_{j,k}) +{\bf A} (\rho \dx U_{j,k}) =F_{j+1,k} +  \begin{pmatrix} [(\rho \dx)^{j+1}, G_0] \psi_{0,k} \\0 \end{pmatrix}.
$$
Note that if $\rho\dx U_{j,k}\in C^1([0,T];{\mathbb X})$ then 
\[
\langle \dt (\rho \dx U_{j,k}),\rho\dx U_{j,k}\rangle_X=\frac{1}{2}\frac{\rm d}{{\rm d}t} \vert \rho\dx U_{j,k}\vert_{\mathbb X}^2,
\]
and that if $\rho\dx U_{j,k}\in C([0,T];H^{1/2}\times \dot{H}^{1/2})$ then we have
\[
\langle \rho \dx U_{j,k},(-{\bf A})\rho\dx U_{j,k}\rangle_{\mathbb X}=0.
\]

Therefore, if we assume the additional regularity $\rho\dx U_{j,k} \in C^1([0,T];{\mathbb X})\cap C([0,T];H^{1/2}\times \dot{H}^{1/2})$ one gets by 
taking the scalar product of the above equation with  $\rho\dx U_{j,k}$,
$$
\frac{\rm d}{{\rm d}t}\frac{1}{2} \vert (\rho\dx) U_{j,k} \vert_{\mathbb X}^2 = \langle F_{j+1,k},\rho\dx U_{j,k} \rangle_{\mathbb X}+ \big( [(\rho \dx)^{j+1}, G_0]\psi_{0,k}, \rho\dx \zeta_{j,k} \big)_{L^2(\Gamma^{\rm D})}.
$$

Using Proposition \ref{propcomm1} we get that
\begin{align*}
\big( [(\rho \dx)^{j+1}, G_0]\psi_{0,k}, \rho\dx \zeta_{j,k} \big)_{L^2(\Gamma^{\rm D})} &\leq C \big( \sum_{l=0}^{j+1} \vert (\rho \dx )^l\psi_{0,k} \vert_{\dot{H}^{1/2}}\big)\vert \zeta_{j,k} \vert_{\dot{H}^{1/2}}\\
&\leq C \big( \vert U \vert_{{\mathbb Y}^n}+\vert \rho \dx U_{j,k} \vert_{{\mathbb X}}\big)\vert U_{j,k+1} \vert_{{\mathbb X}},
\end{align*}
where we used the fact that $(\rho\dx)^l\psi_{0,k}$ is the second component of $U_{l,k}$ and ${{\mathtt g}}\zeta_{j,k}$ is the second component of $U_{j,k+1}=(\rho\dx)^j (-{\bf A}) U_{0,k}$.
We deduce that
$$
\frac{\rm d}{{\rm d}t}\frac{1}{2} \vert (\rho\dx) U_{j,k} \vert_{\mathbb X}^2 \lesssim  \big(\vert F_{j+1,k} \vert_{\mathbb X} + \vert U_{j,k+1} \vert_{\mathbb X}   \big) \vert \rho \dx U_{j,k} \vert_{\mathbb X}+\vert U \vert_{{\mathbb Y}^n} \vert U_{j,k+1}\vert_{\mathbb X} ,
$$
and therefore
$$
 \vert U_{j+1,k}(t) \vert_{\mathbb X}^2\leq  e^t \vert U_{j+1,k}^{\rm in} \vert_{\mathbb X}^2+ C \int_0^t e^{t-s}\big( \vert F_{j+1,k}(s,\cdot) \vert_{\mathbb X}^2 + \vert U(s,\cdot) \vert_{{\mathbb Y}^n}^2 + \vert U_{j,k+1} (s,\cdot)\vert_{\mathbb X}^2 \big){\rm d}s.
$$

\noindent{\bf Step 2.} Regularization of $U_{j,k}$. Since, under the assumptions of the proposition, $U_{j,k}$ does not have the required regularity to apply the computations of the previous steps, we consider the regularization $U_{j,k}^\epsilon$ defined for $\epsilon>0$ as
$$
U_{j,k}^\epsilon:=K_\epsilon U_{j,k}\quad\mbox{ with }\quad K_\epsilon:=\big(1-\epsilon \partial_x(\rho^2 \partial_x \cdot ) \big)^{-1};
$$
the smoothing operator $K_\epsilon$ is well defined and acts on $L^2(\Gamma^{\rm D})$, as proved in the following lemma which gathers several properties of  $K_\epsilon$ that we shall need throughout this proof.
\begin{lemma}\label{lemmaregul}
{\bf i.} There is a constant $C>0$ such that for all $f\in L^2(\Gamma^{\rm D})$, one has 
\[
\vert K_\epsilon f\vert_{L^2(\Gamma^{\rm D})}+\sqrt{\epsilon}\vert \rho\dx (K_\epsilon f)\vert_{L^2(\Gamma^{\rm D})}\leq C \vert f\vert_{L^2(\Gamma^{\rm D})}.
\]
{\bf ii.} There is $\epsilon_0>$ and a constant $C>0$ such that for all $0<\epsilon<\epsilon_0$ and $f\in \dot{H}^{1/2}(\Gamma^{\rm D})$, one has 
\[
\vert K_\epsilon f\vert_{\dot{H}^{1/2}(\Gamma^{\rm D})}+\sqrt{\epsilon}\vert \rho\dx (K_\epsilon f)\vert_{\dot{H}^{1/2}(\Gamma^{\rm D})}\leq C \vert f\vert_{\dot{H}^{1/2}(\Gamma^{\rm D})}.
\]
{\bf iii.} Let $j\in {\mathbb N}^*$. There is a constant $C>0$ such that for all $V\in {\mathbb X}$ such that $(\rho\dx)^l V \in {\mathbb X}$ for  $l\leq j-1$, one has
$$
\vert [(\rho\dx)^j,K_\epsilon ]V \vert_{{\mathbb X}}
+\sqrt{\epsilon}\vert \rho\dx [(\rho\dx)^j,K_\epsilon ] V \vert_{{\mathbb X}} \leq C \sqrt{\epsilon}
\sum_{l=0}^{j-1}  \vert (\rho\dx)^l   V \vert_{{\mathbb X}}.
$$
{\bf iv.} For all $f\in L^2(\Gamma^{\rm D})$, $K_\epsilon f$ converges to $f$ in $L^2(\Gamma^{\rm D})$ as $\epsilon \to 0$.\\
{\bf v.} For all $f\in \dot{H}^{1/2}(\Gamma^{\rm D})$,  $K_\epsilon f$ converges to $f$ in $\dot{H}^{1/2}(\Gamma^{\rm D})$ as $\epsilon \to 0$.
\end{lemma}
\begin{proof}[Proof of the lemma]
{\bf i.} The result is a simple consequence of the variational estimates associated with the identity $(1-\epsilon \partial_x(\rho^2 \partial_x \cdot))K_\epsilon f =f$.\\
{\bf ii.} Taking the $L^2$-scalar product of  $(1-\epsilon \partial_x(\rho^2 \partial_x \cdot))K_\epsilon f =f$ with $G_0 K_\epsilon f$, one gets
$$
\vert K_\epsilon f \vert_{\dot{H}^{1/2}}^2+\epsilon \vert \rho\dx K_\epsilon f \vert_{\dot{H}^{1/2}}^2 \lesssim \vert f \vert_{\dot{H}^{1/2}(\Gamma^{\rm D})}\vert K_\epsilon f \vert_{\dot{H}^{1/2}}
+\epsilon\big\vert (\rho\dx K_\epsilon f, [\rho\partial_x,G_0] K_\epsilon f)_{L^2}\big\vert.
$$

Using Proposition \ref{propcomm1}, this implies 
\begin{align*}
\vert K_\epsilon f \vert_{\dot{H}^{1/2}}^2+\epsilon \vert \rho\dx K_\epsilon f \vert_{\dot{H}^{1/2}}^2 \lesssim& \vert f \vert_{\dot{H}^{1/2}(\Gamma^{\rm D})}\vert K_\epsilon f \vert_{\dot{H}^{1/2}}\\
&+\epsilon \vert K_\epsilon f \vert_{\dot{H}^{1/2}}\big(  \vert K_\epsilon f \vert_{\dot{H}^{1/2}}+\vert \rho\dx K_\epsilon f \vert_{\dot{H}^{1/2}}\big),
\end{align*}
from which the result follows by Young's inequality if $\epsilon$ is mall enough. 

{\bf iii.} We need to prove that the estimate of the lemma holds with ${\mathbb X}$ replaced by $L^2(\Gamma^{\rm D})$ and $\dot{H}^{1/2}(\Gamma^{\rm D})$. We start with $f\in  L^2(\Gamma^{\rm D})$.

We observe that  $w_j:=[K_\epsilon, (\rho\dx)^j] f$ solves the elliptic system
\begin{equation}\label{eqw}
(1-\epsilon \dx (\rho^2 \dx \cdot ))w_j=-\epsilon  [(\rho\dx)^j , \dx\rho ] \rho\dx K_\epsilon f ;
\end{equation}
multiplying by $w_j$, integrating by parts and using Young's inequality we deduce  that
$$
\vert w_j\vert_{L^2}^2+\epsilon \vert \rho\dx w_j \vert_{L^2}^2 \lesssim  \sum_{l=0}^{j-1} \epsilon^2 \vert \rho\dx (\rho\dx)^l K_\epsilon f \vert_{L^2}^2.
$$

If $j=1$, then we can use the first point of the lemma to bound the right-hand side from above by $\vert f \vert_{L^2}^2$; for $j>1$, we write $ (\rho\dx)^l K_\epsilon f =K_\epsilon (\rho\dx)^l f -w_l$ and apply the second point proved above to obtain
$$
\vert w_j\vert_{L^2}^2+\epsilon \vert \rho\dx w_j \vert_{L^2}^2\lesssim \epsilon \sum_{l=0}^{j-1} \big( \vert (\rho\dx)^l  f \vert_{L^2}^2+ \sqrt \epsilon \vert \rho\dx w_{l} \vert_{L^2}^2\big), 
$$
so that a simple induction yields 
$$
\vert [(\rho\dx)^j,K_\epsilon ]f \vert_{L^2}
+\sqrt{\epsilon}\vert \rho\dx [(\rho\dx)^j,K_\epsilon ] f \vert_{L^2} \leq C \sqrt{\epsilon}
\sum_{l=0}^{j-1} \big( \vert (\rho\dx)^l   f \vert_{L^2},
$$
which is the desired result. 

In order to prove that a similar results holds in $\dot{H}^{1/2}(\Gamma^{\rm D})$, we multiply \eqref{eqw} by $G_0 w_j$ and proceed as in the case $f\in L^2(\Gamma^{\rm D})$ and point {\bf ii} of the lemma. We omit the details.

{\bf iv.} The result stems directly from functional calculus since the operator $-\partial_x(\rho^2 \partial_x\cdot)$ is positive self-adjoint on the Hilbert space $\{f\in L^2(\Gamma^{\rm D}), \rho\partial_x f \in L^2(\Gamma^{\rm D})\}$ (endowed with its canonical scalar product). 

{\bf v.}  We first  prove that if $\dx f \in L^2(\Gamma^{\rm D})$ then $\vert \partial_x K_\epsilon f\vert_{L^2(\Gamma^{\rm D})}\leq C \vert \partial_x f\vert_{L^2(\Gamma^{\rm D})}$ for some constant $C>0$ independent of $f$. 

Applying $\partial_x$ to the equation $(1-\partial_x(\rho^2\partial_x))K_\epsilon f = f$, we get
$$
(1-\epsilon\partial_x(\rho^2\partial_x))(\partial_x K_\epsilon f) = \partial_x f+2\epsilon \partial_x(\rho (\partial_x\rho) (\partial_x K_\epsilon f)).
$$
Multiplying by $(\partial_x K_\epsilon f) $ and integrating by parts easily yields, for $\epsilon$ small enough
\begin{equation}\label{ineqdxf}
\vert (\partial_x K_\epsilon f)  \vert_{L^2(\Gamma^{\rm D})}+\sqrt{\epsilon} \vert \rho\partial_x (\partial_x K_\epsilon f)  \vert_{L^2(\Gamma^{\rm D})}
\lesssim \vert \partial_x f\vert_{L^2(\Gamma^{\rm D})},
\end{equation}
from which the desired inequality follows. 

Let us now prove the result stated in the lemma, namely, that for all $\varepsilon >0$ and all $f\in \dot{H}^{1/2}(\Gamma^{\rm D})$, one has $\vert K_\epsilon f -f \vert_{\dot{H}^{1/2}} \lesssim \varepsilon $ for $\epsilon$ small enough. By density of ${\mathcal D}(\Gamma^{\rm D})$ in  $\dot{H}^{1/2}(\Gamma^{\rm D})$, we can find $\widetilde{f}\in {\mathcal D}(\Gamma^{\rm D})$ such that $\vert f-\widetilde{f}\vert_{\dot{H}^{1/2}}<\varepsilon/M$ for some $M>0$ to be fixed below. 

We then write
\begin{align}
\nonumber
\vert K_\epsilon f - f \vert_{\dot{H}^{1/2}} &\leq \vert K_\epsilon \widetilde{f} - \widetilde{f} \vert_{\dot{H}^{1/2}}+ \vert K_\epsilon (f-\widetilde{f})\vert_{\dot{H}^{1/2}} + \vert f-\widetilde{f}\vert_{\dot{H}^{1/2}} \\
\label{boundKB}
&\leq \vert K_\epsilon \widetilde{f} - \widetilde{f} \vert_{\dot{H}^{1/2}}+\epsilon/2,
\end{align}
provided that $M\geq 2\big(1+ \vert K_\epsilon \vert_{\dot{H}^{1/2}\to \dot{H}^{1/2}})$. Since moreover, we have by interpolation that
$$
\vert K_\epsilon \widetilde{f} - \widetilde{f} \vert_{\dot{H}^{1/2}}\lesssim \vert K_\epsilon \widetilde{f} - \widetilde{f}\vert_{L^2}^{1/2}\vert K_\epsilon \widetilde{f} - \widetilde{f}\vert_{H^1}^{1/2}
$$
(this interpolation makes sense by using Proposition \ref{propequivter} and a standard interpolation) and because we can infer from the bound on $\dx K_\epsilon$ proved above that $\vert K_\epsilon \widetilde{f} - \widetilde{f}\vert_{H^1}\lesssim \vert \widetilde{f}\vert_{H^1}$, we obtain that
$$
\vert K_\epsilon \widetilde{f} - \widetilde{f} \vert_{\dot{H}^{1/2}}\lesssim \vert K_\epsilon \widetilde{f} - \widetilde{f}\vert_{L^2}^{1/2}\vert \widetilde{f}\vert_{H^1}^{1/2}.
$$
Since we have seen that $K_\epsilon f$ converges to $f$ in $L^2(\Gamma^{\rm D})$, we can bound the right-hand side of this inequality by $\epsilon/2$ for $\epsilon$ small enough. Plugging this into \eqref{boundKB} then yields the desired result.
\end{proof}
Let us note that under the assumption that $U_{j,k+1}\in C([0,T];{\mathbb X})$ and $F_{j,k} \in C([0,T];{\mathbb X})$ and remarking that $\dt U_{j,k}=-U_{j,k+1}+F_{j,k}$ we deduce that $U_{j,k}\in C^1([0,T];{\mathbb X})$. Since by the first two points of Lemma \ref{lemmaregul}, $\rho\dx K_\epsilon$ is bounded on ${\mathbb X}$, this implies that $V_\epsilon =\rho\dx K_\epsilon U_{j,k}\in C^1([0,T];{\mathbb X})$.

In order to justify the computations leading to the energy estimates, we still need to prove that $V_\epsilon\in C([0,T];H^{1/2}\times \dot{H}^{1/2})$; using the fact that $\rho\dx K_\epsilon$ is bounded on $H^{1/2}(\Gamma^{\rm D})\times \dot{H}^{1/2}(\Gamma^{\rm D})$, it is enough to check that $U_{j,k} \in C([0,T];H^{1/2}\times \dot{H}^{1/2})$. Since  $U_{j,k}\in C([0,T];{\mathbb X})$, the only thing to prove is that $\zeta_{j,k} \in  \dot{H}^{1/2}(\Gamma^{\rm D})$. This is a consequence of the assumption that $U_{j,k+1}=(-{\bf A})U_{j,k}\in C([0,T];{\mathbb X})$ since the second component of $(-{\bf A})U_{j,k}$ is equal to $-{\mathtt g}\zeta_{j,k}$. 

We have therefore enough regularity on $V_{\epsilon}$ to justify the computations leading to the energy estimates; the difficulty is now to control the additional commutator terms with the smoothing operator $K_\epsilon$.

\medskip
\noindent{\bf Step 3.} Energy estimates for $V_\epsilon:=\rho\dx K_\epsilon U_{j,k}$. Since $U_{j,k}$ solves
$$
\dt U_{j,k}+{\bf A}U_{j,k}=F_{j,k}+\begin{pmatrix} [(\rho \dx)^{j}, G_0] \psi_{0,k} \\0 \end{pmatrix},
$$
we get that
\begin{equation}\label{eqVepsilon}
\dt V_\epsilon+{\bf A}V_\epsilon=\rho\dx K_\epsilon F_{j,k}+\begin{pmatrix} C_1+C_2+C_3\\0 \end{pmatrix}
\end{equation}
with 
\begin{align*}
C_1= \rho\dx K_\epsilon [(\rho \dx)^{j}, G_0] \psi_{0,k}, \qquad C_2 = \rho \dx [K_\epsilon, G_0] \psi_{j,k}, \qquad C_3= [\rho\dx, G_0]K_\epsilon \psi_{j,k}.
\end{align*}

Taking the scalar product  in $\mathbb X$ of \eqref{eqVepsilon} with $V_\epsilon$ yields as above
\begin{align}
\nonumber
\frac{\rm d}{{\rm d}t}\frac{1}{2} \vert V_{\epsilon} \vert_{\mathbb X}^2 &\leq
\vert \rho\dx K_\epsilon F_{j,k}\vert_{\mathbb X}\vert V_\epsilon \vert_{\mathbb X}
+\sum_{m=1}^3 (C_m, \rho\dx K_\epsilon \zeta_{j,k})_{L^2} \\
\label{NRJestreg}
& \lesssim 
\vert F_{j+1,k}\vert_{\mathbb X}\vert V_\epsilon \vert_{\mathbb X}
+\sum_{m=1}^3 (C_m, \rho\dx K_\epsilon \zeta_{j,k})_{L^2} ,
\end{align}
where we used the first three points of the lemma. 

We now turn to control the three commutator terms. More precisely, we prove below that for $m=1,2,3$, we have
\begin{equation}
\label{boundCm}
\big\vert (C_m, \rho\dx K_\epsilon \zeta_{j,k})_{L^2} \big\vert  
\lesssim  \vert U\vert_{{\mathbb Y}^n}^2 +\vert V_\epsilon \vert_{\mathbb X}^2 + \vert U_{j,k+1}\vert_{\mathbb X}^2.
\end{equation}
\begin{itemize}
\item Contribution of $C_1$. We decompose $C_1=C_{11}+C_{12}+C_{13}$ with
\begin{align*}
C_{11}&=(\rho \dx) [(\rho\dx)^j,G_0]K_\epsilon \psi_{0,k},\\
C_{12}&=(\rho\dx)^{j+1}[K_\epsilon,G_0]\psi_{0,k} -(\rho\dx)[K_\epsilon,G_0](\rho\dx)^j \psi_{0,k},\\
C_{13}&=(\rho\dx)[K_\epsilon,(\rho\dx)^j]G_0\psi_{0,k}-(\rho\dx)G_0[K_\epsilon,(\rho\dx)^j]\psi_{0,k},
\end{align*}
and consider these components separately. For $C_{11}$, we notice that
$$
C_{11}=[(\rho\dx)^{j+1},G_0]K_\epsilon \psi_{0,k}-[\rho\dx,G_0](\rho\dx)^jK_\epsilon\psi_{0,k},
$$
so that we get from Proposition \ref{propcomm1}
\begin{align}
\nonumber
\big\vert (C_{11}, \rho\dx K_\epsilon \zeta_{j,k}  )_{L^2}\big\vert &\lesssim \big( \sum_{l=0}^{j+1}\vert (\rho\dx)^l K_\epsilon \psi_{0,k}\vert_{\dot{H}^{1/2}}\big) \vert K_\epsilon \zeta_{j,k}\vert_{\dot{H}^{1/2}}  \\
\label{boundC11}
&\lesssim \big( \vert U\vert_{{\mathbb Y}^n}+\vert V_\epsilon\vert_{\mathbb X}\big)\vert U_{j,k+1}\vert_{\mathbb X},
\end{align}
which shows that $C_{11}$ satisfies the upper bound \eqref{boundCm}.

For $C_{12}$, we introduce the notation ${\bf E}=-\dx (\rho^2 \dx \cdot) $; since  $[K_\epsilon,G_0]=-\epsilon K_\epsilon [{\bf E},G_0]K_\epsilon$, we deduce that $(C_{12}, \rho\dx K_\epsilon \zeta_{j,k} )_{L^2}=A_1+A_2$ with
\begin{align*}
A_1=&\epsilon  \big( {\bf E} [(\rho\dx)^{j} , K_\epsilon ][{\bf E},G_0]K_\epsilon \psi_{0,k} , K_\epsilon \zeta_{j,k}\big)_{L^2}\\
&+\epsilon \big( {\bf E} K_\epsilon  (\rho\dx)^{j} [{\bf E},G_0]K_\epsilon \psi_{0,k} , K_\epsilon \zeta_{j,k}\big)_{L^2},\\
A_2=&-\epsilon  \big( {\bf E} K_\epsilon [{\bf E},G_0] K_\epsilon (\rho\dx)^j \psi_{0,k}, K_\epsilon \zeta_{j,k}\big)_{L^2}.
\end{align*}
Since moreover $\epsilon {\bf E}K_\epsilon=\epsilon K_\epsilon{\bf E}=1-K_\epsilon$, we can write
\begin{align*}
A_1=&\epsilon\big( [(\rho\dx)^{j} , K_\epsilon ] [{\bf E},G_0]K_\epsilon \psi_{0,k} , (1-K_\epsilon)K_\epsilon \zeta_{j,k}\big)_{L^2}\\
&+\epsilon\big(   (\rho\dx)^{j} [{\bf E},G_0]K_\epsilon \psi_{0,k} , (1-K_\epsilon)K_\epsilon \zeta_{j,k}\big)_{L^2},
\end{align*}
and
$$
{A}_2= -\epsilon \big(  [{\bf E},G_0] K_\epsilon \psi_{j,k}, (1-K_\epsilon) K_\epsilon \zeta_{j,k}\big)_{L^2}.
$$

For $A_1$, we remark that if $j=0$ then the first component of the right-hand side vanishes while the second one gets using Proposition \ref{propcomm2} and Remark \ref{remcommutateur} that
\begin{align*}
\vert A_1\vert &\lesssim \big( \vert K_\epsilon \psi_{0,k}\vert_{\dot{H}^{1/2}}+\vert \rho \dx K_\epsilon \psi_{0,k}\vert_{H^{1/2}} \big) \vert (1-K_\epsilon)K_\epsilon \zeta_{0,k} \vert_{\dot{H}^{1/2}} \\
&\lesssim \big( \vert  U \vert_{{\mathbb Y}^n}+\vert V_\epsilon\vert_{\mathbb X}+  \vert \rho \dx K_\epsilon \psi_{0,k}\vert_{L^2} \big) \vert U_{0,k+1} \vert_{{{\mathbb X}}}.
\end{align*}

Since $\rho$ is bounded, one can use \eqref{ineqdxf} and Proposition \ref{propDNell} to obtain that for all smooth enough function $f$, one has
\begin{equation}\label{contsup}
\vert \rho \dx K_\epsilon f \vert_{L^2}\lesssim  \vert G_0 f\vert_{L^2}+ \vert f \vert_{\dot{H}^{1/2}},
\end{equation}
so that
$$
\vert A_1\vert \lesssim \big( \vert  U \vert_{{\mathbb Y}^n}+\vert V_\epsilon\vert_{\mathbb X}+  \vert U_{0,k+1} \vert_{{{\mathbb X}}} \big) \vert U_{0,k+1} \vert_{{{\mathbb X}}},
$$
so that $\vert A_1\vert$ can be bounded from above by the right-hand side of \eqref{boundCm}.

For $j\geq 1$, 
we notice that since $ [(\rho\dx)^{j} , K_\epsilon ] f=-\epsilon K_\epsilon [(\rho\dx)^j,\dx\rho]\rho\dx K_\epsilon f$, the first component consists of lower order terms so that we can focus on the second one; using also the fact that ${\bf E}=-(\rho\dx)^2 -(\dx\rho)\rho\dx$ we obtain 
\begin{align*}
{A}_1 \sim &\epsilon\big( [(\rho\dx)^{j-1},G_0] (\rho\dx)^2 K_\epsilon \psi_{0,k}, \rho\dx  ((1-K_\epsilon) K_\epsilon \zeta_{j,k})\big)_{L^2} \\
&+\epsilon \big( [(\rho\dx)^{j+1},G_0]K_\epsilon \psi_{0,k}, \rho\dx  ((1-K_\epsilon) K_\epsilon \zeta_{j,k})\big)_{L^2},
\end{align*}
where the notation $\sim$ means equality up to lower order terms that can be controlled by the right-hand side of \eqref{boundCm}.   

It then follows from Proposition \ref{propcomm1} and the first two points of Lemma \ref{lemmaregul} that ${A}_1$ satisfies the same upper bound as \eqref{boundCm}. This is also the case for  ${A}_2$ as a direct consequence of Proposition \ref{propcomm2} and Remark \ref{remcommutateur},  and \eqref{contsup}.
We can therefore conclude that $C_{12}$ satisfies the upper bound \eqref{boundCm}.

For $C_{13}$, we write 
$(C_{13},\rho\dx K_\epsilon \zeta_{j,k}  )_{L^2}=B_1+B_2$ with
\begin{align*}
B_1&=\big((\rho\dx)[K_\epsilon,(\rho\dx)^j]G_0\psi_{0,k} , \rho\dx K_\epsilon \zeta_{j,k}\big)_{L^2}\\
B_2&= -\big( (\rho\dx)G_0[K_\epsilon,(\rho\dx)^j]\psi_{0,k},\rho\dx K_\epsilon \zeta_{j,k}\big)_{L^2}.
\end{align*}
We directly get from Cauchy-Schwarz inequality and the third point of Lemma \ref{lemmaregul} that 
\begin{align*}
\vert B_1\vert &\lesssim \big(\sum_{l=0}^{j-1 } \vert (\rho\dx)^l G_0\psi_{0,k}\vert_{L^2}\big) \vert  \rho\dx K_\epsilon \zeta_{j,k}\vert_2 \\
&\lesssim \vert U\vert_{{\mathbb Y}^n} \vert V_\epsilon \vert_{\mathbb X}.
\end{align*}

For $B_2$, we decompose
\begin{align*}
B_2=&-\big( G_0 \rho\dx[K_\epsilon,(\rho\dx)^j]\psi_{0,k},\rho\dx K_\epsilon \zeta_{j,k}\big)_{L^2}\\
&-\big( [ \rho\dx, G_0]\, [K_\epsilon,(\rho\dx)^j]\psi_{0,k},\rho\dx K_\epsilon \zeta_{j,k}\big)_{L^2},
\end{align*}
so that
\begin{align*}
\vert B_2\vert \lesssim& \frac{1}{\sqrt{\epsilon}} \vert \rho\dx[K_\epsilon,(\rho\dx)^j]\psi_{0,k} \vert_{\dot{H}^{1/2}} \times \sqrt{\epsilon} \vert \rho\dx K_\epsilon \zeta_{j,k} \vert_{\dot{H}^{1/2}} \\
&+ \big( \sum_{l=0}^j \vert (\rho\dx)^l \psi_{0,k}\vert_{\dot{H}^{1/2}} \big)\times \vert K_\epsilon \zeta_{j,k}\vert_{\dot{H}^{1/2}},
\end{align*}
the second line being a consequence of Proposition \ref{propcomm1}  and Lemma \ref{lemmaregul}. Decomposing $ \rho\dx[K_\epsilon,(\rho\dx)^j]=-[(\rho\dx)^{j+1},K_\epsilon]+[\rho\dx,K_\epsilon](\rho\dx)^j$, we can therefore deduce from the first and  third point of Lemma \ref{lemmaregul} that
$$
\vert B_2\vert \lesssim \vert U \vert_{{\mathbb Y}^n} \vert U_{j,k+1}\vert_{{\mathbb X}},
$$
and we can conclude that $C_{13}$, and therefore $C_1$, satisfy the upper bound \eqref{boundCm}.
\item Contribution of $C_2$. Using again the relation $[K_\epsilon, G_0]=-\epsilon K_\epsilon [ {\bf E}, G_0]K_\epsilon$, we have
\begin{align*}
(C_2, \rho\dx K_\epsilon \zeta_{j,k})_{L^2}&=-\epsilon \big( \rho \dx K_\epsilon [ {\bf E}, G_0]K_\epsilon\psi_{j,k},\rho\dx K_\epsilon \zeta_{j,k}\big)_{L^2(\Gamma^{\rm D})} \\
&=- \big(  [ {\bf E}, G_0]K_\epsilon\psi_{j,k},\epsilon{\bf E} K_\epsilon^2 \zeta_{j,k} \big)_{L^2(\Gamma^{\rm D})}. 
\end{align*}
Since $\epsilon{\bf E} K_\epsilon=1-K_\epsilon$,  it follows from Proposition \ref{propcomm2} and Remark \ref{remcommutateur}, as well as \eqref{contsup}, that
\begin{align*}
(C_2, \rho\dx K_\epsilon \zeta_{j,k})_{L^2}& \lesssim  \big( \vert K_\epsilon \psi_{j,k}\vert_{\dot{H}^{1/2}} +  \vert \rho\dx K_\epsilon \psi_{j,k}\vert_{{H}^{1/2}} \big) \times  \vert (1-K_\epsilon)K_\epsilon\zeta_{j,k}\vert_{\dot{H}^{1/2}} \\
&\lesssim \big( \vert U\vert_{{\mathbb Y}^n} +\vert V_\epsilon \vert_{\mathbb X} +\vert U_{j,k+1}\vert_{\mathbb X} \big) \vert U_{j,k+1}\vert_{\mathbb X},
\end{align*}
so that $C_2$ satisfies the upper bound \eqref{boundCm}.
\item Contribution of $C_3$. We directly get from Proposition \ref{propcomm1} that $C_3$ satisfies the upper bound \eqref{boundCm}.
\end{itemize}
We can then conclude from \eqref{NRJestreg} and \eqref{boundCm} that
$$
 \vert V_\epsilon(t) \vert_{\mathbb X}^2\leq  e^t \vert U_{j+1,k}^{\rm in} \vert_{\mathbb X}^2+ C \int_0^t e^{t-s}\big( \vert F_{j+1,k}(s,\cdot) \vert_{\mathbb X}^2 + \vert U(s,\cdot) \vert_{{\mathbb Y}^n}^2 + \vert U_{j,k+1} (s,\cdot)\vert_{\mathbb X}^2 \big){\rm d}s.
$$

\noindent{\bf Step 4.} Conclusion. The energy estimate of the previous point together with the last two points of Lemma \ref{lemmaregul} can be used to show that $V_\epsilon$ is a Cauchy sequence in $C([0,T];{\mathbb X})$ and therefore converges in this space (up to a constant for the second component). Since this limit coincides (up to a constant) with $\rho\dx U_{j,k}$, this proves the result.
\end{proof}

\subsection{Main result}\label{sectMR}
We recall that the spaces ${\mathbb Y}^n$ and ${\mathbb W}^n_T$ are defined in \eqref{defV0} and \eqref{defWT2}, while ${\mathcal N}^n(U(t))$ is defined in \eqref{defN}.
\begin{theorem}\label{maintheo}
Let $\Omega$, $\Gamma^{\rm D}$ and $\Gamma^{\rm N}$ be as in Assumption \ref{assconfig} and Assumption \ref{assconfig2}.  
Let also $n\in {\mathbb N}$, $T>0$, and $U^{\rm in} \in {\mathbb Y}^n$ and $F\in {\mathbb W}^n_T$. Then there is a unique solution $U\in {\mathbb W}^n_T$ to \eqref{CPcompbis}-\eqref{CP0bis}. Moreover, there exist two continuous functions of times $c_1$ and $c_2$ such that
$$
\forall t\geq 0,\qquad  {\mathcal N}^n (U(t))\leq \big(1+t c_1(t)\big) {\mathcal N}^n (U(0)) + c_2(t) \int_0^t {\mathcal N}^n (F(t')){\rm d}t'.
$$
\end{theorem}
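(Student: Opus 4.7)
The plan is to argue by induction on $n$. For the base case $n=0$, Theorem \ref{theoWPbounded} (if $\Gamma^{\rm D}$ is bounded) or Theorem \ref{theoWPunbounded} (otherwise) directly furnishes a unique solution $U \in C([0,T]; {\mathbb X}) = {\mathbb W}^0_T$ satisfying the stated energy estimate. Uniqueness at any level $n \geq 1$ then follows immediately from the $n=0$ case, since ${\mathbb W}^n_T \subset C([0,T]; {\mathbb X})$, so only existence in ${\mathbb W}^n_T$ together with the quantitative bound on ${\mathcal N}^n(U(t))$ remains to be established.

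For the inductive step from $n-1$ to $n$, the strategy is to combine Corollary \ref{corohigherT}, which delivers time regularity $(-{\bf A})^k U \in C([0,T]; {\mathbb X})$ for $k \leq n$, with Proposition \ref{keyprop}, which trades one power of $-{\bf A}$ for one factor of $\rho\dx$ at each step. First observe that taking $j=0$ in the definition of $\vert\cdot\vert_{{\mathbb Y}^n}$ yields the embedding ${\mathbb Y}^n \subset {\mathbb X}^n$, and analogously ${\mathbb W}^n_T \subset {\mathbb V}^n_T$; thus the assumptions of Corollary \ref{corohigherT} hold at level $n$, and it provides $U \in {\mathbb V}^n_T$, in particular $U_{0,n} = (-{\bf A})^n U \in C([0,T]; {\mathbb X})$. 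Since the outer inductive hypothesis gives $U \in C([0,T]; {\mathbb Y}^{n-1})$, only the top shell $\{(j,k): j+k=n\}$ of the seminorm $\vert\cdot\vert_{{\mathbb Y}^n}$ remains to be controlled. A finite inner induction on $j$ from $0$ to $n$ handles this: the base $j=0$ has just been obtained, and in the step from $j$ to $j+1$ we apply Proposition \ref{keyprop} with reference level $n-1$ and parameters $(j, n-j-1)$. All of its hypotheses are met: $U \in C([0,T]; {\mathbb Y}^{n-1})$ by the outer inductive hypothesis, $U_{j, n-j} \in C([0,T]; {\mathbb X})$ by the preceding inner step, $U^{\rm in}_{j+1, n-j-1} \in {\mathbb X}$ because $U^{\rm in} \in {\mathbb Y}^n$, and $F_{j+1, n-j-1} \in C([0,T]; {\mathbb X})$ because $F \in C([0,T]; {\mathbb Y}^n)$. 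The conclusion yields $U_{j+1, n-j-1} \in C([0,T]; {\mathbb X})$, closing the inner induction and giving $U \in C([0,T]; {\mathbb Y}^n)$.

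The remaining regularity $\dt^l U \in C([0,T]; {\mathbb Y}^{n-l})$ for $1 \leq l \leq n$ then follows algebraically from the equation. Iterating $\dt U = -{\bf A}U + F$ yields
$$
\dt^l U = (-{\bf A})^l U + \sum_{m=0}^{l-1} (-{\bf A})^m \dt^{l-1-m} F,
$$
and since $\dt$ commutes with $(\rho\dx)^j$ and $(-{\bf A})^k$, applying $(\rho\dx)^j (-{\bf A})^k$ to this identity produces a $C([0,T]; {\mathbb X})$-valued expression whenever $j + k + l \leq n$, using the control of $U$ in $C([0,T]; {\mathbb Y}^n)$ just established together with the given regularity of $F$ in ${\mathbb W}^n_T$. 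The quantitative bound on ${\mathcal N}^n(U(t))$ is obtained by chaining the exponential-type estimate of Proposition \ref{keyprop} along the inner induction on $j$, in combination with the outer inductive bound on $\vert U(s) \vert_{{\mathbb Y}^{n-1}}$; after taking square roots, reorganizing, and absorbing all accumulated multiplicative exponential factors into continuous functions of $t$, the announced inequality emerges. The main obstacle is essentially bookkeeping: one has to keep the outer induction index distinct from the reference level $n$ internal to Proposition \ref{keyprop} and from the inner induction variable $j$, and to assemble carefully the nested exponential estimates so that the precise structure $(1 + t c_1(t))\,{\mathcal N}^n(U(0)) + c_2(t) \int_0^t {\mathcal N}^n(F(s))\,{\rm d}s$ is reproduced after resummation.
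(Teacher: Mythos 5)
Your proposal is correct and follows essentially the same approach as the paper: Corollary \ref{corohigherT} gives full time regularity, an inner finite induction trades powers of $-{\bf A}$ for powers of $\rho\dx$ via Proposition \ref{keyprop}, and the identity $\dt^l U=(-{\bf A})^l U+\sum(-{\bf A})^m\dt^{l-1-m}F$ converts this back into control of the time derivatives. The only cosmetic difference is that you frame the outer induction as being on the theorem's own level parameter $n$, while the paper phrases it as an internal inductive property ${\mathcal P}(n')$ for a fixed instance; the two are interchangeable, and your formula for $\dt^l U$ is in fact the correct one (the paper's exponent $l-l'$ should read $l-1-l'$).
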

\begin{remark}
Similarly to Remark \ref{exprCI}, we can use the equation to bound ${\mathcal N}^n (U(0)) $ from above
in terms of $U^{\rm in}$ and $F$, namely,
$$
{\mathcal N}^n(U(0))\leq  \vert U^{\rm in}\vert_{{\mathbb Y}^n}+{\mathcal N}^{n-1}(F(0)).
$$
\end{remark}

\begin{remark}
For {the order} $n=2$, the quantity $ {\mathcal N}^n (U(t))$ controls $\zeta$ in $H^1(\Gamma^{\rm D})$ and $\dx \psi$ in $H^{1/2}(\Gamma^{\rm D})$ as in Corollary \ref{space regularity2}, but also $(\rho\dx)^2\zeta$ in $L^2(\Gamma^{\rm D})$ and $(\rho\dx)^2\psi$ in $\dot{H}^{1/2}(\Gamma^{\rm D})$, that is, away from the corners, we control one derivative more than in Corollary \ref{space regularity2}.
For $n=3$ and angles smaller than $2\pi/3$, we get as for Corollary \ref{space regularity2}  that $ {\mathcal N}^n (U(t))$ controls $\zeta$ in $H^{3/2}(\Gamma^{\rm D})$ and $\dx \psi$ in $H^{1}(\Gamma^{\rm D})$, but also 
$(\rho\dx)^3\zeta$ in $L^2(\Gamma^{\rm D})$ and $(\rho\dx)^3\psi$ in $\dot{H}^{1/2}(\Gamma^{\rm D})$, that is, $3/2$ more derivatives away from the boundary.
In  de Poyferr\'e's nonlinear a priori estimates (see \cite{Poyferre}) for the case of the emerging bottom, the author had to assume that the contact angle was smaller than $\pi/3$ to get $\dx\psi \in H^s$, with $s>3/2$. Here, such a control is obtained away from the boundary for $n=3$; according to Corollary \ref{space regularity2}, we get a control of $\dx \psi$ in $H^2(\Gamma^{\rm D})$ for $n=5$ (and hence also $(\rho\dx)^5\psi$ in $\dot{H}^{1/2}(\Gamma^{\rm D})$), provided that the angle is smaller than $2\pi/5>\pi/3$.
\end{remark}
\begin{remark}
We have characterized the domain $D({\bf A})=H^{1/2}(\Gamma^{\rm D})\times \dot{H}^1(\Gamma^{\rm D})$ and proved that $D({\bf A})=H^1(\Gamma^{\rm D})\times \dot{H}^{3/2}_*(\Gamma^{\rm D})$, where $H^{3/2}_*(\Gamma^{\rm D})$ is subset of $\dot{H}^{3/2}_*(\Gamma^{\rm D})$ (we have seen that in the case of a vertical contact at least, the inclusion is strict). The characterization of $D({\bf A}^n)$ for all $n\in {\mathbb N}^*$ requires the characterization of $D(G_0^n)$, which is a difficult open problem.
\end{remark}
\begin{remark}
When $\Omega$ is bounded, we can define the Neumann-Dirichlet map $H_0: {\mathcal L}^2(\Gamma^{\rm D})\to {\mathcal H}^1(\Gamma^{\rm D})$ (recall that we use calligraphic letters to denote spaces of functions whose integral over $\Gamma^{\rm D}$ vanishes); we have of course $G_0H_0={\rm Id}$. Defining on ${\mathbb X}={\mathcal L}^2(\Gamma^{\rm D})\times {\mathcal H}^{1/2}(\Gamma^{\rm D})$ the operator ${\mathcal B}$ by the matricial formula
$$
{\bf B}=\begin{pmatrix}
0 & 1/{\mathtt g} \\
-H_0 & 0
\end{pmatrix},
$$
so that ${\bf A}{\bf B}={\rm Id}$, and by $\widetilde{X}^n=\{ U\in {\mathbb X}, (\rho\dx)^j U \in {\mathbb X}, 0\leq j\leq n\}$, one has ${\bf B}^n \widetilde{X}^n \subset {\mathbb Y}^n$.
\end{remark}

\begin{proof}
The first step is to prove the property ${\mathcal P}(n')$ for $n'=n$, where ${\mathcal P}(n')$ states that $U_{j,k}\in C([0,T];{\mathbb X})$ for all $0\leq j+k\leq n'$, where we recall the notation $U_{j,k}=(\rho\dx)^j(-{\bf A})^k U$. This is done by a finite induction on $0\leq n'\leq n$.

For $n'=0$ this is a direct consequence of Theorem \ref{theoWPbounded} or \ref{theoWPunbounded}. For $1\leq n'\leq n-1$ we assume that ${\mathcal P}(n'')$ is proved for $0\leq n''\leq n'$ and show that this implies that ${\mathcal P}(n'+1)$ holds.

Under the assumptions of the theorem, we know by Corollary \ref{corohigherT} that $U_{0,n'}$ and $U_{0,n'+1}$ are in $C([0,T];{\mathbb X})$. Since the assumptions of the theorem also imply that $U_{1,n'}^{\rm in} \in {\mathbb X}$ and $F_{1,n'}\in C([0,T];{\mathbb X})$ we can use Proposition \ref{keyprop} to deduce that $U_{1,n'}\in C([0,T];{\mathbb X})$. Since by ${\mathcal P}(n'-1)$ we know that that $U_{1,n'-1}\in C([0,T];{\mathbb X})$ and that the assumptions of the theorem imply that 
$U_{2,n'-1}^{\rm in}$ and $F_{2,n'-1}$ are in $C([0,T];{\mathbb X})$, we use Proposition \ref{keyprop} again to get that $U_{2,n'-1} \in C([0,T];{\mathbb X})$. By a finite induction, we thus get that $U_{j,k}\in C([0,T];{\mathbb X})$ for all $j+k=n'+1$, hereby proving ${\mathcal P}(n'+1)$. The induction step is therefore complete and ${\mathcal P}(n)$ follows.

Since we now know that $\partial_t^l U_{j+k}\in C([0,T];{\mathbb X})$ for $l=0$ and $0\leq j+k\leq n$, we just need to extend this properties to $j \geq 1$ such that $0\leq l+j+k\leq n$. 

For such a $j$, we can write, using the equation
$$
\dt^l U=(-{\bf A})^l U +\sum_{l'=0}^{l-1}(-{\bf A})^{l-l'}\dt^{l'}F,
$$
and therefore
$$
\dt^l U_{j,k}=U_{j,k+l}+\sum_{l'=0}^{l-1}\dt^{l'}F_{j,k+l-l'}.
$$
The first component of the right-hand side belongs to $C([0,T];{\mathbb X})$ from the case $l=0$ proved above, and the second component is also in this space by assumption. This proves that $U\in {\mathbb W}^n_T$; the energy estimates easily follows from the energy estimates of Corollary \ref{corohigherT} and Proposition \ref{keyprop}.
\end{proof}

\thanks{This work was supported by the BOURGEONS project, grant ANR-23-CE40-0014-01 of the French National Research Agency (ANR),  the project Climath of the PEPR Math-VivEs, ANR-23-EXMA-0003, and by National Natural Science Foundation of China no.12071415 and no.12222116.}

\end{document}